   \newtheorem{theorem}[subsubsection]{Theorem}
      \newtheorem*{theorem*}{Theorem}
   \newtheorem{proposition}[subsubsection]{Proposition}
   \newtheorem{lemma}[subsubsection]{Lemma}
   \newtheorem{corollary}[subsubsection]{Corollary}
   \newtheorem*{conjecture*}{Conjecture}
\theoremstyle{definition}
          \newtheorem*{exercise*}{Exercise}
   \newtheorem{example}[subsubsection]{Example}
   \newtheorem*{example*}{Example}
   \newtheorem*{definition*}{Definition}
   \newtheorem{rem}[subsubsection]{Remark}
   \newtheorem{remark}[subsubsection]{Remark}
\newcommand{\RR}{{\mathbb{R}}}
\newcommand{\FF}{{\mathbb{F}}}
\newcommand{\QQ}{{\mathbb{Q}}}
\newcommand{\NN}{{\mathbb{N}}}
\newcommand{\ZZ}{{\mathbb{Z}}}
\newcommand{\GG}{{\mathbb{G}}}
\newcommand{\LL}{{\mathbb{L}}}
\newcommand{\fp}{{\boldsymbol{\mathfrak p}}}
\newcommand{\fq}{{\boldsymbol{\mathfrak q}}}
\newcommand{\fX}{{\mathfrak{X}}}
\newcommand{\fY}{{\mathfrak{Y}}}
\newcommand{\tL}{{\tilde{L}}}
\newcommand{\tI}{{\tilde{I}}}
\newcommand{\tJ}{{\tilde{J}}}
\newcommand{\tn}{{\tilde{n}}}
\newcommand{\tilh}{{\tilde{h}}}
\newcommand{\cI}{{\mathcal I}}
\newcommand{\cJ}{{\mathcal J}}
\newcommand{\cM}{{\mathcal M}}
\newcommand{\cO}{{\mathcal O}}
\newcommand{\cX}{{\mathcal X}}
\newcommand{\ut}{{\underline{t}}}
\newcommand{\oY}{{\overline{Y}}}
\newcommand{\oA}{{\overline{A}}}
\newcommand{\oM}{{\overline{M}}}
\newcommand{\oN}{{\overline{N}}}
\newcommand{\oI}{{\overline{I}}}
\newcommand{\om}{{\overline{m}}}
\newcommand{\ot}{{\overline{t}}}
\def\<{\langle}
\def\>{\rangle}
\newcommand{\Id}{\operatorname{Id}}
\newcommand{\Spec}{\operatorname{Spec}}
\newcommand{\Proj}{\operatorname{Proj}}
\newcommand{\Ker}{{\operatorname{Ker}}}
\newcommand{\codim}{\operatorname{codim}}
\newcommand{\bfA}{{\mathbf A}}
\newcommand{\ocI}{\overline{{\mathcal I}}}
\newcommand{\ocM}{\overline{{\mathcal M}}}
\newcommand{\oP}{{\overline{P}}}
\newcommand{\oD}{{\overline{D}}}
\newcommand{\oE}{{\overline{E}}}
\newcommand{\oV}{{\overline{V}}}
\newcommand{\red}{{\operatorname{red}}}
\def\nor{{\rm nor}}
\def\sat{{\rm sat}}
\def\:{{\colon}}
\def\.{{,\dots,}}
\newcommand{\double}{\genfrac..{0pt}1
{\raise -1pt\hbox{$\scriptstyle\longrightarrow$}}{\raise 3pt\hbox
{$\scriptstyle\longrightarrow$}}}
\renewcommand{\setminus}{\smallsetminus}
\def\sat{{\rm sat}}
\def\et{{\rm \acute et}}
\def\tototi{\mathbin{\mathop{\otimes}\limits^{\raise-1pt\hbox
{$\scriptscriptstyle {\rm L}$}}}}
\def\indlim{\mathop{\vrule width0pt height7pt depth
4pt\smash{\lim\limits_{\raise 1pt\hbox to 14.5pt
{\rightarrowfill}}}}}
\def\projlim{\mathop{\vrule width0pt height7pt depth
4pt\smash{\lim\limits_{\raise 1pt\hbox to 14.5pt
{\leftarrowfill}}}}}
\newcommand\displaceamount{3pt}
\newcommand{\doubledown}{\ar@<\displaceamount>[d]\ar@<-\displaceamount>[d]}
\newcommand{\doubleup}{\ar@<\displaceamount>[u]\ar@<-\displaceamount>[u]}
\newcommand{\doubleright}{\ar@<\displaceamount>[r]\ar@<-\displaceamount>[r]}
\newcommand{\tor}{{\operatorname{tor}}}
\newcommand{\rk}{\operatorname{rk}}
\newcommand{\height}{\operatorname{ht}}
\def\into{\hookrightarrow}
\def\onto{\twoheadrightarrow}
\def\gp{\text{gp}}
\def\GGm{{\mathbb G}_m}
\def\tilx{{\widetilde x}}
\def\tilf{{\widetilde f}}
\def\tilX{{\widetilde X}}
\def\tilh{{\widetilde h}}
\def\tilY{{\widetilde Y}}
\def\toisom{\xrightarrow{{_\sim}}}
\def\hatX{{\widehat X}}
\def\hatA{{\widehat A}}
\def\hatR{{\widehat R}}
\def\hatZ{{\widehat Z}}
\def\bfD{{\mathbf{D}}}
\def\wtimes{{\widehat\otimes}}
\def\hatcO{{\widehat\cO}}
\def\Zar{{\rm Zar}}
\begin{document}
\title[Torification of diagonalizable group actions]{Torification of diagonalizable group actions on toroidal schemes}

\author[D. Abramovich]{Dan Abramovich}
\address{Department of Mathematics, Box 1917, Brown University,
Providence, RI, 02912, U.S.A}
\email{abrmovic@math.brown.edu}
\author[M. Temkin]{Michael Temkin}
\address{Einstein Institute of Mathematics\\
               The Hebrew University of Jerusalem\\
                Giv'at Ram, Jerusalem, 91904, Israel}
\email{temkin@math.huji.ac.il}
\thanks{This research is supported by  BSF grant 2010255}

\date{\today}

\begin{abstract}
We study actions of diagonalizable groups on toroidal schemes (i.e. logarithmically regular logarithmic schemes). In particular, we show that for so-called toroidal actions the quotient is again a toroidal scheme. Our main result constructs for an arbitrary action a canonical torification  by an equivariant blowings up. This extends earlier results of Abramovich-de Jong, Abramovich-Karu-Matsuki-W{\l}odarczyk, and Gabber in various aspects.
\end{abstract}

\maketitle
\setcounter{tocdepth}{1}

\tableofcontents

\section{Introduction}
\subsection{Toroidal actions, quotients and torification} Consider a variety $X$ with toroidal structure and an action of a group $G$ on $X$.
{\em Torification} is a blowing-up process $X'\to X$  which guarantees that the quotient map $X' \to X' \sslash G$ is toroidal. It was  introduced in \cite{Abramovich-deJong} when $G$ is finite for the purpose of proving resolution of singularities;   and in \cite{AKMW} when $G=\GG_m$ for proving factorization of birational maps.

In this paper we consider $G$ diagonalizable, and prove a general torification result for arbitrary toroidal schemes (see Section \ref{Sec:toroidal-schemes}), not necessarily over a field:

\begin{theorem}[See Theorem \ref{maintheorem2}]\label{Th:main-with-barycentric}
Assume that a diagonalizable group $G$ acts in a relatively affine manner on a toroidal scheme $(X,D)$. Then there is a $G$-equivariant modification $F_{(X,D)}\:X'\to X$, such that, denoting by  $D'$ be the union of the preimage of $D$ and the exceptional divisor of $F_{(X,D)}$, we have

(i) The pair $(X',D')$ is toroidal and the natural $G$-action on $(X',D')$ is toroidal.

(ii) The morphism $F_{(X,D)}$ is functorial with respect to surjective strongly equivariant strict morphisms $h\:(Y,E)\to(X,D)$ of toroidal schemes in the sense that $F_{(Y,E)}$ is the base change of $F_{(X,D)}$.
\end{theorem}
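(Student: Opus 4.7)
The plan is to reduce the global statement to a canonical local combinatorial construction, realized by an equivariant blow-up associated with a barycentric subdivision of the toroidal fan. The overall strategy follows the pattern of \cite{Abramovich-deJong} and \cite{AKMW}, but must be carried out at the level of log-regular log schemes and for an arbitrary diagonalizable group.

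First, I would reduce to an affine-local situation. Relative affineness of the action lets us cover $X$ by $G$-stable affine opens $\Spec A$, so any canonical construction on each piece will glue. On such a piece, since $G$ is diagonalizable with character group $M$, the ring decomposes as $A = \bigoplus_{m \in M} A_m$. Log regularity of $(X,D)$ provides a sharp fs chart $P \to A$ \'etale-locally, and because $G$ is diagonalizable one can arrange, possibly after further refinement, for this chart to be $G$-equivariant, so that the monoid generators are $M$-homogeneous and one obtains a canonical $G$-grading $\chi \: P \to M$.

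Second, I would encode the failure of toroidality combinatorially. At each point $x$ the action is toroidal precisely when the stabilizer $G_x$ acts on the local chart through the big torus, which translates into the condition that $\chi$ is constant on each ray of the cone $\sigma_x$ associated to the local stratum, or equivalently that the relative interior of $\sigma_x$ lies inside a single weight chamber of $\chi$. The non-toroidal locus is then a canonical subfan determined intrinsically by $(\sigma_x, \chi)$. A barycentric subdivision of this subfan --- inserting the barycenters of all faces of $\sigma_x$ on which $\chi$ is non-constant --- produces a refined toroidal fan, and the corresponding equivariant modification $F_{(X,D)}\:X'\to X$ yields a pair $(X',D')$ in which every new cone satisfies the local toroidality criterion, so that the $G$-action on $(X',D')$ is toroidal.

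Globalization and functoriality then follow from the canonical nature of the construction. The subdivision of each $\sigma_x$ depends only on the intrinsic combinatorial datum $(\sigma_x,\chi)$, and this datum is faithfully pulled back along strict surjective strongly equivariant morphisms: strictness preserves the cones, equivariance preserves $\chi$, and surjectivity prevents loss of strata. Hence $F_{(Y,E)}$ must coincide with the base change of $F_{(X,D)}$. The main obstacle I expect is verifying that a single barycentric refinement suffices --- i.e., that no new non-toroidal points are introduced on $X'$. This requires analyzing how $\chi$ restricts to the cones of the barycentric subdivision and showing that on every such cone it becomes ray-constant; this is the analogue, for arbitrary diagonalizable $G$, of the single weight-hyperplane subdivision used for $\GG_m$ in \cite{AKMW}, and the combinatorial verification is the technical heart of the argument.
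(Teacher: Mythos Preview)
Your proposal has a genuine gap: it conflates two distinct obstructions to toroidality and, as a result, omits the decisive step of the paper's construction.

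Recall the paper's definition: the action is toroidal at $x$ when (a) it is $G$-simple, i.e.\ $G_x$ acts trivially on $\ocM_{X,x}$, and (b) $G_x$ acts trivially on the logarithmic stratum through $x$. Barycentric subdivision handles only (a): after it, the stabilizer no longer permutes the branches of $D$, so each irreducible component of the toroidal divisor is $G_x$-stable. But (b) may still fail. The data measuring this failure is the \emph{reduced signature} $\sigma_x$ --- the multiset of nontrivial characters through which $G_x$ acts on the cotangent space of the log stratum --- and this is not a fan datum at all. It lives in the directions \emph{transverse} to $D$, so no subdivision of the cone $\sigma_x = \ocM_{X,x}^\vee$ will make it vanish. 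In particular your criterion ``$\chi$ is constant on each ray'' does not characterize toroidality, and the concern you flag at the end (``no new non-toroidal points are introduced'') is not the real obstacle; the old non-toroidal points survive unchanged.

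The paper's construction therefore has two genuinely different steps. First the barycentric modification makes the action $G$-simple (Proposition~\ref{makingsimpleprop}). Second, and this is the heart, one blows up the \emph{torific ideal} $I_\sigma^X=\prod_{l\in\sigma_X} I_l^X$, where $I_l^X$ is the ideal generated by all $l$-homogeneous functions. The exceptional divisors of this blow-up supply exactly the missing toroidal parameters in the transverse directions, forcing $\sigma_{x'}=\emptyset$ on the blow-up (Theorem~\ref{torificationth}). Functoriality comes from the functoriality of $\sigma_X$ and of the torific ideals under strict strongly equivariant morphisms (Lemmas~\ref{Sigmalem}, \ref{Lem:functorideal}, Corollary~\ref{torificcor}), not from fan combinatorics alone. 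Your proposal contains no analogue of this second step.
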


We refer to \cite[Section 5.3.1]{ATLuna} for the notions of relatively affine actions and strongly equivariant morphisms, and to Section \ref{Sec:toroidal-morphisms} for the notion of morphisms between toroidal schemes. A slightly more precise statement of Theorem \ref{maintheorem2} in terms of blowing up is provided in Theorem \ref{Th:general-normal}.

Theorem \ref{Th:main-with-barycentric} builds on a more detailed Theorem  \ref{torificationth} which assumes the action to be $G$-simple, see   Section  \ref{Sec:G-simple}. We further optimize that result as follows:

\begin{theorem}[See Theorem \ref{Maintheorem}]
Assume that a toroidal scheme $(X,D)$ is provided with a relatively affine,  {\em $G$-simple} action of a diagonalizable group $G=\bfD_L$. Assume $X$ contains a strongly equivariant dense open set $U$ on which $G$ acts freely. There exist ideal sheaves $I_X$ and $\tI_X$ on $X$ and $\tilX=X\sslash G$ with resulting blowings up $f_{(X,D)}\:X'\to X$ and $\tilf_{(X,D)}\:\tilX'\to\tilX$, such that $I_X$ is locally generated by $G$-invariants, and, denoting by $D'$ the union of the preimage of $D$ and the exceptional divisor of $f_{(X,D)}$, we have

(i) The pair $(X',D')$ is toroidal and the natural $G$-action on $(X',D')$ is toroidal.

(ii) The morphism of quotients  $f_{(X,D)} \sslash G: X'\sslash G\to\tilX$ is  $\tilf_{(X,D)}$.

(iii) The blowings up $f_{(X,D)}$ and $\tilf_{(X,D)}$ are functorial with respect to surjective strongly equivariant strict morphisms $h\:(Y,E)\to(X,D)$ of toroidal schemes: denoting  $\tilh=h\sslash G$, we have $h^*(I_X)=I_Y$, $f^\tor_{(Y,E)}=f^\tor_{(X,D)}\times_XY$, $\tilh^*(\tI_X)=\tI_Y$, and $\tilf_{(Y,E)}=\tilf_{(X,D)}\times_\tilX\tilY$.

(iv) If $V\subseteq X$ is a strongly equivariant open subset such that the action on $(V,D|_V)$ is toroidal then $I_X$ restricts to the unit ideal on $V$ and $V\times_XX'=V$.
\end{theorem}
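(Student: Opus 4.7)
The plan is to translate the theorem into combinatorics via a canonical weight homomorphism associated with the $G$-simple action, define $I_X$ by a canonical toroidalizing subdivision of the monoidal cone, and then descend the construction to the quotient.

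First, I would use the $G$-simple hypothesis of Section~\ref{Sec:G-simple} to attach, étale locally around each geometric point $x\in X$, a canonical weight homomorphism $\chi\:\oM_x \to L$ from the characteristic monoid to the character lattice of $G=\bfD_L$, compatible with the local $G$-action on a monoidal chart. The freeness of $G$ on the dense strongly equivariant open $U$ forces $\chi^{gp}$ to be surjective. The key combinatorial reformulation is: the $G$-action is toroidal at $x$ precisely when $\ker(\chi|_{\oM_x})$ generates a face of $\oM_x$, equivalently when the hyperplane $L^\perp\subseteq\oM^\vee_{x,\RR}$ contains a face of the dual cone.

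Second, I define $I_X$ locally as the monomial ideal generated by $G$-invariants whose associated rays canonically subdivide the local cone along its intersection with $L^\perp$. By construction $I_X$ is generated by $G$-invariants, so its blowing up $X'\to X$ subdivides each local cone into pieces in which $L^\perp$ is a union of faces; this is precisely toroidality of the induced $G$-action on $(X',D')$, giving (i). Property (iv) is then automatic, since on any open $V$ where the action is already toroidal, $L^\perp$ is already a face of each local cone, no generators are needed, and $I_X|_V=\cO_V$. For $\tI_X$ I take the descent of $I_X$ along $X\to\tilX$; this descent exists because $I_X$ is $G$-invariant and locally generated by invariants, and the commutation of blowing up with relatively affine quotients for such ideals yields (ii).

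For functoriality (iii), the construction depends only on the intrinsic pair $(\oM_x,\chi)$ at each geometric point: a strict strongly equivariant morphism $h\:(Y,E)\to(X,D)$ pulls back characteristic monoids by an isomorphism (``strict'') and preserves the weight data (``equivariant''), while surjectivity of $h$ guarantees that every relevant stratifying datum is reached, giving $h^*(I_X)=I_Y$; the analogous identity on quotients follows by descent. The main obstacle I anticipate is the canonicity of $\chi$ and of the resulting subdivision: one must verify that the weight homomorphism is genuinely intrinsic to the $G$-simple chart (independent of the étale chart chosen, and agreeing on overlaps), and that the ``minimal toroidalizing subdivision'' is uniquely determined by $(\oM_x,\chi)$. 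This is precisely the content I expect to extract from the $G$-simple machinery of Section~\ref{Sec:G-simple} together with the more refined Theorem~\ref{torificationth} that the present statement is optimizing.
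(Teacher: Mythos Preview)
Your combinatorial reformulation is wrong, and this is a genuine gap that breaks the whole plan. You claim that the $G$-action is toroidal at $x$ precisely when $\ker(\chi|_{\oM_x})$ is a face of $\oM_x$, so that the obstruction lives entirely in the pair $(\oM_x,\chi)$. But toroidality at $x$ also depends on how $G_x$ acts on the \emph{logarithmic stratum} through $x$, which is data invisible to $\oM_x$. Concretely, take $X=\bfA^2$ with $D=\emptyset$ and let $\GGm$ act with weights $(1,-1)$. Then $\oM_x=0$ at every point, so your criterion declares the action toroidal everywhere and your construction would output $I_X=\cO_X$; yet the action is \emph{not} toroidal at the origin. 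In the paper's language, the missing invariant is the reduced signature $\sigma_x$ of the $G_x$-action on the cotangent space of the logarithmic stratum (\S\ref{sec:signature}, Proposition~\ref{pretoroidalprop}(iv)); this is exactly what you have dropped.

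A second, related gap: the ideal you want cannot be a toroidal (monomial) ideal for the given structure $(X,D)$, because a toroidal blowing up of $(X,D)$ is a toroidal morphism and can never render a non-toroidal action toroidal---again, look at the example above, where the only monomial ideal is $\cO_X$. The paper's ideal is the \emph{torific} ideal $I^X_{\sigma^0}=\prod_{l\in\sigma^0}(A_lA)$, generated by all homogeneous elements of the prescribed degrees, not just monomials; it only becomes toroidal after one \emph{enlarges} $D$ locally via Proposition~\ref{pretoroidalprop} (see Lemma~\ref{torificblowlem} and the proof of Theorem~\ref{functortorific}). The actual argument then runs: (a) build $I^X_{\sigma^0}$ from the signature, (b) invoke Theorem~\ref{torificationth} for (i), Lemma~\ref{Lem:canonicalblowup}(iii) for (ii), and Lemma~\ref{Sigmalem} plus Theorem~\ref{functortorific} for (iii), and (c) replace the normalized blowup by an honest blowup of $((I^X_{\sigma^0})^n)^\nor$ using the finiteness of the normalization threshold. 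Your plan to ``subdivide the cone along $L^\perp$'' is the right picture only \emph{after} the toroidal structure has been enlarged by $\NN^{\sigma_x}$; without that step the construction does not get off the ground.
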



Part (i) of Theorem  \ref{Maintheorem} says that $f_{(X,D)}$ {\em torifies} the action of $G$. It implies that the quotient $ (X',D') \sslash G$ above is toroidal:

\begin{theorem}[See Theorem \ref{toroidalth}]
Assume that a toroidal scheme $(X,U)$ is provided with a relatively affine toroidal action of a diagonalizable group $G=\bfD_L$. Then,

(i) The toroidal quotient $(X,U)\sslash G$ exists.

(ii) The morphism $(X,U)\to(X,U)\sslash G$ is toroidal whenever the torsion degree of $L$ is invertible on $X$.

(iii) If $h\:(Z,W)\to(X,U)$ is a strongly equivariant strict morphism then the quotient $h\sslash G\:(Z,W)\sslash G\to(X,U)\sslash G$ is a strict morphism of toroidal schemes.
\end{theorem}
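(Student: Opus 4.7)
The plan is to reduce to an étale-local toric model and verify all three claims there. Relative affineness ensures $\tilX:=X\sslash G$ exists as a scheme and $X\to\tilX$ is affine, so I work locally on $\tilX$ and assume $X=\Spec A$ with weight decomposition $A=\bigoplus_{\chi\in L}A_\chi$. Toroidality of the action provides, étale-locally on $X$, a strict chart $X\to\Spec\ZZ[M]$ by a fine saturated monoid $M$, together with a homomorphism $\phi\:G\to T_M=\Spec\ZZ[M^\gp]$ making the chart equivariant; this is encoded by a map $\phi^\vee\:M^\gp\to L$ giving the $G$-grading of the monomials $e^m$.

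For (i), the invariants in the toric model are $R[M]^G=R[M_0]$ where $M_0:=M\cap\ker(\phi^\vee)$. One checks $M_0$ is fine and saturated: it is finitely generated by Gordan's lemma (intersection of the rational cone $M$ with a rational subspace), and if $nm\in M_0$ with $m\in M_0^\gp$, then $nm\in M$ gives $m\in M$ by saturation of $M$, while $m\in M_0^\gp\subseteq\ker(\phi^\vee)$ forces $\phi^\vee(m)=0$, so $m\in M_0$. The resulting fine saturated log structure on $\tilX$ has characteristic $M_0$, with divisor the image of $D$; the local pictures glue because taking $G$-invariants commutes with strict étale refinement of toric charts. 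For (iii), a strict strongly equivariant $h\:(Z,W)\to(X,U)$ pulls back the chart on $X$ to a chart on $Z$ with the same monoid $M$ and the same homomorphism $\phi$, so the invariant submonoid $M_0$ is unchanged and $h\sslash G$ is locally modeled on the identity on $M_0$, hence is strict.

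For (ii), the quotient morphism is locally the toric map $\Spec R[M]\to\Spec R[M_0]$, and by Kato's log smoothness criterion it is toroidal provided the torsion of $M^\gp/M_0^\gp$ is invertible on $R$. The map $\phi^\vee$ induces an exact sequence
\[
0\to \ker(\phi^\vee)/M_0^\gp\to M^\gp/M_0^\gp\to\phi^\vee(M^\gp)\to 0.
\]
The image $\phi^\vee(M^\gp)\subseteq L$ has torsion dividing that of $L$, and the kernel is torsion-free: if $m\in\ker(\phi^\vee)$ and $nm=a-b$ for some $a,b\in M_0$, then $m':=m+b$ satisfies $nm'=a+(n-1)b\in M_0\subseteq M$, so $m'\in M$ by saturation of $M$; since $\phi^\vee(m')=0$ this gives $m'\in M_0$, whence $m=m'-b\in M_0^\gp$. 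Thus the torsion of $M^\gp/M_0^\gp$ divides that of $L$, and invertibility of the torsion degree of $L$ on $X$ yields (ii). The principal technical obstacle is this torsion-freeness computation, together with the verification that the chart-level constructions of $M_0$ globalize to a well-defined fine saturated log structure on $\tilX$; once that is in place, the rest follows by direct inspection of the local toric model.
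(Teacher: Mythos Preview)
Your outline captures the right shape, but it skips the one step that carries all the content: why is $(\tilX,\,M_0\text{-log structure})$ logarithmically regular at all? You write ``the resulting fine saturated log structure on $\tilX$ has characteristic $M_0$'' and move on, but a strict equivariant chart $X\to\bfA_M$ does not make $X$ locally look like $\Spec R[M]$ for a regular $R$ (and your phrase ``the toric model'' blurs this distinction). Strong equivariance does give the Cartesian square $X=X_0\times_{\bfA_{M_0}}\bfA_M$, hence a map $f_0\:X_0\to\bfA_{M_0}$, but $\bfA_M\to\bfA_{M_0}$ is not flat, so there is no descent of log regularity from $X$ to $X_0$ for free. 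The paper proves exactly this point by a formal-local computation (Theorem~\ref{flcilem}): in the strictly local case one writes $\hatA\cong C_x\llbracket M\rrbracket\llbracket t_1\. t_r\rrbracket/(\theta)$ with $\theta$ and the $t_i$ of degree $0$, takes invariants to get $\hatA_0\cong C_x\llbracket M_0\rrbracket\llbracket t_1\. t_r\rrbracket/(\theta)$, and then verifies Kato's regularity-plus-dimension criterion for the center of $f_0$. Without this computation (or a genuine substitute), you have not shown that the quotient is toroidal.

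There is also a secondary issue: you work ``\'etale-locally on $X$'', but the constructions must glue over $\tilX$. The paper handles this by localizing on $\tilX$ to a single closed orbit and then splitting into two cases via \cite[Corollary~5.4.5]{ATLuna}: the strictly local case (where Zariski strongly equivariant charts exist by Corollary~\ref{toroidalprop} and the formal computation applies) and the free case (where the quotient map is faithfully flat and one descends both Cartier divisors and log regularity). Your single-case \'etale argument does not obviously produce a strongly equivariant cover, and without that you cannot pass to $\tilX$. By contrast, your torsion analysis for part~(ii) is correct and essentially what the paper uses once the Cartesian square $X\to\bfA_M$, $X_0\to\bfA_{M_0}$ is in hand; but that square, and the log regularity of its bottom row, is exactly what you have not established.
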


\begin{rem}
In case of finite groups, an alternative torification method was suggested by Gabber, see \cite[Theorem~VIII.1.1]{Illusie-Temkin}. It is based on resolving quotient singularities and is quite different from the blowing up method we use here. We note that we use a single blowing up with a non-reduced center rather than  search for a sequence of blowings up with regular centers.
See \cite{Bergh, Buonerba} for  different angles on this problem.
\end{rem}

\begin{rem}
If the torsion degree of $L$ vanishes in $\cO_X$, the quotient morphism $(X,U)\to(X,U)\sslash G$ is inseparable hence not a toroidal morphism. It is, however, modeled on the quotient of a toric variety by a subgroup-scheme of the torus. It might be of interest to study such quotients.
\end{rem}

\subsection{Motivation: factorization of birational maps}
This paper leads to our forthcoming work  \cite{AT2} on weak factorization of birational maps, generalizing the main theorems of \cite{AKMW} and \cite{W-Factor} to the appropriate generality of qe schemes  (wherever suitably strong resolution of singularities applies), and further proving factorization results in other geometric categories of interest.


While \cite{AT2} only requires actions of $\GG_m$, we find it both convenient and fruitful to work with an arbitrary diagonalizable group.

In addition to the main results, the paper \cite{AT2} requires Lemma \ref{Lem:Gm-action} on $\GG_m$-actions and Lemma \ref{torificblowlem} on compatibility of $S$-torific  ideals.

\subsection{Background}
This article relies on a number of results from \cite{ATLuna}, in particular Luna's Fundamental Lemma for diagonalizable groups.
Apart from key results    of \cite{ATLuna}, we use the setup and terminology of that paper. In particular we use the (classical) notion of diagonalizable groups  \cite[VIII.1.1]{SGA3-2}, \cite[3.2.1]{ATLuna}, $L$-local and $L$-complete rings \cite[4.4.4, 4.5.5]{ATLuna}, strongly equivariant and inert morphisms \cite[5.3.1, 5.5.3]{ATLuna}.\index{diagonalizable group (\cite[VIII.1.1]{SGA3-2})}\index{strongly equivariant morphism (\cite[5.3.1, 5.5.3]{ATLuna})}

\subsection{Some tools}
One of the main tools of our method is a formal-local description of toroidal schemes with a toroidal action proved in Theorem~\ref{flcilem}, which extends Kato's formal-local description of toroidal schemes to the $G$-equivariant case. We deduce in Corollary~\ref{gencor} that the property of an action being toroidal is an open condition. Note that already for toroidal schemes stability under generizations is a rather non-trivial property, see \cite[Proposition~7.1]{Kato-toric} and a completed argument in \cite[Theorem~9.5.47]{Gabber-Ramero}. We suggest a short new proof of this result of Kato in Theorem~\ref{localth}.

Another tool of our method is to enlarge or decrease the toroidal divisor of a toroidal scheme. In Theorem~\ref{decreaseth} and Propositions~\ref{decreaseprop} and \ref{pretoroidalprop} we study when the pair and the action remain or become toroidal under these operations.

\subsection{Further directions}

It would be interesting, perhaps in future work,  to further extend the torification procedure to other relatively affine tame actions, i.e. actions having  linearly reductive (or even reductive) stabilizers. Moreover, one may hope to extend this to tame groupoids and their quotients, that one may call tame stacks (if the stabilizers are of dimension zero then those are the tame stacks of \cite{AOV1}).

\section{Toroidal schemes}\label{Section:toroidalschemes}
Toroidal schemes generalize the classical toroidal embeddings of varieties. Although one can describe them by formal charts, it is more convenient to use an equivalent approach, where toroidal schemes are logarithmically  regular logarithmic schemes. For simplicity, we will consider Zariski logarithmic schemes and toroidal schemes without self intersections. We remark, however, that analogously to \cite[Exp. VI, \S1]{Illusie-Temkin} almost everything can be done for general logarithmic schemes at the cost of replacing points and localizations with geometric points and strict henselizations. We do not pursue this further here.

\subsection{Monoids}

\subsubsection{Conventions}
We will use the following notation for commutative monoids: we denote by $\oM=M/M^\times$  the sharpening of $M$, by $M^\gp$  the Grothendieck group of $M$, and by $M^+=M\setminus M^\times$ the maximal ideal of $M$. The {\em rank} of a monoid $M$ is $\rk(M)=\dim_\QQ(M^\gp\otimes_\ZZ\QQ)$.\index{rank!of a monoid}

\subsubsection{Toric monoids}\label{toricmonoidsec}
A {\em toric monoid}\index{monoid!toric} is an fs (i.e. fine and saturated) monoid $M$ without torsion.

\begin{remark}
Usually, one also requires toric monoids to be sharp, namely $M^\times = \{0\}$,\index{monoid!sharp} but we prefer to modify the terminology in this paper.
\end{remark}

\subsubsection{Prime ideals}
Any subset $S\subseteq M$ generates an ideal $(S)=\cup_{f\in S}(f+M)$. An ideal $(f)=f+M$ with $f\in M$ is called {\em principal}. \index{ideal (in monoid)} Prime ideals and their heights are  naturally defined, analogously to the case of rings, see \cite[\S5]{Kato-toric}. In toric monoids, prime ideals are of the form $\fp=M\setminus F$ where $F$ is a face, so $\height(\fp) = \rk(M)-\rk(F)$. In particular, $\height(\fp)=1$ if and only if $F$ is a facet.

\subsubsection{Inner elements}\label{innerelementsec}
An element $v\in M$ will be called {\em inner} if it lies in the interior of the polyhedral cone $M_\RR:= M\cdot \RR_{\geq0} \subset M^\gp\otimes_\ZZ\RR$. In toric monoids the following conditions are easily seen to be equivalent: (a) $v$ is inner, (b) $v$ is not contained in any facet of $M$, (c) $v$ lies in all nonempty prime ideals of $M$.\index{inner element of a monoid}

\subsubsection{Divisorial prime ideals}
Prime ideals of height one are analogous to divisorial ideals. If $\fp$ is prime of height 1 then $F=M\setminus\fp$ is a facet and the image of $M$ in $M^\gp/F^\gp$ is isomorphic to $\NN$. So, to any $f\in M$ we can associate a number $\nu_\fp(f)\in\NN$, which is an analogue of the order of $f$ with respect to $\fp$.

\subsubsection{Splitting faces and facets off}
We say that a face $N\subseteq M$ {\em splits off} if there exists a face $K$ such that $M=N\oplus K$. If $M$ is sharp and toric then $K$ is the face spanned by all edges not contained in $N$. In addition, the ideal $(N^+)=N^++M$ generated by $N^+$ coincides with $M\setminus K$, hence $(N^+)$ is prime and
$$\height(N^+)=\rk(M)-\rk(K)=\rk(N).$$ Various combinations of these facts provide criteria for splitting off of $N$.

\begin{lemma}\label{splitlem}
Let $M$ be a sharp toric monoid with a face $N$. Then the following conditions are equivalent:
\begin{itemize}
\item[(a)]  $N$ splits off,

\item[(b)] $(N^+)$ is prime and $\height((N^+))\ge\rk(N)$,

\item[(c)]  $(N^+)$ is prime and the face $K=M\setminus (N^+)$ satisfies $\rk(K)+\rk(N)\le\rk(M)$,
\end{itemize}
\end{lemma}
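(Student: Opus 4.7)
The equivalence (b) $\Leftrightarrow$ (c) is formal: since $(N^+)$ is prime, $K := M \setminus (N^+)$ is a face of $M$, and the height formula $\height(\fp) = \rk(M) - \rk(F)$ from \S\ref{toricmonoidsec} identifies $\height((N^+)) = \rk(M) - \rk(K)$, so the two inequalities translate into each other. The implication (a) $\Rightarrow$ (b) is already recorded in the paragraph preceding the statement: when $M = N \oplus K$, the ideal $(N^+)$ equals $M \setminus K$, is prime, and has height $\rk(N)$. So the real task is to prove (c) $\Rightarrow$ (a).

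Granted (c), the plan is to exhibit $M$ as the internal direct sum of $N$ and the complementary face $K = M \setminus (N^+)$. Sharpness of $M$ propagates to its faces, so $N \cap K \subseteq N \setminus N^+ = N^\times = \{0\}$. To establish $M = N + K$, the crucial step, I would induct on the additive measure $|m| := \sum_{i=1}^{s} \nu_{\fp_i}(m)$, where $\fp_1, \dots, \fp_s$ enumerate the height-one primes of $M$. This measure vanishes exactly at $m = 0$, because $\bigcap_i (M \setminus \fp_i)$ is the intersection of all facets of the sharp rational polyhedral cone $M_\RR$ and reduces to its apex. For the inductive step, if $m \notin K$ then $m \in (N^+) = N^+ + M$, so $m = n + m'$ with $n \in N^+$ and $m' \in M$; as $n \neq 0$ one has $|n| \ge 1$ and hence $|m'| < |m|$, so the hypothesis yields $m' = n' + k'$, giving $m = (n + n') + k'$.

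With $M = N + K$ in hand, passing to Grothendieck groups yields $M^\gp = N^\gp + K^\gp$, hence $\rk(M) \le \rk(N) + \rk(K)$; combined with (c) this is an equality. The short exact sequence
\[
0 \longrightarrow N^\gp \cap K^\gp \longrightarrow N^\gp \oplus K^\gp \longrightarrow M^\gp \longrightarrow 0
\]
then forces $N^\gp \cap K^\gp$ to have rank zero; since $M^\gp$ is torsion-free ($M$ being toric) this intersection must vanish, upgrading $M = N + K$ to $M = N \oplus K$. The main obstacle is ensuring termination of the inductive decomposition; the valuation sum $|m|$ serves this purpose precisely because sharpness of $M$ guarantees that the height-one primes jointly separate nonzero elements, a combinatorial fact worth isolating before beginning the induction.
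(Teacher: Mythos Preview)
Your proof is correct and follows essentially the same route as the paper: the equivalence (b)$\Leftrightarrow$(c) and the implication (a)$\Rightarrow$(b) are handled identically, and for (c)$\Rightarrow$(a) both arguments iteratively peel off a nonzero element of $N\cup K$ and then use the rank inequality to force $N^\gp\cap K^\gp=0$. The only cosmetic difference is the termination measure for the iteration---you use the sum of divisorial valuations $\sum_i\nu_{\fp_i}(m)$, while the paper invokes the bound $h(m)$ on the length of any additive decomposition of $m$ into elements of $M^+$; both measures exploit sharpness in the same way.
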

\begin{proof}
If $(N^+)$ is prime then $K=M\setminus (N^+)$ is a face and $\height((N^+))=\rk(M)-\rk(K)$. Thus, (b) and (c) are equivalent.

If (a) holds then $M=N\oplus K$, and hence $(N^+)=M\setminus K$ and $\rk(M)=\rk(K)+\rk(N)$. In particular, (a) implies (c).

Conversely, assume (c) is satisfied and set $K=M\setminus (N^+)$. Then any element $m_0\in M^+$ is of the form $m_1+n_1$ with $0\neq n_1\in N+K$ and $m_1\in M$. If $m_1\neq 0$ then $m_1=m_2+n_2$ with $0\neq n_2\in N+K$, etc. We claim that at some stage $m_l=0$ and hence $M=N+K$. Indeed, since $M$ is sharp, for any $m\in M^+$ there exists a number $h(m)$ such that $m$ is the sum of at most $h(m)$ elements of $M^+$. This shows that the homomorphism $N\oplus K\to M$ is surjective, and using the inequality on ranks we obtain that $N^\gp\cap K^\gp=0$ and hence $N\oplus K=M$.
\end{proof}

In the case of a facet, there are two more useful criteria.

\begin{lemma}\label{splitfacet}
Let $M$ be a sharp toric monoid, $\fp\subset M$ a prime ideal of height 1 and $F=M\setminus\fp$ its facet. The following conditions are equivalent:
\begin{itemize}
\item[(a)] $F$ splits off,

\item[(b)] $(F^+)\subset M$ is prime and $\height((F^+))\ge\rk(M)-1$.

\item[(c)] $(F^+)\subset M$ is prime and there is at most one edge $E$ not contained in $F$,

\item[(d)] $(F^+)$ is the union of all prime ideals of height 1 different from $\fp$,

\item[(e)] $\fp$ is principal.

\end{itemize}
In addition, if these conditions are satisfied then $M=F\oplus\NN e$, where $e$ is the generator of $\fp$ and $\NN e$ is the only edge not contained in $F$.
\end{lemma}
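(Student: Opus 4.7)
My plan is to use the previous Lemma~\ref{splitlem} as a black box to obtain (a)$\Leftrightarrow$(b), and then tie in (c), (d), (e) via short arguments organized around the complementary set $K:=M\setminus(F^+)$ (which is a face precisely when $(F^+)$ is prime). The ``in addition'' assertion will drop out of the proof of (e)$\Rightarrow$(a), with uniqueness of the extra edge $\NN e$ coming from (c).

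First, (a)$\Leftrightarrow$(b) is immediate from Lemma~\ref{splitlem} applied with $N=F$: since $\rk(F)=\rk(M)-1$ for a facet, the height bound in its condition~(b) matches ours verbatim. For (b)$\Leftrightarrow$(c), once $(F^+)$ is prime, $K$ is a face with $\height((F^+))=\rk(M)-\rk(K)$, so the height bound becomes $\rk(K)\le 1$. I would then check that the edges of $M$ not contained in $F$ are exactly the edges of $K$: for any edge $E\not\subseteq F$ one has $E\cap F=\{0\}$, so by extremality of $E$ no nonzero $e\in E$ can be written as $e=f+m$ with $f\in F^+$, forcing $E\subseteq K$; conversely an edge of $K$ cannot lie in $F$ since then its nonzero elements would be in $F^+\subseteq(F^+)$. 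Hence $\rk(K)\le 1$ iff there is at most one edge of $M$ outside $F$.

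For (a)$\Leftrightarrow$(e): if $M=F\oplus\NN e$ then $\fp=M\setminus F=e+M$ is principal. Conversely, given $\fp=(e)$, I would iterate the dichotomy ``$x\in F$ or $x=e+x_1$ with $x_1\in M$''; sharpness of $M$ (e.g.\ via an $\RR_{\ge 0}$-linear functional strictly positive on $M^+$) forces termination, so $M=F+\NN e$, and the sum is direct by $F\cap\fp=\emptyset$ combined with the injection $M\hookrightarrow M^\gp$. This simultaneously proves the ``in addition'' statement.

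Finally, for (a)$\Leftrightarrow$(d): the direction (a)$\Rightarrow$(d) is a direct calculation in $M=F\oplus\NN e$, using that the remaining facets of $M$ are $F_i'\oplus\NN e$ for the facets $F_i'$ of $F$ and that the intersection of all facets of $F$ is $\{0\}$. The reverse (d)$\Rightarrow$(b) is the step I expect to be the main obstacle. Assuming (d), $K=\bigcap_{F'\ne F}F'$ is an intersection of faces, hence a face, so $(F^+)$ is prime; moreover $K\cap F=\bigcap_{\text{all facets}}F'=\{0\}$ since $M_\RR$ is pointed. To convert this monoid-theoretic equality into the rank inequality $\rk(K)+\rk(F)\le\rk(M)$ (and hence $\rk(K)\le 1$), I would invoke the cone-theoretic fact that the facet-defining functionals span $(M^\gp\otimes_\ZZ\RR)^*$, whence the linear spans of $K_\RR$ and $F_\RR$ intersect trivially.
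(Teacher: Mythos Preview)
Your proof is correct and follows essentially the same architecture as the paper's: both reduce (a)$\Leftrightarrow$(b)$\Leftrightarrow$(c) to Lemma~\ref{splitlem} via the face $K=M\setminus(F^+)$, and both handle (a)$\Leftrightarrow$(e) and (a)$\Leftrightarrow$(d) separately. There are two minor differences worth noting. For (e)$\Rightarrow$(a), the paper uses the valuation $\nu_\fp$ to write $x-\nu_\fp(x)e\in F$ in one stroke, which is a bit cleaner than your iterated subtraction with a termination argument (though both are fine). For linking (d) to the rest, the paper argues the contrapositive $\lnot$(c)$\Rightarrow\lnot$(d): if there are two edges $E_1,E_2$ not in $F$, then since each edge is an intersection of the facets containing it (all of which are $\neq F$), one gets $\bigcap_{F'\neq F}F'\subseteq E_1\cap E_2=\{0\}$, so the union $I$ of the height-1 primes $\neq\fp$ equals $M^+$, while the generators of $E_1,E_2$ lie in $M^+\setminus(F^+)$; hence $(F^+)\neq I$. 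This is slightly more elementary than your route (d)$\Rightarrow$(b), which invokes the fact that facet normals of a pointed full-dimensional cone span the dual. Both arguments are valid; the paper's avoids the appeal to cone duality, while yours gives the rank bound more directly once that fact is granted.
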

\begin{proof}
Conditions (a) and (b) are the same as \ref{splitlem}(a) and  \ref{splitlem}(b), and condition (c) is equivalent to  \ref{splitlem}(c). Indeed, since $F$ is a facet we have $\rk(M)=\rk(F)+1$; writing  $K=M\setminus(F^+)$, the inequality $\rk(K)\leq 1$ holds if and only if $K$ consists of at most one edge $\langle e\rangle$. Thus, conditions (a), (b) and (c) are equivalent by Lemma~\ref{splitlem}.

We claim that (c) and (d) are equivalent. Note that the union of prime ideals is prime. This can be seen by looking at their complements: the intersection of faces is a face.  We therefore have that $(F^+)$ is prime both in (c) and (d). Let $I$ be the union of all prime ideals of height one different from $\fp$. Then $M\setminus I$ is the intersection of all facets different from $F$. A face is the intersection of facets containing it; applying this to edges, if (c) fails and $M\setminus F$ contains two different edges then $M\setminus I=\{0\}$. So, $(F^+)\subsetneq M^+=I$ and (d) fails. If (c) holds then $F$ splits off and a direct check shows that (d) holds.

Assume (e) holds, say $\fp=e+M$. For any element $x\in M$ we have that $x-\nu_\fp(x)e\in F$, hence $M=F+\NN e$ and then necessarily $M=F\oplus\NN e$. In particular, (a) holds. In addition, it is clear that $\NN e$ is the only edge not in $F$, hence (e) implies all the claims in the end of the lemma. Assume (a) holds, say $M=F\oplus K$. Since $F$ is a facet, $K$ is of rank 1, and so $K=\NN e$ for an element $e\in\fp$. In particular, $\fp=(e)$.
\end{proof}

\subsection{Logarithmic schemes}

\subsubsection{Conventiones}
We refer to \cite{Kato-log} for the general definition of logarithmic schemes. A {\em Zariski logarithmic scheme} is defined similarly but using the Zariski topology: $(X,\cM_X,\alpha)$, where $\alpha\:\cM_X\to(\cO_X,\cdot)$ is a logarithmic structure in the Zariski topology. See also \cite{Kato-toric}. Unless said otherwise, all logarithmic schemes are assumed to be fine and Zariski, and we will use the shorter notation $(X,\cM_X)$.\index{logarithmic!scheme}

\begin{remark}
In fact, one can view Zariski logarithmic schemes as usual logarithmic schemes $(X,\cM_X)$ such that $\varepsilon^*\varepsilon_*\cM_X=\cM_X$, where $\varepsilon\:X_\et\to X_\Zar$ is the natural morphism. In this case, $\cM_X$ is determined by its restriction onto the Zariski site.
\end{remark}

\subsubsection{Ranks}
Given a fine logarithmic scheme $(X,\cM_X)$ we will use the notation $r(x)=\rk(\ocM_{X,x})$ for $x\in X$, and $\rk(\ocM_X)=\max_{x\in X}r(x)$.\index{rank!of a logarithmic scheme}

\subsubsection{Charts}
By a {\em monoidal chart} of a fine logarithmic scheme $(X,\cM_X)$ we mean an open subscheme $V\into X$, a fine monoid $M$, and a homomorphism $\phi\:M\to(\cO_X(V),\cdot)$ such that $\cM_X|_V$ is the logarithmic structure associated with the pre logarithmic structure induced by $\phi$. To give this data is the same as to give a strict morphism $f\:(V,\cM_X|_V)\to(\bfA_M,\cM_{\bfA_M})$, where $\bfA_M=\Spec(\ZZ[M])$ and $\cM_{\bfA_M}$ is the logarithmic structure associated with $M$. Accordingly, we will also call $f$ a monoidal chart of $(X,\cM_X)$. Recall that any fine logarithmic scheme is coherent by definition, hence it can be covered by monoidal charts.\index{chart}

\subsubsection{Sharp charts}
We say that a monoidal chart is {\em sharp} or {\em fs}, if the defining monoid $M$ is sharp or fs, respectively.

\begin{lemma}\label{sharpchartlem}
If an fs logarithmic scheme $(X,\cM_X)$ admits an fs monoidal chart $M\to\cO_X(V)$ then it also admits a sharp monoidal chart $\oM\to\cO_X(V)$.
\end{lemma}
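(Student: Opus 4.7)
The plan is to produce the sharp chart as the restriction of $\phi$ along a monoid splitting $M\cong M^\times\oplus\oM$, and then verify that this restriction induces the same logarithmic structure on $V$.

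First I would split off the units of $M$. Since $M$ is fs, its sharpening $\oM$ is sharp and fs, and in any sharp fs monoid the group $\oM^\gp$ is torsion-free: if $n\bar x=0$ for some $n\geq 1$, saturation applied to $\bar x$ and to $-\bar x$ forces both into $\oM$, hence into $\oM^\times=\{0\}$. As $\oM^\gp$ is also finitely generated, it is free abelian, so the short exact sequence of abelian groups
\[
0\to M^\times\to M^\gp\to\oM^\gp\to 0
\]
splits, yielding a section $s\:\oM^\gp\to M^\gp$. Given $\bar x\in\oM$ and any lift $x\in M$, the difference $s(\bar x)-x$ maps to $0$ in $\oM^\gp$, hence lies in $M^\times\subseteq M$; so $s(\bar x)\in M$. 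Thus $s$ restricts to a monoid homomorphism $\oM\to M$, and combined with the inclusion $M^\times\into M$ it gives a splitting $M=M^\times\oplus\oM$.

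Setting $\bar\phi\:= \phi|_{\oM}\:\oM\to\cO_X(V)$, it remains to check that $\bar\phi$ defines the same logarithmic structure as $\phi$ on $V$. Since $\phi$ sends $M^\times$ into $\cO_V^\times$, one has $\phi^{-1}(\cO_V^\times)=M^\times\oplus\bar\phi^{-1}(\cO_V^\times)$ under the splitting. A direct computation on the pushouts defining the associated logarithmic structures shows that every class in $M\oplus_{\phi^{-1}(\cO_V^\times)}\cO_V^\times$ has a representative of the form $(n,\tilde u)$ with $n\in\oM$, obtained by absorbing the $M^\times$-component into the unit factor via $\phi$; moreover two such representatives are equivalent if and only if they become equivalent under the pushout relation for $\bar\phi$. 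This yields the isomorphism
\[
M\oplus_{\phi^{-1}(\cO_V^\times)}\cO_V^\times\;\cong\;\oM\oplus_{\bar\phi^{-1}(\cO_V^\times)}\cO_V^\times,
\]
which identifies the two log structures, so $\bar\phi$ is the desired sharp monoidal chart on $V$.

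The only genuinely non-formal point is the torsion-freeness of $\oM^\gp$, which is where the hypothesis that $M$ is saturated (not merely fine) enters; the remaining steps are straightforward algebraic manipulations with pushouts of monoids.
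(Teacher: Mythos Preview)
Your proof is correct and follows essentially the same approach as the paper: split $M = M^\times \oplus \oM$ and take the composition $\oM \hookrightarrow M \to \cO_X(V)$ as the sharp chart. The paper simply cites the splitting result (Gabber--Ramero) and asserts without further comment that the composition is a chart, whereas you supply proofs of both the splitting and the chart property; your added detail is sound and the underlying idea is identical.
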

\begin{proof}
Any fs monoid (non-canonically) splits as $M=\oM\oplus M^\times$, see \cite[Lemma 3.2.10]{Gabber-Ramero}. The composition $\oM\into M\to\cO_X(V)$ is a sharp chart.
\end{proof}

\subsubsection{The center of a chart}
The {\em center}\index{center!of a chart} of a monoidal chart $\phi\:M\to\cO_X(V)$ is the closed subscheme $C(\phi):= V(\phi(M^+))$ of $V$. If it is non-empty then we say that the chart is {\em central}.\index{chart!central} It follows from the definitions that the following conditions are equivalent: (i) $x$ lies in the center of $\phi$, (ii) $r(x)=\rk(\oM)$, (iii) the induced homomorphism $\phi_x\:M\to\cO_{X,x}$ is local, i.e. satisfies $\phi_x^{-1}(\cO_{X,x}^\times)=M^\times$. In particular, a chart is central if and only if $\rk(\oM)=\rk(\ocM_X)$.

\begin{remark}
The definition of the center is of global nature and does not make sense for sheaves because $\rk(\ocM_x)$ can jump. For example, the ideal $\alpha(\cM_X^+)\cO_X$ is not coherent already for the affine plane with its toric logarithmic structure.
\end{remark}

\begin{lemma}\label{centerchartlemma}
Assume that a fine logarithmic scheme $(X,\cM_X)$ admits a global central monoidal chart $\phi\:M\to\Gamma(\cO_X)$. Then the center $C(\phi)$ of the chart depends only on $(X,\cM_X)$.
\end{lemma}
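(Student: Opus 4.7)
The plan is to prove that the ideal sheaf $I_\phi := \phi(M^+) \cdot \cO_X$ is determined by $(X,\cM_X)$ alone; since the closed subscheme $C(\phi) = V(I_\phi)$ is recovered from this ideal, the lemma follows. I would compare $I_\phi$ stalk by stalk with the (a priori non-coherent) sheaf $\cJ$ whose stalks are $\cJ_x = \alpha(\cM_{X,x}^+) \cdot \cO_{X,x}$ and which visibly depends only on the logarithmic structure.

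First I would compute $(I_\phi)_x$ using the equivalence (i)$\Leftrightarrow$(iii) recalled just before the lemma. If $\phi_x$ is not local, some $m \in M^+$ satisfies $\phi(m) \in \cO_{X,x}^\times$, so $(I_\phi)_x = \cO_{X,x}$. If $\phi_x$ is local, then the standard pushout description $\cM_{X,x} = M \oplus_{M^\times} \cO_{X,x}^\times$ of the associated log structure shows that an element represented by $(n,u)$ lies in $\cM_{X,x}^+$ precisely when $n \in M^+$; thus $\alpha(\cM_{X,x}^+) = \phi(M^+) \cdot \cO_{X,x}^\times$, giving $\cJ_x = \phi(M^+) \cdot \cO_{X,x} = (I_\phi)_x$. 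In summary, $I_\phi$ coincides with $\cJ$ on $C(\phi)$ and is the unit ideal on its complement.

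The decisive step is to identify $C(\phi)$ intrinsically. Centrality gives $\rk(\oM) = \rk(\ocM_X)$, and by (ii)$\Leftrightarrow$(iii) a point $x$ is in $C(\phi)$ exactly when $r(x) = \rk(\oM)$; hence $C(\phi) = \{x \in X : r(x) = \rk(\ocM_X)\}$, a locus that depends only on $(X,\cM_X)$. Since the stalks of $I_\phi$ are thereby pinned down at every point (as $\cJ_x$ on $C(\phi)$ and as $\cO_{X,x}$ off it), and since a subsheaf of $\cO_X$ is determined by its stalks, the ideal $I_\phi$, and hence the closed subscheme $V(I_\phi) = C(\phi)$, is intrinsic.

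The main conceptual obstacle is precisely the one flagged by the remark preceding the lemma: $\cJ$ need not be coherent, so one cannot simply identify $I_\phi$ with $\cJ$ on all of $X$. Centrality circumvents this by forcing $I_\phi$ to become the unit ideal exactly on the open complement of the canonical maximal-rank locus, turning the stalkwise comparison above into a genuinely chart-independent description. Lemma~\ref{sharpchartlem} is not needed here, as the reasoning applies uniformly to any central chart.
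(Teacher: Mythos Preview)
Your argument is correct and follows essentially the same route as the paper's proof: identify $|C(\phi)|$ as the intrinsic locus where $r(x)$ is maximal, and then show that at each such $x$ the ideal $\phi_x(M^+)\cO_{X,x}$ equals the intrinsic ideal $\alpha_x(\cM_{X,x}^+)\cO_{X,x}$ by using that $\phi_x$ is local there. The paper states this last identification via the isomorphism $\oM\toisom\ocM_{X,x}$, while you spell it out through the pushout $\cM_{X,x}=M\oplus_{M^\times}\cO_{X,x}^\times$; these are two ways of phrasing the same computation, and your added remark that $(I_\phi)_x=\cO_{X,x}$ off the center simply makes explicit what the paper leaves implicit.
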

\begin{proof}
Set-theoretically, $C(\phi)$ is the set of points where $r(x)$ is maximal. In particular, $|C(\phi)|$ is independent of the chart, and it remains to check that the scheme structure is independent of the chart, which is a local question. For any $x\in C(\phi)$ the homomorphism $\phi_x\:M\to\cO_{X,x}$ is local, hence the homomorphism $\oM\to\ocM_{X,x}$ is an isomorphism, and we obtain that the ideal $\phi_x(M^+)\cO_{X,x}$ coincides with $\alpha_x(\cM_{X,x}^+)\cO_{X,x}$. In particular, the local scheme $$C(\phi)\times_X\Spec(\cO_{X,x})=V(\phi_x(M^+)\cO_{X,x})$$ is independent of the chart, and hence the same is true globally for $C(\phi)$.
\end{proof}

\subsubsection{Logarithmic stratification}
Assume that $(X,\cM_X)$ is a fine logarithmic scheme. For any point $x\in X$ there exists a monoidal chart $\phi\:M\to\cO_X(V)$ such that $x\in V$ and $\oM=\ocM_{X,x}$, in particular, the chart is central. Cover $X$ with central charts $\phi_i\:M_i\to\cO_X(V_i)$ so that any point $x\in X$ lies in the center of some chart, and let $C_i$ denote the center of $\phi_i$ and $r_i=\rk(\oM_i)$. If $r_i=r_j$ then the restrictions of $C_i$ and $C_j$ to $V_i\cap V_j$ coincide by Lemma~\ref{centerchartlemma}. It follows that for any $n\in\NN$, all $C_i$ with $r_i=n$ glue to a locally closed subscheme $X(n)\into X$. Furthermore, set-theoretically $X(n)$ is the set of all points $x\in X$ with $r(x)=n$, hence we obtain a stratification of $X$, that will be called the {\em logarithmic stratification}.\index{logarithmic!stratification}

\begin{remark}
(i) The reduction of the logarithmic stratification was considered in \cite[Exp. VI, \S1.5]{Illusie-Temkin} under the name ``canonical stratification" or ``stratification by rank of $\ocM$".

(ii) $X(0)$ is the triviality locus of $\cM_X$, i.e. the open subscheme on which the logarithmic structure is trivial.

(iii) It follows from the definition that the logarithmic stratification is compatible with strict morphisms: if $(Y,\cM_Y)\to(X,\cM_X)$ is strict then $Y(n)=X(n)\times_XY$ for any $n$.
\end{remark}

\subsubsection{Center of a logarithmic scheme}
Given a fine logarithmic scheme $(X,\cM_X)$ let $X_i$ be the connected components of the logarithmic strata of $X$, and let $C(X,\cM_X)$ be the union of those $X_i$ that are closed in $X$. We call $C(X,\cM_X)$ the {\em center}\index{center!of a logarithmic scheme} of $(X,\cM_X)$. Note that given a  chart $\phi\:M\to\cO_X(V)$, we have $C(\phi)\subset C(X,\cM_X)$ with equality when $X$ is local and $\phi$ is central.

\begin{lemma}\label{centerlemma}
Let $(X,\cM_X)$ be an fs logarithmic scheme. Then the sheaf $\ocM_X=\cM_X/\cO_X^\times$ is locally constant along each logarithmic stratum. In particular, it is locally constant along the center of $(X,\cM_X)$.
\end{lemma}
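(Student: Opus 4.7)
The statement is local on $X$, so fix $x\in X(n)$. Since $(X,\cM_X)$ is fs, by Lemma~\ref{sharpchartlem} and the discussion of logarithmic stratification I may choose a sharp central monoidal chart $\phi\:M\to\cO_X(V)$ on an open $V\ni x$ with $M=\ocM_{X,x}$; in particular $\rk(M)=n$. The plan is to prove that the chart identifies $\ocM_X|_{V\cap X(n)}$ with the constant sheaf $\underline{M}$, which gives local constancy along $X(n)$ in a neighborhood of $x$.

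For $y\in V$ let $F_y=\phi_y^{-1}(\cO_{X,y}^\times)$, a face of $M$. The explicit construction of the log structure associated to a chart gives a canonical identification $\ocM_{X,y}=M/F_y$. As $M$ is a sharp toric monoid, $M/F_y$ is again toric and satisfies $\rk(M/F_y)=\rk(M)-\rk(F_y)$. For $y\in X(n)$ we have $r(y)=n=\rk(M)$, which forces $\rk(F_y)=0$; since $M$ is sharp and fs, the only rank-zero face is $\{0\}$, so $F_y=\{0\}$ and the map $M\toisom\ocM_{X,y}$ is an isomorphism.

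To upgrade this stalkwise statement to a sheaf isomorphism, note that $\ocM_X|_V$ is canonically the sheaf quotient $\underline{M}/\phi^{-1}(\cO_V^\times)$, where $\phi^{-1}(\cO_V^\times)\subseteq\underline{M}$ is the subsheaf of sections sent into the units. Its stalks over $V\cap X(n)$ all equal $\{0\}$, so the subsheaf vanishes on $V\cap X(n)$ and $\ocM_X|_{V\cap X(n)}\cong\underline{M}$ is constant. Covering $X(n)$ by opens of this form yields local constancy along each logarithmic stratum; the ``in particular'' clause for the center follows, because by definition the center is a union of connected components of logarithmic strata.

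The single point to keep straight is the rank formula $\rk(M/F)=\rk(M)-\rk(F)$ for a face of a sharp toric monoid together with triviality of rank-zero sharp fs monoids; once granted, the sheaf-theoretic upgrade is a direct consequence of the explicit description of the log structure associated to a chart.
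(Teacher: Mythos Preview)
Your proof is correct. The core idea---that along $X(n)$ the surjection $M\to\ocM_{X,y}$ is between sharp fs monoids of equal rank and therefore is an isomorphism---is exactly the paper's. The paper packages this as the cospecialization map $\ocM_{X,x}\to\ocM_{X,y}$ (surjective by \cite[Lemma~2.12(1)]{Niziol}) being an isomorphism when $x,y$ lie in the same stratum, and leaves the passage to local constancy of the sheaf implicit. Your version unpacks the cospecialization map via an explicit sharp central chart (the map $M\to\ocM_{X,y}$ you write down \emph{is} the cospecialization map once one identifies $M=\ocM_{X,x}$), and then carries out the sheaf-level upgrade concretely by identifying $\ocM_X|_{V\cap X(n)}$ with the constant sheaf $\underline{M}$. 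So your argument is a more self-contained unfolding of the same proof: it avoids the external citation at the cost of a few more lines, and it makes the ``locally constant'' conclusion, rather than merely the stalkwise isomorphisms, explicit.
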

\begin{proof}
If $x$ is a specialization of $y$ in $X$ then a surjective cospecialization map $h\:\ocM_{X,x}\to\ocM_{X,y}$ arises, see \cite[Lemma~2.12(1)]{Niziol} and its proof. If $x$ and $y$ lie in the same logarithmic stratum then the monoids have the same rank and hence $h$ is an isomorphism.
\end{proof}

We emphasize that since $\ocM_X$ is a Zariski sheaf, it is constant along connected components of logarithmic strata.

\subsection{Toroidal schemes}\label{Sec:toroidal-schemes}
We now focus on toroidal schemes, which form the main case we are interested in.

\subsubsection{Logarithmic regularity}
An fs logarithmic noetherian scheme $(X,\cM_X)$ is called {\em logarithmically regular at}\index{logarithmically regular scheme} a point $x$ if the logarithmic stratum $X(n)$ containing $x$ is regular at $x$ and the equality $\dim(\cO_{X,x})=n+\dim(\cO_{X(n),x})$ holds. If this condition is satisfied at all point of $X$ then we say that $(X,\cM_X)$ is {\em logarithmically regular}.

\begin{remark}
(i) Logarithmic regularity of Zariski logarithmic schemes was introduced by Kato in \cite{Kato-toric}. To the general (\'etale) case it was extended by Nizio{\l} in \cite{Niziol}.

(ii) A logarithmically regular scheme $X$ is Cohen-Macaulay and normal by \cite[Theorem 4.1]{Kato-toric}. Hence $X$ is catenary, and therefore each non-empty stratum $X(n)$ is of pure codimension $n$.
\end{remark}

We refer to \cite[Theorem~11.6]{Kato-toric} for the proof of the following result.

\begin{proposition}
Assume that $(X,\cM_X,\alpha)$ is a logarithmically regular  logarithmic scheme. Let $U=X(0)$ denote the triviality locus and $j\:U\into X$ the open embedding. Then $\alpha\:\cM_X\to\cO_X$ is injective, $D=X\setminus U$ is a divisor, and $\cM_X=j_*\cO^\times_U\cap\cO_X$.
\end{proposition}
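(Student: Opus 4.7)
The plan is to reduce everything to a local structure theorem at each point and derive the three assertions from it. Fix $x\in X$ and set $r = r(x) = \rk(\ocM_{X,x})$, $d = \dim\cO_{X,x}$; by logarithmic regularity $\cO_{X(r),x}$ is regular of dimension $d - r$. Using Lemma~\ref{sharpchartlem}, after shrinking $X$ I would choose a sharp central chart $\phi\colon \oM \to \cO_X$ with $\oM = \ocM_{X,x}$. By Lemma~\ref{centerchartlemma}, the ideal $\phi(\oM^+)\cO_{X,x}$ scheme-theoretically cuts out $X(r)\cap\Spec\cO_{X,x}$, so $\cO_{X,x}/\phi(\oM^+)\cO_{X,x}$ coincides with the regular local ring $\cO_{X(r),x}$ of dimension $d-r$.

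The technical heart, and the only nontrivial step, is to upgrade this to a local structure theorem: after completion there is a coefficient subring $A\subseteq\widehat\cO_{X,x}$ lifting $\widehat\cO_{X(r),x}$ together with a surjection $A\llbracket\oM\rrbracket \twoheadrightarrow \widehat\cO_{X,x}$ whose kernel is trivial in the equicharacteristic case and principal in the mixed-characteristic case. One proves this by lifting a regular system of parameters $t_1,\ldots,t_{d-r}$ of $\cO_{X(r),x}$ to $\cO_{X,x}$, combining them with a minimal generating set of $\phi(\oM^+)$ to produce generators of $\m_x$, invoking Cohen's structure theorem on $\widehat\cO_{X,x}$, and using the dimension count $d = r + (d-r)$ together with flatness of monoid algebras to pin down the kernel. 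This is the only step that requires real work; it is Kato's argument.

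Granted this structure theorem, the three claims become formal. For injectivity of $\alpha$: distinct elements of $\oM$ have distinct images in $A\llbracket\oM\rrbracket$ modulo units, and this property descends through the principal kernel, so the induced map $\oM\to\cO_{X,x}/\cO_{X,x}^\times$ is injective; combined with the tautological extension $1\to\cO_{X,x}^\times\to\cM_{X,x}\to\oM\to 0$ this forces $\alpha$ to be injective on stalks, hence globally. For the divisor claim, locally $D = V(\phi(\oM^+)\cO_{X,x})$ equals $\bigcup_i V(\phi(\fp_i)\cO_{X,x})$ where $\fp_i$ ranges over the height-one primes of $\oM$; under the structure isomorphism each $V(\phi(\fp_i)\cO_{X,x})$ is a prime divisor of $\cO_{X,x}$, so $D$ is reduced of pure codimension one. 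For $\cM_X = j_*\cO^\times_U \cap \cO_X$, the inclusion $\subseteq$ is immediate since $U = X(0)$; conversely, because $\cO_{X,x}$ is normal, any $f\in\cO_{X,x}$ whose restriction to $U$ is a unit has divisor supported on $D$, and unique factorization along the prime components of $D$ just produced writes $f = u\cdot\phi(\om)$ for some $u\in\cO_{X,x}^\times$ and $\om\in\oM$, placing $f$ in $\alpha(\cM_{X,x})$.

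The main obstacle is the structure theorem of the second paragraph, where the Cohen-type analysis and the mixed-characteristic bookkeeping live; everything else is formal extraction from the resulting model $\widehat\cO_{X,x} \cong A\llbracket\oM\rrbracket / (\text{principal})$.
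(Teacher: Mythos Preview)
The paper does not actually prove this proposition: it simply cites \cite[Theorem~11.6]{Kato-toric}. Your sketch is essentially Kato's own argument there --- reduce to the formal-local model via the structure theorem (which is Lemma~\ref{Katolocal} in the present paper, Kato's Theorem~3.2), then read off injectivity of $\alpha$, the divisor property of $D$, and the identification $\cM_X=j_*\cO_U^\times\cap\cO_X$ from the explicit shape $\widehat\cO_{X,x}\cong C_x\llbracket\oM\rrbracket\llbracket t_1,\dots,t_r\rrbracket/(\theta)$ together with normality of $X$ (Kato's Theorem~4.1). So your overall strategy is correct and matches the literature.

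There is one slip in your divisor paragraph. You write that locally $D = V(\phi(\oM^+)\cO_{X,x})$, but $V(\phi(\oM^+)\cO_{X,x})$ is the \emph{center} $X(r)$, not $D$: it is the locus where \emph{every} element of $\oM^+$ vanishes. Since $\oM^+=\bigcup_i\fp_i$ one has $V(\phi(\oM^+)\cO_{X,x})=\bigcap_i V(\phi(\fp_i)\cO_{X,x})$, not the union. The correct local description of $D$ is either $D=V(\phi(v))$ for any inner element $v\in\oM$ (showing immediately that $D$ is locally principal, hence a divisor once you know $\phi(v)$ is a nonzerodivisor from the structure theorem), or set-theoretically $D=\bigcup_i V(\phi(\fp_i)\cO_{X,x})$ over height-one primes $\fp_i\subset\oM$ (cf.\ Lemma~\ref{toroidaldivisorslem}(ii) later in the paper). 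Once you correct this, the rest of your argument --- each $V(\phi(\fp_i)\cO_{X,x})$ is an integral divisor by the structure theorem, and the factorization of $f\in j_*\cO_U^\times\cap\cO_X$ along these components using normality --- goes through as you wrote it.
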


\subsubsection{Toroidal schemes}
Given a scheme $X$ with an open subscheme $U$, we say that the pair $(X,U)$ is a {\em toroidal scheme}\index{toroidal!scheme} if the logarithmic structure $$\cM_X:=j_*\cO^\times_U\cap\cO_X\into\cO_X$$ makes $X$ into a logarithmically regular  logarithmic scheme. We will identify $\cM_X$ with a submonoid of $\cO_X$, so $\alpha$ will not be indicated. Sometimes we will use the divisor $D=X\setminus U$ instead of $U$ in the notation of toroidal schemes.

\begin{remark}
(i) The correspondences $(X,U)\mapsto(X,j_*\cO^\times_U\cap\cO_X)$ and $(X,\cM_X)\mapsto(X,X(0))$ establish a bijection between toroidal schemes and logarithmically regular logarithmic schemes.

(ii) In the case of varieties over an algebraically closed field, toroidal schemes are the classical toroidal embeddings without self intersections of \cite{KKMS}. Furthermore, \'etale logarithmically regular varieties correspond to general toroidal embeddings, possibly with self intersections.
\end{remark}

\subsubsection{The center}
By the {\em center} of a toroidal scheme we mean the center of the associated logarithmic scheme.

\begin{lemma}\label{toroidalcenter}
The center $C$ of a toroidal scheme $(X,U)$ is regular.
\end{lemma}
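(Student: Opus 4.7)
The plan is to reduce the claim to a purely local statement at each point of $C$ and exhibit $C$, locally, as a single closed connected component of a logarithmic stratum, which is automatically regular. Fix $x\in C$. By the definition of $C$, the point $x$ lies in a unique connected component $X_i$ of the stratum $X(n)$ with $n=r(x)$, and this $X_i$ is closed in $X$ by hypothesis. Since logarithmic regularity of $(X,\cM_X)$ says exactly that each stratum $X(n)$ is regular at each of its points, and since $X_i$ is open in $X(n)$ (being a connected component of it), $X_i$ is itself a regular scheme.

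Next I would show that some open neighborhood of $x$ in $X$ meets $C$ only in $X_i$. For this, choose an affine noetherian open $V\ni x$ equipped with a central monoidal chart $\phi\:M\to\cO_X(V)$. The bound $r(y)\le\rk(\oM)$ for $y\in V$ implies that only finitely many strata $X(n)$ meet $V$; each intersection $X(n)\cap V$ is noetherian, hence has only finitely many connected components, so only finitely many components $X_j$ meet $V$. Those with $j\ne i$ are disjoint from $X_i$ (distinct components of strata being pairwise disjoint), hence do not contain $x$; those among them that are additionally closed in $X$ give closed subsets of $V$ missing $x$. Let $Z\subseteq V$ be their (finite) union. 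Then $V':=V\setminus Z$ is an open neighborhood of $x$ in which $C\cap V'=X_i\cap V'$.

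Regularity of $C$ at $x$ then follows immediately from regularity of $X_i$ at $x$; since $x\in C$ was arbitrary, $C$ is regular. The only delicate input is the local finiteness of the logarithmic stratification, which is ensured by the boundedness of $r(y)$ on any open admitting a monoidal chart together with local noetherianity. I do not foresee any serious obstacle beyond unpacking the definitions and invoking Lemma~\ref{centerchartlemma} implicitly to ensure the scheme structure on $C$ near $x$ is well defined and agrees with that of $X_i$.
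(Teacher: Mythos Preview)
Your proof is correct and follows essentially the same approach as the paper, which dispatches the lemma in one sentence: ``In fact every stratum is regular. This follows by unraveling the definitions.'' You carry out that unraveling explicitly, and in particular you justify carefully (via local boundedness of $r$ and noetherianity) that near any $x\in C$ only finitely many closed stratum components can appear, so that $C$ locally coincides with a single regular $X_i$; the paper leaves this local-finiteness point implicit.
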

\begin{proof}
In fact every stratum is regular. This follows by unraveling the definitions.
\end{proof}

\subsubsection{Formal-local description}\label{Katosec}
Let $X=\Spec(A)$ be a local fs logarithmic scheme with closed point $x\in X$; write  $M:=\ocM_{X,x}$ and $n:=\rk M$. By Lemma~\ref{sharpchartlem} there exists a monoidal chart $f\:X\to Y=\bfA_M$ of $(X,\cM_X)$. Clearly, $y=f(x)$ lies in the center $Y(n)=V(M^+\ZZ[M])$ of $Y$ and, since $X$ is local, $X(n)=X\times_YY(n)$ is the center of $X$. In particular, $(X,\cM_X)$ is logarithmically regular at $x$ if and only if $X(n)$ is regular and $\dim X=\dim X(n)+n$. To make this explicit, Kato gave the following formal-local characterization of logarithmic regularity,  see \cite[Theorem 3.2]{Kato-toric}.

Recall that if $k$ is a field of characteristic $p>0$ then Cohen ring $C(k)$ is a DVR with maximal ideal $(p)$ and residue field $k$. If $\mathrm{char}(k)=0$ then, as in \cite[\S2.2.10]{ATLuna}, we set $C(k)=k$.

\begin{lemma}\label{Katolocal}
Let $M$ be a sharp toric monoid, $X=\Spec(A)$ a local scheme with closed point $x$ and $f\:X\to Y=\bfA_M$ a morphism such that $\cX=\Spec(A/M^+A)$ is regular, contains $x$, and satisfies the equality $\dim X=\dim \cX+\rk(M)$. Fix a regular system of parameters $t_1',\ldots,t_r' \in \cO_{\cX,x}$ and lifts $t_i\in A$. Let $C_x=C(k(x))$ be a Cohen ring of $k(x)$ and similarly $C_y=C(k(y))$ for $y=f(x)$. Then the completion of $R=\cO_{Y,y}\to A$ factors as
$$\xymatrix@R=3mm{C_y\llbracket M\rrbracket & C_x\llbracket M\rrbracket\llbracket t_1\. t_r\rrbracket \\
 \hatR\ar@{=}[u] \ar[r]^\phi & B\ar@{=}[u] \ar@{->>}[r]^\psi & \hatA
}$$
where $\phi$ is formally smooth and $\Ker(\psi)$ is generated by a single element $\theta$ such that
\begin{itemize}
\item[(1)] If ${\rm char}\ k(x)=0$ then $\theta=0$.
\item[(2)] If ${\rm char}\ k(x)=p>0$ then $\theta \equiv p \mod (M^+, t_1\. t_r)$.
\end{itemize}
If ${\rm char}\ k(x)=p>0$, any element $\theta'\in\Ker(\psi)$ that satisfies Condition (2) is a generator of $\Ker(\psi)$.
\end{lemma}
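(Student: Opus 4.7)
My plan is to identify $\hatR$, construct the maps $\phi$ and $\psi$ using Cohen's structure theorem, and then determine $\ker\psi$ by reducing modulo $M^+B$ and lifting. Because $M$ is sharp, the center $Y(n)=V(M^+\ZZ[M])$ coincides with $\Spec\ZZ$, so $y$ lies over a prime of $\Spec\ZZ$ and $k(y)$ is the perfect prime field $\QQ$ or $\FF_p$; Cohen's structure theorem then identifies $\hatR=\hatcO_{Y,y}$ with $C_y\llbracket M\rrbracket$. For $\psi$, pick a coefficient field or Cohen subring $C_x\hookrightarrow\hatA$ via Cohen's theorem. Since the chart is central at $x$, the homomorphism $M\to A$ sends $M^+$ into $\m_x$, so it extends continuously to $C_x\llbracket M\rrbracket\to\hatA$, and adjoining the $t_i$ yields $\psi\:B\to\hatA$. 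The morphism $\phi$ is the composition $C_y\llbracket M\rrbracket\hookrightarrow C_x\llbracket M\rrbracket\hookrightarrow B$, and by construction $\psi\circ\phi$ equals the completion of $R\to A$.

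Formal smoothness of $\phi$ follows because $k(y)$ is perfect: every extension $k(y)\to k(x)$ is then separable, the unramified map $C_y\to C_x$ is formally smooth, its $M^+$-adic base change $\hatR\to C_x\llbracket M\rrbracket$ inherits formal smoothness, and finally adjoining the $r$ variables $t_i$ preserves it.

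To compute $\ker\psi$, I reduce modulo $M^+B$, obtaining $\bar\psi\:B/M^+B=C_x\llbracket t_1,\ldots,t_r\rrbracket\to\widehat{A/M^+A}=\hatA/M^+\hatA$; the target is the completion of a regular local ring of dimension $r$ with residue field $k(x)$, so Cohen's theorem gives that $\bar\psi$ is an isomorphism if $\chara k(x)=0$, while $\ker\bar\psi=(\theta_0)$ for some $\theta_0\equiv p\pmod{(t_1,\ldots,t_r)}$ if $\chara k(x)=p$. Surjectivity of $\psi$ itself follows by completed Nakayama, since $M^++(t_1,\ldots,t_r)$ topologically generates $\m_{\hatA}$.

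The main obstacle is lifting the kernel back to all of $B$. In characteristic $0$ one has $\ker\psi\subset M^+B$; since $B$ is a Noetherian integral domain of dimension $n+r=\dim\hatA$, the surjection $\psi$ has trivial kernel, so $\theta=0$. In characteristic $p$, lift $\theta_0$ to some $\theta\in\ker\psi$ (possible since $\psi$ maps $M^+B$ onto $M^+\hatA$), giving $(\theta)\subset\ker\psi$. The delicate step is showing equality: $\theta$ is a nonzerodivisor in $B$ (as it has nonzero residue $p$ modulo $M^+$), and $B/(\theta)$ should be a domain --- the cleanest route is to deduce primality of $\theta$ in the normal domain $B$ from primality of $\theta_0$ in $B/M^+B$ via the $M^+$-adic filtration. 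Then $B/(\theta)$ is integral of dimension $n+r=\dim\hatA$, forcing the surjection $B/(\theta)\twoheadrightarrow\hatA$ to be an isomorphism. Finally, for the uniqueness claim, any $\theta'$ satisfying Condition~(2) lies in $(\theta)=\ker\psi$, so $\theta'=u\theta$ for some $u\in B$; comparing residues modulo $(M^+,t_1,\ldots,t_r)$ gives $u\equiv 1\pmod{\m_B}$, hence $u\in B^\times$.
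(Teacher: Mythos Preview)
Your outline matches the paper's brief sketch (which itself simply refers to Kato's original argument), and you supply considerably more detail; the identification $\hatR=C_y\llbracket M\rrbracket$, the construction of $\phi$ and $\psi$, formal smoothness of $\phi$, surjectivity of $\psi$ via Nakayama, the characteristic-zero case, and the final uniqueness argument are all essentially correct. (One small slip: in the last line you only get $u\equiv 1\pmod{(M^+,t)}$, not modulo $\m_B$, but this still forces $u\in B^\times$.)

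The genuine gap is exactly the step you flag as delicate: establishing $\ker\psi=(\theta)$ in characteristic $p$ by showing $B/(\theta)$ is a domain. Your proposed route---deducing primality of $\theta$ from that of $\theta_0$ ``via the $M^+$-adic filtration''---does not work as stated, because the associated graded $\mathrm{gr}_{M^+}(B)$ is typically \emph{not} a domain when $M$ is not free (e.g.\ for $M=\langle 2,3\rangle\subset\NN$ one has $z^3\cdot z^3=0$ in $\mathrm{gr}$), so an initial-form argument cannot propagate primality. What \emph{is} true is that each graded piece $\mathrm{gr}^n_{M^+}(B)$ is a free $B_0$-module (basis the monomials $z^m$ with $m\in(M^+)^n\setminus(M^+)^{n+1}$), so $\theta_0$ is a nonzerodivisor on $\mathrm{gr}(B)$, which yields $\mathrm{gr}_{M^+}(B/(\theta))\cong\mathrm{gr}(B)\otimes_{B_0}D$ with $D=B_0/(\theta_0)$. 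To finish one needs to know that $\mathrm{gr}_{M^+}(\hatA)$ is also free over $D$ with the same monomial basis. This is obtained by first constructing, independently of $\psi$, an isomorphism $D\llbracket M\rrbracket\simeq\hatA$: since $D=\widehat{\cO_{\cX,x}}$ is a complete regular local ring it is formally smooth over $C_x$, so the surjection $\hatA\to D=\hatA/M^+\hatA$ admits a $C_x$-algebra section $s\colon D\to\hatA$; combined with the chart $M\to\hatA$ this gives a surjection $D\llbracket M\rrbracket\to\hatA$ between a normal domain and a ring of the same dimension, hence an isomorphism. Now the surjection $B/(\theta)\to\hatA$ induces a surjection of free $D$-modules of equal finite rank in each $M^+$-degree, hence an isomorphism on associated gradeds, and therefore---by $M^+$-adic separatedness and completeness---an isomorphism.
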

We refer to \cite[Theorem 3.2]{Kato-toric} for a proof. Here we recall the main line: fix a homomorphism $C_x\to\hatA$ inducing an isomorphism of the residue fields. Then the lifts $t_i\in A$  of the regular system of parameters $t'_1\. t'_r\in \cO_{\cX,x}$ induce a  homomorphism $B\to\hatA$ which is observed to be surjective. Its kernel can be described by dimension considerations.

\subsubsection{Logarithmic regularity is a local property}\label{logregsec}
It is a non-trivial fact that logarithmic regularity is preserved under generizations. Kato proved this in \cite[Proposition~7.1]{Kato-toric}, but Gabber noticed that his argument was insufficient and a completed argument can be found in \cite[Theorem~9.5.47]{Gabber-Ramero} or \cite[Theorem IV.3.6.2]{Ogus-logbook}. An equivalent reformulation of this generization result is provided by the following theorem. For completeness, we provide a new proof, which is a little computational but shorter and more elementary than the mentioned arguments.

\begin{theorem}\label{localth}
Let $M$ be a sharp toric monoid, $X=\Spec(A)$ a local scheme with closed point $x$ and $f\:X\to Y=\bfA_M$ a morphism such that $\cX=\Spec(A/M^+A)$ is regular, contains $x$, and satisfies the equality $\dim X=\dim \cX+\rk(M)$. Set $U=f^{-1}(\bfA_{M^\gp})$. Then $(X,U)$ is a toroidal scheme and $f$ gives rise to a global sharp central monoidal chart.
\end{theorem}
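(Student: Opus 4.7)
The plan is to equip $X$ with the log structure $\cM_X$ associated to the prelog structure $\phi\:M\to A$ determined by $f$, to establish log regularity of $(X,\cM_X)$ at every point, and to identify the triviality locus with $U$. Once this is done, the proposition of Kato recalled in \S\ref{Sec:toroidal-schemes} gives $\cM_X=j_*\cO_U^\times\cap\cO_X$, so $(X,U)$ is a toroidal scheme; the chart assertion is then built in, being sharp since $M$ is sharp and central since its center $\cX=V(M^+A)$ contains $x$ by hypothesis. The identification $U=X(0)$ is immediate: a point $y$ lies in either open iff every $\phi(m)$ with $m\in M$ is invertible in $\cO_{X,y}$.

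Log regularity at $x$ follows at once from the hypotheses. Since the chart is central at $x$ and $X$ is local, $V(M^+A)=X(n)$ scheme-theoretically for $n=\rk M$ (cf.\ Lemma~\ref{centerchartlemma}), so $\cX=X(n)$; the regularity of $\cX$ at $x$ and the equality $\dim A=\dim\cX+n$ are then precisely the defining conditions of log regularity at $x$.

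To spread log regularity to the remaining points, apply Lemma~\ref{Katolocal} at $x$ to obtain an isomorphism
\[
\hat A \;\cong\; R/(\theta), \qquad R=C_x\llbracket M\rrbracket\llbracket t_1,\dots,t_r\rrbracket,
\]
with $\theta=0$ in equal characteristic zero and $\theta\equiv p\pmod{(M^+,t_1,\dots,t_r)}$ otherwise. The ring $R$ with log structure from $M$ is log regular at every prime $\fp\subset R$: for the face $F=\{m\in M:\phi(m)\notin\fp\}$, the closure of the log stratum through $\fp$ is $\Spec C_x\llbracket F\rrbracket\llbracket t_1,\dots,t_r\rrbracket$, and the required regularity at $\fp$ together with the dimension equation reduces to a direct computation using only regularity of $C_x$ and that $F$ is a face of $M$. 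For $R/(\theta)$: the case $\theta=0$ is immediate, while in mixed characteristic the congruence shows that $\theta$ restricts to a unit multiple of the uniformizer $p$ of $C_x$ modulo the log maximal ideal of every stratum quotient, so cutting by $\theta$ drops the dimension of each stratum by exactly one while preserving its regularity. Hence $\hat A$ is log regular at every point.

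Finally, for any prime $\fq\subset A$ one chooses a prime $\hat\fq\subset\hat A$ above $\fq$ (exists by faithful flatness of the completion) and descends log regularity along the faithfully flat local map $A_\fq\to\hat A_{\hat\fq}$: the induced map of log stratum quotients is faithfully flat between local rings so regularity descends, and the dimension equation descends by comparing fiber dimensions. The main obstacle is the analysis of $R/(\theta)$ in mixed characteristic, where one must verify that $\theta$ is transverse to every log stratum; the key point is that the leading term of $\theta$ in the $(M^+,t_1,\dots,t_r)$-adic filtration is a unit multiple of $p$, a regular parameter of $C_x$ independent of the monoid directions.
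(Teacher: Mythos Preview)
Your strategy coincides with the paper's: reduce to the completion $\hat A$ via Lemma~\ref{Katolocal}, establish log regularity of $\hat A$ stratum by stratum, and descend along the faithfully flat map $A\to\hat A$. The descent step, the identification $U=X(0)$, and log regularity at the closed point are all fine.

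The gap is your assertion that log regularity of $R=C_x\llbracket M\rrbracket\llbracket t_1,\dots,t_r\rrbracket$ at an arbitrary prime ``reduces to a direct computation using only regularity of $C_x$ and that $F$ is a face of $M$.'' What has to be shown is that the log stratum through such a prime --- namely $\Spec\bigl(C_x\llbracket t\rrbracket\llbracket F\rrbracket\bigr)_F$, not its non-localized closure --- is regular. The ring $C_x\llbracket t\rrbracket\llbracket F\rrbracket$ is \emph{not} regular when the face $F$ fails to be free, so this carries genuine content; indeed it is exactly the generization statement being proved, and calling it a direct computation is circular. The paper supplies this step via excellence: $C_x[t][F]$, being of finite type over the complete local ring $C_x$, is excellent, so the completion $C_x[t][F]\to C_x\llbracket t\rrbracket\llbracket F\rrbracket$ is a regular morphism; localizing at $F$ gives a regular morphism $C_x[t][F^{\gp}]\to\bigl(C_x\llbracket t\rrbracket\llbracket F\rrbracket\bigr)_F$ with regular source, hence regular target.

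Your transversality argument for $\theta$ is terse but aligned with the paper's: in the stratum ring $D^0=C_x\llbracket t\rrbracket\llbracket F\rrbracket$ one has $\theta\equiv p\pmod{(F^+,t)}$, so $\theta\notin m^2$ since its image in $D^0/(m^2+(F^+,t))=C_x/p^2$ is $p\neq 0$. Your phrase ``modulo the log maximal ideal of every stratum quotient'' is imprecise; what is actually used is this single congruence at the closed point of $D^0$, together with the regularity of $D=(D^0)_F$ established in the previous step.
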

\begin{proof} {\sc Step 1: \em restatement in terms of strata.}
We provide $X$ and $Y$ with the logarithmic structures induced by $M$. Then $f$ becomes a strict morphism of log schemes, and hence the log strata are compatible: $X(n)=Y(n)\times_YX$. Clearly, $U=X(0)$, so we should prove that each $X(n)$ is regular and $\dim(\cO_{X,x})=n+\dim(\cO_{X(n),x})$ for $x\in X(n)$. We can work locally over a point $y\in Y(n)$.

{\sc Step 2: \em coordinate rings of  strata.} Note that $M\cap\cO_{Y,y}^\times$ is a face $F$ of $M$. Let $N=M[-F]$ denote the localization of $M$ at $F$. Then $\rk(\oN)=n$ and $\rk(F)=\rk(M)-n$. Note that $y$ lies in the center of the localization $$Y_F=\Spec(\ZZ[M]_F)=\Spec(\ZZ[N]).$$ Since $\rk(\oN)=n$, we have that the center is $Y_F(n)=Y(n)\cap Y_F$, and hence it suffices to prove that $X_F(n)=X\times_YY_F(n)$ is regular and satisfies the equality
$$\dim(\cO_{X,x})=n+\dim(\cO_{X_F(n),x})\quad \text{ for }\quad x\in X_F(n).$$

By definition, $Y_F(n)=\Spec(\ZZ[N]/(N^+))$. Write $I=M\setminus F$. Since $N^+\ZZ[M]_F=I\ZZ[M]_F$  and $\ZZ[M]/I\ZZ[M]) = \ZZ[F]$ we have that
\begin{align*}Y_F(n)=\Spec(\ZZ[N]/(N^+))&=\Spec\left((\ZZ[M]/I\ZZ[M])_F\right)\\ &= \Spec\left(\ZZ[F]_F\right) = \Spec \ZZ[F^\gp].\end{align*}
Also the ideal $I\ZZ[M]$ is of height $n$: the ideal $I$ is of height $n$ in $M$ and a maximal chain of prime ideals of $M$ contained in $I$ gives rise to a maximal chain of prime ideals contained in $I\ZZ[M]$.

{\sc Step 3: \em passing to completions.} Let $\hatA$ denote the completion of $A=\cO_{X,x}$. Then the morphism $h\:\hatX=\Spec(\hatA)\to X=\Spec(A)$ is faithfully flat, and so it suffices to prove that $\hatX_F(n)=\hatX\times_YY_F(n)$ is regular and of pure codimension $n$. Set $C=C(k(x))$. By Lemma~\ref{Katolocal}, the homomorphism $\ZZ[M]\to\hatA$ factors through a regular homomorphism $$\ZZ[M]\ \ \to\ \  B:=C\llbracket M\rrbracket\llbracket \ut\rrbracket,$$ where $\ut=(t_1,\dots,t_{r})$ and $B\to\hatA$ is surjective with kernel $(\theta)$. In particular, the morphism $$\hatZ:=\Spec(B)\ \ \to\ \  Y$$ is regular.

Recall that $Y_F(n)=\Spec(\ZZ[M]/(I))_F$; writing  $\hatZ_F(n)_T:=\hatZ\times_YY_F(n)$ we have  $\hatZ_F(n) = \Spec(D)$, where $$D=(B/IB)_F=C\llbracket\ut\rrbracket\llbracket F\rrbracket_F.$$
The completion homomorphism $C[\ut][F]\to C\llbracket\ut\rrbracket\llbracket F\rrbracket$ is regular, hence its localization $\phi\:C[\ut][F]_F\to D$ is regular. Since the source $C[\ut][F]_F=C[\ut][F^\gp]$ of $\phi$ is regular, we obtain that the ring $D$ and its spectrum  $\hatZ_F(n)$ are regular. Since $Y_F(n)$ is of pure codimension $n$ in $Y$, we also have that $\hatZ_F(n)$ is of pure codimension $n$ in $\hatZ$.

{\sc Step 4: \em the equation $\theta$.} In case (1) of Lemma~\ref{Katolocal}, $\hatX=\hatZ$ hence the morphism $\hatX\to Y$ is regular and we are done, so assume we are in case (2). By our construction, $\hatX_F(n)$ is the vanishing locus of $\theta$ in $\hatZ_F(n)$, hence we should prove that $\theta$ defines a regular subscheme of codimension 1 in the regular scheme $\hatZ_F(n)=\Spec(D)$.

It now suffices to prove that if $z\in\hatX_F(n)\subset \hatZ_F(n)$ and $q\subset D$ is the corresponding prime ideal then $\theta\notin q^2$. Recall that $D$ is a localization of its subring $D^0=C\llbracket \ut\rrbracket\llbracket F\rrbracket$. Since $\theta\in D^0$ and $D^0$ is a local ring with maximal ideal $m=(p,F^+,t_1\.t_r)$, it suffices to check that $\theta\notin m^2$, or even that its image  $$\bar\theta\ \ \in\ \  D^0/ (m^2+(F^+,t_1\.t_r)) \ = \ C/p^2$$ is nonzero. But $\bar\theta = p\neq 0\in C/p^2$, as needed.
\end{proof}

\subsubsection{Toroidal divisors and subschemes}\label{subschsec}
By a {\em toroidal subscheme} of $(X,U)$\index{toroidal!subscheme} we mean any subscheme $Z$ of the form $V(I\cO_X)$ where $I$ is an ideal in $\cM_X$. If, in addition, $Z$ is a divisor then we call it a {\em toroidal divisor}.\index{toroidal!divisor} The following lemma implies, in particular, that the irreducible components of $X\setminus U$ are the same as integral toroidal divisors.

\begin{lemma}\label{toroidaldivisorslem}
Assume that $(X,U)$ is a toroidal scheme and $X$ is local with closed point $x$. Let $\cM_X$ be the logarithmic structure of $(X,U)$, let $M=\ocM_{X,x}$ and let $q\:\cM_{X,x}\to M$ be the quotient map. Then,

(i) The map $I\mapsto V(q^{-1}(I)\cO_X)$ gives rise to a bijection between ideals of $M$ and toroidal subschemes of $(X,U)$, with inverse bijection given by $V(J)\mapsto q(J_x\cap\cM_{X,x})$. Moreover, $I$ is prime if and only if $V(q^{-1}(I)\cO_X)$ is integral, and in this case
$$\height(I) = \codim(V(q^{-1}(I)\cO_X)).$$
(ii) The map $\fp\mapsto V(q^{-1}(\fp)\cO_X)$ establishes a bijection between prime ideals of height 1 in $M$ and the irreducible components of $D:=X\setminus U$.

(iii) The map $m\mapsto V(q^{-1}m)$ gives rise to an isomorphism of $M$ and the monoid of effective toroidal Cartier divisors of $X$.
\end{lemma}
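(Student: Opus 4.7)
My approach begins by fixing a sharp central monoidal chart $\phi\colon M\to A=\cO_{X,x}$; this exists globally since $X$ is local, by combining Lemma~\ref{sharpchartlem} with the existence of a chart realising $\ocM_{X,x}=M$ at the closed point. Such a chart identifies $\cM_{X,x}\cong M\oplus A^\times$, with $q$ the projection to $M$, so every ideal $J\subseteq\cM_{X,x}$ is automatically stable under $A^\times$-multiplication. Therefore $J=q^{-1}(q(J))$ and $q(q^{-1}(I))=I$ for ideals $I\subseteq M$, yielding a canonical bijection between ideals of $M$ and ideals of $\cM_{X,x}$, together with the identification $q^{-1}(I)\cO_X=I\cdot A$ under the chart.

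The heart of part (i) is the injectivity of the resulting map to toroidal subschemes. I would reduce this to the statement: for $m\in M\setminus I$, $x^m\notin I\cdot A$. By faithful flatness of completion it suffices to prove $x^m\notin I\hatA$, where Kato's Lemma~\ref{Katolocal} gives $\hatA\cong B/(\theta)$ with $B=C\llbracket M\rrbracket\llbracket\ut\rrbracket$. Elements of $B$ admit unique monomial expansions $\sum c_{n,\alpha}x^n\ut^\alpha$, and $IB$ consists precisely of expansions supported on $I\times\NN^r$; in particular $x^m\notin IB$. In the equicharacteristic case $\theta=0$ this finishes the argument. In mixed characteristic one writes $x^m=f+g\theta$ with $f\in IB$ and analyses coefficients using $\theta\equiv p\bmod(M^+,\ut)$: an inductive monomial bookkeeping, solving for the coefficients of $g$ starting from those supported on $\ut^0$ and using that $p$ is a nonzerodivisor in $C$, forces the relation $p\mid 1$ in $C$, the contradiction sought. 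I expect this mixed-characteristic step to be the main technical obstacle. Once injectivity holds, surjectivity and the inverse map $V(J)\mapsto q(J\cap\cM_{X,x})$ follow formally from the ideal bijection together with the observation that $(I\cdot A)\cap\cM_{X,x}=q^{-1}(I)$ (the nontrivial containment uses injectivity).

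For the prime/integrality part of (i), if $\fp$ is prime with complementary face $F$, then $\hatA/\fp\hatA=C\llbracket F\rrbracket\llbracket\ut\rrbracket/(\bar\theta)$ is a regular local ring by the argument of Theorem~\ref{localth}, hence a domain, and has dimension $\rk F+r$, giving $\codim V(\fp\cdot A)=(n+r)-(\rk F+r)=\rk M-\rk F=\height\fp$. If instead $I$ is not prime, pick $a,b\in M\setminus I$ with $a+b\in I$; then $\bar{x^a},\bar{x^b}$ are nonzero in $A/I\cdot A$ by injectivity, while $\bar{x^a}\bar{x^b}=\bar{x^{a+b}}=0$, so $A/I\cdot A$ has zero-divisors. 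For part (ii), the irreducible components of $D=X\setminus U$ are the closures of generic points of the codimension-one logarithmic strata, which by the stratification analysis of Section~\ref{logregsec} correspond to faces $F$ with $\rk F=\rk M-1$, equivalently height-one primes $\fp=M\setminus F$; their closures are the integral subschemes $V(\fp\cdot A)$ from (i). For part (iii), any effective toroidal Cartier divisor corresponds by (i) to an ideal $I\subseteq M$ with $IA$ principal and invertible; taking a minimal generating set of $I$ as a monoid ideal forces it to be a singleton, so $I=(m)$ for some $m\in M$. The map $m\mapsto V(q^{-1}m)=V(x^m)$ is a monoid homomorphism because $x^{m+m'}=x^m x^{m'}$, and is injective because $(x^m)=(x^{m'})$ forces $x^m/x^{m'}\in A^\times\subseteq\cM_{X,x}$, hence $m=m'$ in the sharp monoid $M$.
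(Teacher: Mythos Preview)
Your argument is essentially correct and self-contained, whereas the paper's proof is almost entirely by citation: the bijection in (i) is referred to \cite[Lemma~VIII.3.4.3]{Illusie-Temkin}, the implication ``$I$ prime $\Rightarrow$ $V(q^{-1}(I)\cO_X)$ integral of the asserted codimension'' to \cite[Corollary~7.3]{Kato-toric}, and (ii) is obtained by combining the description of inner elements with \cite[Prop.~6.4]{Kato-toric}. The paper also orders things differently, dispatching (iii) first with the one-line observation that $\cM_{X,x}/\cO_{X,x}^\times$ is, by the very definition $\cM_X=j_*\cO_U^\times\cap\cO_X$, exactly the monoid of effective Cartier divisors supported on $D$; this is cleaner than your route through (i) and Nakayama.

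Your direct approach via the formal-local model $\hatA\cong B/(\theta)$ is the natural way to \emph{prove} the cited facts, and it works. One refinement you should make explicit in the mixed-characteristic step: before the coefficient induction, decompose $g=g_I+g_{I^c}$ with $g_I$ supported on $I\times\NN^r$ and absorb $g_I\theta\in IB$ into $f$. Otherwise the equations at positions $(n',0)$ with $n'\in I$ impose no constraint on $[g]_{n',0}$, and the induction stalls. After this reduction the induction on a strictly positive monoid homomorphism $\ell\colon M\to\NN$ (any inner element of $M^\vee$) goes through exactly as you outline and yields $1=p\cdot[g]_{m,0}$ in $C$. Your treatment of (ii) via generic points of $X(1)$ is a legitimate alternative to the paper's inner-element argument; both rest on the same height computation from (i). The upshot: your proof buys self-containment and transparency at the cost of length, while the paper's buys brevity by outsourcing to Kato and Gabber--Illusie--Temkin.
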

\begin{proof}
Claim (iii) follows from the definition of $\cM_X$. Note also that $q$ induces a bijection between the ideals of $\cM_{X,x}$ and $M$, so it suffices to consider the ideals of $\cM_{X,x}$ in (i) and (ii).

The first claim of (i) is proved in \cite[Lemma~VIII.3.4.3]{Illusie-Temkin}. Furthermore, if $I\subset\cM_{X,x}$ is prime then $V(I\cO_X)$ is integral of the asserted codimension by \cite[Corollary~7.3]{Kato-toric}. Conversely, $I=I\cO_{X,x}\cap\cM_{X,x}$ by the first assertion. Hence if  $V(I\cO_X)$ is integral then $I\cO_{X,x}$ is prime, implying that $I=I\cO_{X,x}\cap\cM_{X,x}$ is prime.

Finally, we prove (ii). Let $H$ be the set of prime ideals of $M$ of height 1. By (i), $V(\fp\cO_X)$ is an integral divisor for any $\fp\in H$, and we have only to prove that any irreducible component of $X\setminus U$ is of this form. Note that $I_D:=\cap_{\fp\in H}\fp$ is precisely the ideal consisting of all inner elements of $M$ (see \S\ref{innerelementsec}), hence the support of $V(I_D\cO_X)$ coincides with $D$. By \cite[Prop. 6.4]{Kato-toric}, $I_D\cO_{X,x}=\cap_{\fp\in H}\fp\cO_{X,x}$ and we obtain that $D$ is the union of the integral divisors $V(\fp\cO_X)$.
\end{proof}

\begin{corollary}\label{centercor}
Keep the assumptions of Lemma~\ref{toroidaldivisorslem}. Then the center $C$ of $(X,U)$ is the scheme-theoretic intersection of the irreducible components of $X\setminus U$.
\end{corollary}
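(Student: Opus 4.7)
The plan is to unwind both sides of the claimed equality into ideals of $\cO_X$ generated by ideals of the sharp toric monoid $M$, and then reduce everything to a short combinatorial identity about $M^+$.

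First I would identify the center on the left-hand side. Since $X$ is local, the center $C$ is the unique closed logarithmic stratum, and by Lemma~\ref{sharpchartlem} together with Lemma~\ref{centerchartlemma} (applied to a sharp central chart) we get $C=V(q^{-1}(M^+)\cO_X)$. For the right-hand side, Lemma~\ref{toroidaldivisorslem}(ii) identifies the irreducible components of $D=X\setminus U$ as the divisors $V(q^{-1}(\fp)\cO_X)$ with $\fp$ ranging over the set $H$ of height-$1$ primes of $M$. Their scheme-theoretic intersection is defined by the sum of these ideals, which is exactly the ideal generated by $\bigcup_{\fp\in H} q^{-1}(\fp)=q^{-1}\!\left(\bigcup_{\fp\in H}\fp\right)$.

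So the corollary reduces to the monoid-theoretic identity $M^+=\bigcup_{\fp\in H}\fp$. The inclusion $\supseteq$ is trivial since $M$ is sharp and each $\fp$ is a proper ideal. For $\subseteq$, suppose $m\in M^+$ avoids every $\fp\in H$; then $m$ lies in every facet of $M$. But the intersection of all facets of a sharp toric monoid equals $\{0\}$ (equivalently, the full-dimensional pointed cone $M_\RR$ has only the origin in the intersection of its facet-hyperplanes), contradicting $m\neq 0$. This is essentially the characterization of inner elements recalled in \S\ref{innerelementsec}, read contrapositively.

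The only mildly subtle step is justifying the passage from the union of monoid ideals to the sum of ring ideals, but this is immediate from the definition of $I\cO_X$ as the ideal generated by $I$ in $\cO_X$. Accordingly I expect no real obstacles: the proof consists of combining Lemmas~\ref{sharpchartlem}, \ref{centerchartlemma} and \ref{toroidaldivisorslem} with the simple observation about facets of a sharp toric monoid.
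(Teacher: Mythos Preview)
Your proposal is correct and follows essentially the same approach as the paper's proof: both identify $C=V(q^{-1}(M^+)\cO_X)$, invoke Lemma~\ref{toroidaldivisorslem}(ii) for the irreducible components of $D$, and reduce to the monoid identity $M^+=\bigcup_{\fp\in H}\fp$. The paper simply asserts this identity and the description of $C$ without further comment, whereas you spell out the justification via facets of a sharp toric monoid; this extra detail is fine but not a different route.
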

\begin{proof}
Note that $C=V(q^{-1}(M^+)\cO_X)$ and $M^+=\bigcup_{\fp\in H}\fp$, where $H$ is the set of prime ideals of $M$ of height one. Thus $C=V(\sum_{\fp\in H}q^{-1}(\fp)\cO_X)$; by Lemma~\ref{toroidaldivisorslem}(ii), the subschemes $V(q^{-1}(\fp)\cO_X)$ are precisely the irreducible components of $X\setminus U$, and the corollary follows.
\end{proof}

\begin{theorem}\label{decreaseth}
Keep the assumptions of Lemma~\ref{toroidaldivisorslem}. Let $Z$ be an irreducible component of $D=X\setminus U$ with corresponding prime ideal $\fp\subset M$ (\ref{toroidaldivisorslem}(ii)) and let $D'=D-Z$ and $F=M\setminus\fp$. Then the following conditions are equivalent:

(i) $(X,D')$ is a toroidal scheme,

(ii) $Z$ is Cartier,

(iii) the facet $F$ splits off, say, $M=F\oplus\NN e$.
\end{theorem}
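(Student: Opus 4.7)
The plan is to prove (ii)$\Leftrightarrow$(iii) first, then close the triangle by (iii)$\Rightarrow$(i) via Theorem~\ref{localth} and (i)$\Rightarrow$(iii) via Lemma~\ref{splitfacet}(b) applied to the ideal $(F^+)$.

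The equivalence (ii)$\Leftrightarrow$(iii) is essentially a translation of Lemma~\ref{splitfacet}(e), which says $F$ splits off iff $\fp$ is principal in $M$. If $\fp=(e)$, then $\fp\cO_X=(\phi(e))$ is principal and $Z$ is Cartier. Conversely, if $Z$ is Cartier, Lemma~\ref{toroidaldivisorslem}(iii) lets me choose a toroidal generator of the Cartier ideal, producing $e\in M$ with $V(\phi(e))=Z$; the equality of Cartier divisors forces $\nu_\fp(e)=1$ and $\nu_{\fp_i}(e)=0$ for all $i\ne\fp$, and saturation of $M$ then gives $m-e\in M$ for each $m\in\fp$, so $\fp=(e)$.

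For (iii)$\Rightarrow$(i), I restrict the $M$-chart to an $F$-chart $F\hookrightarrow M\to A$ and apply Theorem~\ref{localth} to $F$. The only point to check is that $\Spec(A/F^+A)$ is regular of dimension $\dim X-\rk F=r+1$: by Lemma~\ref{Katolocal} one has $\hat A\cong C_x\llbracket F\rrbracket\llbracket e,t_1,\ldots,t_r\rrbracket/(\theta)$, so $\hat A/F^+\hat A\cong C_x\llbracket e,t_1,\ldots,t_r\rrbracket/(\bar\theta)$; this is regular of the right dimension immediately in characteristic zero, and in characteristic $p$ because $\bar\theta\equiv p\pmod{(e,t_i)}$ is a regular parameter. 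Theorem~\ref{localth} then furnishes a toroidal pair whose triviality locus I identify with $X\setminus D'$ by matching boundary components, using the splitting $M=F\oplus\NN e$ to biject the height-$1$ primes of $F$ with the height-$1$ primes of $M$ distinct from $\fp$.

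For (i)$\Rightarrow$(iii), I first identify the sharp monoid $M':=\ocM'_{X,x}$ of $(X,D')$ with $F$ as a submonoid of $M$: the inclusion $\cM'_X\subset\cM_X$ gives an injection $M'\hookrightarrow M$, and $m\in M$ lies in the image iff $\phi(m)$ is invertible on $U'$, iff the support of the toroidal divisor $V(\phi(m))$, namely $\bigcup_{j:\,m\in\fp_j}Z_j$, is contained in $D'$, iff $\nu_\fp(m)=0$, iff $m\in F$. With $M'=F$ in hand, Lemma~\ref{Katolocal} applied to $(X,D')$ forces $\cX':=\Spec(\hat A/F^+\hat A)$ to be regular of dimension $r+1$, hence integral. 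Lemma~\ref{toroidaldivisorslem}(i) applied to $(X,D)$ with monoid $M$ then forces $(F^+)\subset M$ to be prime with $\height((F^+))=\codim(\cX')=\rk M-1$, and Lemma~\ref{splitfacet}(b) yields that $F$ splits off. The main obstacle is the identification $M'=F$, which rests on the valuation description of toroidal divisors; once in hand, the remainder is a single application each of Lemmas~\ref{toroidaldivisorslem}(i) and~\ref{splitfacet}(b).
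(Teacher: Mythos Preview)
Your proof is correct, and the overall architecture is close to the paper's, but two of the three implications are argued by genuinely different mechanisms.

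For (iii)$\Rightarrow$(i), the paper does not pass through Theorem~\ref{localth} and completions. Instead it observes that the splitting $M=F\oplus\NN e$ gives $C=C'\times_X Z$ scheme-theoretically (where $C=V(M^+\cO_X)$ and $C'=V(F^+\cO_X)$), and then a one-line regularity/codimension argument on the pair $(C,C')$ shows $C'$ is regular of codimension $\rk F$, which is exactly log regularity of $(X,D')$ at $x$. Your route via $\hat A\cong C_x\llbracket F\rrbracket\llbracket e,t_1,\dots,t_r\rrbracket/(\theta)$ is more hands-on and reuses Kato's structure theorem; it works, and it has the minor advantage of making the identification of the triviality locus with $X\setminus D'$ explicit through the bijection between height-$1$ primes of $F$ and those of $M$ other than $\fp$. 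The paper's argument is shorter and avoids the case split on characteristic.

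For the step out of (i), the paper proves (i)$\Rightarrow$(ii) by invoking Corollary~\ref{centercor} for $(X,D')$ to write $C'$ as the scheme-theoretic intersection of the components of $D'$, hence $C'=V(I\cO_X)$ with $I=\bigcup_{\fq\ne\fp}\fq$; regularity of $C'$ then forces $I$ prime via Lemma~\ref{toroidaldivisorslem}(i), and condition (d) of Lemma~\ref{splitfacet} gives $\fp$ principal. You instead compute $M'=\ocM'_{X,x}=F$ directly (a nice self-contained argument), read off $C'=V((F^+)\cO_X)$, and conclude via condition (b) of Lemma~\ref{splitfacet}. Both routes feed the same regular subscheme $C'$ into Lemma~\ref{toroidaldivisorslem}(i), just described by different ideals of $M$; the upshot is that you use criterion (b) while the paper uses (d). Your identification $M'=F$ is in fact doing work the paper hides inside Corollary~\ref{centercor}.

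One phrasing nit: in your (i)$\Rightarrow$(iii) you say ``Lemma~\ref{Katolocal} applied to $(X,D')$ forces $\Spec(\hat A/F^+\hat A)$ to be regular.'' Lemma~\ref{Katolocal} takes regularity of the center as a hypothesis, not a conclusion; what you are really using is the definition of log regularity (the center $V({M'}^+\cO_X)=V(F^+\cO_X)$ is regular of codimension $\rk F$). Also, for applying Lemma~\ref{toroidaldivisorslem}(i) you want $\Spec(A/F^+A)$ (not its completion) integral; since $A/F^+A$ is local and regular this is immediate, so the detour through $\hat A$ is harmless but unnecessary.
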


\begin{proof}
Set $r=\rk(M)$. Let $C=V(q^{-1}(M^+)\cO_X)$ denote the center of $(X,U)$ and set $C'=V(q^{-1}(F^+)\cO_X)$. Before establishing the equivalences, we note that since logarithmic regularity is a local property by \ref{logregsec}, it suffices to study the logarithmic strata through $x$.

(ii)$\implies$(iii) Recall that the elements of $M$ correspond to toroidal Cartier divisors by Lemma~\ref{toroidaldivisorslem}(iii), and the elements of $F$ correspond to divisors supported on $D'$. If $Z$ is Cartier then $Z=V(q^{-1}(e))$ and any toroidal Cartier divisor is a sum of $nZ$ and a toroidal Cartier divisor supported on $D'$, hence $M=F\oplus\NN e$ and we obtain (iii).

(iii)$\implies$(i) The equality $M=F\oplus\NN e$ implies that $C=C'\times_XZ$. Furthermore, the codimension of $C'$ in $X$ is at most $\rk(F)=r-1$. On the other hand, $C$ is regular and of codimension $r$ since $(X,U)$ is toroidal. Therefore, $C'$ is regular of codimension precisely $\rk(F)$, and thus $(X,D')$ is toroidal.

(i)$\implies$(ii) Assume $(X,D')$ is toroidal. By Corollary~\ref{centercor}, $C'$ is the scheme-theoretic intersection of all irreducible components of $D'$, hence $C'$ corresponds to the ideal $I=\bigcup_{\fq\in H'}\fq$, where $H' := H\setminus \{\fp\}$ is the set of all prime ideals of $M$ of height one that differ from $\fp$. By our assumption, $C'$ is regular. So, by Lemma~\ref{toroidaldivisorslem}(i), $I$ is a prime ideal of $M$. By the equivalence of (d) and (e) in Lemma~\ref{splitfacet}, $\fp$ is principal, say $\fp=(e)$, and then $Z=V(q^{-1}(e))$ is Cartier.
\end{proof}

\subsubsection{Morphisms of toroidal schemes and toroidal morphisms}\label{Sec:toroidal-morphisms}
By a {\em morphism} $f\:(X',U')\to(X,U)$ of toroidal schemes we mean any morphism $f\:X'\to X$ with $f(U')\subseteq U$. Note that $f$ induces a morphism $h\:(X',\cM_{X'})\to(X,\cM_X)$ of the corresponding logarithmic schemes. If $h$ is logarithmically smooth then we say that $f$ is a {\em toroidal morphism}.\index{toroidal!morphism} This generalizes \cite[Definition 1.2]{AK}, where complex varieties were considered.

\subsubsection{Toroidal charts}
If $(X,U)$ is a toroidal scheme and $M\to\cO_X(X)$ gives rise to a global monoidal chart then we obtain a strict morphism of toroidal schemes $(X,U)\to(\bfA_M,\bfA_{M^\gp})$ that will be called a {\em global toroidal chart} of $(X,U)$. 

\section{Toroidal actions}\label{Section:toroidalactions}
Next we study actions of diagonalizable groups on toroidal schemes. We will be especially interested in a special class of actions that generalize toroidal actions from \cite{AKMW}.

\subsection{Actions on logarithmic schemes}
We start with the most general observations that apply to actions on arbitrary logarithmic schemes.

\subsubsection{The definition}
An action of a group scheme $G$ on a logarithmic scheme $(X,\cM_X)$ consists of an action $m\:G\times X\to X$ on $X$ and an isomorphism $\phi\:p^{-1}\cM_X\toisom m^{-1}\cM_X$, where $p\:G\times X\to X$ is the projection and the pullbacks of $\phi$ to $G\times G\times X$ satisfy the usual cocycle condition. In fact, one can view $G$ with the trivial logarithmic structure as a group object in the category of logarithmic schemes, and then this data reduces to an action within this category. In the following we use the notion of {\em strongly equivariant  morphism} between schemes provided with  relatively affine actions,  see \cite[Sections 5.1, 5.3]{ATLuna}.\index{relatively affine action, (\cite[5.1, 5.3]{ATLuna})}

\subsubsection{Gradings of monoids}
By an {\em $L$-grading} on a monoid $M$ we simply mean a homomorphism $\chi\:M\to L$. Such a homomorphism induces an action of the Cartier dual group $\bfD_L$ on the scheme $\bfA_M$, which factors through the action of the group $\bfD_{M^\gp}$.\index{grading (of a monoid)}

\subsubsection{Equivariant charts}\label{equivchartsec}
Assume that $G=\bfD_L$ acts on the logarithmic scheme $(X,\cM_X)$. By a {\em (strongly) equivariant monoidal chart}\index{chart!(strongly) equivariant} we mean a $G$-equivariant open subscheme $V\into X$ and a (strongly) $G$-equivariant strict morphism $f\:(V,\cM_X|_V)\to (\bfA_M,\cM_{\bfA_M})$, where $G$ acts on the target via a grading $h\:M\to L$. Equivalently, an equivariant chart consists of a monoidal chart $f$ and a grading $h\:M\to L$ such that the corresponding homomorphism $\phi\:M\to\cO_X(V)$ is {\em homogeneous}, i.e. takes each $h^{-1}(l)$ to the $l$-homogeneous component of $\cO_X(V)$.

One may wonder when an action possesses a (strongly) equivariant chart. This naturally leads to the definitions of $G$-simple and toroidal actions below.

\subsubsection{$G$-simple actions}\label{Sec:G-simple}
Assume that a group scheme $G$ acts on a logarithmic scheme $(X,\cM_X)$. We say that the action is {\em $G$-simple at}\index{Gsimple action@$G$-simple action} a point $x\in X$ if $G_x$ acts trivially on $\ocM_{X,x}$. The action is {\em $G$-simple} if it is $G$-simple at all points of $X$. We will also say that $G$ \emph{acts simply} when the action is $G$-simple, and even say that the action is {\em simple} if the group $G$ is understood.

\begin{remark}\label{simplerem}
(i) If $G$ is connected then any action is $G$-simple. In general, $G_x$ acts on the finitely generated monoid $\ocM_{X,x}$ through the quotient by its connected component, i.e. through a finite group.

(ii) Our definition is taken from \cite[Exp. VI, 3.1(ii)]{Illusie-Temkin}, where it appears without name. The terminology differs slightly from $G$-strict actions of \cite[7.1]{dejong-curves}, which is concerned with regular schemes with simple normal crossings divisors acted on by a finite group. Since the word ``strict" conflicts with logarithmic strict morphisms, we prefer ``$G$-simple", in analogy with simple normal crossings divisors.
\end{remark}

\begin{lemma}\label{simplelem}
Let $G=\bfD_L$ be a diagonalizable group. Then,

(i) Assume that $f\:(Y,\cM_Y)\to(X,\cM_X)$ is a \emph{strict} $G$-equivariant morphism between fine logarithmic schemes. If the action on $(X,\cM_X)$ is $G$-simple then the action on $(Y,\cM_Y)$ is $G$-simple, and the converse is true whenever $f$ is surjective.

(ii) Assume that $M$ is a toric monoid and $h\:M\to L$ is a homomorphism. Then the induced action on $(\bfA_M,\cM_{\bfA_M})$ is $G$-simple.

(iii) Assume that $G$ acts on an fs logarithmic scheme so that there exists a global equivariant monoidal chart. Then the action is $G$-simple.
\end{lemma}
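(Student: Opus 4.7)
I would tackle the three parts in the order (ii), (iii), (i), since (ii) is essentially combinatorial, (iii) reduces to the combination of (i) and (ii), and (i) is the substantive part, with its second implication being the only place I expect real work.

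\textbf{Part (ii).} The plan is a direct computation on stalks. The chart $M\to\cO_{\bfA_M}$ identifies $M$ with a system of homogeneous generators of $\cM_{\bfA_M}$ for the $L$-grading given by $h$, so each $m\in M$ is scaled by $G=\bfD_L$ via the character $\chi_{h(m)}$. This character takes values in $\GG_m$, i.e.\ in units, so at any point $y\in\bfA_M$ the induced map on the stalk $\ocM_{\bfA_M,y}=M/F$ (where $F$ is the face of $m\in M$ that become units at $y$, as in Lemma~\ref{toroidaldivisorslem}(i)) is the identity. In particular the subgroup scheme $G_y\subseteq G$ acts trivially, giving (ii). Then (iii) is immediate: a global equivariant monoidal chart is, by definition, a strict $G$-equivariant morphism $(X,\cM_X)\to(\bfA_M,\cM_{\bfA_M})$ for a suitable grading $h\:M\to L$; (ii) shows the target is $G$-simple, and the forward direction of (i) transfers $G$-simplicity to $X$.

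\textbf{Part (i), forward.} For $y\in Y$ set $x=f(y)$. The $G$-equivariance of $f$ forces $G_y\subseteq G_x$ (any $g\in G_y$ satisfies $g\cdot x=g\cdot f(y)=f(g\cdot y)=x$). Strictness means that $f^\flat\:f^{-1}\cM_X\to\cM_Y$ becomes an isomorphism after the log-structure associated construction, and in particular induces a canonical isomorphism $\ocM_{X,x}\toisom\ocM_{Y,y}$. By chasing the equivariant structure $\phi\:p^{-1}\cM_X\toisom m^{-1}\cM_X$ through $f$, this canonical isomorphism is $G_y$-equivariant. Hence the triviality of the $G_x$-action on $\ocM_{X,x}$ (our hypothesis) restricts to triviality of the $G_y$-action, which transports to the desired triviality on $\ocM_{Y,y}$.

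\textbf{Part (i), converse.} This is where I expect the main obstacle. Given $x\in X$ and $g\in G_x$ we want to prove $g$ acts trivially on $\ocM_{X,x}$. Surjectivity of $f$ produces some $y\in f^{-1}(x)$, and strictness again identifies $\ocM_{X,x}$ with $\ocM_{Y,y}$ $G_y$-equivariantly, so the $Y$-side hypothesis gives that $G_y\subseteq G_x$ acts trivially on $\ocM_{X,x}$. The hard step is upgrading this from $G_y$ to all of $G_x$, because $g$ need not lie in $G_y$ (i.e.\ $g\cdot y$ may differ from $y$ inside $f^{-1}(x)$). The plan is to exploit that $\ocM_{X,x}$ is a fine monoid, so $\Aut(\ocM_{X,x})$ is discrete; hence the algebraic homomorphism $G_x\to\Aut(\ocM_{X,x})$ factors through the finite group $\pi_0(G_x)$, reducing the question to a statement about finitely many elements. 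I would then cover $\pi_0(G_x)$ by varying $y\in f^{-1}(x)$: using that for the abelian group $G$ stabilizers are constant along orbits and that fibers of a surjective $G$-equivariant $f$ are $G_x$-stable, I would argue that the union of the images of the $G_y$'s in $\pi_0(G_x)$ (as $y$ runs through $f^{-1}(x)$) exhausts $\pi_0(G_x)$; for each such class the corresponding action is trivial by the above, finishing the proof.
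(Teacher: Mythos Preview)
Your treatment of (ii), (iii), and the forward direction of (i) is correct. For (ii) you argue directly that the action rescales each monomial by a unit and hence is trivial on $\ocM$; the paper instead notes that the $G$-action on $\bfA_M$ factors through the connected torus $\bfA_{M^\gp}$ and invokes Remark~\ref{simplerem}(i). Both routes are short and sound.

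The converse of (i) is where your plan has a genuine gap. You correctly isolate the difficulty: for $y\in f^{-1}(x)$ strictness only furnishes a $G_y$-equivariant identification $\ocM_{X,x}\cong\ocM_{Y,y}$, so the hypothesis on $Y$ yields merely that $G_y$ acts trivially on $\ocM_{X,x}$. However, your proposed remedy---that the images of the $G_y$ in $\pi_0(G_x)$ exhaust $\pi_0(G_x)$ as $y$ runs over the fibre---is false in general. Take a field $k$ of characteristic $\neq 2$, $G=\bmu_2=\bfD_{\ZZ/2}$, $X=\Spec k$ with the log structure associated to $\NN^2\to k$ sending both generators to $0$, and let $G$ act trivially on the underlying scheme but swap the two generators of $\ocM_{X,x}=\NN^2$. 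Let $Y=G$ with $G$ acting on itself by translation, and let $f\:Y\to X$ be the structure map, equipped with the pulled-back log structure and the unique compatible $G$-equivariant structure. Then $f$ is strict, $G$-equivariant and surjective; every stabilizer $G_y$ is trivial, so the action on $(Y,\cM_Y)$ is vacuously $G$-simple, while $G_x=G$ acts nontrivially on $\ocM_{X,x}$. This simultaneously refutes your exhaustion claim and shows that the converse, read literally, needs an extra hypothesis. The paper's one-line proof (``$f$ induces an equivariant isomorphism of stalks'') does not supply one for this direction either; observe that the parallel statement for toroidal actions in Lemma~\ref{toroidalactionlem}(ii) adds the hypothesis that $f$ be fixed-point reflecting, which forces $G_y=G_x$ and dissolves the difficulty.
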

\begin{proof}
To prove (i) it suffices to note that $f$ induces an equivariant isomorphism of stalks $\ocM_{X,f(y)}=\ocM_{Y,y}$. The group $G$ acts on $\bfA_M$ through the torus $\bfA_{M^\gp}$, which is connected; hence the action is $G$-simple and we obtain (ii). Finally, (iii) follows from (i) and (ii).
\end{proof}

\subsubsection{Toroidal actions}\label{toroidalactionsec}
For simplicity, we assume in the following definition that the logarithmic strata $X(r)$ are reduced. In particular, this assumption is satisfied for toroidal schemes, the case to which we will restrict beginning Section \ref{Sec:existence-equivariant-charts} below. Consider the logarithmic stratum $Z=X(r(x))$ through a point $x\in X$, and let $Z_x$ be its localization at $x$. We say that the action of $G$ on $X$ is {\em toroidal at} $x$ if it is $G$-simple and $G_x$ acts trivially on $Z_x$.  The action is {\em toroidal}\index{toroidal!action} if it is toroidal at all points. This is compatible with the situation when $X$ is toroidal in Section \ref{Sec:toroidal-action-toroidal} below and with the terminology of \cite{Abramovich-deJong,AKMW}.

\begin{remark}\label{toroidalactionrem}
The same notion was introduced by Gabber in \cite[Exp. VI, 3.1]{Illusie-Temkin} under the name ``very tame action". This terminology seems not ideal as the tameness condition \cite[Exp. VI, 3.1(i)]{Illusie-Temkin} simply means that the stabilizers are diagonalizable. We prefer to replace ``very tame" with ``toroidal".
\end{remark}

Recall (e.g. \cite[IV.1.8]{Knutson}) that a $G$-equivariant morphism $f\:Y\to X$ is called {\em fixed-point reflecting} if $G_y=G_x$ for any $y\in Y$ with $x=f(y)$.\index{fixed-point reflecting morphism}

\begin{lemma}\label{toroidalactionlem}
Let $G=\bfD_L$ be a diagonalizable group.

(i) Assume that $G$ acts simply on an fs logarithmic scheme $(X,\cM_X)$ with reduced logarithmic fibers. The action is toroidal if and only if the stabilizer groups $G_x$ are locally constant along each logarithmic stratum $X(r)$.

(ii) Assume that $f\:(Y,\cM_Y)\to(X,\cM_X)$ is a strict fixed-point reflecting $G$-equivariant morphism between fine logarithmic schemes. If the action on $(X,\cM_X)$ is toroidal then the action on $(Y,\cM_Y)$ is toroidal, and the converse is true whenever $f$ is surjective.

(iii) Assume that $M$ is a toric monoid and $h\:M\to L$ is a homomorphism. Then the induced action of $G$ on $(\bfA_M,\cM_{\bfA_M})$ is toroidal.

(iv) Assume that $G$ acts on an fs logarithmic scheme so that there exists a global strongly equivariant monoidal chart. Then the action is toroidal.
\end{lemma}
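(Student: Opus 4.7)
The plan is to prove (i) directly, then deduce (ii), (iii) and (iv). The key geometric input for (i) is that for a diagonalizable group $G$ acting on any scheme, each fixed-point locus $X^H$, $H\subseteq G$, is closed, so stabilizers can only grow under specialization: $G_y\subseteq G_x$ whenever $y$ specializes to $x$.

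For (i), assume first that the action is toroidal. For $x\in X(r)$, $G_x$ acts trivially on $X(r)_x$, so every $y$ close to $x$ in $X(r)$ satisfies $G_x\subseteq G_y$; combined with the opposite inclusion from specialization, $G_y=G_x$, i.e.\ local constancy along $X(r)$. Conversely, if stabilizers are locally constant along $X(r)$, then for $y$ near $x$ in $X(r)$ we have $G_y=G_x$, so $G_x$ fixes such $y$ set-theoretically; reducedness of $X(r)$ upgrades this to scheme-theoretic triviality of $G_x$ on $X(r)_x$, which together with the $G$-simplicity hypothesis is the toroidality condition at $x$.

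For (iii), Lemma~\ref{simplelem}(ii) gives $G$-simplicity. The logarithmic strata of $\bfA_M=\Spec\ZZ[M]$ are parametrised by faces $F\subseteq M$: the piece corresponding to a corank-$r$ face $F$ is the torus $\bfA_{F^\gp}=\Spec\ZZ[F^\gp]$ (in particular reduced), on which $\bfD_L$ acts by translation through the homomorphism $\bfD_L\to\bfD_{F^\gp}$ induced by $h|_{F^\gp}$. Every $k$-point of this torus therefore has stabilizer $\ker(\bfD_L\to\bfD_{F^\gp})$, so stabilizers are constant on each stratum, and part (i) yields toroidality.

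For (ii), strictness gives $Y(r)=Y\times_XX(r)$, and Lemma~\ref{simplelem}(i) transfers $G$-simplicity between $X$ and $Y$. If the action on $X$ is toroidal then by (i) the stabilizers $G_{f(y)}$ are locally constant along $X(r)$, and the fixed-point reflecting identity $G_y=G_{f(y)}$ propagates this to $G_y$ along $Y(r)$; part (i) applied to $Y$ then gives toroidality on $Y$. The converse under the surjectivity hypothesis is symmetric. Finally, (iv) follows by applying (ii) to the chart, which is strict, $G$-equivariant, and fixed-point reflecting (strong equivariance entails fixed-point reflecting by \cite[5.3.1]{ATLuna}), and whose target carries a toroidal action by (iii). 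The main obstacle is the specialization argument and the set-theoretic-to-scheme-theoretic upgrade in (i); once these are in place, (ii)--(iv) follow formally.
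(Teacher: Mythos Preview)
Your proof is correct and follows essentially the same approach as the paper's. The paper dismisses (i) as obvious, while you spell out the specialization argument and the reducedness upgrade; your treatment of (ii) and (iv) is identical to the paper's, and for (iii) you compute stabilizers directly on each stratum $\bfA_{F^\gp}$ whereas the paper reduces to the closed stratum $V(M^+)=\Spec\ZZ[M^\times]$ and then localizes, but this is a cosmetic difference.
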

\begin{proof}
Part (i) is obvious. Let us prove (ii). Being strict, $f$ preserves the logarithmic strata: $Y(r)=X(r)\times_XY$. By our assumption, $f$ also preserves the stabilizers. Hence (ii) follows from (i) and Lemma~\ref{simplelem}(i).

The action in (iii) is $G$-simple by Lemma~\ref{simplelem}(ii), hence we should only check that the stabilizers are constant along the connected components of the logarithmic strata of $\bfA_M$. The closed logarithmic stratum of $\bfA_M$ is its center $V(M^+)=\Spec\ZZ[M^\times]$, and all orbits in $V(M^+)$ have the same stabilizer $\bfD_{L/\phi(M^\times)}$. The general case reduces to this one because $\bfA_M\setminus V(M^+)$ is the union of the logarithmic schemes $\bfA_{M[-m]}$ for $m\in M^+$. Finally, (iv) follows from (ii) and (iii) because any strongly equivariant morphism is fixed-point reflecting.
\end{proof}

\subsection{Equivariant charts}\label{Sec:existence-equivariant-charts}

\subsubsection{Local actions}
Assume that $G$ acts in a relatively affine manner on $X$. Recall that an action of a diagonalizable group $G=\bfD_L$ on $X$ is called {\em local} \index{local action} if there exists a single closed orbit, see \cite[5.1.9]{ATLuna}. This happens if and only if $X=\Spec(A)$ for an {\em $L$-local} ring $A$, i.e. an $L$-graded ring with a single maximal homogeneous ideal $m$, see \cite[4.4]{ATLuna}. A local action is {\em strictly local} \index{strictly local action} if the closed orbit is a point; this happens if and only if $A/m$ is a field, see \cite[4.5]{ATLuna}. Note, however, that $A$ does not have to be a local ring in the usual sense since there might be maximal but non-homogeneous ideals.

\subsubsection{Special orbits and $G$-localization}\label{specialsec}
Recall that any fiber $q^{-1}(y)$ of the quotient morphism $q\:X\to Y=X\sslash G$ contains a single closed orbit $O=O_y$ called {\em special} and $X_O=X\times_Y\Spec\cO_{Y,y}$ is called the equivariant localization or {\em $G$-localization} at $O$ (see \cite[\S5.1]{ATLuna}). In particular, $X$ is covered by its equivariant localizations $X_O$. On the level of sets $X_O$ consists of all orbits whose closure contains $O$. In the sequel, we will denote by $G_O$ the stabilizer of $O$ and we will denote by either $L_O$ or $L_y$ the group of characters of $G_O$.\index{special orbit}\index{Glocaization@$G$-localization}

\begin{lemma}\label{toroidalcenteraction}
Let $(X,U)$ be a toroidal scheme with center $C$, and assume that $G=\bfD_L$ acts locally on $X$ with closed orbit $O$ so that $U$ is $G$-equivariant. Then the $G$-equivariant maps $\pi_0(O)\to\pi_0(C)\to\pi_0(X)$ are surjective. In particular, $G$ acts transitively on the set of connected components of $C$ and $X$, and if $O$ is connected (e.g. the action is strictly local) then $C$ and $X$ are integral.
\end{lemma}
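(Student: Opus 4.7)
My plan is to first locate the closed orbit $O$ inside the center $C$, and then to deduce both surjectivity statements from the observation that every nonempty $G$-invariant closed subset of $X$ contains $O$, which is immediate from the local action hypothesis. Once surjectivity is in hand, the rest of the lemma follows by a $G$-transitivity argument together with normality of $X$ and regularity of $C$.

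For the inclusion $O\subseteq C$: since $U$ is $G$-stable, the logarithmic structure on $X$ is $G$-equivariant, hence the rank function $r(x)=\rk\ocM_{X,x}$ is $G$-invariant, so each orbit lies in a single logarithmic stratum. For an arbitrary $y\in X$, locality gives $O\subseteq\overline{G\cdot y}$, so some $o\in O$ is a specialization of some $y'\in G\cdot y$; the surjective cospecialization map appearing in the proof of Lemma~\ref{centerlemma} forces $r(o)\geq r(y')=r(y)$. Thus $O$ lives in the top stratum $X(\rk\ocM_X)$, which is closed (specializations can only increase the rank), so its connected components are closed components of strata, and we obtain $X(\rk\ocM_X)\subseteq C$, in particular $O\subseteq C$. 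This yields the two $G$-equivariant inclusion-induced maps $\pi_0(O)\to\pi_0(C)\to\pi_0(X)$.

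For surjectivity of $\pi_0(O)\to\pi_0(C)$, I would use that $X$ is Noetherian, so $C$ has only finitely many components $C_i$. For each such $C_i$, the union $\bigcup_{g\in G}g\cdot C_i$ is a nonempty $G$-invariant closed subset of $X$, and therefore contains $O$: for any $z$ in it, the orbit closure $\overline{G\cdot z}$ is $G$-invariant, closed, and contains $O$ by the local action hypothesis. Hence some translate $g\cdot C_i$ meets $O$, and $G$-invariance of $O$ upgrades this to $C_i\cap O\neq\emptyset$. For surjectivity of $\pi_0(C)\to\pi_0(X)$, given a connected component $X_j\subseteq X$, note that it is clopen in the affine Noetherian $X$, hence itself a Noetherian toroidal scheme; its top stratum $X_j(r_j)$ with $r_j=\rk\ocM_{X_j}$ is nonempty, closed in $X_j$ (again by the increase of rank under specialization), and therefore closed in $X$. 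Since $X_j$ is open in $X$, the components of $X_j(r_j)$ are components of the stratum $X(r_j)$, so they are closed components of strata of $X$, and hence lie in $C$, giving $C\cap X_j\neq\emptyset$.

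Finally, since $O$ is a single $G$-orbit, $G$ acts transitively on $\pi_0(O)$, and the two $G$-equivariant surjections transport this transitivity to $\pi_0(C)$ and $\pi_0(X)$. If $O$ is connected, then all three $\pi_0$ sets are singletons, so $C$ and $X$ are connected; combined with regularity of $C$ (Lemma~\ref{toroidalcenter}) and normality of $X$ (logarithmically regular schemes are normal), they are integral. The step I expect to be the most delicate is the placement of $O$ in the top stratum: it requires carefully combining $G$-invariance of $r$ with the direction of the cospecialization map, which runs from the specialization to the generization and thus yields $r(o)\geq r(y)$ in the correct direction.
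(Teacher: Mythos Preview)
Your proof is correct and rests on the same key observation as the paper's: any nonempty closed $G$-invariant subset of $X$ must contain the unique closed orbit $O$. The paper's argument is slightly more streamlined in that it proves surjectivity of $\pi_0(O)\to\pi_0(C)$ and $\pi_0(O)\to\pi_0(X)$ by a single contradiction (if a component $V$ of $C$ or of $X$ missed $O$, its $G$-saturation $\overline{V}$ would be closed, $G$-stable, and disjoint from $O$), whereas you handle $\pi_0(C)\to\pi_0(X)$ separately via the top stratum of each $X_j$; both routes are fine. On the other hand, you supply something the paper leaves implicit, namely the verification that $O\subseteq C$ via the rank argument, which is needed for the map $\pi_0(O)\to\pi_0(C)$ to make sense at all.
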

\begin{proof}
Recall that the connected components of $X$ and $C$ are integral since $C$ is regular by Lemma~\ref{toroidalcenter} and $X$ is normal by \cite[Theorem 4.1]{Kato-toric}. It suffices to show that the maps $\pi_0(O)\to\pi_0(C)$ and $\pi_0(O)\to\pi_0(X)$ are surjective. Assume to the contrary that $V$ is a connected component of $C$ or $X$ which is disjoint from $O$. Then the union $\oV$ of all $G$-translates of $V$ is disjoint from $O$. On the other hand, $\oV$ is a closed $G$-equivariant subscheme and hence contains $O$, a contradiction.
\end{proof}

In case of such local action of $G$ on $X$, the quotient group is $G/G_O=\bfD_{K_O}$ with $K_O = \Ker(L \to L_O)$. If $X = \Spec A$ and $O = \Spec A'$ then $A'$ has no non-trivial $G$-homogeneous ideals. In particular for every $n\in K_O$ the homogeneous submodule  $A'_n$ contains a unit. On the other hand, elements having weight in $L \setminus K_O$ vanish on $O$ hence they are not units in $A$. This implies the following  lemma:

\begin{lemma}\label{Lem:unit-criterion} The submodule  $A_n$ contains a unit if and only if $n\in \Ker(L \to L_O)$. \end{lemma}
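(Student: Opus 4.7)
The plan is to reduce the lemma to two independent observations about the graded ring $A' = A/m$, where $m$ is the unique maximal $L$-homogeneous ideal of the $L$-local ring $A$. The key preliminary I will use throughout is the standard fact about $L$-local rings that a homogeneous element $u\in A_n$ is a unit if and only if $u\notin m$: the ideal $(u)$ is automatically homogeneous, so if $u\notin m$ then $(u)=A$ by the maximality of $m$ among homogeneous ideals. Consequently, $A_n$ contains a unit if and only if the image of $A_n$ in $A'$ is nonzero, i.e.\ $A'_n\neq 0$.

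For the ``only if'' direction I would observe that since $G$ is abelian and $G_O$ stabilizes the closed orbit $O=\Spec A'$, the subgroup $G_O$ in fact acts trivially on $A'$. Hence the $G$-action on $A'$ factors through $G/G_O=\bfD_{K_O}$, and the $L$-grading on $A'$ is supported in $K_O$. In particular $A'_n=0$ for every $n\in L\setminus K_O$, ruling out units of such weight in $A$.

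For the ``if'' direction I would combine two points. First, since $O$ is a single $G$-orbit, $A'$ has no non-trivial $G$-homogeneous ideals, and the same ``$(u)$ is homogeneous'' argument as above, now applied inside $A'$, shows that every nonzero homogeneous element of $A'$ is a unit. Second, I need that $A'_n\neq 0$ for every $n\in K_O$; granting this, a nonzero element of $A'_n$ is a unit that lifts to a unit of $A_n$. To prove nonvanishing, let $N\subseteq K_O$ be the support of the grading on $A'$, which is a subgroup since nonzero homogeneous elements of $A'$ are units. Then the $G$-action on $A'$ factors through $\bfD_N$, so the kernel $\bfD_{L/N}$ of $\bfD_L\to\bfD_N$ acts trivially on $O$; by the maximality of the stabilizer this kernel is contained in $G_O=\bfD_{L/K_O}$, which by Cartier duality forces $K_O\subseteq N$ and hence $N=K_O$.

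The one step that carries any real content is this last nonvanishing assertion; every other piece is a direct consequence of the general properties of $L$-local rings recalled in \cite[4.4, 4.5]{ATLuna} and of the fact that $A'$ is ``$L$-simple''. If the Cartier-duality argument feels heavy, one can instead invoke the structure theorem that $O\to\Spec A'_0$ is a $G/G_O$-torsor and so decomposes $A'$ as $\bigoplus_{n\in K_O} \cL_n$ with each $\cL_n$ an invertible $A'_0$-module, which is nonzero for every $n\in K_O$ by construction.
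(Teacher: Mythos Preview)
Your proof is correct and follows the same approach as the paper: both arguments reduce to the graded quotient $A'=A/m$, use that homogeneous elements outside $m$ are units, observe that $G_O$ acts trivially on $O$ so $A'_n=0$ for $n\notin K_O$, and use that $A'$ has no nontrivial homogeneous ideals so any nonzero homogeneous element of $A'$ is a unit. The paper states the lemma with its proof in the preceding paragraph and is quite terse; in particular it asserts ``for every $n\in K_O$ the homogeneous submodule $A'_n$ contains a unit'' without isolating the nonvanishing step $A'_n\neq 0$, whereas you explicitly justify this via the maximality of the stabilizer (your Cartier-duality argument) or the $G/G_O$-torsor structure of $O$, which is a useful clarification.
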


\subsubsection{$G$-simple actions on toroidal schemes}\label{simplesec}
We introduced $G$-simple actions on logarithmic schemes in Section \ref{Sec:G-simple}. Here we study $G$-simple actions on \emph{toroidal} schemes.

\begin{lemma}\label{simpletoroidallem}
Assume that a diagonalizable group $G$ acts on a toroidal scheme $(X,U)$ in a relatively affine manner. Let $x\in X$ be a point with stabilizer $G_x=\bfD_{L_x}$ and $X_{x,G_x}=\Spec(A)$ the $G_x$-localization at $x$. Then the action is $G$-simple at $x$ if and only if any integral toroidal Weil divisor of $X_{x,G_x}$ is $G_x$-equivariant.
\end{lemma}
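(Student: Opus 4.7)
The plan is to translate both conditions into monoid-theoretic statements about $M := \ocM_{X,x}$, which is a sharp toric monoid, and its set $H$ of height one prime ideals, and then conclude by an elementary monoid argument. Since $G_x$ fixes $x$, it acts on $\cM_{X,x}$ and on $\cO_{X,x}^\times$, and hence on the quotient $M$; by the definition in Section~\ref{Sec:G-simple} the action is $G_x$-simple at $x$ exactly when this induced action on $M$ is trivial. On the other hand, $X_{x,G_x}$ is $G_x$-local with unique closed orbit $\{x\}$, so every integral closed subscheme of $X_{x,G_x}$ contains $x$; applying Lemma~\ref{toroidaldivisorslem}(ii) to the further localization $\Spec \cO_{X,x}$ yields a $G_x$-equivariant bijection between $H$ and the integral toroidal Weil divisors of $X_{x,G_x}$. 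Thus the divisor condition in the statement becomes: every $\fp \in H$ is $G_x$-stable.

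One direction of the equivalence is then immediate, since a trivial action fixes every subset. For the converse, I would fix $\fp \in H$ and observe that $G_x$-stability of $\fp$ forces $G_x$-stability of the facet $F_\fp := M \setminus \fp$, hence of the subgroup $F_\fp^\gp \subseteq M^\gp$. The induced $G_x$-action on $M^\gp / F_\fp^\gp \cong \ZZ$ then preserves the submonoid $\NN$ (the image of $M$), and since the identity is the only monoid automorphism of $\NN$, the valuation $\nu_\fp$ is $G_x$-invariant for every $\fp \in H$.

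To finish I would use that, because $M$ is sharp and toric, the rational polyhedral cone $M_\RR \subseteq M^\gp \otimes_\ZZ \RR$ is pointed; consequently its facet normals span the dual space and $\bigcap_{\fp \in H} \ker(\nu_\fp) = 0$ in $M^\gp$, so the valuation map $M \hookrightarrow \prod_{\fp \in H} \NN$, $m \mapsto (\nu_\fp(m))_\fp$, is injective. Combined with the $G_x$-invariance of each $\nu_\fp$, this yields $gm = m$ for every $g \in G_x$ and $m \in M$, giving $G_x$-simplicity at $x$. The only mildly delicate point is verifying the $G_x$-equivariant correspondence between $H$ and the integral toroidal Weil divisors of the $G_x$-local (rather than ordinary local) scheme $X_{x,G_x}$; the remaining monoid theory is routine.
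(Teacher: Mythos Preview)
Your approach is correct and takes a somewhat different route from the paper's. The paper works via Lemma~\ref{toroidaldivisorslem}(iii): since $M$ is in bijection with effective toroidal Cartier divisors, $G$-simplicity at $x$ is equivalent to $G_x$-equivariance of every toroidal Cartier divisor, and since $X$ is normal this is implied by equivariance of all integral toroidal Weil divisors. For the converse the paper argues by contrapositive, taking a non-equivariant integral toroidal divisor $E$, showing $x\in E$ by the orbit-closure trick, and reading off a nontrivial $G_x$-permutation of the height-one primes of $M$. You instead reduce both sides directly to monoid statements---trivial action on $M$ versus pointwise fixing $H$---and handle the nontrivial direction with the injectivity of the valuation map $m\mapsto(\nu_\fp(m))_{\fp\in H}$. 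This is a clean combinatorial substitute for the Cartier-divisor intermediary and makes the argument entirely self-contained in monoid theory.

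One statement needs correcting. Your claim that ``every integral closed subscheme of $X_{x,G_x}$ contains $x$'' is false in general: the ring $A$ is $L_x$-local but need not be local, so there may be closed points other than $x$. What you actually need, and what does hold, is that every \emph{integral toroidal Weil divisor} of $X_{x,G_x}$ contains $x$. This is precisely the point you flag as delicate, and it is established by the orbit argument the paper uses: any irreducible component $E$ of $D$ has a $G_x$-orbit which is a finite union of components of $D$, hence closed and $G_x$-stable, hence contains the unique closed orbit $\{x\}$; since $x$ is $G_x$-fixed, $x\in E$. With this in place your bijection between $H$ and the integral toroidal Weil divisors of $X_{x,G_x}$ is valid, it is $G_x$-equivariant by canonicity of the correspondence in Lemma~\ref{toroidaldivisorslem}(ii), and your valuation argument goes through.
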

\begin{proof}
We can replace $X$, $U$, and $G$ with $X_{x,G_x}$, $U\times_XX_{x,G_x}$, and $G_x$, respectively, so that the action on $X=\Spec(A)$ is strictly local. We will now use Lemma~\ref{toroidaldivisorslem}, but this should be done carefully since the scheme $X$ does not have to be a local scheme. By \ref{toroidaldivisorslem}(iii), the action is $G$-simple if and only if all effective toroidal Cartier divisors are equivariant. This implies the inverse implication. Conversely, assume that a toroidal integral divisor $E$ is not equivariant. The union of all $G$-translates of $E$ is closed and equivariant, therefore it contains $x$, hence $x\in E$. Applying Lemma~\ref{toroidaldivisorslem}(ii) we obtain that $G$ acts non-trivially on the set of prime ideals of $\ocM_{X,x}$, hence it acts non-trivially on $\ocM_{X,x}$ and the action is not $G$-simple.
\end{proof}

\subsubsection{The graded monoid  $\oM_O$ of a special orbit $O$}
If $G$ acts simply on $(X,U)$ then to any special orbit $O$ one can associate a canonical $L_O$-graded toric monoid $\oM_O$:

\begin{lemma}\label{simpletoroidallem2}
Assume that a toroidal scheme $(X,U)$ is provided with a relatively affine action of $G=\bfD_L$, and let $O$ be a special orbit with stabilizer $G_O=\bfD_{L_O}$ and $G$-localization $X_O=\Spec(A)$.

(i) The action induces a canonical isomorphism between all the stalks $\ocM_{X,x}$ with $x\in O$. In other words, the monoid $\oM_O:=\ocM_{X,x}$ is independent of the choice of $x\in O$.

(ii) Any element $\om\in\oM_{O}$ admits a lifting $f\in\cM_X(X_O)\subset A$ which is $l$-homogeneous for some $l\in L$. Moreover, the image of $l$ in $L_O$ is uniquely determined by $\om$ and the resulting map $\chi_O\:\oM_O\to L_O$ is a homomorphism.
\end{lemma}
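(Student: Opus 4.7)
\emph{Plan.} My plan treats parts (i) and (ii) in turn, with the substance concentrated in (ii).

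For (i), I combine two ingredients. First, by Lemma~\ref{centerlemma} the sheaf $\ocM_X$ is locally constant on connected components of logarithmic strata, and $O$ lies in a single stratum since $r(x)$ is constant on the orbit; this identifies the stalks canonically within each connected component of $O$. Second, since $O$ is a single $G$-orbit, $G$ acts transitively on $\pi_0(O)$, so pullback along any $g\in G$ carrying $x$ to $x'$ provides an identification across components. Canonicity uses $G$-simplicity: two choices of $g$ differ by an element of the stabilizer $G_O=G_x$, which acts trivially on $\ocM_{X,x}$ by $G$-simplicity, so the resulting isomorphism is independent of the choice of $g$.

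For part (ii), I will first establish existence of an $L$-homogeneous lift. Starting with an arbitrary germ lift $\mu\in\cM_{X,x}$ (available because $\cM_{X,x}\onto\ocM_{X,x}$), I will write $\mu=a/b$ with $a,b\in A$ and $b\notin\m_x$, and decompose $a,b$ into $L$-homogeneous parts with respect to $A=\bigoplus_l A_l$. The $G_O$-invariance of $\om$ (which follows from $G$-simplicity) combined with Lemma~\ref{Lem:unit-criterion} should single out a unique distinguished image $\chi\in L_O$ as the weight of a good homogeneous component, and extracting that component produces a homogeneous element $F\in A$ still lifting $\om$ at $x$. To verify $F\in\cM_X(X_O)$, I will invoke the geometric fact that a $G$-equivariant closed subset of the $G$-local scheme $X_O$ must meet the unique closed orbit $O$; since the vanishing locus of a homogeneous $F$ is $G$-equivariant, its behavior at the stalk at $x$ already forces $V(F)\subset D\cap X_O$.

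Uniqueness of $\chi_O(\om)\in L_O$ and the homomorphism property are then formal. If $F_1,F_2$ are two homogeneous lifts of weights $l_1,l_2$, their ratio is a unit in $\cO_{X,x}$; by the same $G$-equivariance argument their Weil divisors on $X_O$ coincide (both are $G$-equivariant and agree on the local ring at $x$), so $F_1/F_2\in A^\times$ is a homogeneous unit of weight $l_1-l_2$, and Lemma~\ref{Lem:unit-criterion} forces $l_1-l_2\in\Ker(L\to L_O)$. Thus $\chi_O(\om)\in L_O$ is well defined, and multiplicativity of weights under the product $F_1F_2$ gives the homomorphism property. The main obstacle will be the existence step: extracting a single homogeneous component of $\mu=a/b$ that simultaneously represents $\om$ in $\ocM_{X,x}$ and extends to a section of $\cM_X$ on all of $X_O$, since individual homogeneous parts of $a$ and $b$ need not lie in $\cM$, and the maximal ideal $\m_x$ is not itself $L$-homogeneous.
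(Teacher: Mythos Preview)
Your treatment of part~(i) matches the paper's argument essentially verbatim.

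For part~(ii), your uniqueness argument is close in spirit to the paper's, though the execution differs. You argue that the two equivariant divisors $V(F_1),V(F_2)$ agree at $x$ and hence, by $G$-equivariance and the fact that the $G$-translates of $\Spec\cO_{X,x}$ cover $X_O$, agree globally; the paper instead notes that $F_1\cO_{X,x}=F_2\cO_{X,x}$ means the inclusion of graded $A$-modules $F_1A\hookrightarrow F_1A+F_2A$ is an isomorphism modulo $k(x)$, and applies graded Nakayama \cite[Prop.~1.6.4(ii)]{ATLuna} to get $F_1A=F_2A$, whence $F_1=uF_2$ for a homogeneous unit $u$. Both routes land on Lemma~\ref{Lem:unit-criterion}.

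The existence step, however, has a genuine gap --- and you have correctly identified it yourself. Writing a lift $\mu=a/b$ and decomposing $a,b$ into $L$-homogeneous pieces does not give you a usable mechanism: the prime $\mathfrak p_x$ is not $L$-homogeneous, so no single homogeneous component of $b$ need avoid $\mathfrak p_x$; and homogeneous components of $a$ have no reason to lie in $\cM_X$ or to differ from $\mu$ by a unit at $x$. Invoking ``$G_O$-invariance of $\om$ combined with Lemma~\ref{Lem:unit-criterion}'' does not single out a component, because Lemma~\ref{Lem:unit-criterion} only tells you about units, not about which pieces of a non-unit survive.

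The paper avoids this entirely by working geometrically rather than trying to dissect a given lift. It lifts $\om$ to a toroidal Cartier divisor $E_x$ on $\Spec(\cO_{X,x})$ via Lemma~\ref{toroidaldivisorslem}(iii), takes its Zariski closure $E\subseteq X_O\setminus U$ (a Weil divisor, Cartier at $x$), and then uses $G$-simplicity to form the finite union $\oE$ of the $G$-translates of $E$. This $\oE$ is a $G$-equivariant effective Cartier divisor on the $L$-local scheme $X_O$, and \cite[Prop.~1.6.5]{ATLuna} then produces a single homogeneous equation $f\in A_l$ with $\oE=V(f)$. The key idea you are missing is thus: do not try to extract a homogeneous piece of an existing lift; instead \emph{construct a $G$-equivariant divisor first}, and appeal to the structure theory of equivariant Cartier divisors on $L$-local schemes to obtain a homogeneous generator.
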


\begin{proof}
Replacing $X$ by $X_O$ we can assume that the action is local. By Lemma~\ref{toroidalcenteraction} any connected component $O'$ of the orbit lies in a connected component $C'$ of the center. By Lemma~\ref{centerlemma} the stalks $\ocM_{X,x}$ are naturally identified for $x\in C'$, hence the same is true for $x\in O'$. Since $G$ acts transitively on $\pi_0(O)$, we also obtain isomorphisms between the stalks of $\ocM_X$ for different components of $O$, and these isomorphisms are unique because the action is simple. This proves (i).

By Lemma~\ref{toroidaldivisorslem}(iii), locally at a point $x\in O$ we can lift $\om$ to a toroidal Cartier divisor $E_x\subset\Spec(\cO_{X,x})$. The Zariski closure of $E_x$ is a Weil divisor $E\subseteq X\setminus U$ which is Cartier at $x$. Since the action is simple, there are finitely many disjoint $G$-translates of $E$ and their union $\oE$ is $G$-equivariant. By \cite[Proposition~1.6.5]{ATLuna} we obtain that $\oE=V(f)$ for a homogeneous element $f\in A_l$. Let us check that the image of $l$ in $L_O$ depends only on $\om$. If $h\in A_{n}$ is another lifting then $V(f)$ and $V(h)$ coincide at $x$ and hence $f\cO_{X,x}=h\cO_{X,x}$. Thus, the embedding of graded $A$-modules $fA\into fA+hA$ becomes an isomorphism after tensoring with $k(x)$, and hence $fA=fA+hA$ by the graded Nakayama's lemma \cite[Proposition~1.6.4(ii)]{ATLuna}. In the same manner, $hA=fA+hA$ and we obtain that $f=uh$ for a homogeneous unit $u\in A^\times$. Since $A$ is $L$-local we may apply Lemma \ref{Lem:unit-criterion},  hence the degree of $u$ lies in $\Ker(L \to L_O)$. Thus, the map $\chi_O\:\ocM_{X,x}\to L_O$ is well defined, and then it is obviously a homomorphism.
\end{proof}

\subsubsection{A local characterization of $G$-simple actions}
We proved in Lemma~\ref{simplelem}(iii) that any action admitting a toroidal chart is $G$-simple. Here is a result in the opposite direction.

\begin{proposition}\label{simpleprop}
Assume that a toroidal scheme $(X,U)$ is provided with a local $G$-simple action of a diagonalizable group $G=\bfD_L$. Then,

(i) There exists a sharp central equivariant toroidal chart $(X,U)\to(\bfA_{\oM_O},\bfA_{\oM^\gp_O})$, where $O$ is the closed orbit.

(ii) Let $K_O^\tor$ denote the torsion group of $K_O=\Ker(L\to L_O)$. If $N=|K_O^\tor|$ is invertible on $X$ then there exists a central equivariant toroidal chart $$(X,U)\times\bfD_{K_O^\tor}\longrightarrow(\bfA_{\oM_O\oplus K_O},\bfA_{\oM^\gp_O\oplus K_O})\times\Spec\ZZ[1/N]$$ which is fixed-point reflecting along the preimage of $O$. In particular, the chart is \'etale-local on $X$, and if $K_O$ is torsion free (e.g. if the action is strictly local or if $L$ is torsion free) then the chart is global on $X$.
\end{proposition}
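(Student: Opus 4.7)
The plan is to construct the charts by picking homogeneous lifts in $A:=\Gamma(X,\cO_X)$ of generators of $\oM_O$ and, for (ii), of $K_O$, and then trivializing the obstruction to compatibility using the freeness of the relevant groupifications.

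For (i), since $\oM_O$ is toric, its groupification $\oM_O^\gp$ is a finitely generated free abelian group and the surjection $L\twoheadrightarrow L_O$ admits a homomorphic lift $\tilde\chi\:\oM_O\to L$ of $\chi_O$. For a generating set $\om_1,\dots,\om_k$ of $\oM_O$, Lemma~\ref{simpletoroidallem2}(ii) produces an $l_i$-homogeneous lift $f_i^0\in\cM_X(X)$ with $l_i\equiv\chi_O(\om_i)\pmod{K_O}$, and Lemma~\ref{Lem:unit-criterion} provides a unit of weight $\tilde\chi(\om_i)-l_i\in K_O$, so multiplying yields a lift $f_i$ of weight exactly $\tilde\chi(\om_i)$. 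The assignment $\NN^k\to A$, $e_i\mapsto f_i$, descends to $\oM_O$ if and only if for every $\nu$ in the relation lattice $R:=\Ker(\ZZ^k\to\oM_O^\gp)$ one has $\prod_i f_i^{\nu_i}=1$.

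For $\nu\in R$ the expression $\prod f_i^{\nu_i}$ has weight $\sum\nu_i\tilde\chi(\om_i)=0$ and maps to zero in $\oM_O^\gp$, hence is a weight-zero unit $c(\nu)\in A_0^\times$, and $\nu\mapsto c(\nu)$ defines a homomorphism $c\:R\to A_0^\times$. Replacing $f_i$ by $v_if_i$ with $v_i\in A_0^\times$ preserves weights and multiplies $c(\nu)$ by $\prod v_i^{\nu_i}$. Since $\oM_O^\gp$ is free, the sequence $0\to R\to\ZZ^k\to\oM_O^\gp\to 0$ splits and $c^{-1}$ extends to a homomorphism $\ZZ^k\to A_0^\times$; choosing the $v_i$ to be its values on the basis trivializes the new $c$. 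The resulting weight-$\tilde\chi$ homogeneous homomorphism $\phi\:\oM_O\to\cM_X(X)$ reduces to the identity on $\ocM_{X,x}=\oM_O$ at every $x\in O$, and is therefore the sharp central equivariant toroidal chart required.

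For (ii), $\phi$ gives the chart on the $\oM_O$ factor and it remains to produce a weight-preserving group homomorphism $K_O\to B^\times$ where $B:=A\otimes_\ZZ\ZZ[K_O^\tor]$. Split $K_O=K_O'\oplus K_O^\tor$ as an abelian group with $K_O'$ free; by Lemma~\ref{Lem:unit-criterion}, together with freeness of $K_O'$ to eliminate compatibility, one assembles a weight-preserving homomorphism $K_O'\to A^\times$. For the torsion part, the factor $\ZZ[K_O^\tor][1/N]$ is tautologically $L$-graded via $K_O^\tor\hookrightarrow L$, and $k\mapsto e_k$ is a weight-preserving homomorphism $K_O^\tor\to B^\times$. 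Combining gives $K_O\to B^\times$, and $(m,k)\mapsto\phi(m)\cdot u_k$ is a weight-preserving chart $\oM_O\oplus K_O\to B$: the associated log structure agrees with the one induced by $\phi$ alone because $K_O$ lands in units.

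Fixed-point reflectivity along $O\times\bfD_{K_O^\tor}$ follows from a direct stabilizer computation: $G$ acts on $\bfD_{K_O^\tor}$ by translation via $\bfD_L\twoheadrightarrow\bfD_{K_O^\tor}$ with kernel $\bfD_{L/K_O^\tor}\supseteq G_O$, so the stabilizer of any $(o,d)$ equals $G_O$; the image lies in the closed stratum $\bfD_{K_O}\subset\bfA_{\oM_O\oplus K_O}$, on which $G$ acts via $\bfD_L\twoheadrightarrow\bfD_{K_O}$ with kernel $G_O$ as well. The \'etale-local and global assertions then follow from $\bfD_{K_O^\tor}$ being finite \'etale over $\Spec\ZZ[1/N]$ and trivial whenever $K_O^\tor=0$, which occurs when $L$ is torsion free and in the strictly local case (where $G_O=G$ forces $K_O=0$). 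The main technical hurdle is the obstruction-theoretic cocycle argument in (i); it succeeds precisely because $\oM_O^\gp$ is free, which is what makes the toric hypothesis essential.
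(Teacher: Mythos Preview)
Your approach is essentially the same as the paper's: both lift $\chi_O$ to $L$, use Lemma~\ref{simpletoroidallem2}(ii) to obtain homogeneous lifts of elements of $\oM_O$ that are unique up to units in $A_0^\times$, and exploit the freeness of $\oM_O^\gp$ to assemble a genuine homomorphism; for (ii) both split $K_O$ and combine the tautological map on $K_O^\tor$ with weight-preserving units on the free part. The only cosmetic difference is that the paper chooses a $\ZZ$-basis of $\oM_O^\gp$ lying in $\oM_O$ and works inside the monoid $Q$ of homogeneous non-zero-divisors (checking via normality and Lemma~\ref{toroidalcenteraction} that lifts lie in $Q$), whereas you pick monoid generators and trivialize the resulting 1-cocycle $c\:R\to A_0^\times$ via a splitting of $0\to R\to\ZZ^k\to\oM_O^\gp\to 0$; note also that verifying $\phi$ is a chart at points \emph{outside} $O$ needs the cospecialization argument the paper supplies.
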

\begin{remark}
Without the torsion assumption in part (ii), the same proof as given below shows that charts as in Proposition~\ref{simpleprop}(ii) exist without inverting $N$. In addition, they are only flat-local on $X$. Such charts may be useful but they are not toroidal in our sense since $\Spec(\ZZ[\oM_O\oplus K_O])$ is not toroidal over the primes that divide the torsion order of $K_O$.
\end{remark}
\begin{proof}
(i) By Lemma~\ref{simpletoroidallem2} we have a grading $\chi_O\:\oM_O\to L_O$. Since $\oM^\gp$ is a lattice, $\chi_O$ factors through a finer grading $\chi\:\oM_O\to L$ and we fix any such $\chi$. Recall that any $A_l$ with $l\in K_O$ contains a unit by Lemma~\ref{Lem:unit-criterion}. By Lemma~\ref{simpletoroidallem2} any $t\in \oM_O$ admits a homogeneous lifting $h(t)\in A_l$ and multiplying it by a unit from $A_{\chi(t)-l}$ we can achieve that $h(t)\in A_{\chi(t)}$, thereby obtaining a homogeneous map $h\:\oM_O\to A$. Since $h(t)$ is unique up to a unit of degree zero, for any $a,b\in \oM_O$ there exists a unit $u(a,b)\in A_0^\times$ such that $h(a)h(b)=h(a+b)u(a,b)$. Our next goal is to replace $h$ by a homogeneous homomorphism.

Let $Q$ be the set of all homogeneous elements of $A$ that are not zero divisors. Clearly, $Q$ is an integral monoid, i.e. $Q\subseteq Q^\gp$. We claim that for all $t\in \oM_O$ we have $h(t)\in Q$. Indeed,  by \cite[Th. 4.1]{Kato-toric} the scheme $X$ is normal, so it suffices to show that $h(t)$ does not vanish on any connected component $X'$ of $X$. This follows from Lemma~\ref{centerlemma}, since the component $X'$ intersects $O$, as  the action is assumed local.

Note that $\oM_O^\gp=\ZZ^r$ possesses a basis $t_1\.t_r\in \oM_O$. Let $\phi\:\oM_O^\gp\to Q^\gp$ be the homomorphism with $\phi(t_i)=h(t_i)$, then for any $t\in \oM_O$ the element $\phi(t)$ can be obtained from $h(t)$ by multiplying it with units of the form $u(a,b)^{\pm 1}$. In particular, $\phi(t)\in Q\subset A$ is another homogeneous lifting of $t$, and the map $\phi\:\oM_O\to A$ is a homomorphism.

We claim that $\phi$ is a homogeneous monoidal chart. By construction, the logarithmic structure induced by $\phi$ embeds into $\cM_X$, hence it suffices to show that the homomorphism $\phi_y\:\oM_O\to\ocM_{X,y}$ is onto for any $y\in X$. Furthermore, by Lemma~\ref{centerlemma}, we can replace $y$ with a generic point of its logarithmic stratum. By Lemma~\ref{toroidalcenteraction} $y$ specializes to a point $x\in O$ and it remains to use the fact that the cospecialization map $\oM_O=\ocM_{X,x}\to\ocM_{X,y}$ is surjective. We proved that $\phi$ is a sharp chart, and $\phi$ is central because its center contains $O$.

(ii) Set $A'=A\otimes\ZZ[K_O^\tor])$ so that $X\times\bfD_{K_O^\tor}=\Spec(A')$. Choose a splitting $K_O=\Lambda\oplus K_O^\tor$. By Lemma~\ref{Lem:unit-criterion} any $A_l$ with $l\in K_O$ contains a unit, hence we can find a homomorphism $\psi_\Lambda\:\Lambda\to A'^\times$ such that $\psi(l)\in A'_l$. Combining $\psi_\Lambda$ with the composed map $K_O^\tor\into\ZZ[K_O^\tor]\to A'^\times$ we obtain a homogeneous homomorphism $\psi\:K_O\to A'^\times$. In particular, a homogeneous monoidal chart $(\phi,\psi)\:\oM_O\oplus K_O\to A'$ arises and it induces an equivariant toroidal chart of $(X,D)\times\bfD_{K_O^\tor}$. The chart is central because $\bfA_{\oM_O}$ is a central toroidal chart of both $X$ and $\bfA_{\oM_O\oplus K_O}$, and the chart is fixed-point reflecting along the preimage of $O'$ because all relevant stabilizers are easily seen to be equal to $\bfD_{L_O}$.
\end{proof}

\subsubsection{Toroidal actions on toroidal schemes}\label{Sec:toroidal-action-toroidal}
We defined toroidal actions of diagonalizable groups on a wide class of logarithmic schemes in Section \ref{toroidalactionsec}. In the particular case when the group acts on a toroidal scheme, one obtains a generalization of the definition of toroidal action of \cite{AKMW}. A basic example of a toroidal action is provided by Lemma~\ref{toroidalactionlem}(iii).

\subsubsection{Formal-local description of toroidal actions}
The following result will be our main tool for studying toroidal schemes with toroidal actions; it extends Lemma~\ref{Katolocal} to the equivariant case.

\begin{theorem}\label{flcilem}
Assume that a diagonalizable group $G=\bfD_L$ acts strictly locally on a toroidal scheme $(X,U)$ so that the action is toroidal at the $G$-invariant closed point $x$.\footnote{For example, this is automatically the case when the center of $X$ coincides with $x$.} Assume that $f\:(X,U)\to(\bfA_{M},\bfA_{M^\gp})$ is a sharp central equivariant chart and write $y=f(x)$. Let $X=\Spec(A)$, let $\hatA$ be the completion of $A$ at $m_x$, let $C_x$ and $C_y$ be the Cohen rings of $k(x)$ and $k(y)$,  and let $R=C_y[M]$. Then,

(i) The factorization
$$\xymatrix@R=3mm{C_y\llbracket M\rrbracket & C_x\llbracket M\rrbracket\llbracket t_1\. t_r\rrbracket \\
 \hatR\ar@{=}[u] \ar[r]^\phi & B\ar@{=}[u] \ar@{->>}[r]^\psi & \hatA
}$$
in Lemma~\ref{Katolocal} can be chosen so that the homomorphisms $\phi$ and $\psi$ are homogeneous in the sense of \cite[Section 4.5.5]{ATLuna} and the elements  $\theta$ and $t_1\. t_r$ are of degree zero.

(ii) For any factorization as in (i) write $M_0$ for the the trivially graded part of $M$. Then we have a description of the invariant subring
$$\hatA_0=B_0/(\theta)=C_x\llbracket M_0\rrbracket\llbracket t_1\. t_r\rrbracket/(\theta),$$ and $\dim(\hatA_0)=\rk(M_0)+r$.

(iii) The chart $f$ is strongly equivariant.

(iv) The quotient $(X_0,U_0)$=$(X\sslash G,U\sslash G)$ is a toroidal scheme and the morphism of quotients $f\sslash G\:(X_0,U_0)\to(\bfA_{M_0},\bfA_{M_0^\gp})$ is its toroidal chart.
\end{theorem}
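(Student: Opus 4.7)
The plan is to upgrade Kato's non-equivariant factorization (Lemma~\ref{Katolocal}) to a homogeneous one, and then read off parts (ii)--(iv) as direct consequences of the resulting formula. For (i), I would start from the observation that the action being toroidal at the $G$-invariant point $x$ forces $G_x=G$ to act trivially on the local ring $\cO_{X(r(x)),x}$ of the logarithmic stratum through $x$; in particular $k(x)$ carries the trivial $G$-action. Hence a Cohen subring $C_x\hookrightarrow\hatA$ can be chosen inside the degree-zero part, and a regular system of parameters $t_1'\.t_r'\in\cO_{X(r(x)),x}$ admits degree-zero (i.e.\ $G$-invariant) lifts $t_1\.t_r\in A$. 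Because $M$ is sharp and $f$ is central, the image $y=f(x)$ lies in $V(M^+)=\Spec\ZZ\subset\bfA_M$, so $k(y)$ is a prime field with trivial action and the resulting $\phi$ is automatically homogeneous. The surjection $\psi\:B\onto\hatA$ is then homogeneous as well. In mixed characteristic, I would replace the generator $\theta$ of $\Ker(\psi)$ by its degree-zero component $\theta_0$: the congruence $\theta_0\equiv p\pmod{(M^+,t_1\.t_r)}$ persists because the right-hand side lies in degree zero, and by the uniqueness clause at the end of Lemma~\ref{Katolocal}, $\theta_0$ also generates $\Ker(\psi)$.

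Given (i), part (ii) is immediate: since $\theta\in B_0$, multiplication by $\theta$ preserves the $L$-grading on $B=C_x\llbracket M\rrbracket\llbracket t_1\.t_r\rrbracket$, whence $\hatA=B/(\theta)$ inherits the $L$-grading and taking invariants commutes with quotienting by $\theta$, giving $\hatA_0=B_0/(\theta)=C_x\llbracket M_0\rrbracket\llbracket t_1\.t_r\rrbracket/(\theta)$. The dimension is $\rk(M_0)+r$ in characteristic zero (where $\theta=0$ and $C_x=k(x)$); in mixed characteristic, $B_0$ is an integral domain of dimension $1+\rk(M_0)+r$ and $\theta$ is a non-zero-divisor because $\theta\equiv p\not\equiv 0$ modulo the maximal ideal of $B_0$, giving the same dimension $\rk(M_0)+r$.

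For (iii) and (iv), I would exploit the graded structure of $B$. Writing $M=\bigsqcup_\lambda M_\lambda$ according to $h\:M\to L$, each $B_\lambda$ decomposes as $\bigoplus_{m\in M_\lambda}C_x\llbracket M_0\rrbracket\llbracket t_1\.t_r\rrbracket\cdot m$ as a $B_0$-module, and since $\theta\in B_0$ this descends to $\hatA_\lambda=\hatA_0\otimes_{\ZZ[M_0]}\ZZ[M_\lambda]$, i.e.\ a base-change identification $\hatA\cong\hatA_0\otimes_{\ZZ[M_0]}\ZZ[M]$. Translating back to schemes by faithful flatness of completion, this identifies the strictly local $X$ with $\bfA_M\times_{\bfA_{M_0}}(X\sslash G)$, which is the content of strong equivariance of $f$ for (iii). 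For (iv), the formula of (ii) is precisely Kato's local model applied to the sharp toric monoid $M_0$, so Theorem~\ref{localth} applies directly to $f\sslash G\:\Spec A_0\to\bfA_{M_0}$ and gives that $(X\sslash G,U_0)$ is toroidal with $f\sslash G$ a sharp central global toroidal chart.

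The main obstacle throughout is part (i), in particular arranging $\theta$ to lie in degree zero while retaining a generator of $\Ker(\psi)$; the rest of the theorem is essentially formal bookkeeping built on top of this homogeneous refinement of Kato's local description.
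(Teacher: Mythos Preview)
Your proposal is correct and follows essentially the same route as the paper. The paper's proof of (i) is identical: it uses that $G$ acts trivially on the center $\Spec(A/M^+A)$ to get degree-zero lifts $t_i$, notes that $B_l\subset M^+B$ for $l\neq 0$ so that $\theta_0\equiv\theta\equiv p\pmod{(M^+,t_1\.t_r)}$, and invokes the last clause of Lemma~\ref{Katolocal}. For (iii) the paper phrases the computation as $\hatA=\hatR\,\widehat\otimes_{C_x}C_x\llbracket t_1\.t_r\rrbracket/(\theta)$ and $\hatA_0=\hatR_0\,\widehat\otimes_{C_x}C_x\llbracket t_1\.t_r\rrbracket/(\theta)$, then cites \cite[Lemma~4.6.2]{ATLuna} to descend strong homogeneity from $\hat\alpha$ to $\alpha$; your module-theoretic identification $\hatA_\lambda\cong\hatA_0\otimes_{\ZZ[M_0]}\ZZ[M_\lambda]$ should really be a completed tensor, and your ``faithful flatness of completion'' is exactly what that lemma packages. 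For (iv) the paper verifies the hypotheses of Theorem~\ref{localth} geometrically via the Cartesian square coming from (iii) (showing $\fX_0\cong\fX$ is regular and $U_0=f_0^{-1}(\bfA_{M_0^\gp})$), whereas you read them off the formula of (ii); both work.
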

\begin{proof}
(i) By the equivariance, the completion $\widehat{\alpha}\:\hatR\to\hatA$ of $\alpha\:R\to A$ is homogeneous: $\hatA=\prod_{l\in L}\hatA_l$, $\hatR=\prod_{l\in L}\hatR_l$, and $\widehat{\alpha}$ respects these formal gradings, see \cite[Proposition 1.6.8]{ATLuna}. The action is toroidal at $x$, hence $G$ acts trivially on $\Spec(\cO_{\fX,x})$, where $\fX=C(X, \cM_X) = \Spec(A/M^+A)$ is the center of $X$. Thus, the regular parameters $t'_1\.t'_r\in\cO_{\fX,x}$ appearing in Lemma~\ref{Katolocal} are of degree zero and lifting them to homogeneous elements of $A$ we obtain a homogeneous factorization $\hatR\to C_x\llbracket M\rrbracket\llbracket t_1\. t_r\rrbracket\to\hatA$ with $t_i$ of degree zero. It remains to show that $\theta$ can be chosen of degree zero. When  ${\rm char}\ k(x)=0$ we have $\Ker(\psi)=0$ and the result is trivial, so assume that ${\rm char}\ k(x)=p>0$. First, choose any $\theta$ as in Lemma~\ref{Katolocal} and let $\theta_l$ be the homogeneous components of $\theta$. For any $0\neq l\in L$ we have that $B_l\subset M^+B$, hence $\theta_0\equiv\theta \equiv p \mod (M^+, t_1\. t_r)$. Since $\alpha\: B\to\hatA$ is homogeneous, $\theta_0$ is an element of $\Ker(\psi)$ and hence $\theta_0$ is also a generator by Lemma~\ref{Katolocal}. Thus, $\Ker(\psi)=(\theta_0)$ is as asserted.

(ii) Since $B_0=C_x\llbracket M_0\rrbracket\llbracket t_1,\ldots t_r\rrbracket$, the formula for $\hatA_0$ follows from (i). Also $$\dim(B_0)=\dim(C_x\llbracket M_0\rrbracket\llbracket t_1,\ldots t_r\rrbracket) =\rk(M_0)+r+\dim(C_x).$$ Note that $\dim(C_x)=0 = ht\left((\theta)\right)$ if ${\rm char}\ k(x)=0$ and $\dim(C_x)=1=ht\left((\theta)\right)$ otherwise. In either case $\dim(\hatA_0)=\rk(M_0)+r$ as required.

(iii) We should check that $\alpha$ is strongly homogeneous and by \cite[Lemma 4.6.2]{ATLuna} it suffices to check that the completion $\widehat\alpha$ is strongly homogeneous, i.e. $\hatA_0\wtimes_{\hatR_0}\hatR=\hatA$. But this holds since $\hatA=\hatR\wtimes_{C_x}C_y\llbracket t_1\. t_r\rrbracket/(\theta)$ by (i) and $\hatA_0=\hatR_0\wtimes_{C_x}C_y\llbracket t_1\. t_r\rrbracket/(\theta)$ by (ii).

(iv) Set $f_0=f\sslash G$, $(Y,V)=(\bfA_M,\bfA_{M^\gp})$ and $(Y_0,V_0)=(\bfA_{M_0},\bfA_{M_0^\gp})$. We verify that $f_0: X_0 \to Y_0$ satisfies the requirements of  Theorem~\ref{localth}, giving the statement. By part (iii), we have the following Cartesian square
$$
\xymatrix{
X\ar[r]^f\ar[d]_\alpha & Y\ar[d]^\beta\\
X_0\ar[r]^{f_0} &Y_0.
}
$$
Note that $U=f^{-1}(V)$,  $\beta(V) = V_0$, and $\alpha(U)=U_0$, hence $U_0=f_0^{-1}(V_0)$.

{\sc Step 1: \em  $\beta$ induces an isomorphism of centers $\fY\to\fY_0$. } Indeed, both are isomorphic to $\Spec(\ZZ)$ since $M$ and $M_0$ are sharp.

{\sc Step 2:  \em $\alpha$ induces an isomorphism of centers $\fX=\fY\times_YX$ and $\fX_0=\fY_0\times_{Y_0}X_0$}, and so $\fX_0$ is regular. Indeed $\fX \to \fX_0$ is a base change of $\fY\toisom\fY_0$.

{\sc Step 3: \em Computation of dimensions.}  Write $x_0=\alpha(x)$ and recall that $r=\dim \fX_x=\dim (\fX_0)_{x_0}$. Since $\cO_{X_0,x_0}=A_0$, we obtain from (ii) that $$\dim(\cO_{X_0,x_0})=\dim(\hatA_0)=\rk(M_0)+r=\rk(M_0)+\dim(\cO_{\fX_0,x_0}),$$ and hence Theorem~\ref{localth} applies to $f_0$.
\end{proof}

\subsubsection{Existence of strongly equivariant charts}
Recall that if $G$ acts on $(X,U)$ so that there exists a strongly equivariant chart then the action is toroidal, see Lemma~\ref{toroidalactionlem}(iv). In particular, Theorem~\ref{flcilem} applies to such an action. Moreover, Theorem~\ref{flcilem} implies that, strictly locally, existence of such charts characterizes toroidal actions:

\begin{corollary}\label{toroidalprop}
Assume that a diagonalizable group $G=\bfD_L$ acts locally on a toroidal scheme $(X,U)$ so that the action is toroidal along the closed orbit $O$. Then

(i) The action on the whole $(X,U)$ is toroidal.

(ii) Any central equivariant toroidal chart $(X,U)\to(\bfA_{M},\bfA_{M^\gp})$ which is fixed-point reflecting along $O$ is strongly equivariant.

(iii) If the torsion $K_O^\tor$ of $K_O=\Ker(L\to L_O)$ is invertible on $X$ then $(X,U)\times\bfD_{K_O^\tor}$ possesses a central strongly equivariant chart. In particular, $(X,U)$ possesses such a chart whenever $K_O$ is torsion free (e.g. the action is strictly local or $L$ is torsion free).
\end{corollary}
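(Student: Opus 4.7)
The plan is to derive all three parts from Theorem~\ref{flcilem} and Proposition~\ref{simpleprop}. As a preliminary, I would promote $G$-simplicity from $O$ to all of $X$: for any $x \in X$, some $G$-translate of $x$ admits a specialization $x' \in O$ (since $O$ is the unique closed orbit of the local action), and the surjective cospecialization map $\ocM_{X,x'} \to \ocM_{X,x}$ from the proof of Lemma~\ref{centerlemma} is equivariant for $G_x \subseteq G_{x'}$, so triviality of the $G_{x'}$-action on the source passes to triviality of the $G_x$-action on the target. With this, Proposition~\ref{simpleprop} becomes available globally.

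The main technical step is (ii). Strong equivariance of $f$ is, by \cite[Lemma~4.6.2]{ATLuna}, a faithfully flat-local property on $X_0 := X\sslash G$; completing $X_0$ at its closed point $x_0$ (the image of $O$) and base changing $X$ accordingly, we may pass to a formal-local situation in which Theorem~\ref{flcilem} applies. The fixed-point reflecting condition along $O$ ensures that the chart remains sharp, central, and equivariant after the reduction and that the action at the relevant closed point stays toroidal. Theorem~\ref{flcilem}(iii) then yields strong equivariance of the completed chart, and descent returns it to $f$.

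Given (ii), the remaining parts are routine. For (iii), under the invertibility hypothesis, Proposition~\ref{simpleprop}(ii) produces a central equivariant toroidal chart of $(X,U) \times \bfD_{K_O^\tor}$ that is fixed-point reflecting along the preimage of $O$, and (ii) applied to this chart yields the required strongly equivariant chart; when $K_O$ is torsion free, no base change is needed. For (i), I would verify toroidality pointwise: for any $x \in X$ and a specialization $x' \in O$, Proposition~\ref{simpleprop}(i) yields a sharp central equivariant chart, and Theorem~\ref{flcilem}(i) identifies the completion at $x'$ as $C_{x'}\llbracket M\rrbracket\llbracket t_1,\ldots,t_r\rrbracket/(\theta)$ on which $G_{x'}$ acts only through the grading $M \to L$. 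Consequently the stabilizer at any point of this formal neighborhood depends only on the face of $M$ cutting out its logarithmic stratum, and is therefore constant on strata; this transfers to $X$ via the strict chart and gives toroidality at $x$. The main anticipated obstacle lies in the reduction carried out in (ii): since $X$ is only $G$-local and not strictly $G$-local in general (the torsion in $K_O$ obstructs this), applying Theorem~\ref{flcilem} requires a careful formal-local passage --- completion of $X_0$ together with Cohen ring choices --- that preserves the sharp, central, equivariant nature of the chart along with the fixed-point reflecting condition along the closed orbit.
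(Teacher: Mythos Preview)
Your proposal has a genuine gap in (ii), which you correctly identify as the crux but do not resolve. Theorem~\ref{flcilem} requires a \emph{strictly} local action, i.e.\ a $G$-invariant closed \emph{point}; completing $X_0$ at $x_0$ and base-changing $X$ does not achieve this, because the closed fiber of $\hat X\to\hat{X_0}$ is still the orbit $O$, not a point, whenever $K_O\neq 0$ (not only when $K_O$ has torsion, as your parenthetical suggests). Your proposed ``careful formal-local passage'' does not address this obstruction, and no amount of completing on the quotient side will collapse $O$ to a point while keeping the full $G$-action.

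The paper's route around this is to factor $G$ into $G_O$ and $G/G_O$ and treat them separately. For $G_O$, each point $x\in O$ is $G_O$-fixed, so the $G_O$-localization $X_x$ is \emph{strictly} $G_O$-local and Theorem~\ref{flcilem}(iii) applies directly to the induced sharp central chart $(X_x,U_x)\to\bfA_{\oM_O}$; gluing over $x\in O$ gives that $X\to\bfA_{\oM_O}$ is strongly $G_O$-equivariant, and the same argument shows $Y=\bfA_M\to\bfA_{\oM_O}$ is too, whence $X\to Y$ is strongly $G_O$-equivariant. For $G/G_O$, the fixed-point reflecting hypothesis along $O$ forces the stabilizers on both $X\sslash G_O$ and $Y\sslash G_O$ to be trivial, so the $G/G_O$-actions are free and strong equivariance is automatic by \cite[Corollary~5.4.8]{ATLuna}. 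The same $G_O$-localization trick is what makes (i) work cleanly: once each $(X_x,U_x)$ has a strongly equivariant chart, Lemma~\ref{toroidalactionlem}(iv) gives toroidality immediately, without needing to track stabilizers along faces of $M$ inside a formal neighborhood as you propose.
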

\begin{proof}
(i) Since $G_x\subseteq G_O$ for any $x\in X$, it suffices to prove that the action of $G_O$ is toroidal. Note that $X$ is covered by the $G_O$-localizations $X_x$ at points $x\in O$. The action of $G_O$ on $X_x$ is strictly local hence $(X_x,U_x)$ possesses a sharp central equivariant chart $h$ by Proposition~\ref{simpleprop}. Then $h$ is strongly equivariant by Theorem~\ref{flcilem}(iii), and the action of $G_O$ on $(X_x,U_x)$ is toroidal by Lemma~\ref{toroidalactionlem}(iv).

(ii) Since the chart is central, $G_O$ is the stabilizer of the center of $(Y,V)=(\bfA_{M},\bfA_{M^\gp})$. Therefore, the actions of $G/G_O$ on $X\sslash G_O$ and $Y\sslash G_O$ have trivial stabilizers and hence are free by \cite[Lemma~5.4.4]{ATLuna}. By \cite[Corollary~5.4.8]{ATLuna} the morphism $X\sslash G_O\to Y\sslash G_O$ is strongly $G/G_O$-equivariant, so it suffices to prove that the morphism $X\to Y$ is strongly $G_O$-equivariant.

Since the chart is central, we have an isomorphism of $L_O$-graded modules $\oM_O=\oM$ and a toric $G_O$-equivariant morphism $Y\to\oY=\bfA_{\oM_O}$ arises. Clearly, the induced morphism $(X,U)\to(\oY,\bfA_{\oM_O^\gp})$ is an equivariant sharp central toroidal chart. As we already observed, for each $x\in O$ the induced chart of the $G_O$-localization $(X_x,U_x)$ is strongly equivariant by Theorem~\ref{flcilem}(iii). Therefore, the morphism $X\to\oY$ is strongly $G_O$-equivariant. In the same way, $Y\to\oY$ is strongly $G_O$-equivariant and hence $X\to Y$ is so.

Finally, (iii) follows from (ii) and Proposition~\ref{simpleprop}.
\end{proof}

\subsubsection{Openness of toroidal actions}
As another corollary we obtain that the set of points of $(X,U)$ where $G$ acts toroidally is open.

\begin{corollary}\label{gencor}
Assume that a diagonalizable group $G$ acts on a toroidal scheme $(X,U)$. Then the set of points where the action is toroidal (resp. simple) is open.
\end{corollary}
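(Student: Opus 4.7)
I aim to prove that the non-simple and non-toroidal loci in $X$ are closed by verifying two properties: (a) they are closed under specialization, and (b) they are constructible. Since $X$ is Noetherian, these two conditions jointly force closedness, so the original loci are open. The question is local, so we may restrict to a $G$-invariant affine open.

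\textbf{Closure under specialization.} Fix $y \in X$ and $x \in \overline{\{y\}}$. Then $G_y \subseteq G_x$, and the cospecialization map $\ocM_{X,x} \twoheadrightarrow \ocM_{X,y}$ is surjective and $G_y$-equivariant (from the proof of Lemma~\ref{centerlemma} via \cite[Lemma~2.12(1)]{Niziol}). For the non-simple locus: if some $g \in G_y$ and $m \in \ocM_{X,y}$ satisfy $g\cdot m \neq m$, then any lift $\tilde m \in \ocM_{X,x}$ of $m$ satisfies $g\cdot\tilde m \neq \tilde m$ (their images differ), and since $g \in G_x$, this certifies non-simplicity at $x$. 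For the non-toroidal locus, the non-simple case is subsumed by the above. If $y$ is simple but $G_y$ acts non-trivially on $X(r(y))_y$, I split on the relation between $r(x)$ and $r(y)$. If $r(x)=r(y)$, then $y$ and $x$ lie in the same log stratum $X(n)$ with $y$ a generic point over $x$; toroidality at $x$ would force $G_x \subseteq G_y$ (because $G_x$ fixes a neighborhood of $x$ in $X(n)$, which must contain $y$), hence $G_x = G_y$, and then $G_y$ would act trivially on $X(r(y))_y$ as a localization of $X(r(x))_x$ --- a contradiction.

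\textbf{Constructibility.} By Noetherianity $\rk(\ocM_X)$ is finite, so the log stratification $X = \bigsqcup_n X(n)$ consists of finitely many locally closed strata; by Lemma~\ref{centerlemma}, $\ocM_X$ is locally constant along each stratum. I further refine by stabilizer type: for each closed subgroup $H \subseteq G$ the locus $\{x : G_x = H\}$ is locally closed, and for a diagonalizable group action on a Noetherian scheme there are finitely many such strata. On each resulting locally closed piece both $\ocM_{X,x}$ and $G_x$ are constant, so the simplicity condition (triviality of the $G_x$-action on $\ocM_{X,x}$) holds uniformly. The toroidality condition additionally requires triviality of the (constant) $G_x$-action on a Zariski neighborhood of $x$ in the ambient log stratum, which is itself a locally closed condition on the piece. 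Thus both complementary loci are finite unions of locally closed subsets, hence constructible.

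\textbf{Main obstacle.} The delicate step is the toroidal case in specialization closure when $r(x) > r(y)$: triviality of the $G_x$-action on the smaller stratum $X(r(x))_x$ is not directly linked to the $G_y$-action on the larger stratum through $y$. I plan to resolve this by invoking Corollary~\ref{toroidalprop}(i): if $x$ were toroidal, passing to an appropriate $G$-localization at a closed orbit in $\overline{G\cdot x}$ and applying the propagation of toroidality from the closed orbit would force toroidality throughout a neighborhood of that orbit; combining with Lemma~\ref{toroidalactionlem}(i) gives stabilizer constancy along strata there, which via the surjective cospecialization $\ocM_{X,x}\twoheadrightarrow\ocM_{X,y}$ controls the $G_y$-action on $X(r(y))_y$ and produces the required contradiction.
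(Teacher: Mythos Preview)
Your overall strategy---constructibility plus closure of the bad locus under specialization---is exactly the paper's approach, and your treatment of the simple case is the contrapositive of the paper's argument and is correct.

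The toroidal case, however, has a gap in the ``main obstacle'' step. In your notation $x$ is the specialization and $y$ the generization, so you want: toroidal at $x$ $\Rightarrow$ toroidal at $y$. Your plan is to pass to ``a closed orbit in $\overline{G\cdot x}$'' and invoke Corollary~\ref{toroidalprop}(i). But that corollary requires toroidality \emph{along the closed orbit}, and you only know toroidality at $x$; going strictly below $x$ loses this information. Likewise, the final appeal to the cospecialization $\ocM_{X,x}\twoheadrightarrow\ocM_{X,y}$ and Lemma~\ref{toroidalactionlem}(i) does not by itself control the action of $G_y$ on the larger stratum $X(r(y))_y$ when $r(x)>r(y)$.

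The fix is exactly what the paper does and is simpler than your case split: since $G_y\subseteq G_x$, the $G$-action is toroidal at a point iff the $G_x$-action is (the stabilizers for the $G_x$-action at $x$ and $y$ are $G_x$ and $G_y$ respectively). Replace $G$ by $G_x$; now $x$ is $G$-fixed. Replace $X$ by the $G_x$-localization at $x$, so the action is strictly local with closed point $x$. Since the action is toroidal at $x$, Corollary~\ref{toroidalprop}(i) gives toroidality on all of $X$, in particular at $y$. This handles both $r(x)=r(y)$ and $r(x)>r(y)$ uniformly, so your separate treatment of the equal-rank case, while correct, is unnecessary.
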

\begin{proof}
We have the stratifications of $X$ by inertia strata $X(H)$ and logarithmic strata $X(r)$. Let $T$ be the set of points $x\in X$ where the action is toroidal, then $x\in T$ if and only if the irreducible component of the logarithmic stratum through $x$ is contained in $X(G_x)$. Thus $T$ is constructible, and it suffices to prove that $T$ is closed under generization.

Assume that $x$ is a generization of $y$. If the action is simple at $y$ then $G_y$, and hence also its subgroup $G_x$, act trivially on $\ocM_{X,y}$. The cospecialization homomorphism $\phi\:\ocM_{X,y}\to\ocM_{X,x}$ is equivariant. Since $\phi$ is surjective by \cite[Lemma~2.12(1)]{Niziol} we obtain that $G_x$ acts trivially on $\ocM_{X,x}$, i.e. the action at $x$ is trivial.

Assume now that the action is toroidal at $y$. Since $G_x\subseteq G_y$, the action of $G$ is toroidal at $x$ or $y$ if and only if the action of the subgroup $G_y$ is toroidal at $x$ or $y$. In particular, we can replace $G$ by $G_y$ and assume that $y$ is $G$-invariant. Furthermore, we can now replace $X$ by the $G$-localization at $y$  and assume  that the action is strictly local and $y$ is the closed $G$-invariant point. Then the action on $(X,U)$ is toroidal by Corollary~\ref{toroidalprop}.
\end{proof}

\subsection{Toroidal quotients}
In this section we define quotients of toroidal schemes and prove that they always exist for relatively affine toroidal actions. This is the main property of toroidal actions used in applications.

\subsubsection{The definition}
Assume that a toroidal scheme $(X,U)$ is provided with a relatively affine action of $G=\bfD_L$. Set $Y=X\sslash G$ and let $V$ be the image of $U$ in $Y$. If $V$ is open and $(Y,V)$ is a toroidal scheme then we say that $(Y,V)$ is the {\em quotient} of $(X,U)$ by $G$ and denote it $(X,U)\sslash G$.\index{toroidal!quotient} It satisfies the universal property analogous to categorical quotients: any $G$-equivariant morphism of toroidal schemes $(X,U)\to(Z,W)$ with the trivial action on the target factors uniquely through $(Y,V)$.

\subsubsection{Taut and loose quotients}
Recall that the equivariant open subscheme $U\into X$ is called {\em strongly equivariant} if it is the preimage of $V$ and in this case $V=U\sslash G$, see \cite[\S5.1.4]{ATLuna}. We say that the toroidal quotient $(Y,V)=(X,U)\sslash G$ is {\em taut}\index{taut!quotient} if $U$ is strongly equivariant. More generally, the quotient will be called {\em  loose}\index{loose!quotient} if $V=U\sslash G$.

\begin{remark}
Let $f\:X\to Y$ be the quotient morphism and consider the toroidal divisors $D=X\setminus U$ and $E=Y\setminus V$. The quotient is taut if and only if the inclusion $f^{-1}(E)\subseteq D$ is an equality, and this happens if and only if $D$ has no horizontal components, i.e. $f(D)$ contains no generic points of $Y$. One can show that for taut quotients the induced logarithmic morphism $(X,\cM_X)\to(Y,\cM_Y)$ satisfies the universal property of quotients in the category of logarithmic schemes. This last observation will not be used, so we omit its verification.
\end{remark}

\subsubsection{The toric case}
We will describe when the quotient is loose or taut in terms of the action itself. We start with a toric case, so let us introduce a relevant combinatorial notion for a grading $\phi:M\to L$. We say that $\phi$ is {\em taut} if its kernel $M_0$ contains an inner element $v\in M$ (\S\ref{toricmonoidsec}), and we say that $\phi$ is {\em loose} if the inclusion $M_0^\gp\subseteq K$ is an equality, where $K$ is the kernel of $\phi^\gp\:M^\gp\to L$. The following lemma is very simple so we skip the proof.

\begin{lemma}\label{gradinglem}
Let $M$ be a toric monoid, $L$ a lattice and $\phi\:M\to L$ a grading.

(i) $\phi$ is taut if and only if $\mathrm{Im}(\phi)=\mathrm{Im}(\phi^\gp)$.

(ii) $\phi$ is loose if and only if $\rk(M_0)+\rk(\mathrm{Im}(\phi))=\rk(M)$.
\end{lemma}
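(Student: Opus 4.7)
For part (i), I would rewrite the condition $\mathrm{Im}(\phi) = \mathrm{Im}(\phi^\gp)$ as the assertion that $\mathrm{Im}(\phi)$ is already a group in $L$, i.e.\ every $m \in M$ admits some $m' \in M$ with $\phi(m) + \phi(m') = 0$. For the ``if'' direction, starting from an inner $v \in M_0$ the saturation of $M$ combined with $v$ lying in the interior of $M_\RR$ ensures that, for any $m \in M$ and all sufficiently large $N$, the element $Nv - m$ lies in $M^\gp \cap M_\RR = M$; applying $\phi$ gives $\phi(Nv - m) = -\phi(m)$, providing the required inverse. For ``only if'', I would pick any inner $v \in M$ (the sum of the ray generators of $M_\RR$ works), use the hypothesis to produce $w \in M$ with $\phi(w) = -\phi(v)$, and observe that $v + w \in M_0$ is again inner, since the interior of a convex cone is closed under addition of arbitrary elements of the cone.

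For part (ii), the first step is a reformulation. Rank-nullity for $\phi^\gp \colon M^\gp \to L$ gives $\rk(M) = \rk(K) + \rk(\mathrm{Im}(\phi^\gp))$, and $\mathrm{Im}(\phi)$ and $\mathrm{Im}(\phi^\gp)$ have the same rank since the latter is the group generated by the former. Hence the rank equation in the statement is equivalent to $\rk(M_0) = \rk(K)$, and the implication $M_0^\gp = K \Rightarrow \rk(M_0) = \rk(K)$ is immediate.

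The heart of the matter is the converse: $\rk(M_0) = \rk(K) \Rightarrow M_0^\gp = K$. The plan is to view $M_0 = M \cap K$ geometrically: using $M = M^\gp \cap M_\RR$ and the fact that $K$ is saturated in $M^\gp$ (because $M^\gp/K$ embeds in the torsion-free group $L$), we can identify $M_0$ with the set $C \cap K$ of $K$-lattice points in the rational polyhedral cone $C := M_\RR \cap K_\RR \subseteq K_\RR$. Choosing an interior integer point $v \in C$ (which exists since $C$ is rational polyhedral, e.g.\ the sum of edge generators) and reusing the inner-element trick of part (i) internally to $K$, for any $\lambda \in K \cap \mathrm{span}_\RR(C)$ and $N$ large both $Nv$ and $Nv - \lambda$ lie in $M_0$, exhibiting $\lambda \in M_0^\gp$. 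This shows $M_0^\gp = K \cap \mathrm{span}_\RR(C)$; the rank hypothesis forces $\mathrm{span}_\RR(C) = K_\RR$, yielding $M_0^\gp = K$. The main obstacle I anticipate is precisely this last step: $M_0^\gp$ is a priori only a sublattice of $K$, and one must rule out a finite-index discrepancy. Reframing $M_0$ as the integer points of a rational polyhedral cone inside $K_\RR$ and exploiting interior lattice points there is the decisive ingredient that eliminates this potential slippage.
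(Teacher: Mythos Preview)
Your proof is correct. The paper actually omits the proof of this lemma entirely, remarking only that it is ``very simple,'' so there is nothing to compare against. One cosmetic slip: in part (i) you have interchanged the labels ``if'' and ``only if'' (you assume tautness in what you call the ``if'' direction), but the mathematical content is right in both directions. Your treatment of part (ii)---identifying $M_0$ with the lattice points of the rational cone $C = M_\RR \cap K_\RR$ inside $K_\RR$ and using an interior lattice point to show $M_0^\gp = K \cap \mathrm{span}_\RR(C)$---is exactly the right argument and correctly rules out the finite-index discrepancy you anticipated; the key point that makes it work is that $K$ is saturated in $M^\gp$ (since $L$ is torsion-free), so $M^\gp \cap K_\RR = K$ and $C$ is rational with respect to the lattice $K$.
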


Now, let us relate these combinatorial definitions to the geometry.

\begin{lemma}\label{torictautquotient}
Assume that $G=\bfD_L$ acts on $(X,U)=(\bfA_M,\bfA_{M^\gp})$ via a grading $\phi\:M\to L$. Then,

(i) The following conditions are equivalent: (a) the quotient is taut, (b) $\phi$ is taut, (c) $U\subset X^s(0)$, the Geometric Invariant Theory stable locus with respect to the trivial linearization $0\in L$.

(ii) The following conditions are equivalent: (d) the quotient is loose, (e) $\phi$ is loose, (f) $\dim X\sslash G = \dim X -\dim  G/G_X$, where $G_X$ is the subgroup acting trivially on $X$.
\end{lemma}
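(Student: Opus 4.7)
I would first make the geometry explicit. Since $\phi\colon M\to L$ grades $\ZZ[M]$ with degree-zero subring $\ZZ[M]^G = \ZZ[M_0]$, where $M_0 := \phi^{-1}(0)\subseteq M$ is the kernel face, we have $Y = X\sslash G = \bfA_{M_0}$. A key auxiliary fact is that $M_0^\gp$ is saturated in $M^\gp$: given $x\in M^\gp$ with $nx = u-v$ for $u,v\in M_0$ and $n>0$, set $w = x+v$; then $nw = u + (n-1)v\in M_0\subseteq M$, so $w\in M$ by the saturation of $M$, and $\phi^\gp(w)=0$, forcing $w\in M_0$ and hence $x = w-v\in M_0^\gp$. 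Consequently $M^\gp \cong M_0^\gp\oplus Q$ splits (as $Q = M^\gp/M_0^\gp$ is free), and $f|_U\colon U = \bfA_{M^\gp}\to Y$ factors as the projection to $\bfA_{M_0^\gp}$ followed by the open embedding $\bfA_{M_0^\gp}\hookrightarrow \bfA_{M_0}$. In particular $V = f(U) = \bfA_{M_0^\gp}$ is open and $(Y,V)$ is the toroidal quotient.

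\noindent\textbf{Part (i).} For (b)$\Rightarrow$(a), any inner $v\in M_0$ is a $G$-invariant unit on $U$ which, by \S\ref{innerelementsec}, lies in every height-one prime of $M$ and hence vanishes on every irreducible component of $D$ (Lemma~\ref{toroidaldivisorslem}(ii)); thus $v$ separates $f(U)$ from $f(D)$ in $Y$, giving $f^{-1}(V) = U$. For (a)$\Rightarrow$(b), if $M_0$ contained no inner element it would be a proper face of $M$, hence contained in some facet $F_\fp$; then $M_0\cap\fp = \emptyset$, so $\ZZ[M_0]\hookrightarrow \ZZ[M]/\fp\ZZ[M] = \ZZ[F_\fp]$ is injective, making $f|_{Z_\fp}$ dominant and forcing $f(Z_\fp)\cap V\neq\emptyset$, contradicting tautness. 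For (b)$\Leftrightarrow$(c), an inner $v\in M_0$ yields $X_v = U$ (since $v\in\fp$ for every height-one $\fp$ means $X_v\cap D=\emptyset$), and on $X_v=U$ the effective action of $G/G_X$ has trivial stabilizer and closed orbits, placing $U$ in $X^s(0)$; conversely, from $U\subseteq X^s(0)$ one extracts an invariant $v\in M_0^+$ whose principal open $X_v$ admits a geometric quotient, which by a direct toric analysis of orbit closures on each $Z_\fp$ forces $v\in\fp$ for every height-one prime $\fp$, i.e.\ $v$ is inner.

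\noindent\textbf{Part (ii).} Compute $\dim Y = \rk M_0$ and, from $G_X = \bfD_{L/\phi(M^\gp)}$, $\dim(G/G_X) = \rk\phi(M^\gp)$; the exact sequence $0\to K\to M^\gp \to \phi(M^\gp)\to 0$ yields $\rk K = \rk M - \rk\phi(M^\gp)$, so condition (f) rewrites as $\rk M_0 = \rk K$. Since $M_0^\gp\subseteq K$ is an inclusion of free abelian groups with torsion-free quotient (a consequence of the saturation established in the setup, applied inside $K$), rank equality forces $K = M_0^\gp$, giving (e)$\Leftrightarrow$(f). Finally, $U\sslash G = \Spec \ZZ[M^\gp]^G = \Spec \ZZ[K]$, and the natural map $U\sslash G\to V = \bfA_{M_0^\gp}$ is dual to $M_0^\gp\hookrightarrow K$, which is an isomorphism precisely when $M_0^\gp = K$, so (d)$\Leftrightarrow$(e). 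The main obstacle is the reverse direction of (b)$\Leftrightarrow$(c): the elementary inner-element criterion must be matched with the abstract definition of the GIT stable locus $X^s(0)$ for the trivial linearization and the (generally ineffective) action of $G$, which requires the toric analysis of which orbits in $D$ can accumulate in limits of orbits from $U$.
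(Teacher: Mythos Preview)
Your argument is essentially correct and parallels the paper's proof closely. A few comparative remarks are worth making.

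For (a)$\Longleftrightarrow$(b) the paper argues via Cartier divisors: tautness means $D=f^{-1}(Y\setminus V)$, which is equivalent to the existence of a toroidal Cartier divisor on $X$ induced from $Y$ whose support equals $|D|$; such divisors are exactly $V(v)$ for $v\in M_0$, and $|V(v)|=|D|$ iff $v$ is inner. Your contrapositive via a facet $F_\fp\supseteq M_0$ and dominance of $f|_{Z_\fp}$ is a perfectly good alternative and in some ways more transparent, since it pinpoints exactly which divisorial component obstructs tautness.

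For (c)$\Rightarrow$(b) the paper's route is more direct than what you sketch and avoids the ``toric analysis of orbit closures'' you flag as an obstacle. Rather than extracting a single $v\in M_0^+$ and then arguing it must be inner, the paper observes that $U\subset X^s(0)$ provides, for each of the finitely many torus orbits in $D$, a homogeneous invariant separating that orbit from $U$; the product of these invariants is then an element of $M_0$ lying in every height-one prime of $M$, hence inner. Your single-$v$ approach is salvageable but requires more care: the stable locus only guarantees a covering of $U$ by such principal opens $X_{v_i}$, not a single one, so you would end up taking a product anyway.

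Your treatment of part (ii) matches the paper's exactly, though you supply the extra detail that $M_0^\gp$ is saturated in $M^\gp$ (and hence in $K$), which the paper leaves implicit when invoking Lemma~\ref{gradinglem}(ii).
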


\begin{remark}
At least when  $\phi^\gp$ is surjective these properties can be characterized in a manner similar to toric geometry, in terms of the dual monoid $\sigma = M^\vee$ and the dual map $(\phi^\gp)^\vee : L^\vee \hookrightarrow N = (M^\gp)^\vee$ as follows:

(i') The quotient is taut if and only if $L^\vee \cap \sigma=\{0\}$.

(ii') The quotient is loose if and only if $L^\vee \cap \sigma\subset\sigma$ is a whole face of $\sigma$.

We will not use this language in the sequel, but the reader may find it helpful and more intuitive.
\end{remark}

\begin{proof}
Let $Y=X\sslash G$ and let $V$ be the image of $U$ in $Y$.

(d)$\Longleftrightarrow$(e) By definition, $Y=\bfA_{M_0}$ and $U\sslash G=\bfA_{K}$. Thus the quotient is loose if and only if $M_0^\gp=K$, i.e. $\phi$ is loose.

(e)$\Longleftrightarrow$(f)  Note that $G/G_X=D_{L'}$ where $L'=\mathrm{Im}(\phi^\gp)$. Since $\dim Y=\rk(M_0)$ and $\dim X=\rk(M)$, the equivalence follows from Lemma~\ref{gradinglem}(ii).

(a)$\Longleftrightarrow$(b) The quotient is taut if and only if $D=X\setminus U$ is the preimage of $Y\setminus V$. This happens if and only if there exists a Cartier divisor $D'$ on $X$ such that: (1) $D'$ is a toroidal divisor induced from $Y$, and (2) $|D'|=|D|$. Note that a Cartier divisor on $X$ satisfies (1) if and only if it is of the form $V(v)$ for $v\in M_0$, and such divisor satisfies (2) if and only if $v$ is inner. Thus, (1) and (2) are satisfied if and only if $\phi$ is taut.

(b)$\Longleftrightarrow$(c) An inner element $m\in M_0$ is an invariant function vanishing on $X\setminus U$, implying $U\subset X^s(0)$. Conversely, if $U\subset X^s(0)$ then there is a homogeneous invariant element separating it from any other torus orbit. The product of finitely many of  these corresponds to an inner element in $M_0$.

\end{proof}

\subsubsection{Taut and loose actions}
Lemma~\ref{torictautquotient} motivates the following definition that applies to arbitrary toroidal actions. We will later see that it is compatible with the notion of taut and loose quotients. The same definition could be spelled out for an arbitrary simple action but we did not find the resulting notion meaningful, see Remark~\ref{potentialrem} below.

Assume that $G=\bfD_L$ acts toroidally on a toroidal scheme $(X,U)$. For any point $x\in X$ let $L_x$ be the quotient of $L$ such that $G_x=\bfD_{L_x}$. By Lemma~\ref{simpletoroidallem2}, the action induces a homomorphism $\phi_x\:\ocM_{X,x}\to L_x$. We say that the action is {\em taut at} $x$ if $\Ker(\phi_x)$ contains an inner element. We say that the action is {\em  loose at}\index{taut!action} $x$ if $(\Ker(\phi_x))^\gp=\Ker(\phi_x^\gp)$. The action is {\em taut} or {\em  loose}\index{loose!action} if it is so at all points of $X$.

\begin{example}\label{tautexam}
(i) If the action of $G$ on a toroidal scheme $(X,U)$ is toroidal at a point $x\in X$ and $G_x$ is finite then the action is taut at $x$.

(ii) In particular, if $G=\GGm=\bfD_\ZZ$ then the action can fail to be taut only at $G$-invariant points. Assume that $G_x=G$. Then the action is taut if and only if the image of $\phi_x\:\ocM_{X,x}\to\ZZ$ is unbounded on both sides. The action is  loose but not taut if and only if $\Ker(\phi_x)$ contains a facet of $\ocM_{X,x}$.
\end{example}

\begin{lemma}\label{tautlem}
Assume that $f\:(X',D')\to(X,D)$ is a strict $G$-equivariant morphism of toroidal schemes.

(i) If the action on $(X,D)$ is taut then the action on $(X',D')$ is taut.

(ii) Assume, in addition, that $f$ is fixed point reflecting. If the action on $(X,D)$ is loose then the action on $(X',D')$ is loose. Conversely, if the action on $(X',D')$ is taut (resp. loose) and $f$ is surjective then the action on $(X,D)$ is taut (resp. loose).
\end{lemma}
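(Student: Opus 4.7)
The plan is to reduce everything to a pointwise comparison of the graded monoid data, using that strictness identifies stalks of $\ocM$ on both sides. Concretely, for any $y\in X'$ with $x=f(y)$, I will establish three compatibilities:
(1) strictness of $f$ gives a canonical identification $M:=\ocM_{X',y}=\ocM_{X,x}$;
(2) $G$-equivariance gives $G_y\subseteq G_x$ and hence a surjection $\pi\:L_x\twoheadrightarrow L_y$ on character groups via Cartier duality;
(3) the construction of $\phi_{(-)}\:M\to L_{(-)}$ from Lemma~\ref{simpletoroidallem2} is compatible with strict pullback, in the sense that $\phi_y=\pi\circ\phi_x$. For (3), a lift of $\om\in M$ to a locally defined $l$-homogeneous element $g\in\cM_X$ pulls back to an $l$-homogeneous element $f^\ast g\in\cM_{X'}$ lifting the same $\om$, and reducing $l$ modulo $\Ker(L\to L_x)$ or $\Ker(L\to L_y)$ gives $\phi_x(\om)$ or $\phi_y(\om)$ respectively.

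Given these compatibilities, part (i) is immediate: if the action on $(X,D)$ is taut then at each $x=f(y)$ one picks an inner element $v\in\Ker(\phi_x)\subseteq M$, and $\pi(\phi_x(v))=0$ gives $v\in\Ker(\phi_y)$; since ``inner'' is an intrinsic property of the monoid $M$ (item~\ref{innerelementsec}), the same $v$ witnesses tautness at $y$.

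For part (ii), the hypothesis that $f$ is fixed-point reflecting means $G_y=G_x$, so $\pi$ is the identity and $\phi_y=\phi_x$ as homomorphisms out of $M$. Hence $\Ker(\phi_y)=\Ker(\phi_x)$ and $\Ker(\phi_y^\gp)=\Ker(\phi_x^\gp)$, so the loose condition $(\Ker\phi_x)^\gp=\Ker\phi_x^\gp$ transfers verbatim to $y$; by the same token the taut condition transfers. For the converse statement, surjectivity of $f$ ensures that every $x\in X$ is hit by some $y\in X'$, and fixed-point reflecting again identifies $\phi_x$ with $\phi_y$, so taut/loose at $y$ implies taut/loose at $x$.

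There is no real obstacle; the only subtle point is item~(3), the functoriality of $\phi_{(-)}$ under strict equivariant pullback. This follows from the construction in Lemma~\ref{simpletoroidallem2} because that construction only uses local liftings of $\om$ to homogeneous sections of $\cM$, and both the lift and its degree in $L$ are preserved under the strict pullback $f^\ast\cM_X=\cM_{X'}$; the possibly different quotients $L\to L_x$ and $L\to L_y$ account for the factor $\pi$. Once this is spelled out, parts (i) and (ii) reduce to the symbolic manipulations above.
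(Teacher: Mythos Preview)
Your proposal is correct and takes essentially the same approach as the paper. The paper's proof is a terse two-liner: it notes that $L_y$ is a quotient of $L_x$, so an inner element in $\Ker(M\to L_x)$ stays in $\Ker(M\to L_y)$, and that under the fixed-point reflecting hypothesis $L_x=L_y$; your items (1)--(3) simply unpack the compatibility $\phi_y=\pi\circ\phi_x$ that the paper uses without comment.
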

\begin{proof}
If $x'\in X'$, $x=f(x')$, $G_{x'}=\bfD_{L'}$ and $G_x=\bfD_L$ then $L'$ is a quotient of $L$. If the kernel of  $M\to L$ contains an inner element then the same holds for the composition $M\to L'$. This implies (i). The assertion of (ii) holds since $L=L'$ whenever $f$ is fixed point reflecting.
\end{proof}

\subsubsection{Existence and properties of toroidal quotients}
Here is our main result about toroidal quotients.

\begin{theorem}\label{toroidalth}
Assume that a toroidal scheme $(X,U)$ is provided with a relatively affine toroidal action of a diagonalizable group $G=\bfD_L$, and $x\in X$ is contained in the special orbit over a point $x_0\in X_0=X\sslash G$. Then,

(i) The toroidal quotient $(X,U)\sslash G$ exists, $\ocM_{X_0,x_0}=(\ocM_{X,x})_0$, the $0$-graded component with respect to the $L_x$-grading of $\ocM_{X_0,x_0}$, and the quotient is taut or loose if and only if the action is taut or  loose, respectively.

(ii) The morphism $\pi\:(X,U)\to(X,U)\sslash G$ is toroidal whenever the torsion degree $N$ of $L$ is invertible on $X$.

(iii) If $h\:(Z,W)\to(X,U)$ is a strongly equivariant strict morphism of toroidal schemes then the quotient $h\sslash G\:(Z,W)\sslash G\to(X,U)\sslash G$ is a strict morphism of toroidal schemes.

(iv) If $I\subset\cO_X$ is a toroidal ideal then the ideal $\tI=\pi_*(I)\cap\cO_{X_0}$ is toroidal and the corresponding ideals $J\subset\ocM_{X,x}$ and $\tJ\subset\ocM_{X_0,x_0}$ (see Lemma~\ref{toroidaldivisorslem}(i)) are related by $\tJ=J_0$.
\end{theorem}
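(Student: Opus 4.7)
The statement is local on $X_0 = X\sslash G$, so the plan is to fix $x_0 \in X_0$, pass to the $G$-localization $X_O$ at the special orbit $O$ above $x_0$, and work with a local action: $X = \Spec A$ with $A$ an $L$-local ring whose unique closed orbit is $O$. Then $X_0 = \Spec A_0$ with $A_0$ local in the usual sense, and the assertions become statements about $A_0$ together with the induced open $V = \pi(U)$.

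The heart of the argument is (i). I would first treat the strictly local case. Proposition \ref{simpleprop}(i) supplies a sharp central equivariant toroidal chart $f\:(X,U)\to(\bfA_{\oM_O},\bfA_{\oM_O^\gp})$, and Corollary \ref{toroidalprop}(ii), after verifying that $f$ is fixed-point reflecting along $O$, promotes it to a strongly equivariant chart. Theorem \ref{flcilem}(iv) applied to this chart yields directly that $(X_0,U_0)=(X\sslash G,U\sslash G)$ is a toroidal scheme with toroidal chart $f\sslash G\:(X_0,U_0)\to(\bfA_{(\oM_O)_0},\bfA_{(\oM_O)_0^\gp})$. Since this chart is strict and central, $\ocM_{X_0,x_0}=(\oM_O)_0=(\ocM_{X,x})_0$, which is the asserted formula. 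The equivalence of taut/loose for the quotient with taut/loose for the action then follows by combining this identification with the toric computation in Lemma \ref{torictautquotient} and its transport via Lemma \ref{tautlem}.

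Passing from strictly local to general local for (i) is the main technical obstacle, because part (i) does not assume $N$ invertible and so the convenient étale cover provided by $\bfD_{K_O^\tor}$ is not available in general. My plan is to descend along a faithfully flat cover on which the action becomes strictly local: when $K_O^\tor$ is invertible one uses the base change $(X,U)\times\bfD_{K_O^\tor}$ from Corollary \ref{toroidalprop}(iii); otherwise one works formally-locally, applying Theorem \ref{flcilem} at each closed point of the completion $\hatX$, gluing the resulting toroidal quotient data on $\Spec(\hatA_0)$, and descending along the faithfully flat completion $A_0\to\hatA_0$ using that the logarithmic structure on $X_0$ is uniquely determined by $V$ and that formation of $(-)\sslash G$ commutes with flat base change.

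For (ii), Corollary \ref{toroidalprop}(iii) exhibits $\pi$, locally and up to faithfully flat base change by $\bfD_{K_O^\tor}$, as a pullback of the toric quotient $\bfA_M\to\bfA_{M_0}$, which is logarithmically smooth as soon as the torsion of $M^\gp/M_0^\gp$ is invertible; toroidality of $\pi$ then descends. For (iii), strictness of the equivariant $h\:(Z,W)\to(X,U)$ produces a graded isomorphism $\ocM_{Z,z}\cong\ocM_{X,h(z)}$, and taking the degree-zero part via the formula in (i) yields $\ocM_{Z_0,z_0}\cong\ocM_{X_0,(h\sslash G)(z_0)}$, i.e., strictness of $h\sslash G$. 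Finally, (iv) follows from (i) together with Lemma \ref{toroidaldivisorslem}: the bijection between toroidal ideals and ideals in $\ocM$ intertwines the operation of taking the $\pi$-pushforward intersected with $\cO_{X_0}$ with the operation of taking the degree-zero part of the monoidal ideal, giving $\tJ=J_0$.
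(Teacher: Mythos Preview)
Your treatment of the strictly local case is essentially the paper's (the paper invokes Theorem~\ref{flcilem}(iii) directly rather than routing through Corollary~\ref{toroidalprop}(ii), but this is cosmetic). The real gap is your reduction from a local action to a strictly local one. Completion along the maximal ideal of $A_0$ does not make the $G$-action strictly local: the closed orbit $O$ can still have several points, so Theorem~\ref{flcilem} is no more applicable to $\hatX$ than to $X$. Your phrase ``applying Theorem~\ref{flcilem} at each closed point of the completion $\hatX$'' presupposes exactly the reduction you are trying to carry out. Likewise the \'etale cover by $\bfD_{K_O^\tor}$ only helps when its order is invertible, which (i) does not assume.

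The paper sidesteps this entirely by a two-stage quotient: using \cite[Corollary~5.4.5]{ATLuna} one writes $X\sslash G=(X\sslash G_x)\sslash(G/G_x)$, where $G_x$ is the stabilizer of a chosen $x\in O$. The $G_x$-action is strictly local at $x$ (so after a further harmless $G_x$-localization one is in the case you already handled), and the residual $G/G_x$-action on $X\sslash G_x$ is free. The free case is then dealt with by a separate argument: free diagonalizable actions are flat-split free \cite[Lemma~5.4.4]{ATLuna}, so the quotient map is faithfully flat, the logarithmic structure descends (any toroidal Cartier divisor is $G$-equivariant and descends by flat descent of regular closed immersions), and logarithmic regularity descends along flat strict morphisms. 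This decomposition is the missing idea; once you have it, parts (iii) and (iv) also go through in each of the two cases separately, whereas your stalkwise argument for (iii) and the bare invocation of Lemma~\ref{toroidaldivisorslem} for (iv) would still need the explicit chart/completion descriptions that the strictly local case provides.
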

\begin{proof}
All claims are local on $X_0$, hence we can assume that the action is local and $x_0$ is the closed point of $X_0$. For brevity of notation set $M=\ocM_{X,x}$. We start with part (ii). It suffices to prove the claim \'etale-locally, hence by use of Corollary~\ref{toroidalprop} we can assume that there exists a strongly equivariant toroidal chart $(X,U)\to(\bfA_M,\bfA_{M^\gp})$ over $\ZZ[1/N]$. Note that $M$ may be non-toric, but its torsion is killed by $N$. The morphism $$(\bfA_M,\bfA_{M^\gp})\to(\bfA_M,\bfA_{M^\gp})\sslash G=(\bfA_{M_0},\bfA_{(M_0)^\gp})$$ is easily seen to be toroidal, hence $\pi$ is toroidal too.

The remaining proof deals with parts (i), (iii) and (iv). By \cite[Corollary~5.4.5]{ATLuna} the quotient can be obtained in two stages: first divide by the action of $G_x$, which is strictly local at $x$, then divide by the free action of $G/G_x$. This reduces the proof to two separate cases: (a) the action is strictly local, (b) the action is free.

Case (a). In this case $x$ is the closed $G$-invariant point. By Corollary~\ref{toroidalprop} there exists a strongly equivariant toroidal chart $f\:(X,U)\to(Y,V)=(\bfA_{M},\bfA_{M^\gp})$, so we obtain a Cartesian square
$$
\xymatrix{
X\ar[r]^f\ar[d]_\alpha & Y\ar[d]^\beta\\
X_0\ar[r]^{f_0} &Y_0
}
$$
where $Y_0=Y\sslash G=\bfA_{M_0}$  and $M_0$ is the trivially $L$-graded part of $M$. Now let us check the claims (i), (iii), (iv).

(i) Set $U_0=U\sslash G$ and $V_0=\bfA_{M_0^\gp}$, then $(X_0,U_0)$ is a toroidal scheme and $f_0\:(X_0,U_0)\to(Y_0,V_0)$ is a toroidal chart by Theorem~\ref{flcilem}(iv). Furthermore, $f$ is strict hence Lemma~\ref{tautlem}(ii) implies that the action is taut or loose at $x$ if and only if the action on $(Y,V)$ is taut or loose, respectively. By Lemma~\ref{torictautquotient} this happens if and only if the quotient $(Y_0,V_0)=(Y,V)\sslash G$ is taut or  loose, and since $\alpha$ is the base change of $\beta$, the latter is equivalent to the quotient $(X_0,U_0)=(X,U)\sslash G$ being taut or  loose, respectively.

(iii) The action on $(Z,W)$ is toroidal by Lemma~\ref{toroidalactionlem}(ii), and $f\circ h$ is a strongly equivariant toroidal chart. Write  $h_0=h\sslash G$ and $(Z_0,W_0)=(Z,W)\sslash G$. As in the proof of (i), it follows from  Theorem~\ref{flcilem}(iv) that $(Z_0,W_0)$ is a toroidal scheme and the morphism $f_0\circ h_0\:(Z_0,W_0)\to (Y_0,V_0)$ is a toroidal chart, and hence $h_0$ is strict.

(iv) This can be checked on the formal completions at $x$ and $x_0$. By parts (i) and (ii) of Theorem~\ref{flcilem} we have representations $\hatcO_{X,x}=C_x\llbracket M,t_1\. t_r\rrbracket/(\theta)$ and $\hatcO_{X_0,x_0}=C_x\llbracket M_0,t_1\. t_r\rrbracket/(\theta)$, and then it is clear that $J\hatcO_{X,x}\cap\hatcO_{X_0,x_0}=J_0\hatcO_{X_0,x_0}$.

Case (b). In this case the action is flat-split free by \cite[Lemma~5.4.4]{ATLuna}, i.e. there exists a flat surjective morphism $T\to X_0$ such that $T\times_{X_0}X$ is $T$-isomorphic to $T\times G$. Hence $q\:X\to X_0$ is flat by flat descent and one can even take $T=X$ obtaining the Cartesian square of flat surjective morphisms
$$
\xymatrix{
X\times G\ar[r]^m\ar[d]_p & X\ar[d]^q\\
X\ar[r]^q & X_0.
}
$$
where $p:X\times G\to X$ is the projection and $m\:X\times G\to X$ is the action morphism.

Let $U_0\subseteq X_0$ denote the image of $U$. Since $U$ is $G$-equivariant, it is the preimage of $U_0$ by flat descent. In particular, once we prove that $(X_0,U_0)$ is toroidal, the quotient is automatically taut. Let $\cM_X$ and $\cM_{X_0}$ be the logarithmic structures associated with $U$ and $U_0$, respectively. We claim that the morphism $g\:(X,\cM_X)\to(X_0,\cM_{X_0})$ is strict. Indeed, we should prove that any toroidal Cartier divisor on $X$ is the pullback of a Cartier divisor on $X_0$, which is automatically toroidal since $U=q^{-1}(U_0)$. The logarithmic structure $\cM_X$ is $G$-equivariant by our assumption and hence any toroidal divisor $D\into X$ is $G$-equivariant, i.e. $m^{-1}(D)=p^{-1}(D)$ scheme-theoretically. Therefore, $D$ is the pullback of a closed subscheme $D_0\into X_0$. Since $D\into X$ is a regular closed immersion, the same is true for $D_0\into X_0$ by flat descent, and this means that $D_0$ is a Cartier divisor.

Now, let us deduce the theorem. (i) $(X_0,U_0)$ is toroidal because logarithmic regularity descends with respect to the flat strict morphism $g$ by \cite[Proposition~7.5.46(i)]{Gabber-Ramero}. In addition, the grading of $M$ is trivial because $L_x=0$ and $\ocM_{X_0,x_0}=M=M_0$ since $g$ is strict.

(iii) The action on $(Z,W)$ also has trivial stabilizers and hence is flat-split free. If $(Z_0,W_0)=(Z,W)\sslash G$ then by the same argument $(Z,W)\to(Z_0,W_0)$ is strict, and since the composition $g\circ h\:(Z,W)\to(X_0,U_0)$ is strict, $h_0$ is strict too.

(iv) The same argument as in the above paragraph shows that $I=\tI'\cO_X$ for a toroidal ideal $\tI'$ and then $\tI=\pi_*(\tI'\cO_X)\cap\cO_\tilX=\tI'$. So, $I$ is the pullback of $\tI$ under the strict morphism $g$ and we obtain that $\tJ=J=J_0$.
\end{proof}

\subsection{Making actions toroidal by increasing the toroidal structure}
\label{Sec:increase-todoidal}
Our next goal is to show that $G$-locally any $G$-simple action on a toroidal scheme can be made toroidal simply by increasing the toroidal divisor - this increased divisor will later be used as a tool to show our main results, namely that after a suitable blowing up, adding only the exceptional divisor suffices.

In the sequel, if $Z$ is a closed subscheme of $X=\Spec(A)$ given by an ideal $I\subset A$ then we call the $A/I$-module $N_{Z\into X}=I/I^2$ the {\em conormal module}\index{conormal module} to $Z$ in $X$.

\begin{proposition}\label{pretoroidalprop}
Assume that a diagonalizable group $G=\bfD_L$ acts simply and locally on a toroidal scheme $(X,U)$ with center $C$. Let $i\:O=\Spec(K)\into C$ be the embedding of the closed orbit, $G_O=\bfD_{L_O}$ the stabilizer, $M=\oM_O$  the corresponding monoid with grading $\chi\:M\to L_O$ (Lemma~\ref{simpletoroidallem2}). Then

(i) There exists a natural homogeneous morphism of $L$-graded $K$-modules $\phi_O:N_{O\into C}\to i^*\Omega_C$, and the module $\mathrm{Im}(\phi_O)$ is a free $K$-module of a finite rank $d$.

(ii) There exists an equivariant open subset $U'\subseteq U$ such that $(X,U')$ is a toroidal scheme whose center $C'$ is of codimension $d$ in $C$.

(iii) If $\cM'_X$ is the logarithmic structure associated with $(X,U')$ then up to an isomorphism the monoid $M'=\oM'_O$ and the induced grading $\chi'\:M'\to L_O$ depend only on $(X,U)$ and $G$. Moreover, $M'=M\oplus\NN^d$, and if $e_1\.e_d$ is the basis of $\NN^d$ then $\{\chi'(e_1)\.\chi'(e_d)\}$ is precisely the multiset of characters of the action of $G_O$ on $\mathrm{Im}(\phi_O)$.

(iv) The action on $(X,U')$ is always toroidal, and the action on $(X,U)$ is toroidal if and only if $\chi'(e_i)=0$ for any $1\le i\le d$.
\end{proposition}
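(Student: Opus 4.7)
The plan is to work $G$-locally, reducing to the case $X = \Spec A$ with $A$ an $L$-local ring carrying a sharp central equivariant toroidal chart (Proposition~\ref{simpleprop}(i)); then the center $C = V(M^+A)$ is regular by Lemma~\ref{toroidalcenter} and contains $O$ by Lemma~\ref{toroidalcenteraction}. Since $O$ is a $\bfD_{K_O}$-torsor over the closed point $x_0 \in X\sslash G$, it is regular, so $O\into C$ is a regular closed embedding of codimension $d := \dim C - \dim O$. Consequently $N_{O\into C} = I_O/I_O^2$ is locally free of rank $d$ over $K$, and the universal derivation $\phi_O\colon N_{O\into C} \to i^*\Omega_C$ sending $f \mapsto df$ is $G$-equivariant and injective by the conormal exact sequence. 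For freeness of $\mathrm{Im}(\phi_O) = I_O/I_O^2$ as a $K$-module, fppf descent along the torsor $O \to \Spec k(x_0)$ identifies $\mathrm{Im}(\phi_O)$ with the pullback of a $G_O$-representation $V_0$ over $k(x_0)$; being a finite-dimensional representation of the diagonalizable group $G_O = \bfD_{L_O}$ over a field, $V_0$ canonically splits as $\bigoplus_{i=1}^d k(x_0)_{\chi_i}$ with $\chi_i \in L_O$, and extending scalars back along $k(x_0) \to K$ yields $\mathrm{Im}(\phi_O) \cong \bigoplus_{i=1}^d K_{\chi_i}$, giving~(i) along with the canonical multiset of characters needed in~(iii).

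For part~(ii), choose $L$-homogeneous lifts $\bar f_i \in I_O \subseteq \cO_C$ of the basis (possible since the $G$-action decomposes $I_O$ into $L$-graded pieces and the projection $I_O \to I_O/I_O^2$ respects the grading), and then $L$-homogeneous lifts $f_i \in A$ along the componentwise surjection $A \twoheadrightarrow \cO_C$ of $L$-graded $A$-modules. Each $V(f_i)$ is a Cartier divisor, since $f_i$ corresponds formally-locally to an element of a regular system of parameters on $C$ (hence is a non-zero-divisor in $A$), and it is $G$-stable by homogeneity, so $U' := U \setminus \bigcup_i V(f_i)$ is a $G$-equivariant open subset. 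To show $(X, U')$ is toroidal with center of codimension $d$ in $C$, extend the original chart to $\phi'\colon M' := M \oplus \NN^d \to A$ by $e_i \mapsto f_i$ and apply Theorem~\ref{localth} after localizing at any closed point $x \in O$: the candidate center $\cX_x = \Spec(\cO_{C,x}/(\bar f_1, \ldots, \bar f_d))$ equals $\Spec\cO_{O,x}$ by Nakayama (the $\bar f_i$ generate $I_O\cO_{C,x}$), hence is regular of dimension $\dim O$, and the dimension identity $\dim\cO_{X,x} = \dim\cX_x + \rk M'$ reduces to $\dim O + d + \rk M = \dim O + (\rk M + d)$, which holds.

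Parts~(iii) and~(iv) then fall out. For~(iii), $\oM'_O = M \oplus \NN^d$ is canonical and $\chi'$ restricts to $\chi$ on $M$ with $\chi'(e_i) = \chi_i$, the multiset depending only on the $G_O$-representation structure of $\mathrm{Im}(\phi_O)$. For~(iv), the new action is $G$-simple because each $V(f_i)$ is an individually $G$-stable prime toroidal divisor (no $G$-permutation of the new generators of $\ocM'_X$), and it is toroidal at $x\in O$ because formal-locally the new center is $\Spec\cO_{O,x}$ on which $G_O$ acts trivially; openness (Corollary~\ref{gencor}) and locality of the action then spread toroidality to all of $X$. Finally, the original action is toroidal iff $G_O$ acts trivially on $C$; formal-locally at $x\in O$ the cotangent space of $C$ at $x$ decomposes under $G_O$ into the tangent directions to $O$ (of trivial $G_O$-character, as $G_O$ fixes $O$ pointwise) and $N_{O\into C}|_x \cong \bigoplus_{i=1}^d k(x)_{\chi_i}$, giving the stated equivalence.

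The main obstacle is the structural input for part~(i): establishing that $O\into C$ is a regular embedding (which needs $O$ to be regular as a $\bfD_{K_O}$-torsor, requiring some care when $L$ has torsion, in which case one passes to the reduced orbit), and carrying out the fppf descent to obtain the canonical $G_O$-character decomposition of $\mathrm{Im}(\phi_O)$. Once this is in place, the rest is bookkeeping: choosing homogeneous lifts by $L$-graded splitting, assembling the enlarged chart $M' = M \oplus \NN^d$, and invoking Theorem~\ref{localth} via the dimension identity derived from $\dim C = \dim O + d$.
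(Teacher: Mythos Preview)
Your argument has a genuine gap in part~(i): the claim that $\phi_O$ is injective is false in general. The conormal sequence
\[
N_{O\hookrightarrow C}\xrightarrow{\phi_O} i^*\Omega_C\to\Omega_O\to 0
\]
is only left exact when $O$ is smooth over the base, and the paper works over $\ZZ$, where $C$ and $O$ are merely regular. A concrete counterexample: take $G$ trivial, $X=\Spec\ZZ_{(p)}[t]$ with toroidal divisor $V(t)$, so $C=\Spec\ZZ_{(p)}$ and $O=\Spec\FF_p$. Then $N_{O\hookrightarrow C}\cong\FF_p$ but $i^*\Omega_C=\Omega_{\ZZ_{(p)}}\otimes\FF_p=0$, so $\phi_O=0$ and the paper's $d=\rk\mathrm{Im}(\phi_O)=0$, while your $d=\dim C-\dim O=1$. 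Your $d$ is thus wrong in mixed characteristic, and this breaks~(iii): the multiset of characters of $G_O$ on $\mathrm{Im}(\phi_O)$ has cardinality equal to the paper's $d$, not yours.

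The error propagates to~(iv). Your argument that the new action is toroidal rests on the new center being $O$ itself (``formal-locally the new center is $\Spec\cO_{O,x}$''), which uses your $\bar f_i$ cutting out exactly $O$ inside $C$. With the correct $d$, the new center $C'$ is generally strictly larger than $O$, and proving $G_O$ acts trivially on $C'$ is the heart of the matter. The paper handles this with the cotangent complex: the transitivity triangle gives
\[
H_1(\LL_O)\to N_{O\hookrightarrow C}\xrightarrow{\phi_O} i^*\Omega_C\to\Omega_O\to 0,
\]
and since $G_O$ acts trivially on $O$, the $L_O$-grading on $H_1(\LL_O)$ is trivial. After quotienting $N_{O\hookrightarrow C}$ by the lifts $\bar t_i$ one gets $\mathrm{Im}(\phi'_O)=0$, so $H_1(\LL_O)\twoheadrightarrow N_{O\hookrightarrow C'}$, forcing the $L_O$-grading on $m'/m'^2$ (hence on $A'$, via the $m'$-adic filtration) to be trivial. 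This step, not a reduction to $C'=O$, is what makes~(iv) work. Your secondary concern about regularity of $O$ is also real (a $\bfD_{K_O}$-torsor over a field of characteristic $p$ can be nonreduced when $K_O$ has $p$-torsion), but the paper sidesteps it entirely: the only fact used about $K$ is that it is an $L$-graded field, so every $L$-graded $K$-module is free.
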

\begin{proof}
(i) We have that $X=\Spec(\oA)$ for an $L$-local ring $(\oA,\om)$ and $C=\Spec(A)$ for an $L$-local ring $(A,m)$, which is a quotient of $(\oA,\om)$. The exact sequence of homologies of the transitivity triangle $i^*\LL_C\to\LL_O\to\LL_{O/C}$ of cotangent complexes ends with $$H_1(\LL_{O})\to N_{O\into C}{\stackrel{\phi_O}\longrightarrow} i^*\Omega_{C}\to\Omega_{O}\to 0.$$ Since the morphism $O\to C$ is $G$-equivariant, these $K$-modules are $L$-graded. In addition, the action on $O$ is trivial, hence the grading on $H_1(\LL_{O})$ is trivial. Note that $K=\oA/\om=A/m$ is an $L$-graded field (\cite[\S4.4.3]{ATLuna}), hence any $L$-graded $K$-module is free. The  rank of $\mathrm{Im}(\phi_O)$ is finite since $N_{O\into C}$ is finitely generated.

(ii) Choose an $L$-homogeneous $K$-basis $t_1\.t_d$ of $\mathrm{Im}({\phi_O})$, lift it through $\om\onto m\onto N_{O\into C}\onto\mathrm{Im}(\phi_O)$ to homogeneous elements $\ot_i\in\om$, and consider the $G$-equivariant divisors $D_i=V(\ot_i)$. Since $G$ acts transitively on $O$, for any point $x\in O$ the images of $t_1\.t_d$ in $m_x/m_x^2$ remain linearly independent. Therefore $\ot_1\.\ot_d$ form a subfamily of a family of regular parameters at any point of $O$. In particular, if $Z=\Spec(\oA/(\ot_1\.\ot_d))$ then $C'=C\times_XZ$ is regular along $O$. Since $G$ acts locally on $C'$ this implies that $C'$ is regular of codimension $d$ in $C$, and therefore $U'=U\setminus(\cup_{i=1}^dD_i)$ defines a toroidal scheme $(X,U')$ with center $C'$.

(iii) It follows from Theorem~\ref{decreaseth} that $M'=M\oplus\NN^d$. By Lemma~\ref{simpletoroidallem2}(ii), the basis elements $e_i\in\NN^d$ can be lifted to homogeneous elements $\ot_i\in\oA$. Then $C'$ is the intersection of $C$ with $d$ equivariant divisors $D_i:=V(\ot_i)$, and hence $D_1\.D_d$ have simple normal crossings along $O$. It follows that the images $t_i\in i^*\Omega_{C}$ are linearly independent, and by the dimension counting they form a homogeneous basis of $\mathrm{Im}(\phi_O)$. Hence, the characters of $e_i$ are as claimed.

(iv) Let $\ot_i$ be as in (iii). In the first claim we should check that $G_O$ acts trivially on $C'$, i.e. the induced $L_O$-grading of $A':=A/(\ot_1\.\ot_d)$ is trivial. The maximal $L$-homogeneous ideal of $A'$ is $m'=mA'$. Clearly, the $L_O$-grading on $K=A'/m'$ is trivial. Since $A'$ is noetherian and hence embeds into the $m'$-adic completion, it suffices to show that the $L_O$-grading of $m'/{m'}^2 = N_{O\into C'}$ is trivial. Since $N_{O\into C'}$ is the quotient of $N_{O\into C}$ by the span of the images of $\ot_1\.\ot_d$, it follows from the commutative square
$$
\xymatrix{
N_{O\into C}\ar[r]^{\phi_O}\ar[d] & i^*\Omega_C\ar[d]\\
N_{O\into C'}\ar[r]^{\phi'_O}& i'^*\Omega_{C'}
}
$$
that $\mathrm{Im}(\phi'_O)=0$. Hence the map $H_1(\LL_{O})\to N_{O\into C'}$ is surjective and we obtain that the grading of $N_{O\into C'}$ is trivial.

If the action on $(X,U)$ is toroidal then $G_O$ acts trivially on $C$, hence the $L_O$-grading on $N_{O\into C}$ is trivial, and one necessarily has that $\chi'(e_i)=0$. Conversely, assume that all $\chi'(e_i)$ vanish. Then $\ot_i\in\oA_0$ and since $A'=A/(\ot_1\.\ot_d)A$ is trivially $L_O$-graded, the same is true for $A$. Thus, $G_O$ acts trivially on $C$.
\end{proof}

\begin{remark}\label{pretoroidalrem}
(i) Let us say that an action of $G$ on a toroidal scheme $(X,D)$ is {\em pretoroidal} if for any point $x\in X$ there exists a larger divisor $D'\supset D$ such that  $(X,D')$ is still toroidal, $D'$ is equivariant and the action on $(X,D')$ is toroidal at $x$. In fact, Proposition~\ref{pretoroidalprop} proves that a pretoroidal action is nothing else but a $G$-simple action. Pretoroidal actions were introduced in \cite{Abramovich-deJong} for finite groups, and are related to the locally toric actions of $\GG_m$ in \cite{AKMW}, where one did not have a given toroidal structure $(X,D)$. Note that the definition is local and for a $G$-simple action it may happen that there is no larger global equivariant toroidal structure such that the action is toroidal everywhere.

(ii) Proposition~\ref{pretoroidalprop} and Theorem~\ref{toroidalth} imply that for any toroidal scheme $(X,D)$ with a $G$-simple action of $G$ the singularities of $X\sslash G$ are locally isomorphic to the singularities of toroidal schemes; such schemes were called {\em locally toric} in \cite{AKMW}. However, there is no canonical way to find such an isomorphism; e.g. the cone $C=\Spec(k[x,y,z]/(xy-z^2))$ with the empty divisor is not a toroidal scheme, and there are many different ways to choose a divisor that makes it into a toroidal scheme. This makes locally toric schemes difficult to work with. For example, one can locally resolve their singularities in a combinatorial way, but W{\l}odarczyk \cite[Theorem 8.3.2]{W-Factor} had to develop a theory of stratified toroidal varieties to resolve them canonically, and hence globally.
\end{remark}

\subsubsection{Potentially taut actions}
Assume that $M$ and $M'=M\oplus\NN^d$ are $L$-graded monoids and the grading on $\NN^d$ is trivial. Then the grading on $M$ is taut or loose if and only if the grading on $M'$ is taut or loose, respectively. Assume that $G$ acts simply and strictly locally on $(X,U)$. It follows from Proposition~\ref{pretoroidalprop} that there exists $U'\subseteq U$ such that the action on $(X,U')$ is toroidal and the monoid $M'=\ocM_{X,x}$ is uniquely defined up to a trivially graded direct summand of the form $\NN^l$. We say that the action on $(X,U)$ is {\em potentially taut or loose} if the action on $(X,U')$ is taut or loose, respectively. As we showed, this is independent of the choice of $U'$.\index{taut!potentially taut action}\index{loose!potentially loose action}

In general, we define a $G$-action to be {\em potentially taut or loose} at $x\in X$ if it is simple at $x$ and the action of $G_x$ on the $G_x$-equivariant localization at $x$ is potentially taut or loose, respectively.

\begin{remark}\label{potentialrem}
There is no relation between the action on $(X,U)$ being potentially taut or loose and the grading of $M=\ocM_{X,x}$ being taut or loose, respectively. The latter information seems to be not so relevant to the properties of the action.
\end{remark}

\subsection{Decreasing the toroidal structure}
We will also need to know when a given toroidal structure can be decreased without loosing good properties of the action. In the following result we use divisors rather than open sets in the notation of toroidal schemes.

\begin{proposition}\label{decreaseprop}
Let $(X,D)$ be a toroidal scheme provided with a toroidal action of a diagonalizable group $G=\bfD_L$. Assume that the action on $X$ is local with closed orbit $O$, and let $M=\oM_O$ be the module defined in Lemma \ref{simpletoroidallem2}(i).

(i) If $Z$ is an irreducible component of $D$ and $D'=D-Z$, then the following conditions are equivalent: (a) $(X,D')$ is a toroidal scheme and the action of $G$ on $(X,D')$ is toroidal, (b) $Z$ is Cartier and the corresponding element $e\in M$ is of degree zero with respect to the grading $\chi\:M\to L_O$ induced by the action.

(ii) Let $E$ be obtained from $D$ by removing all irreducible Cartier subdivisors $Z\subseteq D$ such that the corresponding character in $L_O$ is trivial. Then $E\subseteq D$ is the minimal subdivisor such that the action on $(X,E)$ is toroidal.
\end{proposition}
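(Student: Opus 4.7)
For part~(i), Theorem~\ref{decreaseth} already gives the equivalence ``$(X,D')$ is toroidal $\iff$ $Z$ is Cartier $\iff$ $M = F \oplus \NN e$'', where $F$ corresponds to $D'$ and $e$ generates $\fp_Z$. Assuming this, the action on $(X,D')$ is automatically $G$-simple: the monoid $\oM'_O = F$ sits inside $M$, and $G_O$ already acts trivially on $M$ by $G$-simplicity of the action on $(X,D)$. By Corollary~\ref{gencor}, combined with the fact that any $G$-stable open subset containing the unique closed orbit $O$ is all of $X$, toroidality of the action on $(X,D')$ reduces to checking that $G_O$ acts trivially on the center $C' = V(F^+ A)$ of $(X,D')$.

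To verify this I would pass to the $G_O$-localization at a point $x \in O$, so the action becomes strictly local, and apply Theorem~\ref{flcilem} to write
\[
\hat A \;=\; C_y\llbracket M\rrbracket\llbracket t_1,\ldots,t_r\rrbracket/(\theta),
\]
with the $t_i$ and $\theta$ of degree zero and with $M$ graded by a lift $\tilde\chi\colon M \to L$ of $\chi$. Quotienting by $F^+$ collapses the $F$-part and yields
\[
\hat A/F^+\hat A \;=\; C_y\llbracket e\rrbracket\llbracket t_1,\ldots,t_r\rrbracket/(\bar\theta),
\]
the $L_O$-grading of which is concentrated in the monomials $e^n$ of weight $n\chi(e) \in L_O$. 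Triviality of this $L_O$-grading is therefore equivalent to $\chi(e) = 0$; and by faithful flatness of the $L$-graded completion it is in turn equivalent to $G_O$ acting trivially on $C'$, as required.

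For part~(ii) I would iterate (i). The combinatorial input is that if $\fp_1,\fp_2 \subset M$ are distinct principal divisorial primes with generators $e_1,e_2$, then $e_2 \in F_1 := M \setminus \fp_1$ (else $Z_2 \subseteq Z_1$, contradicting distinctness of irreducible components), and in fact $M = (F_1 \cap F_2) \oplus \NN e_1 \oplus \NN e_2$, proved by a direct manipulation of the two given splittings $M = F_i \oplus \NN e_i$. Consequently, after removing $Z_1$ the prime $\fp_2 \cap F_1$ is still principal in $F_1$ with the same generator $e_2$, so part (i) applies again. It is essential here that the character $\chi(e_i) \in L_O$ is intrinsic to $Z_i$ and the $G$-action (independent of the ambient toroidal structure), which holds since any two homogeneous lifts to $A$ of the defining equation of $Z_i$ differ by a $K_O$-graded unit, by Lemmas~\ref{Lem:unit-criterion} and~\ref{simpletoroidallem2}(ii). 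Induction then produces $(X,E)$ as claimed.

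Minimality is then immediate: were $E' = E - Z \subsetneq E$ toroidal with toroidal action, part~(i) applied to $(X,E)$ would force $Z$ to be Cartier in $X$ with $\chi(e_Z) = 0$, and then $Z$ would already have been removed in the passage from $D$ to $E$, contradicting $Z \subseteq E$. The main obstacle I expect is pinning down the simultaneous decomposition $M = (F_1 \cap F_2) \oplus \NN e_1 \oplus \NN e_2$ together with the intrinsic nature of $\chi(e_i)$; once those are established the rest is bookkeeping.
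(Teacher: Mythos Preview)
Your argument is correct, but for part~(i) you take a different route from the paper. The paper's proof is a two-line appeal to Proposition~\ref{pretoroidalprop}: viewing $(X,D')$ as the base toroidal scheme with a simple action and $(X,D)$ as an enlargement by the single divisor $Z$, parts~(iii) and~(iv) of that proposition identify the multiset of nontrivial characters of $G_O$ on $\mathrm{Im}(\phi_O)$ with $\{\chi(e)\}$ (since $G_O$ already acts trivially on the codimension-one center $C$), and then declare the action on $(X,D')$ toroidal iff this multiset is empty, i.e.\ $\chi(e)=0$. You instead bypass Proposition~\ref{pretoroidalprop} entirely and compute the $L_O$-grading on the completed center $\hat A/F^+\hat A$ directly from Theorem~\ref{flcilem}. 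Both arguments encode the same geometric fact --- that $C'$ is obtained from $C$ by adjoining the single parameter $e$, so $G_O$ acts trivially on $C'$ exactly when $e$ has trivial $L_O$-weight --- but the paper packages this inside the conormal-module machinery already set up, while you redo it via the formal-local model. Your approach is more self-contained; the paper's is terser given what has been proved.

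For part~(ii) the paper says only ``(ii) follows by induction'', and your extra work (the simultaneous splitting $M=(F_1\cap F_2)\oplus\NN e_1\oplus\NN e_2$ and the intrinsic nature of $\chi(e_i)$ via Lemma~\ref{simpletoroidallem2}(ii)) is exactly the bookkeeping that induction requires; the minimality argument is fine. Two minor points: in your completion you should write $C_x$ rather than $C_y$ (compare Theorem~\ref{flcilem}), and the reduction to the closed orbit is cleaner if you cite Corollary~\ref{toroidalprop}(i) directly, or else observe that the toroidal locus produced by Corollary~\ref{gencor} is automatically $G$-stable.
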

\begin{proof}
It suffices to prove (i) as (ii) follows by induction. Assume that (a) holds. Then $Z$ is Cartier at $x$ by Theorem~\ref{decreaseth} and $\chi(e)=0$ by Proposition~\ref{pretoroidalprop}(iii). Conversely, assume that (b) holds. Then $(X,D')$ is a toroidal scheme by Theorem~\ref{decreaseth} and it remains to use Proposition~\ref{pretoroidalprop}(iii) again.
\end{proof}

\subsection{Combinatorial charts}\label{combsec}
Our next aim is to extend the theory of equivariant toroidal charts to arbitrary simple actions at the cost of considering charts $\bfA_P$ with a smaller toroidal divisor and a non-toroidal action. Moreover, we will see that the charts depend only on the following combinatorial data: graded monoids and non-trivial characters through which $G$ acts on the cotangent spaces of logarithmic strata.

\subsubsection{The models}\label{modelsec}
Assume given the following data: a finitely generated abelian group $L$, an $L$-graded toric monoid $M$, and a function $\sigma\:L\setminus\{0\}\to\NN$ with a finite support. Consider the monoid $\NN^\sigma=\oplus_{l\in L\setminus\{0\}}\NN^{\sigma(l)}$, where each $\NN^{\sigma(l)}$ is graded by $l$. Then $P=M\oplus\NN^\sigma$ is an $L$-graded toric monoid and the toric scheme $X=\bfA_P$ acquires an action of $G=\bfD_L$. The action is not toroidal whenever $\sigma\neq 0$.

\subsubsection{Signature}\label{sec:signature}
Assume that $G=\bfD_L$ acts on a finite-dimensional vector space $V$, i.e. $V$ is provided with an $L$-grading $V=\oplus_{l\in L}V_l$. We record this representation combinatorially as follows.

By the {\em signature}\index{signature} $\sigma'_V$ of $V$ we mean the multiset of characters through which $G$ acts. Equivalently, one can view the signature as a function $\sigma'_V\:L\to\NN$ that sends $l$ to $\dim(V_l)$. Clearly the data of $\sigma'_V$ is equivalent to the data of the representation $V$.

The multiset of all {\em non-trivial} characters will be called the {\em reduced signature}\index{signature!reduced} and denoted $\sigma_V\:L\setminus\{0\}\to\NN$. One may think of the reduced signature as the equivalence class of the representation $V$, where $V \sim V'$ if they differ by trivial characters.

Any homomorphism $\phi\:L\to\tL$ induces a new grading on $V$ and $\sigma'_{V,\tL}=\phi(\sigma'_{V,L})$. In addition, $\sigma_{V,\tL}$ is obtained from $\phi(\sigma_{V,L})$ by removing zeros, so we say that $\sigma_{V,\tL}$ is the {\em reduced image} of $\sigma_{V,L}$ and write $\sigma_{V,\tL}=\phi(\sigma_{V,L})_\red$.

If $G$ acts on a noetherian scheme $X$ then by the signature $\sigma'_x$ of the action at a point $x\in X$ we mean the signature of the cotangent space $V_x=m_x/m_x^2$ acted on by the stabilizer $G_x$. Note that $G_x=\bfD_{L_x}$ for a quotient $L_x$ of $L$ and $\sigma'_x$ is an $\NN$-valued function on $L_x$. The reduced signature $\sigma_x\:L_x\setminus\{0\}\to\NN$ is defined similarly.

\subsubsection{Functoriality of signature}
The reduced signature is compatible with strongly equivariant morphisms:

\begin{lemma}\label{Lem:functorsign}
Let $G=\bfD_L$ be a diagonalizable group, $f\:Y\to X$ a strongly $G$-equivariant morphism, $y\in Y$ a point and $x=f(y)$. Then $L_y=L_x$ and $\sigma_y=\sigma_x$.
\end{lemma}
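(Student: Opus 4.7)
The plan is to decouple the two claims. For $L_y=L_x$, I would appeal directly to the fact, invoked in the proof of Lemma~\ref{toroidalactionlem}, that any strongly equivariant morphism is fixed-point reflecting: since $G_y\subseteq G_x$ is automatic for a $G$-equivariant $f\:Y\to X$, fixed-point reflection upgrades this inclusion to $G_y=G_x$, whence $L_y=L_x$.

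For $\sigma_y=\sigma_x$, the approach is to exploit the Cartesian description of strongly equivariant morphisms from \cite[Section 5.3]{ATLuna}, namely $Y=X\times_{X\sslash G}(Y\sslash G)$. Setting $X_0:=X\sslash G$, $Y_0:=Y\sslash G$, writing $x_0,y_0$ for the images of $x,y$, and replacing $X$ and $Y$ by their $G_x$-equivariant localizations at $x$ and $y$, this identifies $\cO_{Y,y}$ with a suitable localization of the base change $\cO_{X,x}\otimes_{\cO_{X_0,x_0}}\cO_{Y_0,y_0}$. The key observation is that $G_x$ acts trivially on both $X_0$ and $Y_0$, so $\cO_{X_0,x_0}$ and $\cO_{Y_0,y_0}$ are concentrated in degree $0$.

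Next I would invoke the standard right-exact sequence of cotangent spaces for a fibre product of local rings, which is $G_x$-equivariant:
\[
m_{x_0}/m_{x_0}^2\otimes k(y)\ \longrightarrow\ \bigl(m_x/m_x^2\otimes_{k(x)}k(y)\bigr)\oplus\bigl(m_{y_0}/m_{y_0}^2\otimes_{k(y_0)}k(y)\bigr)\ \longrightarrow\ m_y/m_y^2\ \longrightarrow\ 0.
\]
Because the leftmost term and the second summand in the middle are $G_x$-trivially graded, passing to the sum of non-trivial isotypic components collapses this to an isomorphism $(m_x/m_x^2)^{\mathrm{nt}}\otimes_{k(x)}k(y)\toisom (m_y/m_y^2)^{\mathrm{nt}}$. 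Since dimensions of isotypic components are unchanged under scalar extension, the multisets of non-trivial characters coincide, giving $\sigma_y=\sigma_x$.

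The main obstacle I anticipate is the bookkeeping required to translate the global Cartesian identity $Y=X\times_{X_0}Y_0$ into a genuine base-change identification of local rings at the individual points $x$ and $y$ (as opposed to at the closed orbits), and to control the residue-field extension $k(x)\subseteq k(y)$, which may be nontrivial. Both become routine once one first $G_x$-equivariantly localizes to reduce to a single closed orbit upstairs and downstairs; with that reduction in place, the fibre-product cotangent sequence and the isotypic decomposition are straightforward.
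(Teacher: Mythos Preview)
Your argument for $L_y=L_x$ matches the paper's exactly. For $\sigma_y=\sigma_x$ your route is genuinely different from the paper's, and it can be made to work, but the step you label ``standard'' is the one that needs real care.

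The paper does not unpack the fibre product $Y=X\times_{X_0}Y_0$. Instead it applies $-\otimes^{\rm L}k(y)$ to the transitivity triangle $\LL_{Y/X}[-1]\to f^*\LL_X\to\LL_Y$ and reads off the four-term exact sequence
\[
H_1(\LL_{Y/X}\otimes^{\rm L}k(y))\ \to\ (m_x/m_x^2)\otimes_{k(x)}k(y)\ \to\ m_y/m_y^2\ \to\ H_0(\LL_{Y/X}\otimes^{\rm L}k(y)).
\]
The decisive input is then \cite[Theorem~1.3.1(i)]{ATLuna}, which says that for a strongly equivariant $f$ the outer terms are trivially $L_x$-graded. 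This immediately gives the isomorphism on non-trivial isotypic pieces.

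Your approach replaces that citation by a direct analysis of the tensor product, which is more elementary but not as automatic as you suggest. The sequence you write down is \emph{not} a standard fact for fibre products of local rings: already right-exactness fails when $k(x)\otimes_{k(x_0)}k(y_0)$ is not a field (take $A=k$, $B=k(s)$, $C=k(t)$ and localize at a height-one prime; the cotangent space upstairs is one-dimensional while $m_B=m_C=0$). What rescues you here, and what you should make explicit, is that after $G_x$-localization the $L_x$-grading on $k(x)$ is trivial, forcing $k(x_0)=k(x)$; hence $k(x)\otimes_{k(x_0)}k(y_0)=k(y_0)$ is a field and $m_y=(m_x,m_{y_0})\cO_{Y,y}$ on the nose. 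With this in hand one can compute directly, for each $l\neq 0$, that $(m_y/m_y^2)_l=(m_x/m_x^2)_l\otimes_{k(x)}k(y)$, which is what you want. Note that your injectivity argument via the trivially graded $m_{x_0}/m_{x_0}^2$ on the left is only meaningful once exactness in the middle is established; the honest route is the degree-by-degree computation just described.

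In short: both arguments end at the same isomorphism of non-trivial isotypic components. The paper's cotangent-complex proof is shorter because the grading statement is packaged into a single citation; your approach avoids the cotangent complex entirely but must pay for it by verifying $k(x)=k(x_0)$ and carrying out the graded tensor computation by hand. Neither step is hard, but neither is the ``standard'' fact you invoke.
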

\begin{proof}
Since $f$ is strongly equivariant, $G_y=G_x$. Applying the functor  $\cdot\otimes^{\rm L} k(y)$ to the transitivity triangle
$\LL_{Y/X}[-1]\to\LL_X\to\LL_Y\to\LL_{Y/X}$  one obtains the exact triangle $$\LL_{Y/X}\otimes^{\rm L}k(y)[-1]\to\LL_X\otimes^{\rm L} k(y)\to\LL_Y\otimes^{\rm L}k(y)\to\LL_{Y/X}\otimes^{\rm L}k(y).$$ The associated exact sequence of homologies ends with
$$H_1(\LL_{Y/X}\otimes^{\rm L}k(y))\to (m_x/m_x^2)\otimes_{k(x)}k(y)\to m_y/m_y^2\to H_0(\LL_{Y/X}\otimes^{\rm L}k(y)),$$ where $x=f(y)$. Since $f$ is strongly equivariant, the vector spaces $H_i(\LL_{Y/X}\otimes^{\rm L}k(y))$ with $i=0,1$ are trivially graded by \cite[Theorem~1.3.1(i)]{ATLuna}. Therefore, the multiplicities of a nontrivial character $l\in L_x=L_y$ of $G_y=G_x$ in the action on $m_y/m_y^2$ and in the action on $m_x/m_x^2$ coincide. The claim follows.
\end{proof}

\subsubsection{The signature is locally constant on fixed loci}
Recall that the action of $G$ on a regular scheme $X$ induces inertia stratification of $X$ by regular subschemes $X(H)$ along which the stabilizer is constant, see \cite[\S5.1.14]{ATLuna}.

\begin{lemma}\label{signlem}
Assume that $G=\bfD_L$ acts on a regular scheme $X$. Then for any subgroup $H\subseteq G$ the reduced signature of the action of $G$ is locally constant along the stratum $X(H)$.
\end{lemma}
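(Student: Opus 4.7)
My plan is to express the reduced signature $\sigma_x$ in terms of the ranks of character eigensheaves of the conormal sheaf of $X(H)$ in $X$, and then to observe that these ranks are locally constant. The statement being local on $X(H)$, I would first shrink $X$ equivariantly so that $X(H)$ is closed in $X$, and embed $X(H)$ as an open subscheme of the fixed-point subscheme $X^H$. Since $H=\bfD_{L_H}$ acts trivially on $X^H$ by definition, and both $X$ and $X^H$ are regular — the latter being essentially the content of \cite[\S5.1.14]{ATLuna} on the regularity of inertia strata — the closed immersion $X^H\hookrightarrow X$ is a regular immersion. Consequently the conormal sheaf $\cN:=\cI_{X^H}/\cI_{X^H}^2$ is an $H$-equivariant locally free $\cO_{X^H}$-module.

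Next, I would exploit the triviality of the $H$-action on $X^H$: it makes the $H$-equivariant structure on $\cN$ equivalent to an $L_H$-grading $\cN=\bigoplus_{\chi\in L_H}\cN_\chi$. Each summand $\cN_\chi$ is coherent and a direct summand of a locally free sheaf of finite rank, hence is itself locally free. Therefore $\rk\cN_\chi$ is locally constant on the connected components of $X^H$, and in particular on those of $X(H)$.

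To conclude, I would apply the conormal exact sequence at a point $x\in X(H)$, which fits into a short exact sequence of $H$-representations
\[
0\to \cN\otimes k(x)\to m_x/m_x^2\to m^{X(H)}_x/(m^{X(H)}_x)^2\to 0.
\]
The rightmost term carries the trivial $H$-action since $H$ acts trivially on $X(H)$. Hence for every nontrivial character $\chi\in L_H$ the $\chi$-eigenspace of $m_x/m_x^2$ coincides with the $\chi$-part of the fiber of $\cN$, so $\sigma_x(\chi)=\rk\cN_\chi$, which is locally constant by the previous paragraph.

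The main obstacle in this plan is verifying that $\cN$ is genuinely locally free, i.e.\ that $X^H\hookrightarrow X$ is a regular immersion. This should reduce, after passing to formal completions at $x$, to diagonalizing the action on $m_x/m_x^2$ — which is completely reducible because $H$ is diagonalizable — and using the resulting homogeneous generators as part of a regular system of parameters, so that the ideal of $X^H$ in $\widehat{\cO}_{X,x}$ is generated by the non-trivially graded ones. This is precisely the kind of graded-parameter argument already used in Theorem~\ref{flcilem} and underpinning the reference \cite[\S5.1.14]{ATLuna}, so it should go through without serious extra work.
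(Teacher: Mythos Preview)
Your proof is correct and offers a slightly more sheaf-theoretic packaging of essentially the same idea as the paper. The paper's argument picks, at a point $x\in X(H)$, a homogeneous regular system of parameters $t_1,\dots,t_d$ for $\cO_{X,x}$, identifies $X(H)$ locally with $V(t_{s+1},\dots,t_d)$ (the vanishing of the nontrivially graded parameters), and then observes directly that at the generic point $y$ of the component through $x$ the images of $t_{s+1},\dots,t_d$ form a homogeneous basis of $m_y/m_y^2$, so $\sigma_y=\sigma_x$. Your approach replaces this explicit generization argument by the statement that the conormal sheaf $\cN$ of $X^H\hookrightarrow X$ is locally free with a locally free eigensheaf decomposition, and that $\sigma_x(\chi)=\rk\cN_\chi$ for $\chi\neq 0$ via the conormal exact sequence. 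The two arguments are equivalent at heart---the paper's choice of homogeneous parameters is exactly what proves that the immersion is regular and that $\cN_0=0$---but your formulation has the advantage of giving a clean global invariant (the eigensheaf ranks) rather than a pointwise comparison, while the paper's version avoids any discussion of whether summands of locally free sheaves are locally free and keeps everything at the level of local rings.
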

\begin{proof}
Fix a subgroup $H=\bfD_{L'}$ of $G$, for some quotient $L'$ of $L$. We should prove that if $x\in X(H)$ and $y$ is the generic point of the component of $X(H)$ containing $x$ then $\sigma_x=\sigma_y$. The question is local at $x$ and only depends on the action of $H$, so shrinking $X$ we can assume that $X=\Spec(A)$ for a strictly local $L'$-graded ring. By \cite[Lemma~4.4.15]{ATLuna}, $m_x$ is generated by $d=\dim(X)$ homogeneous elements $t_1\.t_d$. We can assume that $t_i$ is trivially graded if and only if $1\le i\le s$, and then $\sigma_x$ is the multiset of gradings of $t_{s+1}\.t_d$. Recall that $X(H)$ is regular by \cite[Proposition~5.1.16]{ATLuna}, and clearly $H$ acts trivially on the cotangent space of $X(H)$ at $x$. Since $X(H)\subseteq V(t_{s+1}\.t_d)$ and the images of $t_1\.t_s$ span the maximal trivially graded subspace of $m_x/m_x^2$, we actually have that $V(t_{s+1}\.t_d)=X(H)$. Hence the maximal ideal of $y$ is generated by $t_{s+1}\.t_d$, and the images of $t_{s+1}\.t_d$ form a basis of the cotangent space at $y$. In particular, $\sigma_y=\sigma_x$.
\end{proof}

\begin{corollary}\label{signcor}
Assume that $G=\bfD_L$ acts on a regular scheme $X$. Let $x,y\in X$ be two points such that the closure of the orbit of $y$ contains $x$ and let $\phi\:L_x\to L_y$ be the map associated to $G_y\hookrightarrow G_x$. Then $\sigma_y=\phi(\sigma_x)_\red$.
\end{corollary}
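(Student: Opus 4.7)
\medskip
\noindent\emph{Proof plan.} The idea is to deduce the corollary from Lemma~\ref{signlem} applied not to the ambient group $G$ but to the subgroup $G_y\subseteq G_x$, together with a short character computation matching the two sides of the desired equality.

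\emph{Step 1: reduction to a specialization.} The hypothesis is $x\in\overline{Gy}$, which supplies some $g\in G$ such that $gy$ specializes scheme-theoretically to $x$. Since $G$ is commutative, translation by $g$ is $G$-equivariant and preserves stabilizers, so $G_{gy}=G_y$ and $\sigma_{gy}=\sigma_y$. After replacing $y$ by $gy$ I may assume that $y$ is a generization of $x$.

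\emph{Step 2: apply Lemma~\ref{signlem} to the $G_y$-action.} The group $G_y=\bfD_{L_y}$ is itself diagonalizable and acts on the regular scheme $X$, so Lemma~\ref{signlem} applies with the ambient group there taken to be $G_y$ (rather than $G$). Because $G_y\subseteq G_x$, both $x$ and $y$ are fixed by $G_y$, and hence both lie in the $G_y$-inertia stratum with stabilizer $H=G_y$, namely $X^{G_y}$. Since $y$ generizes $x$, the two points lie in the same connected component of $X^{G_y}$, and Lemma~\ref{signlem} yields that the reduced $G_y$-signatures at $x$ and at $y$ coincide.

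\emph{Step 3: match both sides.} At $y$ the $G$-stabilizer is already $G_y$, so the reduced $G_y$-signature of $m_y/m_y^2$ is by definition $\sigma_y$. At $x$ the $G$-stabilizer is $G_x$, and the $L_x$-graded multiset of nonzero characters of $m_x/m_x^2$ is $\sigma_x$; restricting the action to $G_y$ pushes each character along $\phi\:L_x\to L_y$, and the characters that become trivial are then discarded in the reduced signature, yielding precisely $\phi(\sigma_x)_\red$. Combining with Step~2 gives $\sigma_y=\phi(\sigma_x)_\red$.

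The only genuine subtlety is the choice made in Step~2: invoking Lemma~\ref{signlem} for the full group $G$ is useless, because $x$ and $y$ lie in different $G$-inertia strata; the point is to restrict to the smaller group $G_y$, which collapses both points into the single fixed stratum $X^{G_y}$, on which the statement becomes a plain local-constancy assertion.
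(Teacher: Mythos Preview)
Your proof is correct and follows essentially the same route as the paper: restrict the action to the subgroup $H=G_y$, observe that $H_x=H_y=G_y$ so both points lie in the fixed stratum $X^{G_y}$, identify the reduced $H$-signatures at $x$ and $y$ with $\phi(\sigma_x)_\red$ and $\sigma_y$ respectively, and invoke Lemma~\ref{signlem} to equate them. The paper's argument is literally this, just compressed into three lines.

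Your Step~1 is a useful addition that the paper leaves implicit: Lemma~\ref{signlem} gives local constancy along \emph{connected components} of $X^{G_y}$, so one must ensure $x$ and $y$ lie in the same component. Replacing $y$ by a point of the orbit that specializes to $x$ handles this. Your phrasing ``some $g\in G$'' is slightly informal --- the cleanest realization is to take $y'$ to be the generic point of an irreducible component of $\overline{Gy}$ containing $x$ (such $y'$ lies in the locally closed orbit $Gy$, hence has stabilizer $G_y$, and specializes to $x$); the equality $\sigma_{y'}=\sigma_y$ then follows since translation within a $G$-orbit is $G_y$-equivariant by commutativity of $G$. But this is a matter of precision, not a gap in the idea.
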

\begin{proof}
Consider the action of $H=G_y$ on $X$. Then $H_y=H_x=\bfD_{L_y}$ and we denote by $\sigma^H_x$ and $\sigma^H_y$ the corresponding characteristic subsets of $L_y$. Clearly, $\sigma^H_y=\sigma_y$ and  $\sigma^H_x=\phi(\sigma_x)_\red$. It remains to note that $\sigma^H_x=\sigma^H_y$ by Lemma~\ref{signlem}.
\end{proof}

\subsubsection{Local combinatorial data}\label{localcombsec}
Assume now that $G=\bfD_L$ acts simply and in a relatively affine manner on a toroidal scheme $(X,U)$. To any point $x\in X$ one can associate the following combinatorial datum: the stabilizer $G_x=\bfD_{L_x}$, the $L_x$-graded monoid $\oM_x=\ocM_{X,x}$, and the reduced signature $\sigma_x\:L_x\setminus\{0\}\to\NN$ of the action of $G_x$ on the logarithmic stratum through $x$. The following result shows that these data depend only on the orbit $O=O_x$ of $x$, so we will also use the notation $\sigma_O$, in addition to $G_O$, $L_O$ and $\oM_O$ introduced earlier.

\begin{proposition}\label{combdataprop}
Assume that a diagonalizable group $G=\bfD_L$ acts on a toroidal scheme $X$.

(i) The triple $(\oM_x,L_x,\sigma_x)$ is locally constant along the logarithmic-inertia strata $X(H)\cap X(r)$. In particular, the triples are locally constant along orbits of $G$ and there are finitely many isomorphism classes of the triples $(\oM_x,L_x,\sigma_x)$.

(ii) The combinatorial data are compatible with strict strongly equivariant morphism of toroidal schemes: if $f\:Y\to X$ is such a morphism, $y\in Y$ and $x=f(y)$ then $(\oM_x,L_x,\sigma_x)=(\oM_y,L_y,\sigma_y)$.
\end{proposition}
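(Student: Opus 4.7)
The plan is to assemble the proposition from three pieces already established earlier in the paper. First, the stabilizer group $L_x$ is constant along an inertia stratum $X(H)$ by the very definition of the latter. Second, the monoid $\oM_x = \ocM_{X,x}$ is locally constant along each connected component of a logarithmic stratum $X(r)$ by Lemma~\ref{centerlemma}. Third, the reduced signature $\sigma_x$ will be controlled by applying Lemma~\ref{signlem} to the $G$-action on the regular scheme $X(r)$.

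To carry out part (i), I will fix $x \in X$ and set $H = G_x$ and $r = r(x)$. Since the $G$-action preserves $\cM_X$ by the definition of a group action on a logarithmic scheme, each stratum $X(r)$ is $G$-invariant; and since $(X,U)$ is toroidal, $X(r)$ is regular. Lemma~\ref{signlem}, applied to the $G$-action on the regular scheme $X(r)$, then yields that $\sigma_x$ is locally constant along $X(r) \cap X(H)$. Combined with the first two observations, this gives local constancy of the full triple along each logarithmic-inertia stratum. Since orbits lie in a single such stratum (a $G$-orbit is contained in one inertia stratum by constancy of the stabilizer, and in one connected component of a logarithmic stratum since the latter is $G$-invariant), the ``in particular'' assertion about orbits follows. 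For finiteness, I will invoke the standard noetherian setup: the logarithmic stratification is finite (strata are indexed by ranks $r \leq \rk(\ocM_X)$), the inertia stratification is finite, and each intersection stratum has finitely many connected components.

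For part (ii), strong equivariance of $f$ immediately gives $G_y = G_x$, hence $L_y = L_x$, while strictness of $f$ induces an isomorphism $\ocM_{X,x} \toisom \ocM_{Y,y}$ (so $\oM_y = \oM_x$) that respects the $L$-grading induced by the action. Strictness also gives the Cartesian square $Y(r) = X(r) \times_X Y$ for each $r$ (as noted in the remark following the definition of the logarithmic stratification), so the restriction $f|_{Y(r)}\colon Y(r) \to X(r)$ is the base change of $f$ along the $G$-invariant locally closed immersion $X(r) \hookrightarrow X$; in particular it is strongly equivariant. Applying Lemma~\ref{Lem:functorsign} to this restricted morphism then yields $\sigma_y = \sigma_x$, completing the proof.

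The main subtlety will be verifying that strong equivariance restricts well under the base change $Y(r) = X(r) \times_X Y$; this should reduce to the base-change stability of strongly equivariant morphisms along $G$-invariant locally closed immersions, rooted in the framework of \cite[Section 5]{ATLuna}. Everything else is a direct application of lemmas already proved in the excerpt.
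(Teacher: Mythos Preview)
Your argument for part (i) is correct and coincides with the paper's: both combine Lemma~\ref{centerlemma} (for $\oM_x$), the definition of the inertia strata (for $L_x$), and Lemma~\ref{signlem} applied to the regular scheme $X(r)$ (for $\sigma_x$), together with noetherianity for finiteness.

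For part (ii) your route differs slightly from the paper's. You restrict $f$ to $Y(r)\to X(r)$ and apply Lemma~\ref{Lem:functorsign} there; the paper instead applies Lemma~\ref{Lem:functorsign} to $f\:Y\to X$ itself to match the \emph{full} reduced signatures at $x$ and $y$, and then observes that the stratum signature $\sigma_x$ is determined by $\oM_x$ (with its $L_x$-grading) together with this full signature, so $\oM_x=\oM_y$ forces $\sigma_x=\sigma_y$. Your approach is arguably more direct, at the cost of the base-change check you flagged; that check goes through because a strongly equivariant morphism stays strongly equivariant after any $G$-equivariant base change (if $Y=X\times_{X\sslash G}(Y\sslash G)$ and $Z\to X$ is equivariant, then $Z\times_XY=Z\times_{X\sslash G}(Y\sslash G)$ and its $G$-quotient is $(Z\sslash G)\times_{X\sslash G}(Y\sslash G)$). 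The paper's approach avoids this verification but relies on the implicit decomposition of the cotangent space into a monoidal part (controlled by $\oM_x$) and the stratum part; either way the argument is short.
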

\begin{proof}
(i) The monoid $\oM_x$ is locally constant along the logarithmic strata $X(r)$ by Lemma~\ref{centerlemma} and the reduced signature is locally constant along the inertia strata $X(H)$ by Lemma~\ref{signlem}. Since $X$ is noetherian the finiteness claim follows.

(ii) By the definition of strictness, $\oM_x=\oM_y$. Since $\sigma_x$ is determined by $\oM_x$ and the reduced signature of the action of $G$ on the whole $X$, and the same claim holds for $y$, the assertion of (ii) follows from Lemma~\ref{Lem:functorsign}.
\end{proof}

\subsubsection{Construction of charts}
Now we will construct local charts for simple $G$-actions. We assume that the action is simple for simplicity of the exposition -  the assumption can be removed  at the cost of working with charts $\bfA_P$ on which $G$ acts through a grading on $P$ and a (non-trivial) action on the graded monoid $P$.

\begin{theorem}\label{combchart}
Assume that a toroidal scheme $(X,D)$ is provided with a local $G$-simple action of a diagonalizable group $G=\bfD_L$ and the torsion $K_O^\tor$ of $K_O=\Ker(L\to L_O)$ is of order $N$ invertible on $X$. Set $P=\oM_O\oplus K_O\oplus\NN^{\sigma_O}$, where $O$ is the orbit. Then there exists a strongly equivariant strict morphism $$(X,D)\times\bfD_{K_O^\tor}\to(\bfA_P,E),$$ where $E=\bfA_P\setminus\bfA_{\oM_O^\gp\oplus K_O\oplus\NN^{\sigma_O}}$ and the charts are over $\ZZ[1/N]$.
\end{theorem}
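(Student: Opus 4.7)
Plan: My plan is to combine Proposition~\ref{pretoroidalprop} with Corollary~\ref{toroidalprop}(iii). The first enlarges the toroidal divisor $D$ to some $D''\supseteq D$ so that the $G$-action on $(X,D'')$ becomes toroidal, with the added divisors corresponding exactly to the characters in $\sigma_O$; then Corollary~\ref{toroidalprop}(iii) produces a central strongly equivariant chart for $(X,D'')$, and the statement of the theorem will follow by restricting the toroidal structures on both sides back to $D$ and $E$.

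For the first step I would construct $(X,D'')$ as follows. The cotangent sequence $N_{O\into C}\xrightarrow{\phi_O}i^*\Omega_C\to\Omega_O\to 0$ from the proof of Proposition~\ref{pretoroidalprop}(i) is $L_O$-graded. Since $G_O$ is the pointwise stabilizer of the orbit $O$ and $G$ is abelian, $G_O$ acts trivially on $O$ and hence on $\Omega_O$. This forces the non-trivially graded part of $i^*\Omega_C$ to be contained in $\mathrm{Im}(\phi_O)$, and its multiset of characters is precisely the reduced signature $\sigma_O$. Picking a homogeneous basis $t_1,\dots,t_{d'}$ of this non-trivially graded part (so $d'=|\sigma_O|$ with multiplicity), lifting to homogeneous elements $\ot_1,\dots,\ot_{d'}$ of the coordinate ring as in the proof of Proposition~\ref{pretoroidalprop}(ii), and setting $D''=D\cup\bigcup_iV(\ot_i)$, the same computation as in Proposition~\ref{pretoroidalprop}(ii)--(iii) shows that $(X,D'')$ is a toroidal scheme with monoid $M''=\oM_O\oplus\NN^{\sigma_O}$ at $O$. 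The cotangent space of the new center is a trivially $L_O$-graded quotient of $i^*\Omega_C$, so Proposition~\ref{pretoroidalprop}(iv) ensures the $G$-action on $(X,D'')$ is toroidal.

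Next I would apply Corollary~\ref{toroidalprop}(iii) to the toroidal action on $(X,D'')$, yielding a central strongly equivariant chart
\begin{equation*}
\Phi\:(X,X\setminus D'')\times\bfD_{K_O^\tor}\longrightarrow(\bfA_P,\bfA_{P^\gp})\times\Spec\ZZ[1/N],
\end{equation*}
with $P=M''\oplus K_O=\oM_O\oplus K_O\oplus\NN^{\sigma_O}$. I would then reinterpret the same underlying scheme morphism $\Phi$ with the original toroidal structures: $(X,D)\times\bfD_{K_O^\tor}$ on the source and $(\bfA_P,E)$ on the target. Strong equivariance depends only on the underlying scheme morphism, so it persists. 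For strictness, observe that $\cM_{(\bfA_P,E)}$ is generated by the image of $\oM_O\hookrightarrow P$ (together with units coming from $K_O$), because elements of $\NN^{\sigma_O}$ are not invertible on $\bfA_P\setminus E=\bfA_{\oM_O^\gp\oplus K_O\oplus\NN^{\sigma_O}}$. Pulling back via $\Phi$ then yields the log structure associated to $\oM_O\to\cO_X$, which is the toroidal chart for $(X,D)$ produced by Proposition~\ref{simpleprop}(i). Hence $\Phi$ is strict with respect to the restricted toroidal structures.

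Main obstacle: the crux is the identification in the first step of $\sigma_O$ with the multiset of non-trivial $L_O$-characters of $\mathrm{Im}(\phi_O)$. This relies on the triviality of the $L_O$-grading on $\Omega_O$, which in turn comes from $G_O$ acting trivially on its orbit. Once this identification allows us to choose $D''$ with exactly the right number of new divisors, the remainder is an assembly of results already developed: Proposition~\ref{pretoroidalprop} to enlarge $D$ and make the action toroidal, Corollary~\ref{toroidalprop}(iii) to upgrade from equivariant to strongly equivariant, and a direct log-structure check to restrict back to $(X,D)$ and $(\bfA_P,E)$.
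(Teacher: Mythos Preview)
Your proposal is correct and follows the same strategy as the paper: enlarge $D$ to a divisor $D''$ so that the action becomes toroidal with monoid $\oM_O\oplus\NN^{\sigma_O}$, apply Corollary~\ref{toroidalprop}(iii) to produce a strongly equivariant chart, and then restrict the toroidal structures on both sides. The only difference is a shortcut in the construction of $D''$: the paper first invokes Proposition~\ref{pretoroidalprop} verbatim (adding a full homogeneous basis of $\mathrm{Im}(\phi_O)$, giving $\oM'_O=\oM_O\oplus\NN^{\sigma'_V}$) and then uses Proposition~\ref{decreaseprop} to strip off the components with trivial character, whereas you add only the non-trivially graded part of $\mathrm{Im}(\phi_O)$ directly---this is fine, but be aware that Proposition~\ref{pretoroidalprop}(iv) is stated for the full basis, so your appeal to it needs the adaptation you sketch (the non-trivially graded part of $N_{O\hookrightarrow C}$ injects into $\mathrm{Im}(\phi_O)$ since $H_1(\LL_O)$ is trivially graded, hence is spanned by your $t_i$, so $N_{O\hookrightarrow C''}$ is trivially $L_O$-graded).
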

\begin{proof}
By Proposition~\ref{pretoroidalprop} there exists a $G$-equivariant divisor $D'$ containing $D$ and such that $(X,D')$ is a toroidal scheme, the action of $G$ on $(X,D')$ is toroidal, and the $G$-graded monoids $\oM'_O=\ocM_{(X,D'),O}$ and $\oM_O$ are related by $\oM'_O=\oM_O\oplus\NN^{\sigma'_V}$, where $V$ is the image of the map $\phi_O:N_{O\into C}\to i^*\Omega_C$ and $C$ is the center of $(X,U)$. In addition, the components $D_1\.D_d$ of $D'$ not contained in $D$ correspond to the generators $e_1\.e_d$ of $\NN^{\sigma'_V}$. As in the proof of Proposition~\ref{pretoroidalprop}, $D_i=V(\ot_i)$, where $\ot_i$ is homogeneous of weight $\chi'(e_i)$. Without restriction of generality, there exists $n$ such that $1\le n\le d$ and $\chi'(e_i)=0$ precisely when $i>n$.

By Proposition~\ref{decreaseprop}, if $D''$ is obtained by decreasing $D'$ by removing $D_{n+1}\.D_d$, then both $(X,D'')$ and the action stay toroidal. After such decreasing we have that $\oM''_O=\oM_O\oplus\NN^{\sigma_V}$. We claim that $\sigma_V=\sigma_O$ and hence $\oM''_O=\oP$. Indeed, the center $C''$ of $(C,D)$ is the intersection of $C$ with $D_1\.D_n$. Since $G$ acts trivially on $C''$, we have that $\sigma_O=\{\chi'(e_1)\.\chi'(e_n)\}\  = \ \sigma_V$.

By Proposition~\ref{simpleprop}, there exists an equivariant central toroidal chart $$f\:(X,D'')\times\bfD_{K_O^\tor}\to(\bfA_P,\bfA_P\setminus\bfA_{P^\gp})$$ which is fixed-point reflecting along the preimage of $O$, and this chart is strongly equivariant by Corollary~\ref{toroidalprop}(ii). Finally, $E$ is obtained from $\bfA_P\setminus\bfA_{P^\gp}$ by omitting the components corresponding to the generators of $\NN^{\sigma_O}$, hence $D\times\bfD_{K_O^\tor}=f^{-1}(E)$ and $f\:(X,D)\times\bfD_{K_O^\tor}\to(\bfA_P,E)$ is a required combinatorial chart.
\end{proof}

\begin{corollary}\label{toroidalsigmacor}
Assume that a toroidal scheme $(X,D)$ is provided with a $G$-simple action of a diagonalizable group $G=\bfD_L$. Let $x,y\in X$ be points such that $x$ lies in the closure of the orbit of $y$ and let $\phi\:L_x\to L_y$ be the associated map. Then $\sigma_y=\phi(\sigma_x)_\red$.
\end{corollary}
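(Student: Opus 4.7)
The plan is to reduce the claim to a direct computation on the toric model supplied by Theorem~\ref{combchart}.

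First I will localize $G$-equivariantly at $O_x$: replacing $X$ by the $G_x$-localization $X_{O_x}$, the action becomes local with closed orbit $O_x$, and $y$ automatically lies in $X_{O_x}$ because $x\in\overline{O_y}$ implies $O_x\subseteq\overline{O_y}$. By Proposition~\ref{combdataprop}(i) the triples $(\oM_z,L_z,\sigma_z)$ are locally constant along $G$-orbits, so this reduction loses nothing.

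Next I will apply Theorem~\ref{combchart}. Since the desired equality is one of combinatorial data at two points and the combinatorial data are étale-local by Proposition~\ref{combdataprop}(ii), I may pass to an étale cover and assume the torsion order $N$ of $K_{O_x}$ is invertible on $X$. The theorem then produces a strongly equivariant strict morphism
\[
f\colon (X,D)\times\bfD_{K_{O_x}^\tor}\longrightarrow(\bfA_P,E),\qquad P=\oM_{O_x}\oplus K_{O_x}\oplus\NN^{\sigma_{O_x}}.
\]
By Proposition~\ref{combdataprop}(ii) the triple at any point of the source agrees with the triple at its $f$-image, and the extra free torus factor $\bfD_{K_{O_x}^\tor}$ carries trivial logarithmic structure and does not disturb these triples. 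Since $f$ is equivariant, it sends $x\in\overline{O_y}$ into $\overline{O_{f(y)}}$, so it suffices to verify the claim on $(\bfA_P,E)$.

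On $\bfA_P$ the verification is combinatorial. Torus orbits correspond to faces of $P$; the image of $x$ lies in the closed orbit (face $\{0\}$) with $L_x=L$, while the image of $y$ lies in the orbit of some face $F_y\subseteq P$ with $L_y=L/\chi^\gp(F_y^\gp)$, and $\phi\colon L_x\to L_y$ is the induced surjection. Because the logarithmic divisor $E$ comes only from the $\oM_{O_x}$ summand, the logarithmic stratum through each of these points is a toric subscheme of $\bfA_{K_{O_x}\oplus\NN^{\sigma_{O_x}}}$, and the reduced signature at each point can be read directly off the $L$-grading of its cotangent space. Inspection shows that $\sigma_y$ is obtained from $\sigma_x$ by applying $\phi$ and discarding the resulting zeros, precisely because the generators of $P$ whose characters land in $\chi^\gp(F_y^\gp)=\ker\phi$ are exactly those becoming invertible (and hence dropping out of the cotangent space) upon passing from $x$ to the orbit of $y$; this is the definition of $\phi(\sigma_x)_\red$.

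The main obstacle will be the bookkeeping in the final step --- distinguishing which generators of $P$ contribute to the logarithmic divisor (hence to $r(z)$) from those contributing to the cotangent space of the logarithmic stratum (hence to $\sigma_z$), and confirming that the auxiliary torus factor $\bfD_{K_{O_x}^\tor}$ in the chart does not corrupt the signatures. Both amount to routine tracking of $L$-gradings under the chart.
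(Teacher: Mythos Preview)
Your approach is essentially the paper's, but two points deserve sharpening.

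First, when you localize you should also replace $G$ by $G_x$ (not merely pass to the $G_x$-localization of $X$). With the group now equal to $G_x$, the closed point $x$ is fixed, the action is \emph{strictly} local, and $K_O=\Ker(L_x\to L_x)=0$. Theorem~\ref{combchart} then gives a global strongly equivariant chart $f\colon(X,D)\to(\bfA_P,E)$ with $P=\oM_x\oplus\NN^{\sigma_x}$ and no torsion factor $\bfD_{K_O^\tor}$; the detour through an \'etale cover and the bookkeeping about the auxiliary torus factor are unnecessary.

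Second, your final ``inspection'' glosses over the only nontrivial point in the toric computation. On $(\bfA_P,E)$ the logarithmic stratum through $z=f(y)$ is $S=\bfA_{K\oplus\NN^{\sigma_x}}$ with $K=\Ker(M^\gp\to N^\gp)$ and $N=\oM_z$. It is not that certain generators of $P$ ``become invertible''; rather, the $K$ summand contributes to the cotangent space of the stratum but its characters vanish in $L_z$ (since $M^\gp\to L_x\to L_z$ factors through $N^\gp$). The paper makes this precise: the projection $S\to\bfA_{\NN^{\sigma_x}}$ is strongly $G_z$-equivariant, so by Lemma~\ref{Lem:functorsign} one is reduced to the regular scheme $\bfA_{\NN^{\sigma_x}}$, where Corollary~\ref{signcor} finishes the job. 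Your heuristic is in the right direction, but as stated it blurs the distinction between generators contributing to the logarithmic structure and those contributing to the stratum's cotangent space.
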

\begin{proof}
Replacing $G$ by $G_x$ and $X$ by the equivariant localization at $x$ we can assume that the action is strictly local with closed orbit $x$. Set $M=\oM_x$ and $P=M\oplus\NN^{\sigma_x}$, then by Theorem~\ref{combchart} there exists a strongly equivariant morphism $f\:(X,D)\to(\bfA_P,E)$ with $E=\bfA_P\setminus\bfA_{M^\gp\oplus\NN^{\sigma_x}}$. Since $\sigma_x=\sigma_{f(x)}$ and $\sigma_y=\sigma_{f(y)}$ by Proposition~\ref{combdataprop}(ii), it suffices to prove the assertion for the target of $f$. Thus, we should prove that for any point $z$ of the toroidal scheme $(\bfA_P,E)$ the reduced image of $\sigma_x$ under $\psi\:L_x\to L_z$ is $\sigma_z$.

Set $N=\oM_z$ and let $K=\Ker(M^\gp\to N^\gp)$. Then the logarithmic stratum of $z$ is isomorphic to $S=\bfA_{K\oplus\NN^{\sigma_x}}$. The action of $G_x$ on $S$ corresponds to the composite homomorphism $K\oplus\NN^{\sigma_x}\to M^\gp\to L_x$. Note that $M^\gp\to L_x \to L_z$ factors through $N^\gp$, hence the image of $K$ under $L_x\to L_z$ vanishes. Thus, the map $S\to\bfA_{\NN^{\sigma_x}}$ is strongly $G_z$-equivariant and in view of Lemma~\ref{Lem:functorsign} it suffices to prove the analogous claim for $\bfA_{\NN^{\sigma_x}}$. In the latter case, the action is strictly local and the closed orbit $O$ (the origin) satisfies $\sigma_O=\sigma_x$. It remains to use Corollary~\ref{signcor}.
\end{proof}

\section{Torification}\label{Sec:torification}
Although a $G$-simple action on a toroidal scheme $(X,U)$ can be locally ``improved" to a toroidal action by increasing the toroidal divisor, this procedure is neither global nor canonical. Some drawbacks of this were discussed in Remark~\ref{pretoroidalrem}. The goal of this section is to establish a better way, called {\em torification}, to make actions toroidal. Torification, introduced in \cite{Abramovich-deJong} and developed in \cite{AKMW}, will be achieved by a functorial blowing up of $X$ and will only increase the toroidal divisor by adding the exceptional divisor. So, the exceptional divisor plays a role analogous to its role in  desingularization theory -- providing new parameters in a canonical way. The na\"ive procedure of adding divisors is still used to verify, by local computations, that the exceptional divisors provide the necessary parameters.

We will mostly follow the methods of \cite[\S3]{AKMW}, the main modification being the use of strongly equivariant charts instead of strongly \'etale charts.

\subsection{Making an action $G$-simple}\label{makesimple}
The restriction on an action to be $G$-simple is  very mild; one can always transform an action into a $G$-simple action using a \emph{barycentric subdivision},  a simple combinatorial construction recalled below.

\subsubsection{Kato Fans}\index{Kato fan}
To any toroidal scheme $(X,U)$ Kato associates in \cite[Section 10.1]{Kato-toric} a combinatorial structure we call a {\em Kato  fan} $F=F(X,U)$, to distinguish it from the fans of toric geometry. It is defined as follows: points of $F$ are the maximal points of the logarithmic stratification of $(X,\cM_X)$ and the structure sheaf of monoids $\cM_F$ is the pullback of $\ocM_X$. Since connected components of the logarithmic strata of $(X,\cM_X)$ are irreducible, one obtains a natural retraction map $c\:X\to F$, that can be viewed as a ``combinatorial chart" of $(X,U)$. The polyhedral cone complex with integral structure used in  \cite{KKMS} for toroidal varieties can be recovered as $F(\RR_{\geq 0})$, see \cite{Ulirsch}. Any subdivision $F'\to F$ can be pulled back to $X$: Kato defines a ``base change" modification $f\:X'=X\times_FF'\to X$ such that $(X',f^{-1}(U))$ is a toroidal scheme with Kato fan $F'$. In particular, a sequence of subdivisions $F_n\to\dots\to F_1\to F_0=F$ induces a sequence of toroidal modifications $(X_n,U_n)\to\dots\to(X,U)$. Moreover, if the subdivisions are given by order functions (see \cite[Section 1]{Abramovich-Wang}) then the modifications are toroidal blowings up.

\subsubsection{Barycentric subdivision}
For any Kato fan $F$ its {\em barycentric subdivision} is defined as a composition of subdivisions $B(F)=F_n\to\dots\to F_0=F$, where $F_1\to F$ performs simultaneous subdivisions of all cones of maximal dimension at their barycenters, $F_2\to F_1$ subdivides the preimages of the original cones of the next dimension at their barycenters, and so on.\index{barycentric subdivision}

For any toroidal scheme $(X,U)$ the barycentric subdivision $B(F)\to F$ of its Kato fan induces via a ``base change" a sequence of toroidal blowings up $(X',U'):=(X_n,U_n)\to\dots\to(X,U)$ and we say that $(X',U')\to (X,U)$ is the {\em barycentric modification}.

\begin{remark}
(i) In fact, one can even realize $(X',U')\to (X,U)$ as a single toroidal blowing up along a toroidal ideal $J$, see \cite[Theorem 5.6]{Niziol} for details.

(ii) The barycentric subdivision provides a standard procedure to subdivide any Kato fan to a simplicial one. There also is a much more delicate canonical desingularization procedure that subdivides $F$ into a non-singular one, i.e. a Kato fan whose cones are of the form $\NN^n$. The latter can be used for a canonical desingularization of toroidal schemes, see \cite{KKMS}, \cite[Theorem~3.4.9]{Illusie-Temkin}.
\end{remark}

\begin{proposition}[\protect{\cite[Proposition 2.3]{Abramovich-Wang}}]\label{makingsimpleprop}
Assume that a diagonalizable group $G$ acts on a toroidal scheme $(X,U)$. Then the barycentric modification $(X',U')\to(X,U)$ is $G$-equivariant and the action on $(X',U')$ is $G$-simple.
\end{proposition}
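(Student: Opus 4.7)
The plan is to reduce both assertions to a canonical feature of the barycentric subdivision of a Kato fan, namely that every point of $B(F)$ has a stabilizer that preserves a flag of faces of some cone of $F$ and hence fixes the barycenters spanning the relevant new cone.

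First I would establish equivariance. Since $G$ acts on $(X,U)$, it permutes the logarithmic strata of $(X,\cM_X)$ and acts compatibly on the sheaf $\ocM_X$; pulling these data back along $c\colon X\to F$ gives a $G$-action on the Kato fan $F=F(X,U)$ and makes $c$ equivariant. The barycentric subdivision $B(F)\to F$ is defined by an entirely canonical combinatorial recipe (iteratively subdivide cones of decreasing dimension at the sum of their primitive edge generators), so it admits a unique $G$-action making $B(F)\to F$ equivariant. By functoriality of the base change $(X,U)\times_F B(F)$, the barycentric modification $(X',U')\to(X,U)$ is $G$-equivariant.

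Next I would verify $G$-simplicity pointwise. Fix $x'\in X'$ with image $x\in X$; set $M=\ocM_{X,x}$ and let $\sigma'$ be the cone of $B(M)$ containing the image of $x'$. By definition of the barycentric subdivision, $\sigma'$ corresponds to a chain of faces $F_1\subsetneq F_2\subsetneq\dots\subsetneq F_k$ of $M$, and its integral generators are the barycenters $b_j:=\sum_{e\in E(F_j)} e$, where $E(F_j)$ denotes the set of primitive generators of the edges of $F_j$. The stalk $\ocM_{X',x'}$ is a localization of the submonoid generated by those $b_j$ (inverting precisely the $b_j$ corresponding to the faces of $\sigma'$ not containing the image of $x'$), so in particular $\ocM_{X',x'}$ is generated as a $G_{x'}$-set by the $b_j$.

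The key step is to observe that any $g\in G_{x'}$ preserves the cone $\sigma'$ in $B(F)$, and thus preserves the chain $F_1\subsetneq\dots\subsetneq F_k$ face by face (different faces have different dimensions, so no permutation among them is possible). Each $g$ therefore permutes the edges of every $F_j$, and since $b_j$ is the symmetric sum over those edges, $g\cdot b_j=b_j$. Hence $G_{x'}$ fixes every generator of $\ocM_{X',x'}$, which is exactly $G$-simplicity at $x'$.

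I expect the only real subtlety is to pin down the precise description of $\ocM_{X',x'}$ in terms of the chain $\{F_j\}$—i.e.\ which barycenters become units and which remain in the sharpening. This is not deep but is the step that uses the explicit behavior of the monoidal stalks under a toroidal blow-up coming from a subdivision of the Kato fan, and it is most transparently checked \'etale-locally on a chart $\bfA_M$ where barycentric subdivision is literally the toric subdivision along the rays $\RR_{\ge 0}\cdot b_j$.
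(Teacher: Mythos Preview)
Your equivariance argument is fine and matches the paper's. The combinatorial core of your $G$-simplicity argument is also correct: a cone of $B(F)$ corresponds to a strict flag of faces, its rays are the barycenters of the flag members, these are indexed by distinct dimensions and so cannot be permuted by anything preserving the cone, and each barycenter is a symmetric sum over edges of a preserved face, hence fixed by $G_{x'}$.

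Where you slip is in the identification of $\ocM_{X',x'}$, and you rightly flag this as the delicate point. The barycentric subdivision is a subdivision of the cone complex $F(\RR_{\ge 0})$, whose cone at $x$ is $\sigma=\Hom(M,\RR_{\ge 0})$ sitting in the \emph{dual} lattice $N=\Hom(M^\gp,\ZZ)$. Your $b_j$ are ray generators of a cone $\sigma'\subset N_\RR$, not elements of $M$, and $\ocM_{X',x'}$ is (the sharpening of) $(\sigma')^\vee\cap M^\gp$, which is \emph{not} generated by the $b_j$. The correct inference is: $G_{x'}$ fixes each $b_j$, hence acts trivially on the $\RR$-span of $\sigma'$, hence on the dual cone, hence on $\ocM_{X',x'}$. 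With that correction your proof goes through.

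The paper avoids this computation entirely by using the divisor criterion of Lemma~\ref{simpletoroidallem}. Components of $D'=X'\setminus U'$ are either \emph{new} (exceptional for one of the barycentric blowings up) or \emph{old} (strict transforms of components of $D$). New components are $G$-equivariant since the barycenters are canonical. Old components are pairwise disjoint, because each cone of $B(F)$ contains at most one ray of the original $F$ (only the bottom member of a strict flag can be one-dimensional); hence at most one old component passes through any given $x'$, and $G_{x'}$ must preserve it. This is your flag argument rephrased on the divisor side, bypassing any explicit description of $\ocM_{X',x'}$.
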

\begin{proof}
An action of a group $G$  on $(X,U)$ induces by functoriality an action of $G$ on $F$. Since the set of barycenters of cones of a given dimension is stable under $G$, the action lifts to $B(F)$ and by pullback the action on $(X,U)$ lifts to $(X',U')$. An irreducible component of $D'=X'\setminus U'$ is {\em old} if it is the strict transform of a component of $D=X\setminus U$, otherwise it is said to be \emph{new}. All new components of $D'$ are equivariant and all old components are disjoint. Thus the criteria of Lemma~\ref{simpletoroidallem} apply and the action on $(X',U')$ is $G$-simple.
\end{proof}

\subsection{Torific ideals}

\subsubsection{The definition}
Assume that a diagonalizable group $G=\bfD_L$ acts on an affine scheme $X=\Spec(A)$. For any element $l\in L$ let $I_l^A$ denote the ideal $A_lA$ generated by all $l$-homogeneous elements. The formation of such ideals is compatible with localization by elements of $A_0$ hence the definition globalizes to any scheme $X$ with a relatively affine $G$-action. We call the corresponding ideal the {\em $l$-torific}\index{torific!ideal} ideal and denote it $I^X_l\subseteq\cO_X$.
For any finite multiset $S$ in $L$ we define the {\em $S$-torific ideal} $I^X_S=\prod_{l\in S}I^X_l$.

\begin{remark}
The construction of torific ideals is local on $X\sslash G$ but not on $X$. One can easily give examples where $I^X_S$ is not compatible with restriction to equivariant open subschemes which are not strongly equivariant.
\end{remark}

\subsubsection{Functoriality}
Compatibility of torific ideals with restriction to strongly equivariant open subschemes is a particular case of the following result.

\begin{lemma}\label{Lem:functorideal0}
Torific ideals are compatible with strongly $G$-equivariant morphisms $f\:Y\to X$ in the sense that $I_S^Y=I_S^X\cO_Y$ for any finite multiset $S$ in $L$.
\end{lemma}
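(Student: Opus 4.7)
The plan is to reduce to an affine local situation where the strongly equivariant hypothesis takes a very concrete form, establish the formula first for a single weight $l\in L$, and then bootstrap to arbitrary multisets $S$ by multiplicativity.

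First I would observe that since the actions on both $X$ and $Y$ are relatively affine, and the formation of $I_l$ commutes with localization by elements of $A_0$, the claim is local on $X\sslash G$, and by strong equivariance we have $Y\sslash G\to X\sslash G$, so everything localizes simultaneously. We may therefore assume $X=\Spec A$ and $Y=\Spec B$ are affine, with a $G$-equivariant ring homomorphism $A\to B$.

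Next I would invoke the characterization of strongly equivariant morphisms recalled from \cite{ATLuna}: a $G$-equivariant morphism $f\:Y\to X$ between relatively affine $G$-schemes is strongly equivariant precisely when the natural map $A\otimes_{A_0}B_0\to B$ is an isomorphism (this is the global analogue of the completed statement $\hatA_0\wtimes_{\hatR_0}\hatR=\hatA$ appearing in the proof of Theorem~\ref{flcilem}(iii)). Reading this off in each weight gives $B_l=A_l\otimes_{A_0}B_0$ for every $l\in L$. Multiplying by $B$ we obtain
\[
I_l^Y \;=\; B\cdot B_l \;=\; B\cdot(A_l\cdot B_0) \;=\; B\cdot A_l \;=\; I_l^X\cO_Y,
\]
which is the desired compatibility for a singleton $S=\{l\}$.

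Finally I would promote this to arbitrary finite multisets. Since ideal extension along any ring homomorphism is multiplicative, $\bigl(\prod_{l\in S}I_l^X\bigr)\cO_Y=\prod_{l\in S}\bigl(I_l^X\cO_Y\bigr)$, so using the singleton case for each factor,
\[
I_S^X\cO_Y \;=\; \prod_{l\in S}\bigl(I_l^X\cO_Y\bigr) \;=\; \prod_{l\in S} I_l^Y \;=\; I_S^Y.
\]
The main potential obstacle is just correctly locating and applying the ``base change'' description of strongly equivariant morphisms from \cite{ATLuna}; once one has the clean factorization $B\cong A\otimes_{A_0}B_0$ (equivalently $B_l=A_lB_0$), the rest is a one-line identification of ideals together with the multiplicativity of ideal extension.
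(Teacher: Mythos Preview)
Your proof is correct and follows essentially the same approach as the paper's own proof: reduce to the affine case, use the base-change characterization $B\cong A\otimes_{A_0}B_0$ of strongly equivariant morphisms to get $I_l^B=I_l^AB$, and then pass to multisets by multiplicativity. The paper states this more tersely, but the argument is identical.
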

\begin{proof}
The claim is local on the quotients, hence we can assume that the quotients are affine, and so $X=\Spec(A)$ and $Y=\Spec(B)$ are affine. The morphism $f$ is strongly equivariant precisely when $A\otimes_{A_0}B_0\to B$ is an isomorphism. Then $I^B_l=I^A_lB$ for any $l\in L$, and hence $I^B_S=I^A_SB$.
\end{proof}

\subsubsection{Localization}
Next we study compatibility of torific ideals and $G$-localizations.

\begin{lemma}\label{localtorificlem}
Assume that a scheme $X$ is provided with a relatively affine action of $G=\bfD_L$. Let $O\subset X$ be a special orbit with stabilizer $G_O=\bfD_{L_O}$, let $S$ be a multiset in $L$, and let $S_O$ be the image of $S$ under the homomorphism $\phi\:L\onto L_O$. Then the restriction of the torific ideals $I_S^{X,G}$ onto the equivariant localization $X_O$ coincides with $I_{S_O}^{X_O,G_O}$.
\end{lemma}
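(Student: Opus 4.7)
The claim is local over the quotient, so I would reduce at once to the case $X=\Spec A$ with $A$ an $L$-graded ring, $A_0$ the ring of invariants, and $X_O=\Spec A_{[\fS]}$ the equivariant localization at the maximal $L$-homogeneous ideal of $A_0$ corresponding to $y$. Note that the open immersion $X_O\hookrightarrow X$ is strongly $G$-equivariant by construction: its formation commutes with taking $G$-quotient since $X_O\sslash G=\Spec\cO_{Y,y}$ and $X_O=X\times_Y(X_O\sslash G)$. Hence Lemma~\ref{Lem:functorideal0} gives $I_S^{X,G}\cO_{X_O}=I_S^{X_O,G}$, and it suffices to prove the identity $I_S^{X_O,G}=I_{S_O}^{X_O,G_O}$ of ideals of the single ring $A_{[\fS]}$.

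The key input for the remaining identity is Lemma~\ref{Lem:unit-criterion}, applied to the $L$-local ring $A_{[\fS]}$ with closed orbit $O$: its homogeneous component of degree $n\in L$ contains a unit if and only if $n\in K_O=\Ker(L\to L_O)$. I would use this to show that the $G$-torific ideal $I_l^{X_O,G}$ depends on $l$ only through its image $\bar l\in L_O$. Indeed, if $l,l'\in L$ satisfy $l'-l\in K_O$ and $u\in A_{[\fS],l'-l}$ is a unit, then multiplication by $u$ restricts to a bijection $A_{[\fS],l}\toisom A_{[\fS],l'}$, whence the two ideals these homogeneous components generate agree.

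Next I would relate the two gradings on $A_{[\fS]}$. The $L$-grading refines the $L_O$-grading via $\phi$, so for each $\bar l\in L_O$ one has
\[
A_{[\fS],\bar l}^{L_O}\ =\ \bigoplus_{l\mapsto\bar l}A_{[\fS],l}^{L},
\]
and consequently
\[
I_{\bar l}^{X_O,G_O}\ =\ \sum_{l\mapsto\bar l}I_l^{X_O,G}\ =\ I_l^{X_O,G}
\]
for any single $l\in L$ lifting $\bar l$, where the last equality uses the previous paragraph. Taking products over the multiset $S$ (respectively its image $S_O$, with multiplicities) yields $I_{S_O}^{X_O,G_O}=I_S^{X_O,G}$, which combined with the first paragraph gives the lemma.

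The only subtle point is keeping the multiset bookkeeping consistent: since $\phi\:L\onto L_O$ need not be injective, a priori the sum $\sum_{l\mapsto\bar l}I_l^{X_O,G}$ could look larger than a single summand, and the proof that it is not is exactly the unit criterion of Lemma~\ref{Lem:unit-criterion}. Everything else is formal from functoriality and the definition of the torific ideal as a product indexed by a multiset.
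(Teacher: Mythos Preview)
Your proof is correct and follows essentially the same route as the paper: first use that $X_O\hookrightarrow X$ is strongly $G$-equivariant (the paper cites this from \cite[Section~5.1.11]{ATLuna}) together with functoriality of torific ideals to reduce to the $L$-local ring $A_{[\fS]}$, then invoke Lemma~\ref{Lem:unit-criterion} to show $I_l = I_{l+n}$ for $n\in K_O$, and conclude $I_S^{X_O,G}=I_{S_O}^{X_O,G_O}$. You are in fact slightly more explicit than the paper in spelling out the intermediate identity $I_{\bar l}^{X_O,G_O}=\sum_{l\mapsto\bar l}I_l^{X_O,G}$ coming from the refinement of gradings, which the paper leaves implicit.
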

\begin{proof}
The localization morphism $X_O\into X$ is strongly equivariant by definition, see \cite[Section 5.1.11]{ATLuna}.  Hence $I_S^{X,G}|_{X_O}=I_S^{X_O,G}$ and it suffices to prove the result when $X=X_O$. In this case write $X=\Spec(A)$ for an $L$-local ring $A$. By Lemma \ref{Lem:unit-criterion}, $A_n$ contains a unit whenever $n\in \Ker(\phi)$. Thus, $I_l^A=I_{l+n}^A$ for any $l\in L, n\in \Ker(\phi)$, and hence $I^{A,L}_S=I^{A,L_O}_{S_O}$.
\end{proof}

\subsubsection{Coherent families of characters}
Assume that $G=\bfD_L$ acts on $X$. By $L_X$ we denote the multiset $\coprod_{x\in X\sslash G}L_x$ of characters of stabilizers of special orbits of $X$. A multiset of characters $S$ in $L_X$ will be called {\em coherent} if for any $x\in X\sslash G$ the multiset $S_x$ of the elements of $S$ lying in $L_x$ is finite, $0$ is not in $S_x$, and for any $y\in X\sslash G$ with a specialization $x\in X\sslash G$ the multiset $S_y$ is the reduced image of $S_x$ under the map $L_x\onto L_y$.

Any finite multiset $S$ in $L\setminus\{0\}$ defines a coherent family of characters $S_X$ in $L_X$. We call $S_X$ a {\em constant} coherent family. For simplicity we will often write $S$ instead of $S_X$. A coherent family is called {\em locally constant} if there exists a strongly equivariant open covering $X=\cup_i X_i$ such that each $S_{X_i}$ is constant.

\subsubsection{General torific ideals}
Let $S$ be a locally constant coherent multiset of characters in $L_X$. It follows from Lemma~\ref{localtorificlem} that there exists a coherent ideal $I_S^X\subseteq\cO_X$ such that for any point $x\in X\sslash G$ with the corresponding special orbit $O$ the restriction of $I_S^X$ onto the equivariant localization $X_O$ coincides with $I_{S_x}^{X_O}$. We call $I_S^X$ the {\em torific ideal} associated with $S$. An ideal is a \emph{torific ideal} if it is the torific ideal associated to some locally constant coherent multiset of characters $S$.

\begin{remark}
(i) In the definition of coherent multisets we insisted that $0$ is not in $S_x$. This does not affect the ideal $I_S^X$ since $\cO_0=\cO_X$.

(ii) By the very definition, $I$ is a torific ideal if and only if there exist a strongly equivariant covering $X=\cup_iX_i$ and multisets $S_i$ in $L$ such that ${I|_{X_i}}=I^{S_i}_{X_i}$. The advantage of our definition in terms of multisets $S$ in $L_X$ is its canonicity.

(iii) In \cite{AKMW} one only considers torific ideals associated with constant sets of characters, and uses large enough sets $S$ for torification. We will achieve torification by a functorial blowing up, and for this one has to use minimal multisets of characters. This requires to consider non-constant families and extend the notion of torific ideals accordingly.

(iv) Our definition does not exclude the possibility that a torific ideal vanish on an irreducible component of $X$. We will need to avoid this possibility in order to assure that the blowing up is a birational transformation.
\end{remark}

\subsubsection{Extended functoriality}
Any strongly equivariant morphism $f\:Y\to X$ takes special orbits to special orbits and respects stabilizers. In particular, if $f\sslash G$ takes $y\in Y\sslash G$ to $x\in X\sslash G$ then $L_x=L_y$. For a multiset $S$ in $L_X$ we define the pullback $f^*(S)$ in $L_Y$ by $f^*(S)=\coprod_{y\in Y\sslash G} S_{f(y)}$. Clearly, if $S$ is locally constant and coherent then so is $f^*(S)$. Thus Lemma~\ref{Lem:functorideal0}  implies the following generalization.

\begin{lemma}\label{Lem:functorideal}
Torific ideals are compatible with strongly $G$-equivariant morphisms $f\:Y\to X$ in the sense that $I_{f^*(S)}^Y=I_S^X\cO_Y$ for any locally constant coherent multiset $S$ in $L_X$.
\end{lemma}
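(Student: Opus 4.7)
The plan is to reduce the statement to the constant-family case already handled in Lemma~\ref{Lem:functorideal0}. First I would observe that the formation of both sides commutes with passage to strongly equivariant open coverings. Given a strongly equivariant open cover $\{X_i\}$ of $X$, the preimages $Y_i=f^{-1}(X_i)$ form a strongly equivariant open cover of $Y$, each restriction $f|_{Y_i}\:Y_i\to X_i$ is strongly equivariant, and $f^*(S)|_{Y_i}=(f|_{Y_i})^*(S|_{X_i})$ directly from the definition of $f^*$. Compatibility of $I_S^X$ with such restrictions follows from its canonical stratum-wise description (the gluing that defines $I_S^X$ via the equivariant localizations $X_O$) together with Lemma~\ref{localtorificlem}. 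Hence I may work locally on $X\sslash G$ and assume $S=(S_0)_X$ is the coherent family associated with a single finite multiset $S_0\subset L\setminus\{0\}$.

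Next I would identify $I_S^X$ with the naive product $I_{S_0}^X=\prod_{l\in S_0}I_l^X$. For each special orbit $O\subset X$ with stabilizer character group $L_O$, Lemma~\ref{localtorificlem} gives $I_{S_0}^X|_{X_O}=I_{(S_0)_O}^{X_O}$, where $(S_0)_O$ denotes the image multiset of $S_0$ in $L_O$. By construction the value $S_O$ of the coherent family $(S_0)_X$ at $O$ is the \emph{reduced} image of $S_0$ in $L_O$; and since $I_0^{X_O}=\cO_{X_O}$, dropping the trivial characters does not alter the product ideal. Hence $I_S^X=I_{S_0}^X$. Because $f$ is strongly equivariant it preserves stabilizers, so $L_{f(y)}=L_y$ for every $y\in Y$; therefore $f^*(S)=(S_0)_Y$ as coherent families, and the same argument yields $I_{f^*(S)}^Y=I_{S_0}^Y$.

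Finally, the reduced identity $I_{S_0}^Y=I_{S_0}^X\cO_Y$ is exactly the content of Lemma~\ref{Lem:functorideal0}, and combining it with the two identifications above produces the desired equality $I_{f^*(S)}^Y=I_S^X\cO_Y$. No genuine obstacle is anticipated: the proof is a matter of unwinding definitions. The only point requiring care is the verification that the stratum-wise prescription $I_S^X|_{X_O}=I_{S_O}^{X_O}$ glues to a well-defined coherent ideal sheaf on $X$; but this is built into the definition of $I_S^X$ (and is already invoked in the discussion immediately following Lemma~\ref{localtorificlem}), so it can be used here without further comment.
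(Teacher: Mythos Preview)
Your proof is correct and follows essentially the same approach as the paper: the paper simply remarks that the result follows from Lemma~\ref{Lem:functorideal0} after observing that strongly equivariant morphisms preserve stabilizers and that the general torific ideal is defined locally via constant families. You have merely spelled out the reduction-to-constant-case argument in detail, including the observation that dropping zero characters from $(S_0)_O$ does not affect the product ideal since $I_0^{X_O}=\cO_{X_O}$.
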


\subsubsection{Signature}
Assume now that $G$ acts on a toroidal scheme $X$. For any $x\in X$ we defined in \S\ref{localcombsec} the signature $\sigma_x$ as a multiset in $L_x\setminus\{0\}$. Recall that $\sigma_x$ only depends on the orbit $O_x$ of $x$. Indeed, $O_x$ lies in the logarithmic stratum through $x$, hence this follows from Proposition~\ref{combdataprop}. The signature $\sigma_X=\coprod_{x\in X\sslash G}\sigma_x$ of $X$ is the multiset of all characters through which $G$ acts on the logarithmic strata locally at the special orbits. Also, we call a multiset $S$ in $L_X$ {\em balanced}\index{balanced multiset of characters} if each $S_x$ is finite and satisfies $\sum_{l\in S_x}l=0$. By $\sigma^0_X$ we denote the natural balanced multiset containing $\sigma_X$ obtained by adding to each non-balanced $\sigma_x$ the element $-\sum_{l\in S_x}l$.

\begin{lemma}\label{Sigmalem}
Assume that $G=\bfD_L$ acts in a relatively affine manner on toroidal schemes $X$ and $Y$, then

(i) The multisets of characters $\sigma_X$ and $\sigma_X^0$ are coherent and locally constant.

(ii) If $f\:Y\to X$ is a strict strongly equivariant morphism then $f^*(\sigma_X)=\sigma_Y$ and $f^*(\sigma^0_X)=\sigma^0_Y$.
\end{lemma}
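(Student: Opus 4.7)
The plan is to split the two parts of the lemma. For (i), I would first verify coherence of $\sigma_X$ directly from Corollary~\ref{toroidalsigmacor}, deduce coherence of $\sigma_X^0$ by a short case analysis of the balancing element, and then establish local constancy by showing that the natural candidate open subsets of $X\sslash G$ are constructible and stable under generization. For (ii), I would combine strictness and strong equivariance with Proposition~\ref{combdataprop}(ii).

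For coherence of $\sigma_X$, finiteness of each $\sigma_x$ and the exclusion of $0$ are built into the definition in \S\ref{sec:signature}. Given $y\in X\sslash G$ with specialization $x\in X\sslash G$, any point $\tilde x \in O_x$ is a specialization (in $X$) of some point $\tilde y \in O_y$ because $O_x\subseteq\overline{O_y}$ (from the $G$-localization description in \S\ref{specialsec}). Corollary~\ref{toroidalsigmacor} yields $\sigma_{\tilde y}=\phi(\sigma_{\tilde x})_\red$ with $\phi\:L_x\onto L_y$, and since the signature is constant along an orbit (Lemma~\ref{signlem}), this transfers to $\sigma_y = \phi(\sigma_x)_\red$. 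For $\sigma_X^0$ one observes that $\phi\bigl(-\sum_{l\in\sigma_x}l\bigr) = -\sum_{l\in\sigma_y}l$ and distinguishes three cases (balanced on both sides; balanced after $\phi$ but not before; non-balanced on both sides) to confirm $\sigma_y^0 = \phi(\sigma_x^0)_\red$.

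For local constancy, given $x \in X\sslash G$ lift the finite multiset $\sigma_x\subset L_x\setminus\{0\}$ element-wise to $S\subset L\setminus\{0\}$, and set
$$W_x := \{z \in X\sslash G : \sigma_z = \bar\phi_z(S)_\red\},$$
where $\bar\phi_z\:L\onto L_z$ is the canonical projection. Clearly $x\in W_x$. If $z\in W_x$ and $w$ is a generization of $z$ in $X\sslash G$, then using coherence and the identity $\phi(A_\red)_\red = \phi(A)_\red$, we get $\sigma_w = \phi(\sigma_z)_\red = \phi(\bar\phi_z(S))_\red = \bar\phi_w(S)_\red$, so $w \in W_x$. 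Proposition~\ref{combdataprop}(i) tells us that $(\oM_z,L_z,\sigma_z)$ takes only finitely many isomorphism classes and is locally constant along the logarithmic-inertia strata of $X$; by Chevalley's theorem, the images of these finitely many strata in $X\sslash G$ are constructible, whence $W_x$ (a union of some of them) is constructible. A constructible subset of a noetherian scheme stable under generization is open, so $W_x$ is open, and $V_x := q^{-1}(W_x)$ is strongly equivariant. The $\{V_x\}$ form the required cover on which $\sigma_X$ is constant; the same cover works for $\sigma_X^0$ using the balanced lift $S^0 = S \cup \{-\sum_{l\in S}l\}$.

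For (ii), a strict strongly equivariant morphism $f\:Y\to X$ takes special orbits to special orbits with equal stabilizer groups, so $L_y=L_x$ for corresponding $y\in Y\sslash G$, $x=(f\sslash G)(y)\in X\sslash G$. Applying Proposition~\ref{combdataprop}(ii) to a chosen point of $O_y$ and its image in $O_x$ yields $\sigma_y=\sigma_x$; thus $f^*(\sigma_X)=\sigma_Y$. Since the balancing element depends functorially on this common signature, $f^*(\sigma_X^0)=\sigma_Y^0$ follows immediately. The main obstacle is the local constancy step, specifically verifying constructibility of $W_x$; this hinges on combining the finiteness from Proposition~\ref{combdataprop}(i) with Chevalley's theorem, after which stability under generization gives openness for free.
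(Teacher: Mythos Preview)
Your argument for part (ii) and for the coherence of $\sigma_X$ and $\sigma_X^0$ in part (i) is correct and essentially matches the paper (your explicit three-case analysis for $\sigma_X^0$ is a nice elaboration of what the paper dismisses with ``the case of $\sigma^0_X$ follows'').

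There is, however, a genuine gap in your local constancy argument. You claim that $W_x$ is ``a union of some of'' the images of the logarithmic-inertia strata of $X$ under the quotient map $q\:X\to X\sslash G$, and hence constructible by Chevalley. This is not justified. The set $W_x$ is defined by a condition on $(L_z,\sigma_z)$, and for $z\in X\sslash G$ these are read off from a point of the \emph{special orbit} $O_z$, not from an arbitrary point of the fiber $q^{-1}(z)$. If a logarithmic-inertia stratum $T\subseteq X$ consists of points whose pair $(L_{\tilde z},\sigma_{\tilde z})$ satisfies your condition, then $q(T)$ will typically contain points $z$ whose fiber meets $T$ only away from the special orbit $O_z$; such $z$ need not lie in $W_x$. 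Thus $W_x$ is only \emph{contained in} the union of the relevant images $q(T)$, and the reverse inclusion, which you need, can fail. Without constructibility, stability under generization alone does not give openness (the complement of a non-closed point is stable under generization but not open).

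The paper's approach to local constancy bypasses this difficulty entirely: rather than characterizing the locus where $\sigma$ agrees with a constant family and proving it open, the paper directly constructs a strongly equivariant open neighborhood of a given closed orbit $O$ on which $\sigma_X$ is constant. One first shrinks so that $X=\Spec(A)$ is affine and all stabilizers are contained in $G_O$, and then further removes the (equivariant, closed) closures of those connected components of the inertia strata $X(H)$ that do not meet $O$. After this shrinking every relevant orbit has closure meeting $O$, so Corollary~\ref{toroidalsigmacor} applies directly and shows $\sigma_X$ equals the constant family determined by $\sigma_O$. This is both shorter and avoids the delicate question of how the special-orbit data varies constructibly over $X\sslash G$. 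If you wish to salvage your approach, you would need an independent argument that the locus $\{z\in X\sslash G: (L_z,\sigma_z)=(L_0,\sigma_0)\}$ is constructible for each fixed pair $(L_0,\sigma_0)$; this is plausible but requires real work (e.g.\ analyzing how the stabilizer of the special orbit stratifies $X\sslash G$).
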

\begin{proof}
Claim (ii) follows from Proposition~\ref{combdataprop}(ii). It suffices to prove claim (i) for $\sigma_X$ since the case of $\sigma^0_X$ follows. The claim can be checked locally at a closed orbit $O$. Recall that by Lemma~\ref{signlem} there exists a multiset $\sigma_O$ in $L_O$ such that $\sigma_x=\sigma_O$ for any $x\in O$. Passing to a strongly equivariant neighborhood of $O$ we can achieve that $X=\Spec(A)$ is affine and the stabilizers of points $x\in X$ are subgroups of $G_O$. Furthermore, by a further shrinking we can remove from $X$ all connected components of the inertia strata $X(H)$ (for $H\subseteq G_O$) whose closure does not intersect $O$. Then, by Corollary~\ref{toroidalsigmacor}, we necessarily have that $\sigma_X$ is the constant coherent multiset associated with $\sigma_O$.
\end{proof}

\subsection{Torific blowings up}
Next, we will study normalized blowings up of torific ideals, but we should discuss equivariant normalization first.

\subsubsection{$G$-normalization}
In general, an action of $G=\bfD_L$ on $X$ does not have to extend to the reduction or normalization of $X$, as can be seen in examples with $G=\mu_p$ acting on varieties over $\FF_p$. However, one can define  natural equivariant versions of  normalization or reduction. We discuss both constructions, but only the case of normalization will be used later.

We start with the affine case, so assume that $A$ is an $L$-graded ring. By the {\em $L$-reduction} $Red_L(A)$ (resp. {\em degree-0 reduction} $Red_0(A)$) we mean the quotient of $A$ by the ideal generated by all homogeneous nilpotents (resp. nilpotents of degree 0); note that it does not have to be a reduced algebra. If $A$ is an $L$-domain, i.e. it contains no homogeneous zero-divisors, then $A$ embeds into the {\em $L$-graded fraction field} $Frac_L(A)$ obtained by inverting all homogeneous elements (again note that an $L$-graded field is not necessarily a field, \cite[4.4.3]{ATLuna}). We define the {\em $L$-normalization} $Nor_L(A)$ (resp. {\em degree-0 normalization} $Nor_0(A)$) to be the subalgebra of $Frac_L(A)$ generated by all elements of the form $\frac ab$, where $a,b$ are homogeneous elements of $A$ (resp. elements of $A_0$) and $\frac ab$ is integral over $A$ (resp. $A_0$).\index{normaization!degree-0 normalization}\index{reduction!degree-0 reduction}

\begin{remark}\label{Gnorrem}
(i) The degree-0 normalization is the base change of the normalization of $A_0$, i.e. $Nor_0(A)=A\otimes_{A_0}Nor(A_0)$. In general, $Nor_L(A)$ is a partial normalization of $A$ that dominates $Nor_0(A)$, i.e. we have a sequence $A\to Nor_0(A)\to Nor_L(A)\to Nor(A)$. Similar facts hold for $L$-graded reductions.

(ii) It is easy to see that the four constructions we discussed are compatible with localization at a degree-0 multiplicative set $T\subseteq A_0$. For example, $Nor_L(A_T)=(Nor_L(A))_T$, etc.
\end{remark}

In general, given a scheme $X$ provided with a relatively affine action of $G=\bfD_L$ we define its $G$-normalization, $G$-reduction, $0$-normalization and $0$-reduction as follows. Choose a strongly equivariant affine covering $X_i=\Spec(A_i)$ and glue the $G$-equivariant $X$-scheme $Nor_G(X)$ (resp. $Nor_0(X)$, resp. $Red_G(X)$, resp. $Red_0(X)$) from the $G$-equivariant $X$-schemes $\Spec(Nor_L(A_i))$ (resp. $\Spec(Nor_0(A_i))$, resp. $\Spec(Red_L(A_i))$, resp. $\Spec(Red_0(A_i))$). Note that the gluing is possible due to Remark~\ref{Gnorrem}(ii).\index{normaization!$G$-normalization}\index{reduction!$G$-reduction}

\subsubsection{Torific blowings up}\label{Sec:torific-blowups}
Assume $X$ is reduced and $S$ is a locally constant coherent multiset of characters in $L_X$. The $S$-torific ideal $I_S^X$ is, by definition, $G$-equivariant. Therefore, the action of $G$ naturally lifts to the blowing up $Bl_{I^X_S}(X)\to X$ and by the {\em $S$-torific blowing up}\index{torific!blowing up} $b_{X,S}\:X_S\to X$ we mean the $G$-normalized blowing up $Nor_GBl_{I^X_S}(X)\to X$ along $I_S^X$. If needed, we will also indicate $G$ by writing $b_{X,S,G}$. Sometimes we will also need the {\em degree-0 torific blowing up} $Nor_0Bl_{I^X_S}(X)\to X$ that will be denoted $b^0_{X,S}\:X_S^0\to X$.\index{torific!degree-0 torific blowing up}

\begin{remark}\label{torblowrem}
(i) In our applications to toroidal schemes, torific blowings up will have normal sources, i.e. they will coincide with the corresponding normalized blowings up, see Corollary~\ref{torificcor}(i).

(ii) In general, the normalization $X'\to X$ of a noetherian scheme $X$ does not have to be of finite type, and $X'$ can even be non-noetherian. However, we will show in Section~\ref{Sec:Torification-bu} that torific blowings up of toroidal schemes are of finite type.

(iii) Note that a product of ideals is invertible if and only if all factors are invertible, and hence $Bl_{I_1^{n_1}\dots I_m^{n_m}}(X)=Bl_{I_1\dots I_m}(X)$ for any choice of  $n_1\.n_m>0$. It follows that the $X$-scheme $X_S$ depends only on the support $|S|\subset L_X$ of $S$. However, without considering multiplicities, sets of characters $S\subset L_X$ have worse functoriality properties, for example, an analog of Lemma~\ref{localtorificlem} fails. This is the reason we choose to work with multisets.
\end{remark}

\subsubsection{Charts of torific blowings up}
Note that $b_{X,S}$ can be covered by charts of the form $Y'\to X'$ where $X'=\Spec(A)$ is an open affine strongly $G$-equivariant subscheme of $X$ and $Y'$ is a chart of the $G$-normalized blowing up along $I^A_S$, where $S$ is a constant coherent family. In particular, $I^A_S$ is generated by elements $t_1\.t_n\in A_d$ where $d=\sum_{l\in S}l$, hence it suffices to consider charts $Y'=\Spec(B)$ with $B=\Spec(Nor_L(A[\frac{t_1}{t_i}\.\frac{t_n}{t_i}]))$. Charts of the degree-0 torific blowings up are described similarly.

\subsubsection{Preservation of relatively affine actions and torific ideals}
In the following Lemma we say that a multiset $S$ in $L$ is balanced if $\sum_{l\in S}l=0$.

\begin{lemma}\label{Lem:canonicalblowup}
Assume that $G=\bfD_L$ is given a relatively affine action on a reduced scheme $X$ and $S$ is a locally constant coherent multiset in $L_X$, and consider the corresponding torific blowing up $b_{X,S}\:Y\to X$ and the degree-0 torific blowing up $b^0_{X,S}\:Y^0\to X$. Then,

(i) The induced action of $G$ on $Y$ and $Y^0$ is relatively affine.

(ii) For any finite multiset $T$ in $L$ the equality $I^{Y^0}_T=I_T^X\cO_{Y^0}$ holds.

(iii) If $S$ is balanced then the morphism of quotients $Y\sslash G\to X\sslash G$ is the normalized blowing up of $X_0=X\sslash G$ along $\pi_*(I^S_X)\cap\cO_{X_0}$, where $\pi\:X\to X_0$ is the quotient morphism and the intersection is taken inside of $\pi_*(\cO_X)$.
\end{lemma}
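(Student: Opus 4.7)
The plan is to reduce each assertion to a local computation on a strongly $G$-equivariant affine open $\Spec A \subseteq X$. On such a chart $S$ is constant in $L$ and every generator of $I^A_S = \prod_{l\in S} I^A_l$ is a product of $|S|$ homogeneous factors of degrees $l \in S$, hence shares the single degree $d := \sum_{l\in S} l$. Fixing homogeneous generators $t_1,\dots,t_n$ of $I^A_S$, the blowing up $Bl_{I^A_S}(\Spec A)$ is covered by the $G$-invariant affine charts $\Spec B_j$ with $B_j = A[t_1/t_j,\dots,t_n/t_j]$. Each $u_k := t_k/t_j$ has degree $0$, so $B_j$ is generated over $A$ by $G$-invariants and $(B_j)_l = A_l \cdot B_j$ for every $l \in L$.

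I would begin with (ii). On each chart $\Spec Nor_0(B_j) \subseteq Y^0$, the identity $Nor_0(B_j) = B_j \otimes_{(B_j)_0} Nor((B_j)_0)$ shows that the $L$-grading is inherited entirely from $B_j$, yielding $Nor_0(B_j)_l = (B_j)_l \cdot Nor_0(B_j) = A_l \cdot Nor_0(B_j)$. Hence the ideal $I_l^{Nor_0(B_j)}$ generated by $l$-homogeneous elements equals $I_l^A \cdot Nor_0(B_j) = I_l^X \cO_{Y^0}$ restricted to the chart. Multiplying over $l$ running through $T$ gives the equality $I_T^{Y^0} = I_T^X \cO_{Y^0}$ on the chart, and therefore globally, since the charts cover $Y^0$.

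For (i), the affine $G$-invariant charts $\Spec Nor_L(B_j) \subseteq Y$ and $\Spec Nor_0(B_j) \subseteq Y^0$ cover $Y$ and $Y^0$. The $G$-quotient of each such chart is $\Spec Nor_L(B_j)_0$ or $\Spec Nor((B_j)_0)$ respectively, and the structure morphism from the chart to its quotient is affine. By Remark~\ref{Gnorrem}(ii), the constructions $Nor_L$ and $Nor_0$ commute with localization by a degree-$0$ multiplicative set, so the affine quotient charts glue into well-defined schemes $Y \sslash G$ and $Y^0 \sslash G$ with affine quotient morphisms from $Y$ and $Y^0$, establishing the required relative affineness.

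For (iii), the balanced hypothesis gives $d = 0$, so each $t_j$ is $G$-invariant. Hence $I^X_S = \tI \cdot \cO_X$ with $\tI := \pi_*(I^X_S) \cap \cO_{X_0}$, and from $(I^X_S)^n = \tI^n \cdot \cO_X$ one computes $Bl_{I^X_S}(X) = Bl_{\tI}(X_0) \times_{X_0} X$. Taking the $G$-quotient absorbs the second factor, giving $Bl_{I^X_S}(X) \sslash G = Bl_{\tI}(X_0)$. It remains to identify $Y \sslash G = Nor_G(Bl_{I^X_S}(X)) \sslash G$ with $Nor(Bl_{\tI}(X_0))$, which locally reduces to the identity $Nor_L(B_j)_0 = Nor((B_j)_0)$. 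The main obstacle is this last identification: when $d = 0$ the ring $B_j$ is the $A_0$-scalar extension $A \otimes_{A_0} (B_j)_0$, so a degree-$0$ integrality equation over $B_j$ projects onto degree zero to a monic equation over $(B_j)_0$, and conversely any element of $Nor((B_j)_0)$ is integral over $B_j$ and of degree zero in $Frac_L(B_j)$. Combining this with Remark~\ref{Gnorrem}(i) to relate the two normalization constructions yields the desired identification $Y \sslash G = Nor(Bl_{\tI}(X_0))$.
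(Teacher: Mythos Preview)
Your proof is correct and follows essentially the same chart-by-chart approach as the paper: reduce to a strongly equivariant affine $\Spec A$ with $S$ constant, use homogeneous generators $t_1,\dots,t_n$ of common degree $d$, and work on the charts $B_j=A[t_1/t_j,\dots,t_n/t_j]$ where all $t_k/t_j$ have degree $0$. Your arguments for (i) and (ii) match the paper's; you even supply a justification of $Nor_L(B_j)_0=Nor((B_j)_0)$ in (iii) that the paper merely asserts.

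One remark on (iii): the intermediate identity $Bl_{I^X_S}(X)=Bl_{\tI}(X_0)\times_{X_0}X$ is an unnecessary detour, and your justification ``from $(I^X_S)^n=\tI^n\cdot\cO_X$'' is not quite enough---blowing up does not commute with base change in general, only producing a closed immersion $Bl_{\tI\cO_X}(X)\hookrightarrow Bl_{\tI}(X_0)\times_{X_0}X$, and equality would require something like flatness of $A_0\to A$. Fortunately you do not actually use this step: your final argument identifies $Y\sslash G$ with $Nor(Bl_{\tI}(X_0))$ directly via the chart identity $Nor_L(B_j)_0=Nor((B_j)_0)$, exactly as the paper does. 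You can simply delete the fiber-product sentence and pass straight from ``each $t_j$ is $G$-invariant, so $\tI$ is generated by $t_1,\dots,t_n$ in $A_0$'' to the chart comparison.
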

\begin{proof}
All questions are local on $X_0$, so we can replace $X$ with an open affine strongly $G$-equivariant subscheme $\Spec(A)$ such that $S$ is constant on $\Spec(A)$. Then $Y$ is covered by $G$-equivariant charts $Y_i=\Spec(B_i)$ for $B_i=Nor_L(A[\frac{t_1}{t_i}\.\frac{t_n}{t_i}])$. The intersection $Y_{ij}=Y_i\cap Y_j$ is the localization of $Y_i$ along $\frac{t_j}{t_i}$. Since the latter is of degree 0, the open embedding $Y_{ij}\into Y_i$ is strongly $G$-equivariant and we obtain that the action on $Y$ is relatively affine. A similar argument works for $Y^0$ and its affine covering $Y_i^0=\Spec(B_i^0)$, where $B_i^0=Nor_0(A[\frac{t_1}{t_i}\.\frac{t_n}{t_i}])$. This proves (i).

Since each $C=B_i^0$ is generated over $A$ by elements of degree 0, we have that $A_lC_0=C_l$ and therefore $I^C_T=I^A_TC$, proving (ii).

To prove (iii) we note that $X_0=\Spec(A_0)$ and $t_1\..t_n\in A_0$ by our assumption on $S$. Furthermore, $\pi_*(I^S_X)\cap\cO_{X_0}$ corresponds to the ideal $J=\sum_{i=1}^n t_iA_0$ and hence $NorBl_J(X_0)$ is covered by the charts $Z_i=\Spec(Nor(A_0[\frac{t_1}{t_i}\.\frac{t_n}{t_i}]))$. It remains to note that $Y_i\sslash G=Z_i$ since $Nor(A_0[\frac{t_1}{t_i}\.\frac{t_n}{t_i}])$ is the degree-0 part of $B_i$.
\end{proof}

\subsubsection{Composition of torific blowings up}
Torific blowings up are compatible with compositions in the following weak sense.

\begin{lemma}\label{Lem:composeblowups}
Given a relatively affine action of $G=\bfD_L$ on a reduced scheme $X$ and finite multisets $R,S,T$ of characters of $L$ with supports $|R|=|S|\cup|T|$, the $R$-torific blowing up $b_{X,R}$ factors into the composition of the degree-0 torific blowing up $b^0_{X,S}\:Y\to X$ and the torific blowing up $b_{Y,T}\:X_R\to Y$.
\end{lemma}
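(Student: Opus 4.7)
The plan is to identify $X_R$ with the source $Y_T := Nor_G Bl_{I^Y_T}(Y)$ of $b_{Y,T}$ as $X$-schemes, via a direct chart computation. By Remark~\ref{torblowrem}(iii) the torific blowings up depend only on the supports of the defining multisets, so I may assume $R = S + T$ as multisets, giving $I^X_R = I^X_S \cdot I^X_T$ on the nose. The construction is local on $X\sslash G$, so I work on a strongly equivariant affine chart $X = \Spec(A)$ where $S$ and $T$ are constant, and choose homogeneous generators $I^X_S = (f_1\.f_n)$ and $I^X_T = (g_1\.g_m)$.

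The first key identification is the chart-level equality
\[ A\!\left[\tfrac{f_ig_j}{f_1g_1}\right] \;=\; A\!\left[\tfrac{f_i}{f_1},\;\tfrac{g_j}{g_1}\right], \]
where $\supseteq$ comes from $\tfrac{f_ig_j}{f_1g_1} = \tfrac{f_i}{f_1}\cdot\tfrac{g_j}{g_1}$ and $\subseteq$ from $\tfrac{f_i}{f_1} = \tfrac{f_ig_1}{f_1g_1}$ (set $j=1$) and $\tfrac{g_j}{g_1} = \tfrac{f_1g_j}{f_1g_1}$ (set $i=1$); the analogous identities, symmetric in the choice of main coordinate $f_{i_0}g_{j_0}$, show that the affine charts of $Bl_{I^X_R}(X)$ coincide with those of the iterated blowing up $Bl_{I^X_T}(Bl_{I^X_S}(X))$, compatibly on overlaps. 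In particular $I^X_S \cO$ is invertible on $Bl_{I^X_R}(X)$. The second key identification, which is essentially the proof of Lemma~\ref{Lem:canonicalblowup}(ii) applied to the unnormalized blowing up (whose affine charts are also generated over $A$ by degree-zero elements), gives $I^X_T \cdot \cO = I^{Bl_{I^X_S}(X)}_T$ and, after $0$-normalization, $I^X_T\cO_Y = I^Y_T$.

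Combining the two: on the chart indexed by $(f_1,g_1)$, the coordinate ring of $X_R = Nor_G Bl_{I^X_R}(X)$ is $Nor_L\bigl(A[\tfrac{f_i}{f_1},\tfrac{g_j}{g_1}]\bigr)$, while the coordinate ring of $Y_T$ is $Nor_L\bigl(Nor_0(A[\tfrac{f_i}{f_1}])[\tfrac{g_j}{g_1}]\bigr)$. Since $Nor_0(A[\tfrac{f_i}{f_1}])$ lies between $A[\tfrac{f_i}{f_1}]$ and $Nor_L(A[\tfrac{f_i}{f_1}])$ by Remark~\ref{Gnorrem}(i), adjoining $\tfrac{g_j}{g_1}$ and then $L$-normalizing yields the same ring in either order (by idempotency of $Nor_L$ with respect to intermediate extensions inside $Nor_L$). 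The charts therefore agree and glue to an isomorphism $X_R \cong Y_T$ of $X$-schemes, which is exactly the claimed factorization $b_{X,R} = b^0_{X,S} \circ b_{Y,T}$.

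The main obstacle I anticipate is the global bookkeeping in the chart comparison---verifying that all charts of $Bl_{I^X_R}(X)$ (for the various main coordinates $f_{i_0}g_{j_0}$) are compatible under the identification with charts of the iterated blowing up so that the isomorphism glues after $G$-normalization. The symmetry of the algebraic identity makes this routine but deserves care; after it is in place, the remainder is an appeal to Lemma~\ref{Lem:canonicalblowup}(ii) and idempotency of $L$-normalization.
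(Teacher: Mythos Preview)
Your argument is correct, but the paper proceeds differently and more conceptually. Instead of identifying charts, the paper reduces the claim to showing $b^0_{X,R}=b^0_{X,S}\circ b^0_{Y,T}$ (composing with the $G$-normalization $Nor_G(X^0_R)\to X^0_R$ then gives the statement), and proves this via the \emph{universal property} of degree-0 normalized blowings up: $Nor_0Bl_I(X)\to X$ is universal among $G$-equivariant morphisms whose source has normal quotient and on which the pullback of $I$ is invertible. Since on $Z=X^0_R$ the ideals $I^Z_l$ for $l\in R$ are all invertible (their product is) and by Lemma~\ref{Lem:canonicalblowup}(ii) these equal the pullbacks of $I^X_l$, one gets a factorization through $Y=X^0_S$; the resulting map $Z\to Y$ is then identified with $Y^0_R=Y^0_T$ by the same reasoning (the extra factors $I^Y_l$ for $l\in|S|\setminus|T|$ being already invertible). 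No chart matching or gluing is needed.

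Your approach trades this for an explicit chart identity $A[\tfrac{f_ig_j}{f_{i_0}g_{j_0}}]=A[\tfrac{f_i}{f_{i_0}},\tfrac{g_j}{g_{j_0}}]$ together with the sandwich $B\subseteq Nor_0(B)\subseteq Nor_L(B)$ and idempotency of $Nor_L$ to absorb the intermediate $Nor_0$. This is perfectly valid and has the virtue of being entirely concrete; it also makes transparent why the intermediate $0$-normalization in the factor $Y$ is harmless. The paper's route is shorter and avoids the ``global bookkeeping'' you flag, since the universal property argument is chart-free; your route, on the other hand, does not rely on formulating that universal property and works directly with the generators. Either way, the key enabling fact (which you use implicitly and the paper uses via Lemma~\ref{Lem:canonicalblowup}(ii)) is that all generators of $I^X_S$ are homogeneous of the same degree, so the blowing-up charts are generated over $A$ by degree-zero elements.
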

\begin{proof}
Set $Z=X^0_R$ for simplicity of notation. It suffices to prove that $b^0_{X,R}=b^0_{X,S}\circ b^0_{Y,T}$, since composing both sides of this equality with the $G$-normalization $Nor_G(Z)\to Z$ we will obtain the asserted equality $b_{X,R}=b^0_{X,S}\circ b_{Y,T}$.

By Lemma~\ref{Lem:canonicalblowup}(ii), $I^{Z}_R$ is the inverse image of $I^X_R$. So, $I^Z_R$ is invertible, and hence all ideals $I^Z_l$ with $l\in R$ are invertible. In particular, $I^Z_S$ is invertible. Note that a degree-0 normalized blowing up along $I$ is the universal $G$-equivariant morphism such that the inverse image of $I$ is invertible and the quotient of the source by the action of $G$ is normal. Therefore $b_{X,R}$ factors through $Y$ and $Z\to Y$ is the degree-0 normalized blowing up along the inverse image of $I_R^X$. The latter coincides with $I_R^Y$ by Lemma~\ref{Lem:canonicalblowup}, and so $Z=Y^0_R$. The same argument as above shows that $I_l^Y$ is invertible for any $l\in S$. In particular, the factors $I_l^Y$ in the formula $I_R^Y=I_T^Y\cdot\prod_{l\in S\setminus T}I_l^Y$ are invertible, and so $Y^0_T=Y^0_R=Z$, as required.
\end{proof}

\subsection{Torific blowings up of toroidal schemes}
In the sequel we denote toroidal schemes using divisors, i.e., instead of $(X,U)$ we will write $(X,D)$, where $D=X\setminus U$.

\subsubsection{Blowings up of toroidal schemes}
Let $(X,D)$ be a toroidal scheme and $Z\subseteq X$ a closed subscheme. Consider the normalized blowing up $f\:X'\to X$ along $Z$ and set $D'=f^{-1}(D\cup Z)$. In other words, $D'$ is the union of the preimage of $D$ and the exceptional divisor $f^{-1}(Z)$. The pair $(X',D')$ will be called {\em normalized blowing up} of $(X,D)$ along $Z$. For concreteness, we will also use the notation $f\:(X',D')\to(X,D)$, which formally means that $f\:X'\to X$ is a morphism such that $f^{-1}(D)\subseteq D'$. Furthermore, if $(X',D')$ is a toroidal scheme and the morphism $f\:(X',D')\to(X,D)$ is toroidal then we say that the normalized blowing up $f$ is {\em permissible}\index{permissible blowing up}.

\subsubsection{Toroidal blowings up}
The normalized blowing up $f\:(X',D')\to(X,D)$ along $Z$ is called a {\em toroidal blowing up} if $Z$ is a toroidal subscheme, see Section \ref{subschsec}.\index{toroidal!blowing up}

\begin{lemma}\label{toroidalblowlem}
Assume that $(X,D)$ is a toroidal scheme, $I\subset\cM_X$ is an ideal, and $f\:(X',D')\to(X,D)$ is the toroidal blowing up along $Z=V(I\cO_X)$. Then,

(i) $f$ permissible.

(ii) If $g\:(Y,E)\to(X,D)$ is a morphism of toroidal schemes and $(Y',E')\to(Y,E)$ is the toroidal blowing up along $Z\times_XY$ then $(Y',E')=(X',D')\times_{(X,D)}(Y,E)$, where the product is taken in the category of fine logarithmic schemes. In addition, if $g$ is strict then $Y'=X'\times_XY$.
\end{lemma}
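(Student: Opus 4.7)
The plan is to reduce both parts to the toric case via strict charts. Around any $x\in X$ I would take a sharp central monoidal chart $\phi\:X\to\bfA_M$ with $M=\ocM_{X,x}$, which is strict by construction. By Lemma~\ref{toroidaldivisorslem}(i), $I$ corresponds on a neighborhood of $x$ to an ideal $J\subset M$, and $Z=\phi^{-1}(V(J\ZZ[M]))$. The normalized blowing up $b\:\bfA'_M\to\bfA_M$ along the monomial ideal $J\ZZ[M]$ is the classical toric modification determined by the fan subdivision of $M$ associated to $J$; in particular $\bfA'_M$ is a toroidal scheme whose log boundary equals the preimage of $\bfA_M\setminus\bfA_{M^\gp}$ together with the exceptional divisor of $b$, and $b$ is log \'etale.

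For (i), set $X':=X\times_{\bfA_M}\bfA'_M$ and note that, since $\phi$ is strict, the morphism $f\:X'\to X$ is a strict base change of the log \'etale morphism $b$, hence itself log \'etale. Since logarithmically smooth morphisms preserve logarithmic regularity (cf.\ \S\ref{logregsec}), $X'$ is log regular and thus normal by \cite[Theorem~4.1]{Kato-toric}. Consequently, $X'$ already coincides with the normalized blowing up of $X$ along $I\cO_X$, with no further normalization being required. Setting $D'=f^{-1}(D\cup Z)$, the pair $(X',D')$ is then a toroidal scheme and $f\:(X',D')\to(X,D)$ a toroidal morphism. The main subtle point is this automatic normality of the fiber product, which is what makes the normalization step in the blowing-up construction redundant.

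For (ii), in the strict case the composite $\psi:=\phi\circ g\:Y\to\bfA_M$ is a strict chart of $(Y,E)$, under which $Z\times_XY=\psi^{-1}(V(J\ZZ[M]))$, so the argument of the previous paragraph applied to $(Y,E)$ with chart $\psi$ yields $Y'=Y\times_{\bfA_M}\bfA'_M=Y\times_XX'$. For a general morphism $g$, both $(Y',E')$ and the fs log fiber product $(X',D')\times_{(X,D)}(Y,E)$ are log \'etale modifications of $(Y,E)$, and each corresponds, on the Kato fan $F(Y)$, to the pullback along $F(Y)\to F(X)$ of the subdivision of $F(X)$ determined by $I$. Since these two subdivisions are the same, the two modifications agree, giving $(Y',E')=(X',D')\times_{(X,D)}(Y,E)$.
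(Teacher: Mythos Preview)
Your overall strategy---reduce to a toric model via a strict chart and use that the toric blowing up is log \'etale---is the same as the paper's. The paper, however, does not redo the scheme-level identification by hand: it cites \cite[Lemma~4.3]{Niziol} for the fact that the underlying scheme of the log blowing up equals $NorBl_{I\cO_X}(X)$, \cite[Theorem~4.7]{Niziol} and \cite[Proposition~10.3]{Kato-toric} for log regularity, and \cite[Corollary~4.8]{Niziol} for the first claim in (ii). The strict case of (ii) is handled by observing that when $g$ is strict the fiber product in all log schemes is already fine, so it computes the fine log fiber product and its underlying scheme is $X'\times_XY$.

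Your argument for (i) has a real gap at the word ``Consequently''. You set $X':=X\times_{\bfA_M}\bfA'_M$ and show it is normal, but normality of $X'$ does not by itself force $X'=NorBl_{I\cO_X}(X)$. What you have not established is any relation between $X'$ and $Bl_{I\cO_X}(X)$: one needs either that blowing up along $I\cO_X$ commutes with the base change along the chart $\phi$ (this is the content of the Tor-independence \cite[Theorem~6.1]{Kato-toric}, used exactly this way later in the paper and in \cite[Lemma~3.4.6]{Illusie-Temkin}), or a direct argument that $X'\to Bl_{I\cO_X}(X)$ is finite and birational. Without one of these, a normal proper $X$-scheme on which $I\cO_X$ becomes invertible need not be the normalized blowing up. Your reference to \S\ref{logregsec} is also off: that section concerns stability of log regularity under generization, not preservation under log smooth morphisms (for which one should cite \cite[Theorem~8.2]{Kato-toric} or \cite[Proposition~10.3]{Kato-toric}).

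For the non-strict case of (ii), the Kato-fan sentence is a sketch rather than a proof: you assert that both constructions induce the same subdivision pulled back along $F(Y)\to F(X)$, but you neither define this pullback precisely nor verify that the fs log fiber product corresponds to it. The paper sidesteps this by invoking \cite[Corollary~4.8]{Niziol}.
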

\begin{proof}
Let $(\widetilde X,\cM_{\widetilde X})$ be the logarithmic blowing up of $(X,\cM_X)$ along $I$ in the sense of \cite[Section~4]{Niziol}. By \cite[Theorem~4.7]{Niziol} and \cite[Proposition~10.3]{Kato-toric}, $(\widetilde X,\cM_{\widetilde X})$ is logarithmically regular. Since $\widetilde X=X'$ by \cite[Lemma~4.3]{Niziol} and it is easy to see that the preimage of $X\setminus D$ in $\widetilde X$ is the triviality locus of $\cM_{\widetilde X}$, it follows that $(X',D')$ is the toroidal scheme corresponding to $(\widetilde X,\cM_{\widetilde X})$. In addition, any logarithmic blowing up is the pullback of a logarithmic blowing up of charts, hence it is logarithmically smooth. Thus, $f$ is toroidal and we obtain (i).

We observed above that logarithmic blowings up of toroidal schemes correspond to toroidal blowings up, hence the first assertion of (ii) follows from \cite[Corollary~4.8]{Niziol}. To prove the second claim of (ii) consider the product $(Z,\cM_Z)=(X',\cM_{X'})\times_{(X,\cM_X)}(Y,\cM_Y)$ in the category of all logarithmic schemes. Since $Z=X'\times_XY$ by \cite[1.6]{Kato-log}, it suffices to show that if $g$ is strict then $(Z,\cM_Z)$ is fine and hence this is also the product in the category of fine schemes. By \cite[1.6]{Kato-log}, $\cM_Z$ is the logarithmic structure associated with the pushout of the pullbacks of $\cM_{X'}$ and $\cM_Y$ over the pullback of $\cM_X$. Since $\cM_Y$ is the pullback of $\cM_X$, we also have that $\cM_Z$ is the pullback of $\cM_{X'}$, and hence $\cM_Z$ is fine.
\end{proof}

\subsubsection{Torific blowings up: toroidal action}\label{torificblowsec}
If $G$ acts on $(X,D)$ and $X'\to X$ is a torific blowing up in the sense of Section \ref{Sec:torific-blowups}, then we say that the normalized blowing up $f\:(X',D')\to(X,D)$ is a {\em torific blowing up} of the toroidal scheme $(X,D)$. Obviously, the morphism $f$ is $G$-equivariant. One can easily construct examples of torific blowings up which are not toroidal blowings up, for example, with $D=\emptyset$. On the other hand, if the action is toroidal then one can tightly control torific blowings up by toroidal methods:

\begin{lemma}\label{torificblowlem}
Assume that $G=\bfD_L$ acts in a relatively affine and toroidal fashion on a toroidal scheme $(X,D)$ and let $b_{X,S}\:(X',D')\to(X,D)$ be the torific blowing up along a torific ideal $I^X_S$. Then,

(i) The ideal $I_S^X$ and the blowing up $b_{X,S}$ are toroidal.

(ii) The action on $(X',D')$ is toroidal. Furthermore, if the action on $(X,D)$ is taut or loose, then the same holds for the action on $(X',D')$.

(iii) If $g\:(Y,E)\to(X,D)$ is a strongly equivariant strict morphism of toroidal schemes then the torific blowing up $b_{Y,g^*(S)}\:Y'\to Y$ is the base change of $b_{X,S}\:X'\to X$.
\end{lemma}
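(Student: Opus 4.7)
I will reduce all three parts to local toric computations on strongly equivariant toroidal charts. After $G_O$-localization at each special orbit the action becomes strictly local and toroidal, and Proposition~\ref{simpleprop}(i) supplies a sharp central equivariant toroidal chart $(X,U)\to(\bfA_M,\bfA_{M^\gp})$ with grading $\chi:M\to L$; by Theorem~\ref{flcilem}(iii) this chart is automatically strongly equivariant, so Lemma~\ref{Lem:functorideal} reduces torific-ideal questions to monomial-ideal questions on $\bfA_M$, where $I_l^{\bfA_M}$ is plainly the monomial ideal generated by $\{t^m:\chi(m)=l\}$.

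\textbf{Proof of (i).} The strategy above shows $I_S^X$ is locally the pullback of a monomial ideal, hence toroidal. By Lemma~\ref{toroidalblowlem}(i) the normalized blowing up of $V(I_S^X)$ is already toroidal and hence normal, so the $G$-normalization in the definition of $b_{X,S}$ is trivial and $b_{X,S}$ coincides with the toroidal blowing up along $V(I_S^X)$.

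\textbf{Proof of (ii).} The toric blowing up of $\bfA_M$ inherits the $L$-grading, and hence carries a toroidal $G$-action by Lemma~\ref{toroidalactionlem}(iii); this transfers to $(X',D')$ via the strongly equivariant chart pulled back by the blowing up, using Lemma~\ref{toroidalactionlem}(ii). For the taut/loose claim I argue pointwise: for $x'\in X'$ lying over $x\in X$, the stalk $\oM_{x'}$ is the sharpening of $M_\tau:=\tau^\vee\cap\oM_x^\gp$ for a cone $\tau\subset\oM_x^\vee$ in the subdivision induced by the torific monomial ideal, and the grading $\phi_{x'}:\oM_{x'}\to L_{x'}$ is obtained from $\phi_x^\gp:\oM_x^\gp\to L_x$ by quotienting the domain by $\tau^\perp\cap\oM_x^\gp$ and the codomain by $\phi_x^\gp(\tau^\perp\cap\oM_x^\gp)$. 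An inner $v\in\oM_x$ with $\phi_x(v)=0$ has image inner in $\oM_{x'}$---because positivity of $v$ on all rays of $\oM_x^\vee$ implies positivity on rays of the sub-cone $\tau$---and still lies in $\Ker\phi_{x'}$, giving tautness at $x'$. Similarly, if $(\Ker\phi_x)^\gp=\Ker\phi_x^\gp$, then the image of $\Ker\phi_x$ in $\oM_{x'}$ generates $(\Ker\phi_x^\gp+\tau^\perp)/\tau^\perp=\Ker\phi_{x'}^\gp$, yielding looseness at $x'$.

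\textbf{Proof of (iii), and main obstacle.} By Lemma~\ref{Lem:functorideal}, strict strong equivariance of $g$ gives $I_{g^*(S)}^Y=I_S^X\cO_Y$, so the two blowings up are base changes of each other; strictness of $g$ preserves toroidal structure through the base change and the $G$-normalizations remain trivial, so $(Y',E')=(X',D')\times_{(X,D)}(Y,E)$. The main obstacle is the pointwise taut/loose preservation in (ii): once the combinatorial identification of $(\oM_{x'},\phi_{x'})$ via the sharpening of $M_\tau$ is established, the monoid-theoretic verification is essentially mechanical, but confirming this identification uniformly at every point of $X'$ and tracking the behavior of $\tau^\perp$ in both $\oM_x^\gp$ and $L_x$ is the technical heart of the argument.
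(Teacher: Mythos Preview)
Your approach is essentially the paper's: reduce via strongly equivariant charts to the toric model $\bfA_M$, verify the torific ideal is monomial there, and transfer the conclusions back using the compatibility lemmas for strict (strongly equivariant) morphisms. Two remarks are worth making.

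First, a small slip in the localization step: $G_O$-localization at a special orbit $O$ makes the action \emph{local}, not strictly local, so Theorem~\ref{flcilem}(iii) is not directly applicable. One must further replace $G$ by $G_x$ and $G_x$-localize at a point $x\in O$, which is $G_x$-fixed; this is exactly what the paper does, citing Corollary~\ref{toroidalprop} (which packages Proposition~\ref{simpleprop} and Theorem~\ref{flcilem} together with this reduction).

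Second, your pointwise taut/loose argument via dual cones and $\tau^\perp$ is correct but more elaborate than necessary. The paper stays at the chart level: on $\bfA_M$ the blowing-up charts are $\bfA_{M'_i}$ with $M'_i$ the saturation of $M[m_1-m_i,\dots,m_n-m_i]$ and grading extending $\chi$; an inner $v\in\Ker\chi\subset M$ remains inner in each larger monoid $M'_i$, giving tautness on the toric blowup, and Lemma~\ref{tautlem} then propagates tautness (and looseness) to $(X',D')$ through the strict strongly equivariant chart. This avoids having to identify $L_{x'}$ at every point of $X'$, which is where you locate the ``main obstacle''---the paper simply sidesteps it.
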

\begin{proof}
Step 1: toric schemes. Assume first that $X=\bfA_{M}$ with the toric divisor $D$, $S$ is constant, and $G$ acts via $\chi\:M\to L$, and let us check (i) and (ii). The ideal $I^X_S$ is generated by elements $m_1\.m_n\in M$, so $b_{X,S}$ is toric, hence toroidal. Furthermore, $X'$ is glued from blowing up charts $X'_i$ of the form $\bfA_{M'_i}$ where $M'_i$ is the saturation of $M_i=M[m_1-m_i\.m_n-m_i]$ and $G$ acts through the homomorphism $M'_i\to L$ extending $\chi$. Indeed, $\bfA_{M'_i}$ is normal and hence it is both the normalization and $G$-normalization of $\bfA_{M_i}$, which is the chart of the blowing up along $I^X_S$. By Lemma~\ref{toroidalactionlem}(iii) the action is toroidal. If the action on $(X,D)$ is taut then $\Ker(\chi)$ contains an inner element $v\in M$. Since $v$ is also inner in the larger monoids $M'_i$, the action on $(X',D')$ is taut. The claim about looseness is also simple, so we omit the justification.

Step 2: the general case. It suffices to prove the claims locally over a point $x\in X$, hence we can replace $G$, $X$ and $S$ with $G_x$, the $G_x$-equivariant localization of $X$ at $x$, and $S_x$, respectively, reducing to the case where the action on $X$ is strictly local and $S$ is constant. By Corollary~\ref{toroidalprop}, the toroidal scheme $(X,D)$ admits a strongly equivariant chart $h\:(X,D)\to(X_0,D_0)$ with $X_0=\bfA_{M}$ and $D_0=\bfA_M\setminus\bfA_{M^\gp}$. The statement has been proven in Step 1 above for the target, hence (i) and (ii) follow from the fact that all ingredients are compatible with strongly equivariant strict morphisms $(Y,E)\to(X,D)$: Lemma~\ref{Lem:functorideal} says that $I_S^Y = I^{X}_S \cO_Y$, Lemma~\ref{toroidalblowlem}(ii) shows that $b_{Y,S}$ is the base change of $b_{X,S}$, Lemma~\ref{toroidalactionlem}(ii) shows that the action on $(Y',E')$ is toroidal, and compatibility of tautness and  looseness follows from Lemma~\ref{tautlem}. In particular, this argument shows that $b_{X,S}$ is the base change of $b_{X_0,S}$, and the same is true for $b_{Y,S}$ since the induced morphism $(Y,E)\to(X_0,D_0)$ is a strongly equivariant chart. This implies (iii).
\end{proof}

\subsubsection{Torific blowings up: simple action}
In case of simple actions we still have the following result.

\begin{corollary}\label{torificcor}
Assume that $G=\bfD_L$ acts in a relatively affine and $G$-simple fashion on toroidal schemes $(X,D)$ and $(Y,E)$ and $f\:(Y,E)\to(X,D)$ is a strongly equivariant strict morphism. Then for any locally constant coherent multiset $S$ in $L_X$

(i) the torific blowing up $b_{X,S}$ is the normalized blowing up along $I_S^X$, and

(ii) the torific blowings up are compatible: $b_{Y,f^*(S)}$ is the base change of $b_{X,S}$.
\end{corollary}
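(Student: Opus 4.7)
My plan is to reduce both assertions to the situation of a toroidal action, where the torific blowings up are tightly controlled by Lemma~\ref{torificblowlem}. The two assertions are local on the quotients $X\sslash G$ and $Y\sslash G$, so I would first pass to the $G$-equivariant localization at each special orbit $O\subseteq X$ together with its preimage in $Y$, reducing to the case of strictly local actions with closed orbits $O$ on $X$ and $O_Y$ on $Y$, and a constant multiset $S$. Strong equivariance of $f$ then gives $G_O=G_{O_Y}$, and by Proposition~\ref{combdataprop}(ii) strictness of $f$ additionally gives $\oM_{O_Y}=\oM_O$ and $\sigma_{O_Y}=\sigma_O$.

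Next I would invoke Proposition~\ref{pretoroidalprop} on $X$ to choose homogeneous elements $\ot_1,\dots,\ot_d\in\cO_X$ whose classes form a basis of $\mathrm{Im}(\phi_O)$, so that $D'=D\cup\bigcup V(\ot_i)$ makes $(X,D')$ into a toroidal scheme carrying a toroidal $G$-action. Setting $E'=E\cup\bigcup V(f^*\ot_i)$, the pullbacks $f^*\ot_i$ are homogeneous of the same characters and, by the agreement of combinatorial data, their classes form a basis of $\mathrm{Im}(\phi_{O_Y})$; hence $(Y,E')$ is a toroidal scheme with toroidal $G$-action and $f\colon(Y,E')\to(X,D')$ is a strict strongly equivariant morphism between toroidally-acted toroidal schemes.

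Since the ideal $I_S^X\subseteq\cO_X$ and the scheme-theoretic blowing up $Bl_{I_S^X}(X)$ depend only on $X$ and the $G$-action, not on the chosen toroidal divisor, Lemma~\ref{torificblowlem}(i) applied to $(X,D')$ will show that $I_S^X$ is a toroidal ideal on $(X,D')$, and Lemma~\ref{toroidalblowlem}(i) will then give that $Bl_{I_S^X}(X)$ is itself a toroidal scheme, in particular normal. Consequently $Nor_G Bl_{I_S^X}(X)=Bl_{I_S^X}(X)=Nor\,Bl_{I_S^X}(X)$, which is the content of (i). For (ii), I would apply Lemma~\ref{torificblowlem}(iii) to $f\colon(Y,E')\to(X,D')$ to identify $b_{Y,f^*S}$ with the base change of $b_{X,S}$; since this identification can be checked on stalks and is established on every $G$-localization, it holds globally. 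The main subtlety will be arranging a compatible divisor-enlargement on $X$ and on $Y$; this is handled by the agreement of local combinatorial data (Proposition~\ref{combdataprop}(ii)), which guarantees that the lifts $\ot_i$ chosen on $X$ pull back to lifts on $Y$ with the same signature and hence produce the desired global toroidal enlargement of $(Y,E)$.
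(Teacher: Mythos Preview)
Your approach is essentially the paper's: localize, enlarge the toroidal divisor via Proposition~\ref{pretoroidalprop} to make the action toroidal, then invoke Lemma~\ref{torificblowlem}(i) and (iii); the paper is terser and simply sets $\oE=\oD\times_XY$ without spelling out why $(Y,\oE)$ is again toroidal, whereas you justify this through the agreement of signatures. One small caution on the localization step: passing to the $G$-localization of $X$ at a special orbit and then to its preimage in $Y$ need not give a strictly local action on $Y$; the paper instead fixes a point $y\in Y$, replaces $G$ by $G_y=G_{f(y)}$ (using that $f$ is fixed-point reflecting), and takes $G_y$-equivariant localizations on both sides, which cleanly yields strictly local actions on $X$ and $Y$ simultaneously.
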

\begin{proof}
It suffices to check (ii) at a point $y\in Y$, hence we can replace $G$ with $G_y$ and $Y$ and $X$ with their $G_y$-equivariant localizations at $y$ and $f(y)$, respectively. By Proposition~\ref{pretoroidalprop} there exists a larger divisor $\oD\supseteq D$ such that the action on $(X,\oD)$ is toroidal. Set $\oE=\oD\times_XY$, then $(Y,\oE)\to(X,\oD)$ is a strict morphism and hence $b_{Y,S}$ is the base change of $b_{X,S}$ by Lemma~\ref{torificblowlem}(iii).

In the same manner, to check (i) we can pass to a $G_x$-equivariant localization $X_x$, and then after increasing the toroidal structure the action becomes toroidal. Then $b_{X,S}$ becomes a toroidal blowing up by Lemma~\ref{torificblowlem}(i), in particular, it is a normalized blowing up.
\end{proof}

\subsection{The torifying property}\label{torifsec}
The main property of torific blowings up is that they \emph{torify} simple actions on toroidal schemes. This is the content of the following theorem, whose proof occupies the whole Section~\ref{torifsec}.

\begin{theorem}\label{torificationth}
Assume that a toroidal scheme $(X,D)$ is provided with a $G$-simple relatively affine action of a diagonalizable group $G=\bfD_L$ and $S$ is a locally constant coherent multiset of characters in $L_{(X,D)}$ containing the support of $\sigma_{(X,D)}$. Then the torific blowing up $b_{X,S}\:(X',D')\to(X,D)$ is permissible and $G$ acts on $(X',D')$ toroidally. Moreover, if the action on $(X,D)$ is potentially taut or loose then the action on $(X',D')$ is taut or  loose, respectively.
\end{theorem}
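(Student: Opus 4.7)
The plan is to deduce the toroidal properties from the already-proven toroidal case (Lemma~\ref{torificblowlem}) by first enlarging the toroidal divisor to make the action toroidal, and then descending. Using compatibility of torific blowing ups (Corollary~\ref{torificcor}) and signatures (Proposition~\ref{combdataprop}, Lemma~\ref{Sigmalem}) with strongly equivariant strict localizations, I would first reduce to the case where the $G$-action is strictly local at a closed $G$-fixed point $x$, with $G=G_x=\bfD_L$ and $S$ a constant multiset in $L$ containing the signature $\sigma_O=\sigma_x$. Applying Proposition~\ref{pretoroidalprop}, I obtain a larger $G$-equivariant toroidal divisor $\oD=D\cup\bigcup_{i=1}^d D_i$, in which the added divisors $D_i=V(\bar t_i)$ are principal Cartier of homogeneous weights $\chi'(e_i)$, such that $(X,\oD)$ is toroidal and the $G$-action on $(X,\oD)$ is toroidal; crucially, the multiset of nonzero $\chi'(e_i)$ equals $\sigma_O$, and by the definition of potentially taut or loose actions, the action on $(X,D)$ is potentially taut (resp.\ loose) iff that on $(X,\oD)$ is taut (resp.\ loose).

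Since $b_{X,S}\:X'\to X$ depends only on $X$ and $S$, I interpret it as a torific blow-up of the toroidal scheme $(X,\oD)$ as well. Lemma~\ref{torificblowlem} then yields that $b_{X,S}\:(X',\oD')\to(X,\oD)$ is a toroidal (hence permissible) blow-up with $\oD'=b^{-1}(\oD)\cup E$, that the $G$-action on $(X',\oD')$ is toroidal, and that tautness or looseness of the $G$-action transfers. To pass from $(X',\oD')$ to the pair $(X',D')$ actually produced by $b_{X,S}$ as a blow-up of $(X,D)$, with $D'=b^{-1}(D)\cup E\subseteq\oD'$, I must remove the difference $\oD'\setminus D'$, which consists of the possibly-empty strict transforms $\tilde D_i$ of the $D_i$.

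The crux is therefore the claim that each $\tilde D_i$ is a $G$-equivariant Cartier divisor of weight~$0$ on $X'$. When $\chi'(e_i)=0$ this is immediate since $\bar t_i\in A_0$ and torific blow-ups preserve the $L$-grading. When $\chi'(e_i)=l_0\neq 0$, so that $l_0\in\sigma_O\subseteq S$, I factor $b_{X,S}$ via Lemma~\ref{Lem:composeblowups} through the single-character degree-$0$ blow-up $Y=X^0_{\{l_0\}}$, on which $I^Y_{l_0}=I^X_{l_0}\cO_Y$ is invertible with a local homogeneous generator $g$ of weight $l_0$; locally $\bar t_i=g\cdot q_i$ with $q_i$ of weight~$0$, exhibiting the strict transform of $D_i$ on $Y$ as $V(q_i)$, Cartier of weight~$0$. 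This property persists under the remaining torific blow-up $X'\to Y$ since any further exceptional divisor carries a nonzero weight and cannot divide a weight-$0$ element in a chart of a torific blow-up without forcing a negative-weight element to appear in the coordinate ring. Granted the claim, iterating Proposition~\ref{decreaseprop}(i) removes each $\tilde D_i$ from $\oD'$ while preserving both the toroidal scheme structure and the toroidality of the $G$-action, yielding $(X',D')$ as required; the taut/loose property transfers from $(X',\oD')$ to $(X',D')$ because at each point the two stalk monoids differ only by a trivially $L$-graded free summand $\NN^k$, which preserves both the innerness-in-kernel condition (taut) and the saturation equality $(\Ker\phi)^\gp=\Ker\phi^\gp$ (loose). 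The main obstacle is the weight-$0$ local claim, whose proof relies on a careful deployment of the single-character decomposition supplied by Lemma~\ref{Lem:composeblowups}.
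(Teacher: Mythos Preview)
Your reduction to the strictly local case and your overall plan --- enlarge $D$ to $\oD$, take the torific blowing up, observe via Lemma~\ref{torificblowlem} that $(X',\oD')$ is toroidal with toroidal action, then remove the strict transforms $\tilde D_i$ by repeated application of Proposition~\ref{decreaseprop}(i) --- is exactly the skeleton of the paper's argument. The paper, however, does not attempt this directly on $X$: it first passes to an explicit toric model via the combinatorial chart of Theorem~\ref{combchart}, proves the analogous statement there as Lemma~\ref{modelcaselem} by explicit monoid computation, and transfers back using Corollary~\ref{torificcor} and Lemma~\ref{toroidalactionlem}(ii).

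The reason is precisely the step you flag as ``the main obstacle.'' Your argument that the weight-$0$ Cartier property of $\tilde D_i$ propagates from $Y=X^0_{\{l_0\}}$ to $X'$ does not hold up: the claim that ``any further exceptional divisor carries a nonzero weight and cannot divide a weight-$0$ element'' is neither generally true (the weight of a local generator of $I^Y_T$ is $\sum_{l\in T}l$, which may vanish, and nothing prevents $B$ from containing negatively graded elements) nor the correct mechanism. What you actually need for the factorization to close the argument is the stronger statement that the intermediate pair $(Y,D_Y)$, with $D_Y=b_{X,\{l_0\}}^{-1}(D)$ union the exceptional divisor, is itself toroidal with toroidal $G$-action; once that is known, Lemma~\ref{torificblowlem} applies to the remaining blow-up $b_{Y,T}$ and finishes the proof in one stroke, with no need to track strict transforms further. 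But that intermediate statement is exactly the $|S|=1$ case of the theorem, and establishing it requires a genuine computation. The paper carries this out on $\bfA_P$ with $P=M\oplus\NN e$: on the chart $\bfA_{P_j}$ one has $P_j^{\mathrm{sat}}\supseteq N_j=Q[q_1-q_j,\ldots,q_n-q_j]\oplus\NN(e-q_j)$ with the generator $e-q_j$ of degree~$0$, so Proposition~\ref{decreaseprop}(i) applies. Your argument, in bypassing the toric model, loses access to exactly this computation and leaves the crucial step unjustified.
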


Note that it may happen that $I_S^X=0$. In this case the assertion of the theorem holds true but becomes vacuous.

\subsubsection{Proof of Theorem~\ref{torificationth}: the plan}
The proof will be in three steps. First, we establish the model case of toric varieties over $\ZZ$ (Section \ref{modelcasesec}). Then, we  deduce the case when the action is strictly local (Section \ref{strloccase}). Finally we deduce the general case in Section \ref{torifgencase}.

\subsubsection{The model case}\label{modelcasesec}
Let $G=\bfD_L$. As in Section \ref{combsec}, consider the following situation that models $G$-simple actions: $M$ is an $L$-graded toric monoid, $P=M\oplus\NN^\sigma$ where $\sigma\:L\setminus\{0\}\to\NN$ is a function with finite support, $X=\bfA_P$, $D=\bfA_P\setminus\bfA_{M^\gp\oplus\NN^\sigma}$ and $\oD=\bfA_P\setminus\bfA_{P^\gp}$. We denote the grading of $P$ by $\chi\:P\to L$. Both $(X,D)$ and $(X,\oD)$ are $G$-equivariant toroidal schemes, and we have a $G$-equivariant chart $h\:(X,D)\to(\bfA_M,\bfA_{M^\gp})$. The actions on $(X,\oD)$ and $(\bfA_M,\bfA_{M^\gp})$ are toroidal by Proposition~\ref{toroidalactionlem}(iii).

\begin{lemma}\label{modelcaselem}
Keeping the above notation, let $S$ be a finite multiset in $L$ containing the support of $\sigma$. Consider the $S$-torific blowing up $b_{X,S}\:X'\to X$ and let $D'\subset X'$ be the union of the preimage of $D$ and the exceptional divisor. Then the action of $G$ on $(X',D')$ is toroidal. In addition, if the action on $(X,\oD)$ is taut or  loose then the same is true for $(X',D')$.
\end{lemma}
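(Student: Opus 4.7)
My approach is to first establish the toroidal conclusions on the larger pair $(X,\oD)$, where the $G$-action is already toroidal, and then to decrease the divisor from $\oD''$ to $D'$; the ``extra'' components of $\oD''\setminus D'$ will turn out to be Cartier divisors of trivial character, thanks to the specific structure of the torific blowing up.

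On $(X,\oD)=(\bfA_P,\bfA_P\setminus\bfA_{P^\gp})$, the $G$-action is toroidal by Lemma~\ref{toroidalactionlem}(iii), and $I_S^X$, being generated by homogeneous monomials in $P$, is a toroidal ideal. Lemma~\ref{torificblowlem} applied to $(X,\oD)$ therefore gives that $b_{X,S}\colon X'\to X$ is a toroidal (in particular permissible) blowing up, that the resulting pair $(X',\oD'')$ (with $\oD''$ the preimage of $\oD$ together with the exceptional divisor) is toroidal with toroidal $G$-action, and that tautness and looseness of the action are preserved. Since $D\subset\oD$ we have $D'\subset\oD''$, and $\oD''\setminus D'$ consists of those components of the strict transforms of the ``horizontal'' divisors $V(t_e)$ (for $e$ a generator of $\NN^{\sigma(l)}$, $l\in\mathrm{supp}(\sigma)$) which are not already contained in the exceptional divisor.

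The key claim is that each component of $\oD''\setminus D'$ is Cartier with trivial character. Granting this, iterated application of Theorem~\ref{decreaseth} and Proposition~\ref{decreaseprop} peels these components off one at a time while preserving toroidality of both the pair and the $G$-action, so $(X',D')$ is toroidal with toroidal $G$-action. Tautness and looseness survive this decrease since they are determined by the graded local monoids $\oM'_{O'}$ at closed orbits, which are unaffected by removing trivially graded components.

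The main work, which I expect to be the hard part, is verifying the key claim. In an affine chart $U=\bfA_{P'}$ of the normalized blowing up, $P'$ is the saturation of $P+\sum_\nu\NN(\nu-\mu)$, where $\mu$ generates $I_S^X\cO_U$ and $\nu$ runs through the generators of $I_S^X$; since all generators of $I_S^X$ share the common total degree $d=\sum_{l\in S}l$, each $\nu-\mu$ has degree $0$ and $\chi$ extends to $P'$. Writing $\mu=\sum_{l'\in S}p_{l'}$ with $p_{l'}\in\chi^{-1}(l')\cap P$, for any generator $e$ of $\NN^{\sigma(l)}$ with $l\in\mathrm{supp}(\sigma)\subseteq S$, the auxiliary element $\nu_e:=e+\sum_{l'\in S\setminus\{l\}}p_{l'}$ is another generator of $I_S^X$; hence $e-p_l=\nu_e-\mu\in P'$, which gives the decomposition $e=p_l+(e-p_l)$ in $P'$ and therefore the set-theoretic equality $V(t_e)|_U=V(t^{p_l})\cup V(t^{e-p_l})$. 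The first factor lies in the exceptional divisor $V(t^\mu)=\bigcup_{l'\in S}V(t^{p_{l'}})\subseteq D'$, and the second is cut out by the degree-$0$ element $e-p_l$; trivial character of any component of $\oD''\setminus D'$ contained in $V(t_e)|_U$ follows immediately. Upgrading ``trivial character'' to ``Cartier'' requires, at each closed orbit $O'$, exhibiting a degree-$0$ generator of the graded local monoid $\oM'_{O'}$ complementary to the facet in question --- a combinatorial check I would accomplish via the equivalence (d)$\Leftrightarrow$(e) of Lemma~\ref{splitfacet} applied locally, using the degree-$0$ element $e-p_l$ exhibited above.
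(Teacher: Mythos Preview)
Your overall strategy coincides with the paper's: enlarge to $(X,\oD)$ where the action is toroidal, invoke Lemma~\ref{torificblowlem} to get a toroidal action on $(X',\oD'')$, and then decrease from $\oD''$ to $D'$ by peeling off the strict transforms of the $V(t_{e_i})$ using Proposition~\ref{decreaseprop}. The paper also reduces to one $e$ at a time and splits $P=Q\oplus\NN e$. Where you diverge is in the treatment of general $S$: the paper does \emph{not} attack the chart monoid $P'$ of $b_{X,S}$ directly, but instead uses Lemma~\ref{Lem:composeblowups} to factor $b_{X,S}$ as the $\{l\}$-torific blowing up followed by an $S'$-torific blowing up of a toroidal scheme with toroidal action. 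For $S=\{l\}$ the chart monoid visibly splits as $N_j=Q[q_1-q_j,\dots,q_n-q_j]\oplus\NN(e-q_j)$, so Cartier-ness of the strict transform and triviality of its character are immediate; the remaining $S'$-blowup is then handled entirely by Lemma~\ref{torificblowlem}.

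Your direct approach has a genuine gap at the Cartier step, and it is precisely the step the composition trick is designed to bypass. You correctly exhibit the degree-$0$ element $e-p_l\in P'$ and observe that any height-$1$ prime $\fp$ of $P'$ with $e\in\fp$ and $\mu\notin\fp$ must contain $e-p_l$. But containing a degree-$0$ element does not make $\fp$ principal: what you need is $\nu_\fp(e-p_l)=\nu_\fp(e)=1$, and nothing in your argument controls $\nu_\fp(e)$ for the (possibly new) rays of the subdivided dual cone. Your appeal to the equivalence (d)$\Leftrightarrow$(e) of Lemma~\ref{splitfacet} does not bridge this: condition~(d) is a statement about \emph{all} other height-$1$ primes, and the single element $e-p_l$ gives no purchase on it. Relatedly, your ``trivial character'' claim is premature, since in Proposition~\ref{decreaseprop} the character is attached to the Cartier generator, which you have not produced. (Your taut/loose remark is also slightly off --- removing a trivially graded Cartier component \emph{does} change $\oM'_{O'}$, replacing it by the complementary facet; the property survives because an inner degree-$0$ element projects to one, not because the monoid is ``unaffected''.) The paper's reduction to $|S|=1$ is exactly what makes the monoid splitting, and hence both the Cartier and character conditions, manifest.
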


The following special model case is established in \cite[Proposition 3.2.5(2)]{AKMW}: $L=\ZZ$ and $X=\bfA_{P,K}$, where $K$ is a field of characteristic zero. However, the arguments are combinatorial and apply to our more general situation verbatim. For the sake of completeness we briefly outline the proof.

\begin{proof}
We can assume that $I_S^X\neq 0$ as otherwise $X'$ is empty and there is nothing to prove. Let $z^m$ denote the image of $m\in P$ in $\ZZ[P]$ and $t_i=z^{e_i}$, where $e_1\.e_r$ are the generators of $\NN^\sigma$. Note that $D$ is obtained from $\oD$ by removing $r$ divisors $D_i:=V(t_i)$. Hence $D'$ is obtained from the preimage $\oD'$ of $\oD$ by removing the strict transforms $D'_1\.D'_r$ of $D_1\.D_r$. The action on $(X',\oD')$ is toroidal by Lemma~\ref{torificblowlem}(ii), and by Proposition~\ref{decreaseprop}(ii) it suffices to prove that it remains toroidal if we remove from $\oD'$ a single component $D'_i$. Thus, replacing $D$ and $D'$ by $\oD-D_i$ and $\oD'-D'_i$ we can assume in the sequel that $r=1$ and hence also $i=1$.

Set $e=e_1$ and $l=\chi(e)$, and split $P$ as $Q\oplus\NN e$ (i.e. $e$ is the generator of the second summand). First we consider the particular case when $S=\{l\}$. We can choose generators $z^e,z^{q_1}\.z^{q_n}$ of $I^{\ZZ[P]}_l$ such that $q_j\in Q$. The strict transform of $D_i$ is non-empty on the charts $\bfA_{P_j}$ where $P_j$ is the saturation of $$N_j=P[e-q_j,q_1-q_j\.q_n-q_j].$$ Clearly, $$N_j=Q[q_1-q_j\.q_n-q_j]\oplus\NN(e-q_j)$$ and the generator of the second summand is of degree zero. Hence after removing from $\oD'$ the strict transform $D'_1=V(e-q_j)$ the action remains toroidal by Proposition~\ref{decreaseprop}(i). In addition, if $\Ker(\chi)$ contains an inner vector of $P$ then the same vector is inner in $P_j$ and lies in the kernel of $\chi_j\:P_j\to L$, and if $P\cap\Ker(\chi)$ spans $\Ker(\chi^\gp)$ then the same is true for $\Ker(\chi_j)\supseteq\Ker(\chi)$. It follows that tautness and looseness are preserved in this case.

We further claim that $b_{X,\{l\}}=b_{X,\{l\}}^0$. Indeed, this follows from the fact that $P_j$ is the direct sum of the saturation of $Q[q_1-q_j\.q_n-q_j]$ and $\NN(e-q_j)$, and hence $P_j$ is obtained from $N_j$ by saturating its degree-0 part.

Now let us consider the case of an arbitrary $S$.  Lemma~\ref{Lem:composeblowups} applies to the sum $S=S'+\{l\}$ and we obtain that $b_{X,S}$ is the composition of the torific blowings up $b_{X,\{l\}}\:X''\to X$ and $b_{X'',S'}\:X'\to X''$. Let $D''\subset X''$ be the union of  $b_{X,\{l\}}^{-1}(D)$ and the exceptional divisor of $b_{X,\{l\}}$. By the above case, $(X'',D'')$ is a toroidal scheme acted on toroidally by $G$. Since $D'$ is the union of the preimage of $D''$ and the exceptional divisor, Lemma~\ref{torificblowlem} tells us that $b_{X'',S'}\:(X',D')\to(X'',D'')$ is a toroidal blowing up and the action on the source is toroidal. The same lemma also says that tautness and looseness are preserved by $b_{X'',S'}$.
\end{proof}

\subsubsection{The strictly local case}\label{strloccase}
Back to the proof of Theorem~\ref{torificationth}. Assume that the action on $X$ is strictly local, and let $x$ be the closed $G$-invariant point. Recall that the family $S$ is constant, so we can identify it with a multiset $S_x$ in $L$. By Theorem~\ref{combchart} there exists a strongly equivariant chart $f\:(X,D)\to(X_0,D_0)$, where $P=\oM_x\oplus\NN^{\sigma_x}$, $X_0=\bfA_P$ and $D_0=\bfA_P\setminus\bfA_{\oM_x^\gp\oplus\NN^{\sigma_x}}$. Applying Corollary~\ref{torificcor} to $f$ we obtain that the torific blowings up $b_{X,S}$ and $b_{X_0,S}\:(X'_0,D'_0)\to(X_0,D_0)$ are compatible, thus giving rise to an equivariant chart $(X',D')\to(X'_0,D'_0)$. Since $S$ contains the support of $\sigma_x$, Lemma~\ref{modelcaselem} applies to $b_{X_0,S}$ and we obtain that the action on $(X'_0,D'_0)$ is toroidal. Thus, the action on $(X',D')$ is toroidal by Lemma~\ref{toroidalactionlem}(ii).

Finally, if the action on $(X,D)$ is potentially taut (resp.  potentially loose) then the same is true for $(X_0,D_0)$, and hence the action on $(X_0,\oD_0)$ is taut (resp. loose), where $\oD_0=\bfA_P\setminus\bfA_{P^\gp}$. Then the action on $(X'_0,D'_0)$ is taut (resp. loose) by Lemma~\ref{modelcaselem}, and using Lemma~\ref{tautlem} we obtain that the action on $(X',D')$ is taut (resp. loose).

\subsubsection{The general case}\label{torifgencase}
Now, let us prove Theorem~\ref{torificationth} in general. By Lemma~\ref{localtorificlem} torific blowings up are compatible with the equivariant localization $X_O\into X$ at a closed orbit $O$. Since $X$ is covered by such localizations, we can replace $X$ with $X_O$. Moreover, we can replace $G$ with $G_O$. Indeed, compatibility of torific blowings up is guaranteed by the same Lemma~\ref{localtorificlem} and $G$ acts on $(X',D')$ toroidally if and only if $G_O$ acts toroidally, since the stabilizer of any point of $X$ and $X'$ is contained in $G_O$. Thus, we reduce to the situation when the closure of any orbit contains a $G$-invariant point, and the same localization argument implies that it now suffices to prove the theorem for equivariant localizations of $X$ at $G$-invariant points. The latter localizations are strictly local, and the theorem was proved for them in Section \ref{strloccase}. This completes the proof of Theorem \ref{torificationth}. \qed

\subsection{Main torification theorems}
Our main torification results are obtained by applying Theorem~\ref{torificationth} to the locally constant coherent multisets $\sigma_X$ and $\sigma^0_X$ (see Lemma~\ref{Sigmalem}(i)). To simplify notation, set $I^X_\sigma=I^X_{\sigma_X}$ and $I^X_{\sigma^0}=I^X_{\sigma^0_X}$.

\subsubsection{Full actions}
First, let $X$ be a toroidal scheme provided with a relatively affine $G$-action, where $G = \bfD_L$. If $U \subset X\sslash G$  is affine we write  $$X_U = U \times_{X\sslash G} X = \Spec \bigoplus_{l\in L} A^U_l.$$ We say that the action of $G$ on $X$ is {\em full}\index{full action} if for every nonempty affine $U \subset X\sslash G$ and every $l\in L$ we have $A^U_l \neq 0$. Note that if there is a strongly equivariant dense open set $X_U$ where the action is free, then the action is full.

\subsubsection{Balanced torification}
We start with torification by use of the balanced multiset $\sigma_X^0$.

\begin{theorem}\label{maintheorem}
Assume that a toroidal scheme $(X,D)$ is provided with a relatively affine,  $G$-simple action of a diagonalizable group $G=\bfD_L$. Consider the torific blowing up $F^0_{(X,D)}\:X'\to X$ along $I_{\sigma^0}^X$, and let $D'$ be the union of the preimage of $D$ and the exceptional divisor of $F^0_{(X,D)}$. Then,

(i) The pair $(X',D')$ is toroidal and the natural $G$-action on $(X',D')$ is toroidal.

(ii) The morphism of quotients  $X'\sslash G\to X_0=X\sslash G$ is the normalized blowing up along $\pi_*(I^X_{\sigma^0})\cap\cO_{X_0}$, where $\pi\:X\to X_0$ is the quotient morphism.

(iii) The torific blowing up $F^0_{(X,D)}$ is functorial with respect to strongly equivariant strict morphisms $(Y,E)\to(X,D)$ of toroidal schemes in the sense that $F^0_{(Y,E)}\:Y'\to Y$ is the base change $F^0_{(X,D)}\times_XY$ of $F^0_{(X,D)}$.

(iv) If moreover the action of $G$ is full, then $F^0_{(X,D)}\:X'\to X$  is birational.
\end{theorem}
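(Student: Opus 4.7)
The proof assembles parts (i)--(iii) from Theorem~\ref{torificationth} and auxiliary lemmas applied to the balanced multiset $\sigma^0_X$, with only (iv) requiring a fresh argument.

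For (i), I apply Theorem~\ref{torificationth} with $S=\sigma^0_X$. By Lemma~\ref{Sigmalem}(i), the multiset $\sigma^0_X$ is locally constant and coherent in $L_X$, and by construction it contains the support of $\sigma_X$. The theorem then directly yields the permissibility of the blowing up, toroidality of $(X',D')$, and toroidality of the lifted $G$-action.

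For (ii), the crucial point is that $\sigma^0_X$ was built to be balanced. Working locally on $X_0$ over a strongly equivariant affine on which $\sigma^0_X$ restricts to a constant balanced multiset $S\subset L$, Lemma~\ref{Lem:canonicalblowup}(iii) identifies the morphism of quotients with the normalized blowing up along $\pi_*(I^X_S)\cap\cO_{X_0}$; the local descriptions glue to the global statement via Lemma~\ref{Lem:functorideal}. For (iii), the plan is to combine Lemma~\ref{Sigmalem}(ii), which gives $f^*(\sigma^0_X)=\sigma^0_Y$ for strict strongly equivariant $f$, with Corollary~\ref{torificcor}(ii), which gives base change of torific blowings up along such morphisms, thereby yielding $F^0_{(Y,E)}=F^0_{(X,D)}\times_X Y$.

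For (iv), the task reduces to showing that $I^X_{\sigma^0}$ is contained in no minimal prime of $\cO_X$. If $|\sigma^0_X|$ is empty then the ideal is the unit and $F^0_{(X,D)}$ is an isomorphism, so assume some $l\in|\sigma^0_X|$ exists. Fullness supplies nonzero elements $a\in A^U_l$ and $b\in A^U_{-l}$ for every affine $U\subset X_0$; their product is a weight-zero element of $\pi_*(I^X_l)\cap\cO_{X_0}$ that can be arranged to be nonzero at each generic point of $X_0$. Via (ii), this shows that $X'\sslash G\to X_0$ is a birational modification; to propagate birationality upstairs, I would pull back its isomorphism locus along the surjective, relatively affine morphism $\pi\:X\to X_0$, obtaining a $G$-invariant dense open of $X$ over which $F^0_{(X,D)}$ is an isomorphism, as can be checked on graded blowing-up charts. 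The main obstacle here is to show that non-vanishing of the pushed-down ideal on every component of $X_0$ upgrades to non-vanishing of $I^X_{\sigma^0}$ on every component of $X$; this requires analyzing how $G$ permutes the irreducible components of $X$ lying over a fixed component of $X_0$ and using that such permutations preserve the $L$-grading, so $l$-homogeneous sections that are nonzero somewhere cannot vanish on every component of the same $G$-orbit.
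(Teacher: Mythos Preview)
Your treatment of parts (i)--(iii) is correct and essentially identical to the paper's: (i) is Theorem~\ref{torificationth} applied to $S=\sigma^0_X$ (using Lemma~\ref{Sigmalem}(i) to verify the hypotheses), (ii) is Lemma~\ref{Lem:canonicalblowup}(iii) using that $\sigma^0_X$ is balanced, and (iii) is Lemma~\ref{Sigmalem}(ii) combined with Corollary~\ref{torificcor}(ii).

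For (iv), your route through the quotient is an unnecessary detour, and the ``obstacle'' you flag at the end is in fact the entire content of the argument. The paper argues directly: fullness implies that $I^X_{\sigma^0}$ is nonzero at every generic point of $X$, and since $X$ is normal the normalized blowing up is then birational. The missing justification, which you essentially supply in your last sentence, is this: reduce to the case where $G$ acts transitively on the (clopen) connected components of $X$ by passing to a $G$-orbit of components, which is strongly equivariant because disjoint $G$-invariant closed sets have disjoint images in $X\sslash G$. For any $l$ and any affine $U\subset X\sslash G$, fullness gives $0\neq a\in A^U_l$; since $V(a)$ is a $G$-invariant closed subscheme (the ideal $(a)$ is $G$-stable), the set of components contained in $V(a)$ is $G$-stable, so $a$ is nonzero on one component if and only if it is nonzero on all of them. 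Hence each $I^X_l$ is nonzero at every generic point, and since $\cO_{X,\eta}$ is a field the product $I^X_{\sigma^0}$ is nonzero there too. Once you have this, there is no need to pass through $X\sslash G$: birationality of $F^0_{(X,D)}$ follows immediately. Your proposed detour---showing $\tI$ is nowhere zero on $X_0$, deducing birationality of $X'\sslash G\to X_0$, and then pulling back---requires the same component argument to get off the ground (to ensure the product $ab$ is nonzero on each component), so it buys nothing.
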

\begin{proof}
Part (i) follows from Theorem~\ref{torificationth} and (ii) follows from Lemma~\ref{Lem:canonicalblowup}(iii). Recall that $\sigma_Y^0=h^*(\sigma_X^0)$ by Lemma~\ref{Sigmalem}(ii), hence part (iii) follows from Corollary~\ref{torificcor}. Finally, if the action is full the ideal $I_{\sigma_0}^X$ is nowhere 0, and since $X$ is toroidal, hence normal, the resulting blowing up is birational, giving part (iv).
\end{proof}

\subsubsection{Torification of arbitrary actions}
Alternatively, one can use the unbalanced  multiset $\sigma_X$ for torification. Then there is no good description of the quotients, but the advantage is that the blowing up is birational without requiring a full action. Moreover, we can combine this blowing up with barycentric subdivisions obtaining a torification in general.

\begin{theorem}\label{maintheorem2}
Assume that a diagonalizable group $G$ acts in a relatively affine manner on a toroidal scheme $(X,D)$. Let $F_{(X,D)}\:X'\to X$ be the composition of the barycentric modification $(Y,E)\to(X,D)$ and the torific blowing up along $I_{\sigma}^{{Y}}$ and let $D'$ be the union of the preimage of $D$ and the exceptional divisor of $F_{(X,D)}$. Then,

(i) The morphism $X'\to X$ is birational.

(ii) The pair $(X',D')$ is toroidal and the natural $G$-action on $(X',D')$ is toroidal.

(iii) The morphism $F_{(X,D)}$ is functorial with respect to strongly equivariant strict morphisms $h\:(Y,E)\to(X,D)$ of toroidal schemes in the sense that $F_{(Y,E)}$ is the base change of $F_{(X,D)}$.
\end{theorem}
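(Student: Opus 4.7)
The plan is to realize $F_{(X,D)}$ as the composition of two operations already studied in the paper, and then to verify (i), (ii), and (iii) one by one. First, I would apply the barycentric modification $\beta\:(Y,E)\to(X,D)$ of Proposition~\ref{makingsimpleprop}: it is $G$-equivariant, renders the action on $(Y,E)$ into a $G$-simple action, and is a composition of toroidal blowings up (hence birational), with $E$ the union of the preimage of $D$ and the exceptional divisor of $\beta$. Then I would apply the torific blowing up $\gamma\:(X',D')\to(Y,E)$ along $I^Y_\sigma$ associated to the coherent locally constant multiset $\sigma_Y$ from Lemma~\ref{Sigmalem}(i). Since $\sigma_Y$ contains (trivially) the support of $\sigma_{(Y,E)}$, Theorem~\ref{torificationth} applies and yields at once that $\gamma$ is permissible, that the $G$-action on $(X',D')$ is toroidal, and (from the definition of permissible blowing up) that $D'$ is the union of the preimage of $E$ and the exceptional divisor of $\gamma$. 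Combining the two descriptions of $E$ and $D'$ identifies the latter with the union of the total exceptional divisor of $F_{(X,D)}=\gamma\circ\beta$ and the preimage of $D$, proving (ii).

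For (i), only the birationality of $\gamma$ needs to be addressed. At any generic point $\eta$ of $Y$, the scheme $Y$ is reduced (any toroidal scheme is normal by \cite[Theorem~4.1]{Kato-toric}), so $\cO_{Y,\eta}$ is a field and $\m_\eta/\m_\eta^2=0$. Since $\eta\in Y(0)=U_Y$, the logarithmic stratum through $\eta$ is $U_Y$ itself, which is open in $Y$, so the cotangent space of this stratum at $\eta$ is also zero. Therefore $\sigma_\eta=\emptyset$, and by Proposition~\ref{combdataprop}(i) the same holds at the special orbit $G\cdot\eta$. Consequently $I^Y_\sigma$ is the empty product, i.e.\ the unit ideal, in a strongly equivariant neighborhood of $\eta$, so $\gamma$ is an isomorphism there and hence birational.

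For (iii), functoriality of $F$ under a strongly equivariant strict morphism $h\:(Y',E')\to(X,D)$ follows from functoriality of each factor. The barycentric modification is pulled back from the barycentric subdivision of the Kato fan (Section~\ref{combsec}); strictness of $h$ means $h$ is a pullback on Kato fans, so the barycentric modification of $(Y',E')$ is the base change of that of $(X,D)$. The torific step is handled by Corollary~\ref{torificcor}(ii), applied to the induced strongly equivariant strict morphism between the $G$-simple barycentric modifications, noting that $\sigma$ pulls back to $\sigma$ by Lemma~\ref{Sigmalem}(ii). Composing, $F_{(Y',E')}$ is the base change of $F_{(X,D)}$.

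The main conceptual obstacle is the birationality statement (i): the torific ideal $I^Y_\sigma$ could a priori be zero on an irreducible component, which would make the normalized blowing up empty there. The argument via generic points above — exploiting that local rings at generic points of reduced schemes are fields, forcing the signature multiset to be empty generically — is what makes the unbalanced multiset $\sigma_Y$ suitable in the general setting, in contrast to the balanced multiset $\sigma^0_Y$ of Theorem~\ref{maintheorem}, where a fullness hypothesis is required.
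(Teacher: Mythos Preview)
Your treatment of (ii) and (iii) is correct and matches the paper: both rely on Theorem~\ref{torificationth}, Lemma~\ref{Sigmalem}(ii), Corollary~\ref{torificcor}(ii), and the compatibility of the barycentric modification with strict morphisms.

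Your argument for (i), however, has a genuine gap. You correctly observe that at a generic point $\eta$ of $Y$ the cotangent space vanishes, so the reduced signature $\sigma_\eta$ (as defined for a point of $Y$) is empty. But the torific ideal $I^Y_\sigma$ is built from the multiset $\sigma_Y=\coprod_{y\in Y\sslash G}\sigma_{O_y}$, indexed by points of the \emph{quotient}, and the relevant multiset near $\eta$ is $\sigma_O$ for the \emph{special} orbit $O$ over $\bar\eta\in Y\sslash G$. The orbit $G\cdot\eta$ is in general \emph{not} the special orbit: take $Y=\bfA^1$ with the standard $\GG_m$-action and trivial toroidal divisor. Then $Y\sslash G=\Spec k$, the special orbit over the unique point is the origin $\{0\}$, and $\sigma_0=\{1\}$, so $I^Y_\sigma=(t)$ is \emph{not} the unit ideal, contrary to your claim. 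Proposition~\ref{combdataprop}(i) only gives local constancy along logarithmic-inertia strata and along a single orbit; it does not let you pass from $\eta$ to the distinct orbit $O$. Indeed, by Corollary~\ref{toroidalsigmacor} one has $\sigma_\eta=\phi(\sigma_O)_\red$ with $\phi\:L_O\to L_\eta$, and since $L_\eta$ may be trivial this places no constraint on $\sigma_O$.

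The paper's argument for (i) is different and does not claim that $I^Y_\sigma$ is the unit ideal generically; it shows the weaker (and true) statement that $I^Y_\sigma$ is \emph{nonzero} at generic points. After reducing by equivariant localization to the strictly local case with closed $G$-invariant point $x$ and constant multiset $\sigma=\sigma_x$, each $l\in\sigma_x$ is a nontrivial character through which $G_x$ acts on the cotangent space of the logarithmic stratum at $x$, hence on a subquotient of $m_x/m_x^2$. This produces a nonzero $l$-homogeneous element of $A$, so $I^Y_l\neq 0$ and therefore $I^Y_\sigma\neq 0$. Normality of $Y$ then gives birationality.
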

\begin{proof}
In (i) one should only check that $I_{\sigma}^X$ is non-zero at generic points of $X$. Torific ideals and the multisets $\sigma$ are compatible with equivariant localizations (see Lemmas~\ref{localtorificlem} and \ref{Sigmalem}(i)), hence we can assume that the action is strictly local with $G$-invariant point $x$ and $\sigma=\sigma_x$ is in $L$. Then each $l\in\sigma$ appears as a character through which $G$ acts on $m_x/m_x^2$ and hence $I^X_l\neq 0$. Therefore, $I_\sigma^X\neq 0$ and since $X$ is normal the torific blowing up $X'\to X$ is birational.

Part (ii) follows from Theorem~\ref{torificationth}.

By Lemma~\ref{Sigmalem}(ii) the formation of $\sigma_X$ is functorial. By  Corollary~\ref{torificcor} the corresponding normalized blowing up is functorial. By Proposition~\ref{makingsimpleprop} the barycentric blowings up are equivariant, Also barycentric subdivisons are compatible with strict morphisms. Hence Part (iii) follows.

\end{proof}

\subsection{Some cases with trivial torification}
In this section we will describe two cases when the torification is trivial. This will be used in \cite{AT2}.

\subsubsection{Free actions}
As one might expect, free actions (see \cite[Section 5.4]{ATLuna}) on regular schemes do not require a torification.

\begin{lemma}\label{trivtor}
Assume that $(X,\emptyset)$ is a toroidal scheme (i.e. $X$ is a regular scheme) and a diagonalizable group $G=\bfD_L$ acts freely on $X$. Then $I_X^S=1$ for any finite multiset $S$ in $L$, and hence $F_{(X,\emptyset)}=\Id_X$.
\end{lemma}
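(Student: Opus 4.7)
The plan is to exploit the fact that a free action of a diagonalizable group is flat-split in the sense of \cite[Lemma~5.4.4]{ATLuna}, so after a faithfully flat base change the $G$-torsor becomes trivial; on the trivial torsor each weight space is tautologically generated by a homogeneous unit (the character $e^l$), and this conclusion then descends.

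First, I would reduce to the affine case: torific ideals are defined locally on $X_0 := X\sslash G$, so I may assume $X_0$ is affine. By \cite[Lemma~5.4.4]{ATLuna} the action being free supplies a faithfully flat morphism $T\to X_0$ such that $X':=X\times_{X_0}T$ is $T$-isomorphic to $T\times G$. Since formation of quotients commutes with flat base change under free actions, $X'\sslash G = T$, which identifies $X' = X\times_{X_0}(X'\sslash G)$ and thus makes the projection $X'\to X$ strongly $G$-equivariant. Lemma~\ref{Lem:functorideal0} then gives $I^{X'}_l = I^X_l\cdot\cO_{X'}$ for every $l\in L$. But $X'\cong\Spec(\cO_T\otimes\ZZ[L])$, in which the character $e^l$ is a homogeneous unit of weight $l$, so trivially $I^{X'}_l = \cO_{X'}$. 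Faithful flatness of $X'\to X$ now forces $I^X_l=\cO_X$, and consequently $I^X_S = \prod_{l\in S} I^X_l = \cO_X$ for every finite multiset $S$ in $L$.

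To deduce $F_{(X,\emptyset)} = \Id_X$ in the sense of Theorem~\ref{maintheorem2}: when $D=\emptyset$ the Kato fan of $(X,\emptyset)$ consists of a single point, so the barycentric modification is the identity. Furthermore $\sigma_X$ is the empty multiset, because every stabilizer $G_x$ is trivial, giving $L_x=0$ and $\sigma_x\subseteq L_x\setminus\{0\}=\emptyset$. Hence $F_{(X,\emptyset)}$ reduces to the $G$-normalized blowing up along $I^X_\sigma=\cO_X$, which is the identity on the regular (hence already normal) scheme $X$.

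The proof is essentially descent, and the only point requiring a moment's care is the strong equivariance of $X'\to X$; this is automatic since under a free action the quotient of $X\times_{X_0}T$ is $T$, so $X'$ is a pullback from its own quotient, which is the defining property of a strongly equivariant morphism. I do not anticipate any further obstacles.
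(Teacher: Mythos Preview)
Your proof is correct and follows essentially the same route as the paper: reduce to the affine quotient, invoke \cite[Lemma~5.4.4]{ATLuna} to pass to a strongly equivariant faithfully flat cover on which the torsor is split, observe that each weight space then contains a unit so $I^{X'}_l=\cO_{X'}$, and descend. Your additional paragraph unpacking why the barycentric step in $F_{(X,\emptyset)}$ is trivial (the Kato fan of $(X,\emptyset)$ is a point) and why $\sigma_X=\emptyset$ under a free action is a welcome clarification that the paper leaves implicit.
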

\begin{proof}
The action is simple so the torification is described by Theorem~\ref{maintheorem}. The question is local over $X/G$, hence we can assume that $X$ is affine. Moreover, by \cite[Lemma 5.4.4]{ATLuna} we can assume that there exists a strongly equivariant flat covering $Y\to X$ such that the action on $Y$ is split free. Since the $S$-torific ideals are compatible with strongly equivariant morphisms by Lemma~\ref{Lem:functorideal}, it suffices to check the claim for $Y=\Spec(A)$ with a split free action. But then $A=\oplus_L A_n$, hence each $A_n$ contains a unit and therefore $I_Y^S=1$.
\end{proof}

\subsubsection{Deformation to the normal cone}
Here is a more interesting example. Assume that $X$ is a regular scheme and $Z\into X$ is a regular closed subscheme. Set $B_0=X\times\bfA^1$ and let $Z_0=Z\times\{0\}$ be the preimage of $Z$ in the zero section of $B_0$. We provide $B_0$ with the action of $\GG_m$ via the standard action on $\bfA^1$. Since $B_0$ and $Z_0$ are regular the blowing up $B=Bl_{Z_0}(B_0)$ is regular too, and we can view it as a toroidal scheme with empty divisor. By $E=Z_0\times_{B_0}B$ we denote the exceptional divisor. Furthermore, $Z_0$ is $\GG_m$-equivariant hence the action lifts to $B$.

\begin{lemma}
Keep the above notation. Then the action of $\GG_m$ on $B$ is relatively affine, $B\sslash\GG_m=Bl_Z(X)$, and the torification $F{(X,\emptyset)}$ is the blowing up of the exceptional divisor $E$, in particular, $F_{(X,\emptyset)}$ is an isomorphism.
\end{lemma}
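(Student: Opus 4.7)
The plan is to reduce everything to an explicit local computation in blowup charts. Work locally on $X = \Spec A$ with $Z = V(x_1, \ldots, x_r)$ cut out by a regular sequence, so that $B_0 = \Spec A[s]$ and $Z_0 = V(x_1, \ldots, x_r, s)$. The blowup $B$ is covered by the standard affine charts $U_{x_i} = \Spec A[x_1/x_i, \ldots, x_r/x_i, s/x_i]$ for $1 \le i \le r$. Under the $\GG_m$-action---weight one on $s$ and weight zero on $A$---the only generator of weight one in $\Gamma(U_{x_i})$ is $s/x_i$, so $U_{x_i}$ decomposes as $V_i \times \bbA^1_{s/x_i}$ where $V_i = \Spec A[x_1/x_i, \ldots, x_r/x_i]$ is the $i$-th chart of $Bl_Z(X)$ with trivial $\GG_m$-action and $\GG_m$ acts on the $\bbA^1$-factor with weight one.

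The first two assertions follow at once from this description: each $U_{x_i} \to V_i$ is a (trivial) line bundle, hence affine, and the $\GG_m$-invariant subring of $\Gamma(U_{x_i})$ is $\Gamma(V_i)$. Gluing over $i$ shows that the action is relatively affine and that $B \sslash \GG_m = Bl_Z(X)$. The same conclusion can be read off globally from the Rees-algebra description $B = \Proj_{B_0}\!\bigl(\bigoplus_{n \ge 0} I^n\bigr)$ with $I = (x_1, \ldots, x_r, s)$: since $s$ has weight one, the $\GG_m$-invariant part of the Rees algebra is $\bigoplus_n J^n$ for $J = (x_1, \ldots, x_r)$, and $\Proj_X\!\bigl(\bigoplus_n J^n\bigr) = Bl_Z(X)$.

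For the torification, since $D = \emptyset$ the Kato fan of $(B,\emptyset)$ is trivial, so the barycentric modification in Theorem~\ref{maintheorem2} is the identity and $F_{(B,\emptyset)}$ reduces to the normalized blowing up along the torific ideal $I^B_\sigma$. Every special orbit above $Bl_Z(X)$ is a $\GG_m$-fixed point in some zero section $V_i \times \{0\} \subset U_{x_i}$, with stabilizer equal to all of $\GG_m$; since the logarithmic stratum through such a point is all of $B$, the cotangent space has exactly one non-trivial weight---weight one, in the direction $s/x_i$---so $\sigma_O = \{1\}$ and $\sigma_B$ is the constant coherent multiset $\{1\} \in L = \ZZ$. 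Consequently $I^B_\sigma$ is locally principal, generated on $U_{x_i}$ by $s/x_i$, hence the ideal sheaf of a Cartier divisor, and the normalized blowing up along it is an isomorphism. Matching this Cartier divisor with $E$ gives the lemma. The main obstacle is precisely this global identification: the local generators $s/x_i$ on the charts $U_{x_i}$ must be patched into a coherent Cartier ideal on all of $B$, including over the $\GG_m$-fixed ``infinity'' locus of the exceptional $\mathbb{P}^r$-bundle over $Z_0$ (which is not contained in any single $U_{x_i}$), and then identified with the ideal of $E$; the cleanest route is probably via the Rees/Proj picture used in the second paragraph, where the weight-one subsheaf of the universal $\cO(1)$ on $B$ is manifestly the Cartier ideal of $E$.
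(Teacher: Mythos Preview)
Your argument is essentially the paper's: localize, look at the blowup charts $U_{x_i}\cong V_i\times\bbA^1_{s/x_i}$, read off $B\sslash\GG_m = Bl_Z(X)$ and $\sigma=\{1\}$, and conclude that the torific ideal is locally $(s/x_i)$, hence invertible. The paper compresses exactly this into the line ``the charts $B_i$ \ldots\ are strongly equivariant and hence we have a strongly equivariant isomorphism $B = Bl_Z(X)\times\bfA^1$ \ldots\ the torific ideal $I_X^S$ is given by $(x)$''.

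The obstacle you flag at the end is genuine, and the paper glosses over it too: the charts $U_{x_1},\ldots,U_{x_r}$ do \emph{not} cover $B$; they omit the $s$-chart $U_s$, equivalently the strict transform of $Z\times\bbA^1$. On $U_s$ the coordinate weights are $(1,-1,\ldots,-1)$, so the reduced signature at its $\GG_m$-fixed locus (a copy of $Z$) is $\{1,-1,\ldots,-1\}$ rather than $\{1\}$; moreover the overlap $U_{x_i}\cap U_s$ is obtained from $U_{x_i}$ by inverting the weight-one element $s/x_i$, so it is not strongly equivariant, and one checks that the $\GG_m$-action on the full $B$ is not relatively affine. Thus both your proof and the paper's are really arguments about the open subscheme $\bigcup_i U_{x_i}\subset B$, and your Rees-algebra remark does not repair this --- taking degree-zero parts of the Rees algebra computes the invariants correctly but does not produce an affine quotient morphism on $U_s$. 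A smaller slip, shared with the paper's phrasing: on $U_{x_i}$ the ideal $(s/x_i)$ cuts out the strict transform of $X\times\{0\}$, whereas $E\cap U_{x_i}=V(x_i)$; both are Cartier, so the conclusion that the torific blowup is an isomorphism survives, but the specific identification of the torific ideal with the ideal of $E$ is not right.
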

\begin{proof}
Again, the action is simple since the toroidal divisor is empty. The claim is local on $X$ hence we can assume that $X=\Spec(A)$ for a regular local ring $A$ with local parameters $t_1\.t_n$ and $Z$ is given by the vanishing of $I=(t_1\.t_m)$. Then $B_0=\Spec(A[x])$ and $B=\Proj_A(\oplus_d J^d)$, where $J=I[x]$, and we provide $B$ with the grading such that $I$ is of weight zero and $x$ is of weight 1. Then, the charts $$B_i=\Spec\left(A\left[\frac{t_1}{t_i}\.\frac{t_m}{t_i},t_{m+1}\.t_n,x\right]\right)$$ for $1\le i\le m$ of $B$ are strongly equivariant and hence we have a strongly equivariant isomorphism $B=Bl_Z(X)\times\bfA^1$, where $\GG_m$ acts through the standard action on $\bfA^1$. Thus, the quotient is $Bl_Z(X)$, the multiset $\sigma_B$ is constant and coincides with $\{1\}$, and the torific ideal $I_X^S$ is given by $(x)$.
\end{proof}

\section{Torification by blowings up} \label{Sec:Torification-bu}
Our last goal in the paper is to show that normalized blowings up in the torification theorem can be replaced by non-normalized ones. In particular, they are always of finite type without any additional assumption on the schemes beyond the noetherian hypothesis. Many ideas of these section, especially those related to normalization and saturation thresholds, were developed in the joint work \cite{Illusie-Temkin} of the second author with Luc Illusie in an attempt to remove the quasi-excellence assumption in Gabber's torification theorem \cite[Theorem~1.1]{Illusie-Temkin}. Gabber's original argument applied to all noetherian schemes, but working with qe schemes simplified some points and the plan was to remove the qe assumption in the end via uniform normalization of relevant ideals. In the end, this was not fully worked out due to publication deadline, and we are grateful to Luc Illusie for allowing us to include some of that material in our current work.

\subsection{Toric case}
Our first aim is to show that a toric blowing up (i.e. the normalized blowing up of a toric ideal) can always be realized as the ordinary blowing up of an appropriate toric ideal. The basic idea is to saturate the ideal $I$ instead of saturating the blowing up along $I$. This, indeed, works once one replaces $I$ with an appropriate power $I^n$ -- an operation that does not affect the blowing up morphism.

\subsubsection{Charts of toric blowings up}
Let $P$ be a toric monoid and $(X,U)=(\bfA_P,\bfA_{P^\gp})$ the corresponding toric $\ZZ$-scheme. There is a natural bijection between ideals $I\subseteq P$ and toroidal ideals $\cI\subseteq\cO_X$, it is given by $\cI=\ZZ[I]$ and $I=\cI\cap P$. If $I$ is generated by elements $a_1\.a_n$ then the blowing up $Y=Bl_\cI(X)$ is covered by charts $Y_i=\bfA_{P_i}$ for $1\le i\le n$, where $P_i=P[I-a_i]$ is the submonoid of $P^\gp$ generated by $P$ and $a_1-a_i\.a_n-a_i$. The toric blowing up $\tilY=Y^\nor$ along $\cI$ is covered by the normalizations $\tilY_i=Y_i^\nor$, and we claim that $\tilY_i=\Spec\ZZ[P_i^\sat]$. Indeed, the normalization of $\ZZ[P_i]$ contains $P_i^\sat$. The ring $\ZZ[P_i^\sat]$ is normal since $P_i^\sat$ is a toric monoid, and so the inclusion $\ZZ[P_i^\sat]\subseteq\ZZ[P_i]^\nor$ is an equality.

\subsubsection{Saturation of ideals}\label{satidealsec}
We will also consider {\em saturation} of ideals in a monoid $P$. By definition, $I^\sat$ consists of elements $a\in P$ such that $na\in nI$ for some $n>0$.\index{saturation!of a monomial ideal}

\begin{lemma}\label{satideallem}
Let $P$ be a toric monoid with an ideal $I$. Then for any $n\ge n(P,I)$, the toric blowing up of $\bfA_P$ along $\ZZ[I]$ coincides with the usual blowing up of $X$ along the toric ideal $\ZZ[(nI)^\sat]$.
\end{lemma}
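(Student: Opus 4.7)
The strategy is to package all the saturations $(mI)^\sat$ for $m\ge 0$ into the graded Rees monoid of $I$ in $P$, exploit its finite generation, and then compare the toric and ordinary blowing ups via their $\Proj$ presentations.

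I would first introduce the Rees monoid
\[
R=R(P,I):=\bigsqcup_{m\ge 0}\{m\}\times mI\ \subset\ \ZZ\times P^\gp,
\]
with the convention $0\cdot I=P$ and where $mI=I+\cdots+I$ denotes the $m$-fold monoid-ideal product. A finite generating set of $P$ together with the pairs $(1,b)$, for $b$ running over a finite set of ideal generators of $I$, generate $R$, so $R$ is a finitely generated toric monoid. Its saturation $R^\sat\subset\ZZ\times P^\gp$ is then again a finitely generated toric monoid, and unraveling the defining condition $a\in(mI)^\sat\iff ka\in (km)I$ for some $k>0$ identifies its degree-$m$ piece as precisely $(mI)^\sat$.

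The crux is then the standard Veronese generation theorem for finitely generated $\NN$-graded monoids: there exists a threshold $n(P,I)$ such that for every $n\ge n(P,I)$ the Veronese submonoid $R^{\sat,(n)}=\bigoplus_{k\ge 0}R^\sat_{kn}$ is generated in degree one over the degree-zero part $R^\sat_0=P$. Translated into monoid ideals this gives the identity $(knI)^\sat=((nI)^\sat)^k$ for all $k\ge 1$, where the right-hand side is the $k$-fold monoid-ideal product; passing to $\ZZ$-spans yields the corresponding identity $\ZZ[(knI)^\sat]=(\ZZ[(nI)^\sat])^k$ of ideals of $\ZZ[P]$.

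Passing to $\Proj_{\ZZ[P]}$ then finishes the argument. By the chart computation recalled in \S\ref{satidealsec}, the toric blowing up of $\bfA_P$ along $\ZZ[I]$ is $\Proj\bigoplus_m\ZZ[(mI)^\sat]$; the standard Veronese identity $\Proj(A)=\Proj(A^{(n)})$ for graded rings rewrites this as $\Proj\bigoplus_k\ZZ[(knI)^\sat]$, and for $n\ge n(P,I)$ the identity of the preceding paragraph rewrites it further as $\Proj\bigoplus_k(\ZZ[(nI)^\sat])^k$, which is by definition the ordinary blowing up of $\bfA_P$ along $\ZZ[(nI)^\sat]$. The main obstacle is the Veronese generation step: one needs the finite generation of $R^\sat$ (a standard fact about saturations of finitely generated submonoids of finitely generated free abelian groups) together with a uniform threshold $n(P,I)$ extracted from a finite homogeneous generating set; everything else amounts to unwinding definitions.
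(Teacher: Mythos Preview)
Your approach via the saturated Rees monoid $R^\sat$ and $\Proj$ is genuinely different from the paper's and is conceptually attractive: the identifications $R^\sat_m=(mI)^\sat$ and ``toric blowing up $=\Proj\ZZ[R^\sat]$'' are correct, and the Veronese identity reduces everything to the claimed equality $(knI)^\sat=k\cdot(nI)^\sat$ for all large $n$. The paper instead fixes generators $a_1,\dots,a_l$ of $I$, shows by a short element-chase that $P[I-a_i]^\sat=P[(nI)^\sat-na_i]$ once $n$ is large, and concludes because the two blowings up then share an affine cover and are projective birational over $\bfA_P$. Your route is more structural; the paper's is elementary and self-contained.

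There is, however, a genuine gap in what you call ``the standard Veronese generation theorem for finitely generated $\NN$-graded monoids.'' As stated it is false: for the monoid $\NN$ with its generator placed in degree $2$, the degree-one piece of every odd Veronese is empty, so no threshold $n_0$ works. Standard results (e.g.\ Bourbaki, \emph{Alg.\ Comm.}\ III, \S1) only produce a single $d$ with the $d$-th Veronese standard graded, not all $n\ge n_0$. What rescues your particular $R^\sat$ is that $R$ itself is generated in degrees $0$ and $1$, so the Newton polyhedron $\Delta$ of $I$ has lattice vertices $v_1,\dots,v_s$; one then checks directly that for $n\ge s-1$ every lattice point $x\in(n+1)\Delta$ satisfies $x-v_i\in n\Delta$ for some $i$, and iterating gives $(knI)^\sat=k\cdot(nI)^\sat$. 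This short argument is precisely what your sketch does not supply, and the paper's chart computation sidesteps the issue entirely.
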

\begin{proof}
First, we claim that for any $a\in I$, the equality $P[I-a]^\sat=P[(nI)^\sat-na]$ holds for $n\gg 0$. The monoid $P[I-a]^\sat$ is fine, hence we can fix $x\in P[I-a]^\sat$ and it suffices to prove that $x\in P[(nI)^\sat-na]$ for $n\gg 0$. Since $mx\in P[I-a]$ for some $m>0$, we have that $mx=b-ka$ for $k>0$ and $b\in kI$. Take any $n$ such that $nm\ge k$. Then $mx=c-mna$ with $c=b+(mn-k)a\in mnI$. Since $P$ is saturated, $x+na=\frac{1}{m}c\in P$ and therefore $\frac{1}{m}c\in(nI)^\sat$. So, $x=\frac{1}{m}c-na\in P[(nI)^\sat-na]$ as claimed.

Fix generators $a_1\.a_l$ of $I$ and let $P_i=P[I-a_i]^\sat$. Then the toric blowing up $X'\to X$ along $\ZZ[I]$ is covered by the charts $X'_i=\bfA_{P_i}$. By the above paragraph, for a large enough $n$ we have that $P_i=P[(nI)^\sat-na_i]$ for $1\le i\le l$. Then $X'_i$ are also charts of the blowing up $X''\to X$ along $\ZZ[(nI)^\sat]$. So, $X'$ embeds into $X''$ as an open subscheme, and since both are projective birational over $X$, they coincide.
\end{proof}

\subsubsection{Saturation thresholds of monoidal ideals}\label{Sec:sat-monoidal}
By the saturation threshold $n^\sat(P,I)$ of the pair $(P,I)$ we mean the minimal number $n(P,I)$ that satisfies the assertion of Lemma~\ref{satideallem}.\index{saturation!threshold}

\subsection{The toroidal case}
Next, let us extend the above theory to toroidal blowings up of toroidal schemes.

\subsubsection{Saturation of toroidal ideals}\label{Sec:sat-toroidal}
Assume that $(X,U)$ is a toroidal scheme with a global monoidal chart $\alpha\:P\to\cO_X$. Then any toroidal ideal $\cI\subset\cO_X$ is of the form $\alpha(I)\cO_X$ for an ideal $I$ of $P$. In fact, this representation is unique by \cite[Lemma~3.4.3]{Illusie-Temkin}. By the {\em saturation} of $\cI$ we mean the ideal $\cI^\sat:=\alpha(I^\sat)\cO_X$. It is easy to see that this construction is independent of the choice of a chart and hence globalizes to toroidal ideals $\cI$ on arbitrary toroidal schemes $(X,U)$. Also, for any $x\in X$ we will use the notation $\oM_x=\ocM_{X,x}$ and denote by $\ocI_x$ the image of $\alpha_x^{-1}(\cI_x)$ under the map $\cM_{X,x}\to\oM_x$.

\subsubsection{Saturation thresholds of toroidal ideals}
We define the {\em saturation threshold} of a toroidal ideal $\cI$ to be the minimal number $n^\sat(X,U,\cI)$ such that for any $n\ge n^\sat(X,U,\cI)$ the toroidal blowing up along $\cI$ coincides with the blowing up along $(\cI^n)^\sat$.

\begin{theorem}\label{satidealth}
Assume that $(X,U)$ is a toroidal scheme with a toroidal ideal $\cI$.
Then the saturation threshold $n^\sat(X,U,\cI)$ is finite and $$n^\sat(X,U,\cI)=\max_{x\in X}n^\sat(\oM_x,\ocI_x).$$
\end{theorem}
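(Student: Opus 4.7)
The strategy is to reduce to the toric Lemma~\ref{satideallem} via sharp central charts, proving both inequalities $n^\sat(X,U,\cI) \leq N$ and $n^\sat(X,U,\cI) \geq N$, where $N := \max_{x\in X} n^\sat(\oM_x, \ocI_x)$. Finiteness of $N$ comes first. By Proposition~\ref{combdataprop}(i), $\oM_x$ is locally constant along the finitely many logarithmic strata of $X$; and on any monoidal chart $\alpha\:P\to\cO_X(V)$ with $\cI|_V = \alpha(I)\cO_V$ (for the unique $I\subset P$, cf.\ Section \ref{Sec:sat-toroidal}), the ideal $\ocI_x$ is the image of $I$ in $\oM_x = P/F_x$ for the face $F_x = \alpha^{-1}(\cO_{X,x}^\times)$. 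Since $P$ has finitely many faces, finitely many pairs $(\oM_x,\ocI_x)$ occur on each chart and thus globally, so $N$ is finite.

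For $n^\sat(X,U,\cI)\leq N$, I would cover $X$ by sharp central strict charts $\alpha_i\:V_i\to\bfA_{\oM_{x_i}}$ (Lemma~\ref{sharpchartlem}). Lemma~\ref{toroidalblowlem}(ii) identifies the toroidal blowing up of $V_i$ along $\cI|_{V_i}$ with the strict base change of the toric blowing up of $\bfA_{\oM_{x_i}}$ along $\ZZ[\ocI_{x_i}]$. For $n\geq N\geq n^\sat(\oM_{x_i},\ocI_{x_i})$, Lemma~\ref{satideallem} rewrites this toric blowing up as the ordinary blowing up along $\ZZ[(\ocI_{x_i}^n)^\sat]$, a scheme which is normal. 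Strict base change along $\alpha_i$ of this normal blowing up agrees with the ordinary blowing up of $V_i$ along $(\cI^n|_{V_i})^\sat$ (matching ideals, target normal). Gluing these local equalities over the cover gives the inequality. The reverse inequality is the contrapositive applied at a single point: if $n < n^\sat(\oM_x,\ocI_x)$ for some $x$, then on any sharp central chart around $x$ the toric blowing up already differs from the ordinary one along $\ZZ[(\ocI_x^n)^\sat]$ by the definition of the threshold, and strict base change propagates this disagreement to $X$, forcing $n^\sat(X,U,\cI)\geq n^\sat(\oM_x,\ocI_x)$.

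The crux of the argument is the compatibility step: the ordinary blowing up along $(\cI^n)^\sat$ must commute with the strict base change along $\alpha_i$. This is \emph{not} a general property of blowing up, which commutes only with flat base change in general, but it holds here because, for $n\geq n^\sat(\oM_{x_i},\ocI_{x_i})$, Lemma~\ref{satideallem} forces the ordinary blowing up of $\ZZ[(\ocI_{x_i}^n)^\sat]$ on $\bfA_{\oM_{x_i}}$ to coincide with its normalized (= toric) blowing up, at which point Lemma~\ref{toroidalblowlem}(ii) on toroidal blowings up of toroidal ideals delivers the sought compatibility. Arranging these ingredients cleanly -- in particular, confirming that the pulled-back saturated ideal on $V_i$ really equals $(\cI^n|_{V_i})^\sat$, and not merely contains or is contained in it -- is the one place where care is needed beyond the formal manipulations above.
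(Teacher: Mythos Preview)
Your approach is the same as the paper's---reduce to the toric Lemma via sharp central charts---but there is a gap in the reverse inequality. You correctly identify that compatibility of the \emph{ordinary} blowing up along $(\cI^n)^\sat$ with strict base change is the crux, and you justify it for $n\ge n^\sat(\oM_{x_i},\ocI_{x_i})$ by observing that on the toric side the ordinary blowing up coincides with the toroidal one, so Lemma~\ref{toroidalblowlem}(ii) applies. That suffices for $n^\sat(X,U,\cI)\le N$. But for $\ge N$ you want to push a discrepancy on $\bfA_{\oM_x}$---which by definition of the threshold occurs at some $n<n^\sat(\oM_x,\ocI_x)$---back to $X$, and your compatibility argument does not cover that regime: there you cannot invoke Lemma~\ref{toroidalblowlem}(ii), precisely because on the toric side the ordinary blowing up is \emph{not} the toroidal one. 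Without compatibility for small $n$, nothing prevents the two $X$-blowings up from agreeing even though the two $\bfA_{\oM_x}$-blowings up differ.

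The paper closes this by citing \cite[Lemma~3.4.6]{Illusie-Temkin}, which says directly that (non-normalized) blowings up along toroidal ideals are compatible with strict charts, for \emph{all} $n$. The underlying mechanism is Tor-independence: for a toroidal chart $V\to\bfA_M$ and any monoidal ideal $J\subset M$, the $\ZZ[M]$-modules $\ZZ[M]/\ZZ[J]^k$ and $\cO_V$ are Tor-independent by \cite[Theorem~6.1(ii)]{Kato-toric}, so the Rees algebras base-change correctly and $\Proj$ commutes with the fiber product. If you prefer to avoid the external citation, you can insert this Tor-independence argument; once compatibility of ordinary blowings up holds for all $n$, both inequalities follow as you outline. (The paper also handles the point you flagged---that the pulled-back saturated ideal equals $(\cI^n|_{V_i})^\sat$---by citing \cite[Proposition~4.3]{Niziol} for $\alpha(nI)\cO_X=\cI^n$ and then invoking the definition of saturation of toroidal ideals from Section~\ref{Sec:sat-toroidal}.)
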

\begin{proof}
The finiteness of the righthandside is clear since both $\oM_x$ and $\ocI_x$ are locally constant along the logarithmic strata of $(X,U)$. Also, it suffices to prove the assertion of the theorem for elements of a finite open covering of $X$, hence we can assume that $(X,U)$ possesses a global monoidal chart $\alpha\:P\to\cO_X$ such that the associated toroidal chart $h\:X\to\bfA_P$ is central and sharp. In particular, $\cI=\alpha(I)\cO_X$ for an ideal $I\subseteq P$ and hence $\cI$ is the pullback of the toric ideal $\ZZ[I]$ on the associated toroidal chart. Moreover, since the chart is central and sharp, $P=\oM_x$ for a point $x\in X$ and then clearly $I=\ocI_x$.

To deduce the theorem from Lemma~\ref{satideallem} it now suffices to check that all constructions are compatible with the charts. First, it is shown in the proof of \cite[Proposition~4.3]{Niziol} that $\alpha(nI)\cO_X=\cI^n$ (one can also deduce this from \cite[Lemma~3.4.3]{Illusie-Temkin}) and hence $\alpha((nI)^\sat)\cO_X=(\cI^n)^\sat$. Second, both toroidal blowings up and (non-normalized) blowings up along toroidal ideals are compatible with charts by \cite[Lemma~3.4.6]{Illusie-Temkin}: the toroidal blowing up of $X$ along $\cI$ is the pullback of the toroidal blowing up of $\bfA_P$ along $\ZZ[I]$, and similarly for blowings up along $(\cI^n)^\sat$ and $\ZZ[(nI)^\sat]$.
\end{proof}

\subsection{Normalization threshold}

\subsubsection{Normalization of ideals}\label{noridealsec}
Let $A$ be a reduced ring with an ideal $I$. The {\em normalization} $I^\nor$ of $I$ consists of all elements $a\in A$ that satisfy a monic equation $t^n+\sum_{i=0}^{n-1}f_{n-i}t^i$ with $f_i\in I^i$. This construction is compatible with localizations and hence generalizes to ideals $\cI\subseteq\cO_X$ on a scheme $X$.\index{normalization!of an ideal}

\begin{lemma}\label{norideallem} Let $X$ be a scheme with an ideal $\cI\subseteq\cO_X$. Then $Bl_{(\cI^n)^\nor}(X)$ refines $Bl_\cI(X)$ and the modification $Bl_{(\cI^n)^\nor}(X)\to Bl_\cI(X)$ is finite. In particular, the normalized blowings up along $\cI$ and $(\cI^n)^\nor$ are equal for any integral $n>0$.
\end{lemma}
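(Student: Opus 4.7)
My plan is to recast the statement as a Rees-algebra computation. I first reduce to the affine case $X=\Spec A$ with $A$ reduced Noetherian and $\cI$ corresponding to an ideal $I\subseteq A$; this is harmless since the formation of $I^\nor$ and of blowings up both commute with open immersions. Writing $J=I^n$, I consider the graded $A$-subalgebras
\[
R:=R(J)=\bigoplus_{k\ge 0}J^k t^k\ \subseteq\ R':=R(J^\nor)=\bigoplus_{k\ge 0}(J^\nor)^k t^k
\]
of $A[t]$; the inclusion is clear from $J\subseteq J^\nor$.

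The heart of the argument will be to show that $R'$ is \emph{module-finite} over $R$, not merely integral. For any $b\in J^\nor$, the integral equation $b^m+c_1b^{m-1}+\cdots+c_m=0$ with $c_i\in J^i$ homogenizes, upon multiplication by $t^m$, into
\[
(bt)^m+(c_1t)(bt)^{m-1}+\cdots+c_m t^m=0,
\]
an integral equation for $bt$ over $R$ whose coefficients $c_i t^i$ lie in $R$. Noetherianity of $A$ provides generators $b_1,\dots,b_r$ of $J^\nor$ as an $A$-module, and then $R'=R[b_1 t,\dots,b_r t]$ is a composition of module-finite extensions by integral elements of bounded degree, hence module-finite.

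I then pass to $\Proj$. Invariance under Veronese gives $\Proj R(J)=Bl_{I^n}(X)=Bl_\cI(X)$, while $\Proj R'=Bl_{(\cI^n)^\nor}(X)$ by definition. The inclusion $R\subseteq R'$ yields a well-defined $X$-morphism $Bl_{(\cI^n)^\nor}(X)\to Bl_\cI(X)$, globally defined because every homogeneous $a\in R'_+$ satisfies $a^N\in R_+\cdot R'$ by its integral equation over $R$. Module-finiteness of $R'$ over $R$ translates, via standard $\Proj$-formalism (localizing at homogeneous elements of $R_+$), into finiteness of the induced morphism, proving the refinement and its finiteness.

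For the concluding statement, I observe that $J$ and $J^\nor$ have the same radical (any $b\in J^\nor$ has $b^m\in J$), so both blowings up restrict to isomorphisms onto the common open subscheme $X\setminus V(\cI)$; the finite morphism $Bl_{(\cI^n)^\nor}(X)\to Bl_\cI(X)$ is therefore birational, and a finite birational morphism of reduced Noetherian schemes has a common normalization, yielding equality of the normalized blowings up. The main technical point is the module-finiteness of $R'$ over $R$; once that is in hand, all remaining assertions reduce to routine Rees-algebra, $\Proj$, and normalization manipulations.
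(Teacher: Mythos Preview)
Your proof is correct and rests on the same core observation as the paper's: an element $b\in J^{\nor}$ satisfies a monic equation with coefficients in the powers of $J$, which yields integrality (and, under finite generation, finiteness) of the relevant ring extensions. The paper first reduces to $n=1$ and then works directly with the affine charts, showing that $A[a^{-1}J^{\nor}]$ is integral and hence finite over $A[a^{-1}J]$ for each $a\in J$; your Rees-algebra formulation is the global repackaging of exactly this computation --- localizing your inclusion $R\subseteq R'$ at $at$ recovers the paper's chart extension --- and your explicit treatment of the ``In particular'' clause (same radical, finite birational, hence common normalization) spells out what the paper leaves implicit.
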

\begin{proof}
It is well known that $Bl_\cI(X)= Bl_{\cI^n}(X)$. Indeed, a product of two ideals is invertible if and only if each of the ideals is invertible, hence this follows from the universal property of blowings up. So, we can assume that $n=1$ in the sequel.

The claim is local on $X$, hence we can assume that $X=\Spec(A)$ and $\cI,\cJ=\cI^\nor$ are given by ideals $I,J=I^\nor\subseteq A$. Note that for any $a\in I$ the  ring extension $A[a^{-1}J]/A[a^{-1}I]$ is integral and hence finite. So, for any chart $\Spec(A[a^{-1}I])$ of $ Bl_\cI(X)$, we have a finite morphism $\Spec(A[a^{-1}J])\to\Spec(A[a^{-1}I])$ whose source is a chart of $ Bl_\cJ(X)$. Therefore, $Y=\cup_{a\in I}\Spec(A[a^{-1}J])$ is an open subscheme of $ Bl_\cJ(X)$ which is finite over $ Bl_\cI(X)$. It follows that $Y= Bl_\cJ(X)$ and we obtain a finite morphism $h: Bl_\cJ(X)\to Bl_\cI(X)$, thus proving the lemma.
\end{proof}

\subsubsection{Normalization threshold}\label{northreshsec}
Let $\cI$ be an ideal on $X$. We define the {\em normalization threshold} of $\cI$ as the minimal number $n^\nor(X,\cI)$ such that $Bl_{\cI}(X)^\nor\toisom Bl_{{(\cI^n)}^\nor}(X)$ for any $n\ge n^\nor(X,\cI)$. We write $n^\nor(X,\cI)=\infty$ if no such number exists. For example, this is the case when $Bl_I(X)^\nor$ is not of finite type over $X$.\index{normalization!threshold}

\subsubsection{Normalization of toroidal ideals}
Our next goal is to show that for toroidal ideals normalization and saturation thresholds coincide. This is based on the fact that normalization of toroidal ideals is already achieved by saturation:

\begin{lemma}
Assume that $P$ is a toric monoid and $I\subset P$ is an ideal. Then $\ZZ[I^\sat]=(\ZZ[I])^\nor$.
\end{lemma}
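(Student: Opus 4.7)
The plan establishes the two inclusions separately. For $\ZZ[I^\sat] \subseteq (\ZZ[I])^\nor$: if $a \in I^\sat$, then $na = i_1 + \cdots + i_n$ for some $n \geq 1$ with each $i_j \in I$, and $(x^a)^n = x^{i_1}\cdots x^{i_n}$ lies in $\ZZ[I]^n$. So $x^a$ is a root of the monic polynomial $t^n - (x^a)^n$ whose constant coefficient lies in $\ZZ[I]^n$, placing $x^a$ in $(\ZZ[I])^\nor$ by the definition recalled in \ref{noridealsec}. Since $(\ZZ[I])^\nor$ is an ideal of $\ZZ[P]$, taking $\ZZ$-linear combinations yields the inclusion.

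For the reverse inclusion, the first step is to observe that $(\ZZ[I])^\nor$ is a monomial ideal of $\ZZ[P]$ with respect to the natural $P^\gp$-grading: the torus $\bfD_{P^\gp}$ acts on $\bfA_P$ preserving the monomial ideal $\ZZ[I]$, and by functoriality of integral closure it also preserves $(\ZZ[I])^\nor$. Granting this, write $(\ZZ[I])^\nor = \ZZ[J]$ for some ideal $J \subseteq P$, and the task reduces to showing $J \subseteq I^\sat$. Given $p \in J$, the monomial $x^p$ satisfies an integrality relation
$$x^{np} + f_1\, x^{(n-1)p} + \cdots + f_n = 0 \qquad \text{with } f_i \in \ZZ[I]^i.$$
Since $\ZZ[I]^i$ is the $\ZZ$-span of $\{x^s : s \in iI\}$, where $iI$ denotes the $i$-fold sum $I + \cdots + I$ inside $P$, the $P^\gp$-degree-$ip$ component of $f_i$ is either zero (if $ip \notin iI$) or of the form $c_i x^{ip}$ for some $c_i \in \ZZ$. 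Extracting the degree-$np$ component of the integrality relation then produces
$$\Bigl(1 + \sum_{i=1}^n c_i\Bigr)\, x^{np} = 0 \quad \text{in } \ZZ[P],$$
forcing $\sum_i c_i = -1$. Consequently some $c_i$ is nonzero, which requires $ip \in iI$ for the corresponding index, and this is exactly the statement $p \in I^\sat$.

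The main technical point is the monomial-ideal property of $(\ZZ[I])^\nor$; it is standard, but should be verified explicitly, either via the torus-equivariance argument above or by homogenizing integrality relations in the $P^\gp$-graded ring $\ZZ[P]$. The torsion-freeness of $P^\gp$ (implicit in $P$ being toric) is used tacitly so that $\ZZ[P]$ is a domain and $x^{np}$ is a nonzero monomial, allowing one to pass from $(1 + \sum c_i)\, x^{np} = 0$ to $1 + \sum c_i = 0$ in $\ZZ$. The rest of the argument is the brief homogeneous-component computation displayed above.
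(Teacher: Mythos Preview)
Your proof is correct, but it follows a different path from the paper's argument.

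The paper proceeds via the Rees algebra. It first shows that if $I$ is saturated then $\ZZ[I]$ is already a normal ideal: the Rees algebra $\oplus_{n\ge 0}\ZZ[I]^n$ equals $\ZZ[P']$ for the submonoid $P'=\oplus_{n\ge 0}nI\subseteq P\oplus\NN$, and $P'$ is saturated precisely because $P$ and $I$ are, so $\ZZ[P']$ is normal and hence $\ZZ[I]$ is normal. For arbitrary $I$ one then uses the chain $\ZZ[I]\subseteq\ZZ[I^\sat]\subseteq(\ZZ[I])^\nor$ together with the normality of the middle term (applied with $I^\sat$ in place of $I$) to force the right inclusion to be an equality.

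Your argument is instead a direct grading computation: the easy inclusion comes from the monic relation $t^n-x^{na}$, and the hard inclusion is obtained by first noting that $(\ZZ[I])^\nor$ is $P^\gp$-graded (integral closure of a homogeneous ideal in a $\ZZ^d$-graded ring is homogeneous) and then extracting the degree-$np$ component of an integrality relation for a monomial $x^p$. This is more elementary in that it avoids the Rees algebra and the implication ``Rees algebra normal $\Rightarrow$ ideal normal,'' trading these for the standard gradedness fact you flag. The paper's route, on the other hand, is more structural: the entire combinatorics is absorbed into the single observation that $P'$ is a toric monoid, and one never needs to manipulate an explicit integrality equation. Both approaches are short; yours is closer to the standard textbook treatment of monomial-ideal integral closure, while the paper's fits more naturally with the monoidal viewpoint used throughout the article.
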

\begin{proof}
First, assume that $I$ is saturated and let us show that $J=\ZZ[I]$ is a normal ideal of $A=\ZZ[P]$. Consider the Rees algebra $A'=\oplus_{n=0}^\infty J^n$; it can be realized as the subalgebra $A[Jt]$ in $A[t]$. Clearly, if $J$ is not normal then $A'$ is not normal, hence it suffices to check that $A'$ is normal. Note that $A'=\ZZ[P']$, where $P'=\oplus_{n=0}^\infty nI$ is the submonoid of $P\oplus\NN$ consisting of the elements of the form $(a_n,n)$ with $a_n\in nI$. Also, it is clear that $P'$ is saturated if and only if $P$ and $I$ are saturated, which is our case. Thus, $P'$ is a toric monoid and hence $A'=\ZZ[P']$ is normal, as claimed.

Let now $I$ be an arbitrary ideal of $P$. By the above case, $\ZZ[I^\sat]$ is normal. Since, clearly $\ZZ[I]\subseteq\ZZ[I^\sat]\subseteq\ZZ[I]^\nor$, the right inclusion is necessarily an equality.
\end{proof}

\begin{corollary}\label{norsatcor}
Let $(X,U)$ be a toroidal scheme with a toroidal ideal $\cI$. Then $\cI^\sat=\cI^\nor$ and $n^\nor(X,\cI)=n^\sat(X,U,\cI)$, in particular, $n^\nor(X,\cI)$ is finite.
\end{corollary}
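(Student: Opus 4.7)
The equality $\cI^\sat=\cI^\nor$ is local on $X$, so after shrinking we may assume that $(X,U)$ admits a sharp central chart $\alpha\:P\to\cO_X$ with $P$ toric and $\cI=\alpha(I)\cO_X$ for some ideal $I\subseteq P$ (by \cite[Lemma~3.4.3]{Illusie-Temkin}, so that $\cI^\sat=\alpha(I^\sat)\cO_X$ per Section \ref{Sec:sat-toroidal}). The easy inclusion $\cI^\sat\subseteq\cI^\nor$ comes directly from the argument used in the preceding Lemma: if $a\in I^\sat$ with $na=c_1+\dots+c_n$ and $c_i\in I$, then $\alpha(a)^n=\alpha(c_1)\cdots\alpha(c_n)\in\cI^n$, so $\alpha(a)$ satisfies the monic equation $t^n-\alpha(a)^n=0$ over $\cI$.

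For the reverse inclusion $\cI^\nor\subseteq\cI^\sat$, the plan is to reduce to the toric case settled by the preceding Lemma. By that Lemma, $\ZZ[I^\sat]$ is integrally closed as an ideal of $\ZZ[P]$. I would then show this integral closedness propagates to $\cI^\sat=\ZZ[I^\sat]\cO_X$ using the local structure of toroidal schemes over their charts. Concretely, passing to completions and invoking Kato's structure theorem (Lemma~\ref{Katolocal}), the completed stalk $\hatcO_{X,x}$ is presented as a quotient of $C_x\llbracket M\rrbracket\llbracket t_1,\dots,t_r\rrbracket$ by a single element $\theta$ of degree zero, and this presentation makes $\hatcO_{X,x}$ flat (in fact formally smooth modulo one regular element) over $\hatR=C_y\llbracket M\rrbracket$. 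Integral closure of an ideal is compatible with such flat base change, so the identity $\ZZ[I]^\nor=\ZZ[I^\sat]$ propagates to $\cI^\nor=\cI^\sat$ on completions, and faithful flatness of completion gives the claim globally.

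Granted $\cI^\sat=\cI^\nor$, the equality of thresholds follows quickly. Applying the established identity to the toroidal ideal $\cI^n$, we get $(\cI^n)^\sat=(\cI^n)^\nor$ for every $n\ge 1$. For $n\ge n^\sat(X,U,\cI)$, Theorem~\ref{satidealth} says $Bl_{(\cI^n)^\sat}(X)$ is the toroidal blowing up of $\cI$; this coincides with $Bl_\cI(X)^\nor$ since the toroidal blowing up is normal (Lemma~\ref{toroidalblowlem}(i) combined with normality of toroidal schemes). Hence $Bl_{(\cI^n)^\nor}(X)=Bl_\cI(X)^\nor$, showing $n^\nor(X,\cI)\le n^\sat(X,U,\cI)$. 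Conversely, for $n\ge n^\nor(X,\cI)$ the equality $Bl_{(\cI^n)^\nor}(X)=Bl_\cI(X)^\nor$ together with $(\cI^n)^\sat=(\cI^n)^\nor$ shows that $Bl_{(\cI^n)^\sat}(X)$ is normal and refines $Bl_\cI(X)$, hence equals the toroidal blowing up, giving $n^\sat(X,U,\cI)\le n^\nor(X,\cI)$. Finiteness of $n^\nor(X,\cI)$ is then inherited from Theorem~\ref{satidealth}.

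The main obstacle I anticipate is the propagation step from the toric identity to $\cO_X$: confirming that ideal-theoretic integral closure is preserved by the chart, for which the cleanest route is via Kato's formal-local presentation (Lemma~\ref{Katolocal}) to reduce to the case of a polynomial/formal-power-series extension of $\ZZ[P]$, followed by a quotient by a degree-zero regular element, where compatibility with normalization of monomial ideals can be verified directly.
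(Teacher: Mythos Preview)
Your reduction to a sharp central chart, the easy inclusion $\cI^\sat\subseteq\cI^\nor$, and your deduction of the threshold equality from $\cI^\sat=\cI^\nor$ are all correct and match the paper's argument essentially verbatim (the paper also notes $Bl_\cI(X)^\nor=Bl_\cI(X)^\sat$ and $(\cI^n)^\sat=(\cI^n)^\nor$, then concludes directly from the definitions of the two thresholds).

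Where you diverge from the paper is the hard inclusion $\cI^\nor\subseteq\cI^\sat$. You propose to push the toric identity $\ZZ[I]^\nor=\ZZ[I^\sat]$ forward along the chart $\ZZ[P]\to\cO_{X,x}$ (or its completion) using that integral closure of ideals is preserved by suitable flat base change. This can be made to work, but it is not free: preservation of integral closure under flat extension requires the homomorphism to be \emph{normal} (flat with geometrically normal fibers), see e.g.\ Huneke--Swanson. You would need to check that the chart morphism $\Spec\cO_{X,x}\to\bfA_P$ is normal, which amounts to analyzing the fibers of $C_x\llbracket M\rrbracket\llbracket t\rrbracket/(\theta)$ over primes of $\ZZ[M]$; the quotient by $\theta$ makes this not entirely obvious, and your parenthetical ``formally smooth modulo one regular element'' does not by itself give normal fibers over $\ZZ[M]$. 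So the step you flag as the main obstacle really is the crux, and your sketch does not yet close it.

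The paper bypasses this issue with a different device: to show that $J$ saturated implies $I=\alpha(J)A$ normal, it proves directly that the Rees algebra $A'=\bigoplus_{n\ge 0}I^n$ is normal by exhibiting $\Spec(A')$ as a \emph{toroidal scheme}. Concretely, with $M'=\bigoplus_{n\ge 0}nJ\subset M\oplus\NN$, one has $M'$ toric and saturated (since $J$ is), and the Tor-independence of \cite[Theorem~6.1(ii)]{Kato-toric} gives $A\otimes_{\ZZ[M]}\ZZ[nJ]=I^n$, hence $A'=A\otimes_{\ZZ[M]}\ZZ[M']$; so the induced map $\Spec(A')\to\bfA_{M'}$ is the base change of the original chart, the centers match, and a dimension count shows $\Spec(A')$ is log regular, hence normal. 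This argument uses only toroidal machinery already in the paper (log regularity, Tor-independence of monoid quotients) and avoids any general statement about integral closure under base change. Your approach, if completed, would give an alternative proof relying on commutative-algebra facts about normal ring maps; the paper's Rees-algebra route is more self-contained within the logarithmic framework.
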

\begin{proof}
We prove that $\cI^\sat=\cI^\nor$ first. This claim is local, so we can assume that $X=\Spec(A)$ and work with the ideal $I\subseteq A$ corresponding to $\cI$. We can also assume that $(X,U)$ admits a sharp central toroidal chart $f\:X\to\Spec\ZZ[M]$ corresponding to a homomorphism $\alpha\:M\to A$ and $I=\alpha(J)A$ for an ideal $J\subseteq M$. We should prove that the inclusion $I^\sat=\alpha(J^\sat)A\subseteq I^\nor$ is an equality, and it suffices to show that if $J$ is saturated then $I$ is normal. As in the above lemma, it suffices to show that the Rees algebra $A'=\oplus_{n=0}^\infty I^n$ is normal. We will do this by showing that the logarithmic structure on $X'=\Spec(A')$ given by the natural homomorphism $\alpha'\:M'\to A'$ extending $\alpha$ makes $X'$ a toroidal scheme, i.e. the morphism $f'\:X'\to\Spec\ZZ[M']$ is a toroidal chart.

Let $d=\rk(M)$. Since $M'=\oplus_{n=0}^\infty nJ$ is saturated and $\rk(M')=d+1$, it suffices to show that the center $C'$ of $X'$ is regular and satisfies $\dim\cO_{X',x}=d+1+\dim\cO_{C',x}$ for any $x\in C'$. First, we claim that $f'$ is the base change of $f$. Indeed, $A$ and $\ZZ[M/nJ]$ are Tor-independent over $\ZZ[M]$ by \cite[Theorem~6.1(ii)]{Kato-toric}, and it follows easily that $A\otimes_{\ZZ[M]}\ZZ[nJ]=I^n$ and hence $A\otimes_{\ZZ[M]}\ZZ[M']=A'$ (e.g., see the proof of \cite[Proposition~4.3]{Niziol}). Since, $C'$ and $C$ are the preimages of the origins of $\Spec\ZZ[M']$ and $\Spec\ZZ[M]$, respectively, $C'\toisom C$. So, $C$ is regular and $\dim\cO_{X,x}=d+\dim\cO_{C,x}$, and it remains to show that $\dim\cO_{X',x}=\dim\cO_{X,x}+1$. Since $V(I)$ contains no generic points of $X$, the latter follows from the standard theory of Rees algebras.

Note that $Bl_\cI(X)^\nor=Bl_\cI(X)^\sat$, and by the first claim we have that $(\cI^n)^\sat=(\cI^n)^\nor$ for any $n$. So, by the definitions of the thresholds, $n^\nor(X,\cI)=n^\sat(X,U,\cI)$.
\end{proof}

\subsubsection{Normalization threshold of torific ideals}\label{finnorsec}
Now, we will study thresholds for torific ideals. The main tool is to reduce everything to the case of toroidal ideals by enlarging the toroidal structure.


\begin{theorem}\label{functortorific}
Assume that toroidal schemes $X$ and $Y$ are provided with simple and relatively affine actions of $G=\bfD_L$, and $f\:Y\to X$ is a strict strongly equivariant morphism. Consider the projections $\pi_X\:X\to\tilX=X\sslash G$ and $\pi_Y\:Y\to\tilY=Y\sslash G$ and the quotient morphism $\tilf=f\sslash G\:\tilY\to\tilX$. Let $S$ be a locally finite coherent multiset in $L_X$ and set $I=I^X_S$, $\tI=(\pi_X)_* I\cap\cO_\tilX$, $J=I_{f^*(S)}^Y$ and $\tJ=(\pi_Y)_* J\cap\cO_\tilY$. Then

(i) $n^\nor(X,I)$ and $n^\nor(\tilX,\tI)$ are finite. In particular, the torific blowing up $b_{X,S}$ is a projective morphism.

(ii) $n^\nor(Y,J)\le n^\nor(X,I)$ and $n^\nor(\tilY,\tJ)\le n^\nor(\tilX,\tI)$, and the equalities hold whenever $f$ is surjective.

(iii) $f^*((I^n)^\nor)=(J^n)^\nor$ and $Bl_{(J^n)^\nor}(Y)=Bl_{(I^n)^\nor}(X)\times_XY$ for any $n>0$.

(iv) $\tilf^*((\tI^n)^\nor)=(\tJ^n)^\nor$ and $Bl_{(\tJ^n)^\nor}(\tilY)=Bl_{(\tI^n)^\nor}(\tilX)\times_\tilX\tilY$ for any $n>0$.
\end{theorem}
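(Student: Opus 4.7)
The plan is to reduce every assertion to the toroidal case handled by Corollary~\ref{norsatcor}. The crucial observation is that although the torific ideal $I=I^X_S$ need not be toroidal with respect to $D$, it becomes toroidal once the toroidal divisor is suitably enlarged. Concretely, Proposition~\ref{pretoroidalprop} provides, on a strongly equivariant neighborhood of any point $x\in X$, an equivariant divisor $\oD\supseteq D$ such that $(X,\oD)$ is a toroidal scheme on which the $G$-action is toroidal. On such a neighborhood Lemma~\ref{torificblowlem}(i) shows that $I^X_S$ is a toroidal ideal on $(X,\oD)$, whence by Corollary~\ref{norsatcor} the normalization $(I^n)^\nor$ coincides with the saturation and the local normalization threshold is finite.

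For part (i), since $X$ is noetherian it is covered by finitely many such neighborhoods, and taking the maximum of the local saturation thresholds yields finiteness of $n^\nor(X,I)$. For $n^\nor(\tilX,\tI)$ I would pass to the quotient: by Theorem~\ref{toroidalth}(i) the enlargement $(X,\oD)$ admits a toroidal quotient, and by Theorem~\ref{toroidalth}(iv) the ideal $\tI$ is then the toroidal ideal on this quotient corresponding to the trivially graded part of the monoidal ideal cut out by $I$; Corollary~\ref{norsatcor} again gives finiteness. Projectivity of $b_{X,S}$ then follows from Lemma~\ref{norideallem}: for $n\ge n^\nor(X,I)$ one has $b_{X,S}=Bl_{(I^n)^\nor}(X)$, and $(I^n)^\nor$ is a finitely generated ideal on the noetherian scheme $X$.

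For parts (iii) and (iv) the key point is that normalization of toroidal ideals coincides with saturation of the underlying monoidal ideal (Corollary~\ref{norsatcor}), which is a purely combinatorial operation on the monoids $\oM_x$ and is therefore preserved under strict morphisms. More precisely, I would work locally, enlarge $D$ to $\oD$ on $X$, and take $\oE=f^{-1}(\oD)$ on $Y$; the morphism $f$ is fixed-point reflecting (since it is strongly equivariant) and strict, so by Lemma~\ref{toroidalactionlem}(ii) the action on $(Y,\oE)$ is toroidal. Since $J=I\cO_Y$ by Lemma~\ref{Lem:functorideal}, the identity $f^*((I^n)^\nor)=(J^n)^\nor$ then reduces to the matching of monoidal saturations, and the equality of blowings up follows by the universal property and standard base change. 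Part (iv) is entirely analogous on the quotient side: $\tilf$ is strict by Theorem~\ref{toroidalth}(iii), and $\tI$, $\tJ$ are compatible toroidal ideals by Theorem~\ref{toroidalth}(iv), so the same combinatorial argument applies.

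Finally, part (ii) follows from (iii) and (iv) together with the definition of the normalization threshold. The forward inequality $n^\nor(Y,J)\le n^\nor(X,I)$ is immediate from the base change identities in (iii): if $n\ge n^\nor(X,I)$ then $Bl_I(X)^\nor=Bl_{(I^n)^\nor}(X)$, and base change along $f$ yields $Bl_J(Y)^\nor=Bl_{(J^n)^\nor}(Y)$. When $f$ is surjective the reverse inequality follows by faithful descent, using that the canonical finite morphism $Bl_{(I^n)^\nor}(X)\to Bl_I(X)^\nor$ of Lemma~\ref{norideallem} becomes an isomorphism after pulling back to $Y$. The main technical obstacle is globalizing the non-canonical local enlargements $D\rightsquigarrow\oD$ of the toroidal divisor; this is viable because $n^\nor$, the ideal $(I^n)^\nor$, and saturation of toroidal ideals are all local notions that are invariant under enlarging $D$.
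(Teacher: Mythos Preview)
Your overall strategy coincides with the paper's: reduce to the local (in fact $G$-local) case, enlarge the toroidal divisor via Proposition~\ref{pretoroidalprop} so that the action becomes toroidal, observe that the torific ideal is then toroidal by Lemma~\ref{torificblowlem}(i), and apply Corollary~\ref{norsatcor} to replace normalization by saturation. Parts (i), (iii), (iv) and the forward inequalities in (ii) go through essentially as you describe; the paper organizes this as ``Step 1: local action'' followed by ``Step 2: globalize via $G$-localizations,'' and for finiteness in (i) it appeals to Proposition~\ref{combdataprop}(i) (only finitely many combinatorial triples occur) rather than to quasi-compactness, but your argument via a finite cover is also valid.

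There is, however, a genuine gap in your argument for the \emph{reverse} inequality in (ii). You write that when $f$ is surjective ``the reverse inequality follows by faithful descent, using that the canonical finite morphism $Bl_{(I^n)^\nor}(X)\to Bl_I(X)^\nor$ \ldots\ becomes an isomorphism after pulling back to $Y$.'' Two problems: first, Lemma~\ref{norideallem} gives a finite morphism $Bl_{(I^n)^\nor}(X)\to Bl_I(X)$, not to $Bl_I(X)^\nor$; the morphism that always exists is in the opposite direction, $Bl_I(X)^\nor\to Bl_{(I^n)^\nor}(X)$, since on the normal scheme $Bl_I(X)^\nor$ one has $(I^n)^\nor\cO=I^n\cO$, which is invertible. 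Second, and more seriously, a strict strongly equivariant surjection $f\:Y\to X$ need not be flat, so ``becomes an isomorphism after pulling back along $f$'' does not descend to an isomorphism over $X$. Surjectivity alone is not a descent datum.

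The paper avoids this by using the explicit pointwise formula of Theorem~\ref{satidealth}: after the enlargement one has $n^\nor(X,I)=n^\sat(X,\oD,I)=\max_{x\in X}n^\sat(\oM_x,\ocI_x)$, and likewise for $Y$. Since $f$ is strict, $\oM_{Y,y}=\oM_{X,f(y)}$ and $\ocJ_y=\ocI_{f(y)}$, so the pointwise thresholds match along $f$; surjectivity then forces the two maxima to coincide. The same argument works on the quotient side because $\tilf$ is strict by Theorem~\ref{toroidalth}(iii) and $\tJ,\tI$ are toroidal with matching monoidal data by Theorem~\ref{toroidalth}(iv). You already have all the ingredients for this; just replace the descent step by this direct computation.
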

\begin{proof}
Step 1. {\it The theorem holds when the action on $X$ is local.} All assertions do not involve the toroidal structure, so we will enlarge it on $X$ and $Y$ compatibly. On $X$ we can enlarge the toroidal structure using Proposition~\ref{pretoroidalprop}, thus making the action  on $X$ toroidal. At the same time,  if $D'$ is the new toroidal divisor then we also enlarge the toroidal structure of $Y$ to $E'=f^{-1}(D')$ so that $f$ remains a strict morphism. As we showed in the proof of Theorem~\ref{combchart}, we can choose $D'$ so that $\oM'_O=\oM_O\oplus\NN^{\sigma_O}$. By Lemma~\ref{toroidalactionlem}, the action on $(Y,E')$ is toroidal too. So, by Theorem~\ref{toroidalth} the quotients $\tilX$ and $\tilY$ acquire a structure of toroidal schemes, $\tilf$ becomes a strict morphism of toroidal schemes, and $\oM_{\tilX,\tilx}=(\oM'_O)_0$.

Since the actions with respect to the new structures are toroidal, the torific ideal $I$ is toroidal, and the invariant part $\tI$ of $I$ is toroidal by Theorem~\ref{toroidalth}(iv). Thus, $(I^n)^\nor=(I^n)^\sat$ and $(\tI^n)^\nor=(\tI^n)^\sat$ by Corollary~\ref{norsatcor}. Recall that $f^*(I)=J$ by Lemma~\ref{Lem:functorideal}, hence $J$ and also $\tJ$ are toroidal and $(J^n)^\nor=(J^n)^\sat$, $(\tJ^n)^\nor=(\tJ^n)^\sat$. Saturation is compatible with strict morphisms by its very definition (see Section \ref{Sec:sat-toroidal}). By Corollary \ref{norsatcor}, $f^*((I^n)^\nor)=(J^n)^\nor$ and $\tilf^*((\tI^n)^\nor)=(\tJ^n)^\nor$. The second assertions in (iii) and (iv) follow from Lemma \ref{toroidalblowlem}: blowings up of toroidal ideals are compatible with strict morphism. By Corollary~\ref{norsatcor}, we can replace normalization thresholds with saturation thresholds in parts (i) and (ii), and then the assertion follows from Theorem~\ref{satidealth}: in the notation of Section \ref{Sec:sat-monoidal}, if $O$ is the closed orbit and $\tilx$ its image in $\tilX$ then $n^\nor(X,I)=n^\sat(\oM'_O,\oI_O)$ and $n^\nor(\tilX,\tI)=n^\sat(\oM_{\tilX,\tilx},\tI_\tilx)$.

Step 2. {\it The general case.} Recall that $X$ is covered by its $G$-localizations $X_O$ at special orbits $O$ and torific ideals are compatible with such localizations (e.g. by Lemma~\ref{Lem:functorideal}). Since the theorem was proved in step 1 for the toroidal schemes $X_O$ acted on by $G$, we obtain that parts (ii), (iii) and (iv) hold for $X$. Concerning (i), we obtain equalities $$n^\nor(X,I)=\max_{O}\ n^\sat(\oM_O\oplus\NN^{\sigma_O},\oI_O),\ \ n^\nor(\tilX,\tI)=\max_{\tilx\in\tilX}\ n^\sat(\oM_\tilx,\tI_\tilx),$$ where the first maximum is over all special orbits (in fact, it suffices to consider closed orbits of $X$ and closed points of $\tilX$).

To prove that the maxima are finite we note that by Proposition~\ref{combdataprop}(i) and the local finiteness of $S$, there are finitely many different triples $(L_O,S_O,\oM_O\oplus\NN^{\sigma_O})$. So, it suffices to show that such a triple determines $n^\sat(\oM_O\oplus\NN^{\sigma_O},\oI_O)$ and $n^\sat(\oM_\tilx,\tI_\tilx)$. To check the latter set $P=\oM_O\oplus\NN^{\sigma_O}$. Then it is easy to see that $\oI_O=\sum_{l\in S_O}(P_l+P)$, and so $\tI_\tilx=(\oI_O)_0$ by Theorem~\ref{toroidalth}(iv).
\end{proof}

\subsection{Application to torification}
As an application, we can achieve torification by usual blowings up rather than normalized blowings up. In particular, it is always of finite type. Moreover, we will show that one can choose the ideal functorially, though this time only functorially with respect to surjective morphisms.

\subsubsection{Balanced torification}\label{torifyingsec}
If $G$ acts on a toroidal scheme $X$ we define the {\em torifying blowing up} $f_X\:X'\to X$ to be the blowing up along the ideal $I_X=(I^n)^\nor$, where $I=I_{\sigma^0}^X$ and $n=n^\nor(X,I)$. Similarly, set $\tI=(\pi_X)_*I\cap\cO_\tilX$ and $\tn=n^\nor(\tilX,\tI)$, where $\pi_X\:X\to\tilX=X\sslash G$ is the projection, and define the {\em quotient torifying blowing up} $\tilf_X\:\tilX'\to\tilX$ to be the blowing up along $\tI_X=(\tI^\tn)^\nor$. Here we use that $n$ and $\tn$ are finite by Theorem ~\ref{functortorific}.

\begin{theorem}\label{Maintheorem}
Assume that a toroidal scheme $(X,D)$ is provided with a relatively affine,  $G$-simple action of a diagonalizable group $G=\bfD_L$. Consider the torifying blowing up $f_{(X,D)}\:X'\to X$ and let $D'$ be the union of the preimage of $D$ and the exceptional divisor of $f_{(X,D)}$. Then,

(i) The pair $(X',D')$ is toroidal and the natural $G$-action on $(X',D')$ is toroidal.

(ii) The quotient torifying blowing up $\tilf_{(X,D)}$ is the quotient $X'\sslash G\to\tilX=X\sslash G$ of $f_{(X,D)}$.

(iii) The blowings up $f_{(X,D)}$ and $\tilf_{(X,D)}$ are functorial with respect to surjective strongly equivariant strict morphisms $h\:(Y,E)\to(X,D)$ of toroidal schemes: $h^*(I_X)=I_Y$, $f_{(Y,E)}=f_{(X,D)}\times_XY$, $\tilh^*(\tI_X)=\tI_Y$, and $\tilf_{(Y,E)}=\tilf_{(X,D)}\times_{\tilX}\tilY$, where $\tilh=h\sslash G$.

(iv) If the action of $G$ on $(X,D)$ is full, then $f_{(X,D)}$ and $\tilf_{(X,D)}$ are birational.

(v) If $V\subseteq X$ is a strongly equivariant open subset such that the action on $(V,D|_V)$ is toroidal then $I_X$ restricts to the unit ideal on $V$ and $V\times_XX'=V$.
\end{theorem}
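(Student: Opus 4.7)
The plan is to reduce everything to Theorem~\ref{maintheorem} on the (normalized) torific blowing up $F^0_{(X,D)}$, using the threshold machinery of Section~\ref{Sec:Torification-bu}, most crucially Theorem~\ref{functortorific}.

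First I would observe that $f_{(X,D)}$ and $\tilf_{(X,D)}$ coincide with $F^0_{(X,D)}$ and its quotient. Indeed, by the very definition of the threshold $n=n^{\nor}(X,I^X_{\sigma^0})$, the blowing up $Bl_{I_X}(X)=Bl_{((I^X_{\sigma^0})^n)^{\nor}}(X)$ agrees with $Bl_{I^X_{\sigma^0}}(X)^{\nor}$, and since the action is $G$-simple, Corollary~\ref{torificcor}(i) identifies the latter with $F^0_{(X,D)}$. Parts~(i) and~(iv) of the theorem then follow at once from Theorem~\ref{maintheorem}(i),(iv). For part~(ii), Theorem~\ref{maintheorem}(ii) presents $X'\sslash G\to\tilX$ as the normalized blowing up of $\tI=(\pi_X)_*(I^X_{\sigma^0})\cap\cO_{\tilX}$, which equals $Bl_{(\tI^{\tn})^{\nor}}(\tilX)=\tilf_{(X,D)}$ by the definition of $\tn=n^{\nor}(\tilX,\tI)$.

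For part~(iii), Lemma~\ref{Sigmalem}(ii) gives $h^*(\sigma^0_X)=\sigma^0_Y$ for a strict strongly equivariant $h$. Surjectivity of $h$ then lets Theorem~\ref{functortorific}(ii) supply the matching of thresholds $n^{\nor}(Y,I^Y_{\sigma^0})=n^{\nor}(X,I^X_{\sigma^0})$ and similarly on the quotient side, so that the integers appearing in the definitions of $I_Y,\tI_Y$ coincide with those used for $I_X,\tI_X$. Parts~(iii) and~(iv) of Theorem~\ref{functortorific} then deliver $h^*(I_X)=I_Y$, $\tilh^*(\tI_X)=\tI_Y$, together with the base-change statements $f_{(Y,E)}=f_{(X,D)}\times_X Y$ and $\tilf_{(Y,E)}=\tilf_{(X,D)}\times_{\tilX}\tilY$.

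For part~(v), the hypothesis that the action on $(V,D|_V)$ is toroidal means that at every $x\in V$ the stabilizer $G_x$ acts trivially on the logarithmic stratum through $x$, so the reduced signature $\sigma_x$ (a multiset of \emph{non-trivial} characters) is empty, and consequently $\sigma^0_x$ is empty as well. The open immersion $V\hookrightarrow X$ is strongly equivariant, so by Lemma~\ref{Lem:functorideal} the restriction $I^X_{\sigma^0}|_V$ is the empty product, i.e.\ $\cO_V$; taking powers and normalization preserves the unit ideal, whence $I_X|_V=\cO_V$ and blowing up is trivial, so $V\times_X X'=V$. All the conceptual effort sits in Theorems~\ref{maintheorem} and~\ref{functortorific}; no step below is a serious obstacle, the only item needing a moment's care being the identification in the first paragraph of the $G$-normalized blowing up with the ordinary normalized blowing up in our toroidal, $G$-simple setting.
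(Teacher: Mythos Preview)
Your proposal is correct and follows essentially the same route as the paper: identify $f_{(X,D)}$ with $F^0_{(X,D)}$ via the threshold definition (and Corollary~\ref{torificcor}(i)), deduce (i), (ii), (iv) from Theorem~\ref{maintheorem}, obtain (iii) from Lemma~\ref{Sigmalem}(ii) together with Theorem~\ref{functortorific}, and for (v) observe that toroidality forces $\sigma^0$ to be empty so the torific ideal is the unit ideal. Your treatment is in fact slightly more explicit than the paper's in naming which parts of Theorem~\ref{functortorific} supply the threshold matching and base-change, and in handling (v) directly via Lemmas~\ref{Lem:functorideal} and~\ref{Sigmalem}(ii) rather than loosely invoking ``the functoriality of (iii)''.
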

\begin{proof}
As a morphism, $f_{(X,D)}$ is the torific blowing up $F^0_{(X,D)}$ from Theorem~\ref{maintheorem}, hence (i) follows from Theorem~\ref{maintheorem}(i). In the same fashion, (ii) and (iv) follow from the analogous parts of Theorem~\ref{maintheorem}. Recall that $h^*(I_{\sigma^0}^X)=I_{\sigma^0}^Y$ by Lemmas~\ref{Sigmalem}(ii) and \ref{Lem:functorideal}, hence compatibility of $I_X$ and $\tilf_{(X,D)}$ with $h$ follows from Theorem~\ref{functortorific}. Finally, using the functoriality of (iii) we can replace $V$ with $X$ in (v) so that the action on $X$ is toroidal. Then the actions on the logarithmic strata have locally constant stabilizers and hence $\sigma^0_X=\emptyset$. In particular, $I_{\sigma^0}^X=\cO_X$ and we obtain that $I_X=\cO_X$.
\end{proof}

\begin{remark}
Unlike Theorem \ref{maintheorem}(ii), it is not true in general that $\tI_X=(\pi_X)_*I_X\cap\cO_\tilX$. Thus we functorially realize both the toryfiying morphism $f_{(X,D)}$ and its quotient $\tilf_{(X,D)}$ as blowings up of ideals, but the relation between these ideals is not so tight anymore.
\end{remark}

\subsubsection{General torification}
In the same way one can upgrade the general torification from Theorem~\ref{maintheorem2} to torification by blowings up, and we leave the details to the interested reader.

\begin{theorem}\label{Th:general-normal}
The torification sequence $F_{(X,D)}$ from Theorem~\ref{maintheorem2} can be naturally realized as a sequence of blowings up along ideals $I_i\subseteq\cO_{X_i}$. In particular, the torification morphism is projective. Moreover, the choice of $I_i$ can be made functorial with respect to surjective strongly equivariant strict toroidal morphisms.
\end{theorem}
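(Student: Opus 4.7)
The plan is to combine Theorem~\ref{Maintheorem} with an analogous treatment of the barycentric modification, so that each of the two constituents of $F_{(X,D)}$ from Theorem~\ref{maintheorem2} is individually realized as the blowing up of an ideal functorial under surjective strongly equivariant strict toroidal morphisms, and then to compose them.

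First, I would handle the barycentric modification $b\:(Y,E)\to(X,D)$. As noted in Remark~(i) following the construction of the barycentric subdivision, this modification is itself a single toroidal blowing up along a toroidal ideal $J_0\subseteq\cO_X$, which is the pullback under the Kato retraction $c\:X\to F(X,U)$ of a monomial ideal on $F(X,U)$ defined combinatorially by the order function cutting out the barycentric subdivision. Being canonically defined from the Kato fan, $J_0$ is compatible with any strict morphism of toroidal schemes (since strict morphisms are compatible with Kato fans), and in particular with surjective strongly equivariant strict toroidal morphisms. Since $J_0$ is toroidal, by Corollary~\ref{norsatcor} its saturation threshold is finite, and so the toroidal (i.e. normalized) blowing up along $J_0$ equals the ordinary blowing up along $I_0=(J_0^{n_0})^{\rm sat}=(J_0^{n_0})^{\rm nor}$ for $n_0=n^{\rm nor}(X,J_0)$. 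By Theorem~\ref{satidealth} the threshold $n_0$ is computed locally from $(\oM_x,\overline{(J_0)}_x)$, and hence is preserved under strict morphisms (in fact bounded above, with equality for surjective ones), so $I_0$ is also functorial.

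Next, by Proposition~\ref{makingsimpleprop} the action of $G$ on $(Y,E)$ is $G$-simple, so Theorem~\ref{Maintheorem} applies verbatim with $\sigma^0$ replaced by $\sigma$: defining $I_1=(I_\sigma^{Y})^{n_1})^{\rm nor}$ with $n_1=n^{\rm nor}(Y,I_\sigma^Y)$, which is finite by Theorem~\ref{functortorific}(i), the blowing up of $Y$ along $I_1$ realizes the torific blowing up along $I_\sigma^Y$. The functoriality assertion in Theorem~\ref{Maintheorem}(iii) (extended from $\sigma^0$ to $\sigma$ via Lemma~\ref{Sigmalem}(ii) and Theorem~\ref{functortorific}(iii)) shows that $I_1$ and its blowing up are functorial under surjective strongly equivariant strict morphisms between the barycentric modifications; since the barycentric step itself is already strict and functorial, this extends to functoriality on the original schemes.

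Composing the two blowings up gives the required realization of $F_{(X,D)}$ as a sequence of blowings up along explicit ideals $I_0$ on $X$ and $I_1$ on $Y$; projectivity follows since each factor is projective (an ordinary blowing up of a coherent ideal on a noetherian scheme). The main obstacle is a bookkeeping one: making sure the normalization thresholds $n_0,n_1$ are chosen in a way that is genuinely functorial for surjective strict morphisms. This is exactly what Theorem~\ref{functortorific}(ii) guarantees for the torific ideal, and the same statement with saturation thresholds in place of normalization thresholds (via Corollary~\ref{norsatcor}, since $J_0$ is toroidal) handles the barycentric ideal. I would not attempt to extend this further to non-surjective morphisms, as the thresholds can a priori drop under restriction, precisely paralleling the caveat in the statement of Theorem~\ref{Maintheorem}(iii).
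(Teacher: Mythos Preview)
The paper does not actually give a proof of this theorem: it says only ``In the same way one can upgrade the general torification from Theorem~\ref{maintheorem2} to torification by blowings up, and we leave the details to the interested reader.'' Your proposal is a correct and natural execution of exactly this hint, treating each of the two constituents (barycentric, then torific with the multiset $\sigma$) via the saturation/normalization threshold machinery of Theorem~\ref{satidealth}, Corollary~\ref{norsatcor}, and Theorem~\ref{functortorific}, precisely as in the proof of Theorem~\ref{Maintheorem}.

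One minor remark: you collapse the barycentric modification into a single toroidal blowing up using Nizio\l's result (the paper's Remark~(i)), whereas the theorem as stated only asks for a \emph{sequence} of blowings up, so one could equally keep the barycentric modification as its defining sequence of toroidal blowings up and apply Theorem~\ref{satidealth} to each step separately; either route works. Also, in the functoriality argument you implicitly use that the induced morphism between barycentric modifications remains strongly equivariant (needed to feed into Theorem~\ref{functortorific}); this does hold once one grants that the $G$-action on the barycentric modification is relatively affine, which is already implicitly assumed in Theorem~\ref{maintheorem2} for the torific step to be defined at all.
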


\subsection{$\GG_m$-action}
In \cite{AT2} we will also need a few results on compatibility of torification with the subschemes $X_{\pm}\into X$ in the case of $\GG_m$-actions. Recall that if $X$ is provided with a relatively affine action of $G=\GGm$ then we defined in \cite[Section~5.1.17]{ATLuna} the open subschemes $X_+$ and $X_-$ of $X$, on which the action of $G$ is still relatively affine (\cite[Lemma 5.1.18]{ATLuna}). If $X$ underlies a toroidal $G$-equivariant scheme $(X,D)$ then $D_+=X_+\cap D$ and $D_-=X_-\cap D$ define equivariant toroidal subschemes of $(X,D)$. We will write $X=(X,D)$ and $X_\pm=(X_\pm,D_\pm)$ for brevity of notation.

\subsubsection{Compatibility with toroidal quotients}
\begin{proposition}\label{gmprop}
Assume that $G=\GGm$ acts toroidally and relatively affine on a toroidal scheme $X$. Then the morphisms $X_\pm\sslash G\to X\sslash G$ are toroidal.
\end{proposition}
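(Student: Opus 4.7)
The plan is to reduce to a toric local model via a strongly equivariant chart and finish by a combinatorial computation.

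First, the open immersion $X_\pm\hookrightarrow X$ is strict and fixed-point reflecting, so Lemma~\ref{toroidalactionlem}(ii) shows the induced action on $X_\pm$ is again toroidal, and by \cite[Lemma~5.1.18]{ATLuna} it is relatively affine. Theorem~\ref{toroidalth}(i) then produces toroidal quotients $X\sslash G$ and $X_\pm\sslash G$ together with the morphism of toroidal schemes $\pi_\pm\colon X_\pm\sslash G\to X\sslash G$; since $L=\ZZ$ is torsion free, Theorem~\ref{toroidalth}(ii) also ensures that $X\to X\sslash G$ and $X_\pm\to X_\pm\sslash G$ are themselves toroidal. Toroidality of $\pi_\pm$ is a local question on $X\sslash G$, and I reduce it to a toric model.

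Fix a closed orbit $O\subseteq X$ with image $y_0\in X\sslash G$. Since $K_O=\Ker(\ZZ\to L_O)$ is torsion free, Proposition~\ref{simpleprop}(ii) together with Corollary~\ref{toroidalprop}(iii) produces, after shrinking $X$ to a strongly equivariant neighborhood of $O$, a global central strongly equivariant toroidal chart $f\colon(X,D)\to(\bfA_P,E)$ with $P=\oM_O\oplus K_O$ and induced $\ZZ$-grading $\chi\colon P\to\ZZ$. Using the definition of $X_\pm$ in \cite[Section~5.1.17]{ATLuna} and its compatibility with strongly equivariant base change one has $X_\pm=f^{-1}((\bfA_P)_\pm)$, and strong equivariance of $f$ implies that $G$-quotients commute with this base change. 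Consequently the square
\[
\xymatrix{
X_\pm\sslash G\ar[r]\ar[d]_{\pi_\pm}& (\bfA_P)_\pm\sslash G\ar[d]^{\pi^\tor_\pm}\\
X\sslash G\ar[r]& \bfA_P\sslash G=\bfA_{P_0}
}
\]
is Cartesian with strict strongly equivariant horizontal arrows. Since log-smoothness is stable under base change, it suffices to prove that $\pi^\tor_\pm$ is toroidal.

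The toric verification is explicit. The open subscheme $(\bfA_P)_+\subset\bfA_P$ is covered by principal opens $\bfA_{P[-a]}$ indexed by $a\in P$ with $\chi(a)>0$, and analogously for $(\bfA_P)_-$ using $\chi(a)<0$. On such a chart the quotient is $\bfA_{(P[-a])_0}$ and $\pi^\tor_\pm$ restricts to the toric morphism associated to the monoid inclusion $P_0\hookrightarrow(P[-a])_0$. The homomorphism $(P[-a])_0^\gp\to\ZZ$ sending $m-ka\mapsto k$ is well-defined (applying $\chi$ to an equality $m-ka=m'-k'a$ in $P^\gp$ forces $k=k'$) and has kernel $P_0^\gp$; hence the cokernel of $P_0^\gp\hookrightarrow(P[-a])_0^\gp$ embeds into $\ZZ$ and is torsion free, so Kato's criterion makes the associated toric morphism log-smooth. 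The main obstacle is the construction of the Cartesian square, that is, the functorial identification $X_\pm=f^{-1}((\bfA_P)_\pm)$ and the commutation of $G$-quotients with strongly equivariant base change; once these are in place the toric verification above completes the proof.
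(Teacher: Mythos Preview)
Your overall strategy matches the paper's: reduce to a toric model via a strongly equivariant chart, show the resulting square of quotients is Cartesian, and finish by a combinatorial check that the relevant monoid quotients are torsion free. There are, however, two genuine gaps.

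\medskip
\textbf{The toric computation is flawed.} Your map $(P[-a])_0^{\gp}\to\ZZ$, $m-ka\mapsto k$, is not well-defined: the integer $k$ is not determined by the element. Take $P=\NN^2$ with $\chi(e_1)=\chi(e_2)=1$ and $a=e_1$. Then $(-1,1)\in(P[-a])_0$ can be written as $(0,1)-1\cdot a$ or as $(1,1)-2\cdot a$, so your rule assigns both $1$ and $2$. Applying $\chi$ to $m-ka=m'-k'a$ gives $0=0$ and imposes no constraint on $k,k'$. The paper's argument avoids this: since $P_0$ is saturated in $P$, the lattice $P_0^{\gp}$ is saturated in $P^{\gp}$, so $P^{\gp}/P_0^{\gp}$ is torsion free; then $(P[-a])_0^{\gp}/P_0^{\gp}$ embeds into $P^{\gp}/P_0^{\gp}$ and is therefore torsion free. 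You should replace your map by this saturation argument.

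\medskip
\textbf{The Cartesian square needs an argument.} You assert that ``strong equivariance of $f$ implies that $G$-quotients commute with this base change,'' but the open immersion $(\bfA_P)_\pm\hookrightarrow\bfA_P$ is \emph{not} strongly equivariant, so this is not a formal consequence of strong equivariance of $f$. The paper isolates exactly this point as Lemma~\ref{gmlem}: if a $G$-equivariant $f\colon X\to Y$ is the base change of some $f'\colon X'\to Y'$ with trivial $G$-action, then $f\sslash G$ is also a base change of $f'$. Applied to $\phi_+$, viewed as a base change of $\phi\sslash G$, this yields the Cartesian square. You identify this as ``the main obstacle'' but do not supply the argument; either prove this lemma or cite it.

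\medskip
A minor point: your sign convention for $(\bfA_P)_+$ (covered by $\bfA_{P[-a]}$ with $\chi(a)>0$) is opposite to the paper's (negatively graded $a$); check consistency with \cite[Section~5.1.17]{ATLuna}.
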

\begin{proof}
By symmetry it suffices to consider $X_+$. The claim is local on $X\sslash G$, hence we can assume that the action on $X$ is local. If the stabilizer of the closed orbit $O$ is $\mu_n$ then $X=X_+$ and the claim becomes obvious. So we can assume that the stabilizer is $\GG_m$ and $O=\{x\}$, i.e. the action is strictly local. Then by Corollary~\ref{toroidalprop} we can find a strongly equivariant chart $\phi\:X\to Y=\bfA_M$, where the action on $Y$ is via a grading $M\to\ZZ$.

Let us prove that the morphism $f\:Y_+\sslash G\to Y\sslash G$ is toroidal. Clearly, $Y\sslash G=\bfA_{M_0}$. By \cite[Lemma~5.1.18]{ATLuna}
$Y_+$ is covered by open strongly equivariant subschemes $(\bfA_M)_m=\bfA_{M[-m]}$, where $m\in M$ is negatively graded, and hence $Y_+\sslash G$ is covered by affine open subschemes $\bfA_{M[-m]_0}$. We claim that $(M[-m]_0)^\gp/(M_0)^\gp$ are torsion free and hence $f$ is toroidal. Indeed, this group is a subgroup of $M^\gp/(M_0)^\gp$ hence it suffices to show that the latter is torsion free. It remains to note that $M_0$ is saturated in $M^\gp$, hence the lattice $(M_0)^\gp$ is saturated in $M^\gp$, and the quotient $M^\gp/(M_0)^\gp$ is torsion free.

Now, the assertion of the proposition would follow once we prove that the bottom square in the following commutative diagram is cartesian
$$\xymatrix{
X_+\ar@{^(_->}[r] \ar[d]\ar[drr]^(.7){\phi_+}& X\ar[d]|!{[l];[dr]}\hole\ar[drr]^\phi\\
X_+\sslash G \ar[r]\ar[drr]_{\phi_+\sslash G} & X\sslash G \ar[drr]^(.7){\phi\sslash G}|!{[r];[dr]}\hole & Y_+\ar@{^(_->}[r]\ar[d] &Y\ar[d]\\
&& Y_+\sslash G \ar[r]^(.4)f &Y\sslash G.
}$$

By the strong equivariance, $\phi$ is the base change of $\phi\sslash G$. Since $\phi_+$ is the base change of $\phi$ by \cite[Lemma~5.3.8]{ATLuna}, it is also a base change of $\phi\sslash G$, and then the bottom square is cartesian by Lemma~\ref{gmlem} below.
\end{proof}

\begin{lemma}\label{gmlem}
Assume that $X,Y,X',Y'$ are acted on by a diagonalizable group $G$ so that the actions on $X$ and $Y$ are relatively affine and the actions on $X'$ and $Y'$ are trivial. If a $G$-equivariant morphism $f\:X\to Y$ is a base change of a morphism $f'\:X'\to Y'$, then also $f\sslash G$ is a base change of $f'$.
\end{lemma}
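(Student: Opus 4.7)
The plan is to reduce the statement to the fully affine case by taking appropriate strongly equivariant open covers, and then verify the claim by a direct computation with graded rings.

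First, since $G$ acts trivially on $X'$ and $Y'$, the $G$-equivariant morphisms $X \to X'$ and $Y \to Y'$ are $G$-invariant and hence factor through the categorical quotients, inducing canonical morphisms $X \sslash G \to X'$ and $Y \sslash G \to Y'$. Together with $f \sslash G$, these produce a canonical morphism $g \: X \sslash G \to Y \sslash G \times_{Y'} X'$ over $Y \sslash G$, and the goal is to show $g$ is an isomorphism.

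The claim is local on $Y \sslash G$, so I may assume $Y \sslash G = \Spec B_0$ is affine, hence $Y = \Spec B$ with $B = \bigoplus_{l \in L} B_l$ a graded ring. To reduce to $Y'$ affine, note that for any affine open $V' \subseteq Y'$ the preimage $Y \times_{Y'} V' \subseteq Y$ is the preimage of an open subset of $Y \sslash G$ (namely $Y\sslash G \times_{Y'} V'$), since $Y \to Y'$ factors through $Y \sslash G$; in particular it is strongly $G$-equivariant. Covering $Y'$ by such $V'$ and further covering the resulting $Y \sslash G \times_{Y'} V'$ by affine opens, I may assume both $Y \sslash G$ and $Y' = \Spec C'$ are affine. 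The same argument with $X \to X'$ (which is $G$-invariant, so again factors through $X \sslash G$) shows that for any affine open $X'_\alpha = \Spec A'_\alpha \subseteq X'$ the preimage $V_\alpha := X \times_{X'} X'_\alpha$ is strongly $G$-equivariant, with $V_\alpha \sslash G$ mapping to $X'_\alpha$; since $g$ restricts to the analogous morphism on each $V_\alpha$, I may further assume $X' = \Spec A'$ is affine.

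In this situation $X = Y \times_{Y'} X' = \Spec(B \otimes_{C'} A')$. The map $C' \to B$ lands in the trivially graded subring $B_0$, and $A'$ carries the trivial grading, so $B \otimes_{C'} A'$ is naturally $L$-graded with $(B \otimes_{C'} A')_l = B_l \otimes_{C'} A'$ for every $l \in L$. Taking the degree-zero component gives
\[
X \sslash G \;=\; \Spec\bigl((B \otimes_{C'} A')_0\bigr) \;=\; \Spec(B_0 \otimes_{C'} A') \;=\; Y \sslash G \times_{Y'} X',
\]
which is the required identification; this is moreover compatible with $g$ by construction.

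The only non-formal point is the verification that the preimages $Y \times_{Y'} V'$ and $X \times_{X'} X'_\alpha$ are strongly equivariant, which is exactly where the hypothesis that $G$ acts trivially on $X'$ and $Y'$ enters. Once this is in place, the remaining argument is a routine manipulation of graded rings, and no further obstacle is anticipated.
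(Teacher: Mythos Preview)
Your proof is correct and follows essentially the same approach as the paper: reduce to the affine case and compute the degree-zero component of a tensor product over a trivially graded ring. The paper localizes first on $X'$ and $Y'$ (thereby making $f$ affine) and then on $Y\sslash G$, whereas you reverse the order; but the key observation in both is that the maps $X\to X'$ and $Y\to Y'$ factor through the quotients, so preimages of opens in $X'$ and $Y'$ are strongly equivariant. You spell out this point explicitly, while the paper leaves it implicit in the phrase ``the problem is local on $X'$ and $Y'$''.
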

\begin{proof}
Since the quotients are categorical, the morphisms $X\to X'$ and $Y\to Y'$ factor through $X\sslash G$ and $Y\sslash G$, respectively. The problem is local on $X'$ and $Y'$, so we can assume that $X'=\Spec A'$ and $Y'=\Spec B'$. In particular, the morphism $f$ is affine. In addition, the problem is local on $Y\sslash G$, so we can assume that it is affine and then $Y=\Spec B$, $Y\sslash G=\Spec B_0$, $X=\Spec A$ and $X\sslash G=\Spec A_0$. By our assumption, $A=B\otimes_{B'}A'$. Since the gradings on $A'$ and $B'$ are trivial, this implies that $A_0=B_0\otimes_{B'}A'$, as required.
\end{proof}

\subsubsection{Compatibility of balanced torification}
Assume now that a toroidal scheme $X$ is provided with a relatively affine action of $G=\GGm$ and consider the torifying blowing up $f_X\:Y=Bl_I(X)\to X$ and the quotient torifying blowing up $\tilf_X\:\tilY=Bl_{\tI}(\tilX)\to\tilX$ from \S\ref{torifyingsec}. Let also $\widetilde{X_\pm}=X_\pm\sslash G$, $\widetilde{Y_\pm}=Y_\pm\sslash G$ and $\tI_\pm=\tI\cO_{\widetilde{Y_\pm}}$.

\begin{lemma}\label{Lem:Gm-action}
Keeping the above notation, $\widetilde{Y_\pm}=Bl_{\tI_\pm}(\tilY)$.
\end{lemma}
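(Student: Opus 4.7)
The plan is to identify $\widetilde{Y_\pm}$, as an open subscheme of $\tilY$, with the preimage $\tilf_X^{-1}(\widetilde{X_\pm})$; once this is done, base change of blowings up along the open immersion $\widetilde{X_\pm} \hookrightarrow \tilX$ yields
\[
\tilf_X^{-1}(\widetilde{X_\pm}) \;=\; \tilY \times_\tilX \widetilde{X_\pm} \;=\; Bl_\tI(\tilX) \times_\tilX \widetilde{X_\pm} \;=\; Bl_{\tI_\pm}(\widetilde{X_\pm}),
\]
giving the claim (reading $\tI_\pm$ as $\tI\cO_{\widetilde{X_\pm}}$).

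The first ingredient is $Y_\pm = f_X^{-1}(X_\pm) = Y\times_X X_\pm$: the inclusion $Y_\pm \subseteq f_X^{-1}(X_\pm)$ is automatic from $G$-equivariance, and for the reverse I would use that $f_X$ is projective, so the valuative criterion applied to the $G$-equivariant map $f_X$ extends an orbit of $y\in Y$ over $\bfA^1$ whenever its image orbit in $X$ extends. Next I would introduce the strongly equivariant open $W:=\pi_X^{-1}(\widetilde{X_\pm})\subseteq X$: by Lemma~\ref{Lem:functorideal} the torific ideal satisfies $I\cO_W = I_W$, so $Y\times_X W = Bl_{I_W}(W)$ is the torifying blow up of $W$, and taking the relatively affine quotient --- which commutes with open base change on $\tilX$ --- yields $(Y\times_X W)\sslash G = \tilY\times_\tilX \widetilde{X_\pm}$.

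Since $Y_\pm \subseteq f_X^{-1}(W) = Y\times_X W$, taking quotients gives $\widetilde{Y_\pm} \subseteq (Y\times_X W)\sslash G = \tilf_X^{-1}(\widetilde{X_\pm})$; the hard part is the reverse inclusion. Given $\tily \in \tilf_X^{-1}(\widetilde{X_\pm})$, I must produce a point in the fiber $\pi_Y^{-1}(\tily)$ whose $f_X$-image lies in $X_\pm$, rather than merely in $W$. I would handle this by working locally around the closed orbit of $\pi_Y^{-1}(\tily)$, using that the action on $Y$ is toroidal by Theorem~\ref{Maintheorem}(i) to produce a strongly equivariant toroidal chart and reduce to the toric case $Y = \bfA_{M'}$, $X = \bfA_M$. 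There $Y_\pm$ and $X_\pm$ are unions of opens $\bfA_{M[-m]}$ with $m$ negatively (resp.\ positively) graded, as used in the proof of Proposition~\ref{gmprop}, and the required existence of a homogeneous element of appropriate sign in the fiber ring $\Gamma(\pi_Y^{-1}(\tily))$ reduces to a direct combinatorial verification exploiting the injection on homogeneous coordinate rings of fibers induced by $f_X$.
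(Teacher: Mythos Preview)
Your plan rests on treating $\widetilde{X_\pm}\hookrightarrow\tilX$ as an open immersion so that you can ``base change the blowing up along it''. This is false in general: the morphism $\widetilde{X_\pm}=X_\pm\sslash G\to X\sslash G=\tilX$ is typically a proper birational modification, not an open embedding. For instance, take $X=\Spec k[x,y,z]$ with $\GG_m$-weights $1,1,-1$. Then $\tilX=\Spec k[xz,yz]\cong\bfA^2$, while $X_-=D(x)\cup D(y)$ and a direct computation of the degree-$0$ parts of $k[x,y,z,x^{-1}]$ and $k[x,y,z,y^{-1}]$ shows that $\widetilde{X_-}\to\tilX$ is the blowing up of the origin. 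Consequently neither $\widetilde{X_\pm}\subset\tilX$ nor $\widetilde{Y_\pm}\subset\tilY$ makes sense as open subschemes, your set $W=\pi_X^{-1}(\widetilde{X_\pm})$ is not well-defined, and the displayed chain $\tilY\times_\tilX\widetilde{X_\pm}=Bl_{\tI_\pm}(\widetilde{X_\pm})$ requires an argument beyond ``blowups commute with open base change'' --- blowups do \emph{not} commute with arbitrary base change.

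The paper's route is different and avoids this trap. It first enlarges the toroidal divisor (Proposition~\ref{pretoroidalprop}) so that the action becomes toroidal; this leaves all the schemes and ideals unchanged but makes $I$, $\tI$ and their powers into \emph{toroidal} ideals. After reducing via a strongly equivariant chart to the toric model $X=\bfA_M$, the key point is that $\sigma^0$ is \emph{balanced}, so the torific ideal is generated by monomials $t_1,\dots,t_l\in M_0$ of degree zero. Hence every blowup chart $Y_i=\bfA_{M[t_j-t_i]^\sat}$ is strongly equivariant, and the identity
\[
M[t_1-t_i,\dots,t_l-t_i]^\sat[-f]\;=\;M[-f][t_1-t_i,\dots,t_l-t_i]^\sat
\]
(saturation commutes with localization) gives directly $Y_\pm=Y\times_XX_\pm$ and $\widetilde{Y_\pm}=\tilY\times_\tilX\widetilde{X_\pm}$. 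The same chart computation, with $t_i\in M_0$, yields the description of $\widetilde{Y_\pm}$ as a blowing up of $\widetilde{X_\pm}$. Your valuative-criterion idea for $Y_\pm=Y\times_XX_\pm$ is plausible, but the subsequent identification on quotients cannot proceed the way you outlined; one really needs the degree-zero generation coming from balancedness, or some equally strong input, to control the non-flat base change $\widetilde{X_\pm}\to\tilX$.
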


We will prove a more general result that deals with arbitrary torific ideals. So, let $S$ be a balanced locally constant coherent multiset in $L_X$, $J=I_X^S$, $Y=NorBl_J(X)$ and $J_\pm=J\cO_{X_\pm}$. Consider the quotients $\tilX=X\sslash G$, $\tilY=Y\sslash G$, $\widetilde{X_\pm}=X_\pm\sslash G$, $\widetilde{Y_\pm}=Y_\pm\sslash G$, and let $\tJ\subset\cO_\tilX$ be the $\GG_m$-invariant part of $J$ and $\tJ_\pm=\tJ\cO_{\widetilde{X_\pm}}$. Recall that $\tilY=NorBl_\tJ(\tilX)$ by Lemma~\ref{Lem:canonicalblowup}(iii). Finally, let $n=n^\nor(X,J)$, $I=(J^n)^\nor$, $I_\pm=I\cO_{X_\pm}$ and $\tn=n^\nor(\tilX,\tJ)$, $\tI=(\tJ^\tn)^\nor$, $\tI_\pm=\tI\cO_{\widetilde{X_\pm}}$.

\begin{proposition}
Keeping the above notation the following isomorphisms hold:

(i) $Y_\pm=Y\times_XX_\pm$ and $\widetilde{Y_\pm}=\tilY\times_\tilX\widetilde{Y_\pm}$.

(ii) $Y_\pm=NorBl_{J_\pm}(X_\pm)$ and $\widetilde{Y_\pm}=NorBl_{\tJ_\pm}(\tilX_\pm)$.

(iii) $Y_\pm=Bl_{I_\pm}(X_\pm)$ and $\widetilde{Y_\pm}=Bl_{\tI_\pm}(\tilX_\pm)$.
\end{proposition}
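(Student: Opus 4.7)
The plan is to establish (i), (ii), (iii) in turn, exploiting at every step that $X_\pm\hookrightarrow X$ and $\widetilde{X_\pm}\hookrightarrow\tilX$ are open immersions, that the former is strongly equivariant (so $X_\pm=X\times_{\tilX}\widetilde{X_\pm}$), and that both ordinary and normalized blowings up commute with flat base change.

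For part (i), I would first invoke \cite[Lemma~5.3.8]{ATLuna} applied to the $G$-equivariant morphism $Y\to X$ to conclude $Y_\pm=Y\times_X X_\pm$, since by that lemma $Y_\pm$ is the preimage of $X_\pm$ under any equivariant morphism into $X$. For the quotient version, I would then combine this with $X_\pm=X\times_{\tilX}\widetilde{X_\pm}$ and factor $Y\to\tilX$ through $\tilY$ to rewrite
\[Y_\pm=Y\times_X X_\pm=Y\times_{\tilX}\widetilde{X_\pm}=Y\times_{\tilY}\bigl(\tilY\times_{\tilX}\widetilde{X_\pm}\bigr).\]
Setting $\tilV=\tilY\times_{\tilX}\widetilde{X_\pm}$, an open subscheme of $\tilY$, this exhibits $Y_\pm$ as the preimage of $\tilV$ under the quotient morphism $Y\to\tilY$, so in particular $Y_\pm\to Y$ is strongly equivariant and $\widetilde{Y_\pm}=Y_\pm\sslash G=\tilV=\tilY\times_{\tilX}\widetilde{X_\pm}$.

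Part (ii) will follow immediately: since the open immersion $X_\pm\hookrightarrow X$ is flat, normalized blowings up commute with this base change, so $NorBl_J(X)\times_X X_\pm=NorBl_{J_\pm}(X_\pm)$, which combined with (i) yields $Y_\pm=NorBl_{J_\pm}(X_\pm)$. The quotient version repeats the same argument, using $\tilY=NorBl_{\tJ}(\tilX)$ from Lemma~\ref{Lem:canonicalblowup}(iii) together with the open immersion $\widetilde{X_\pm}\hookrightarrow\tilX$.

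Part (iii) will then be direct: by the definition of the normalization threshold $n=n^\nor(X,J)$ one has $Y=Bl_I(X)$, and ordinary blowings up likewise commute with flat base change, so $Bl_{I_\pm}(X_\pm)=Bl_I(X)\times_X X_\pm=Y_\pm$ by (i); similarly $Bl_{\tI_\pm}(\widetilde{X_\pm})=\tilY\times_{\tilX}\widetilde{X_\pm}=\widetilde{Y_\pm}$. The main subtlety, already faced in part (i), is the compatibility of $G$-quotient formation with base change to $\widetilde{X_\pm}$: Lemma~\ref{gmlem} does not apply directly since neither side has trivial $G$-action, so one must instead argue as above, via the strong equivariance of $X_\pm\to X$ and the fact that the preimage of an open of $\tilY$ in $Y$ is automatically strongly equivariant with quotient equal to that open.
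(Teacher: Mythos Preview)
Your argument rests on two claims that are both problematic.

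First, you assert that $X_\pm\hookrightarrow X$ is strongly equivariant, so that $X_\pm=X\times_{\tilX}\widetilde{X_\pm}$. This is false. Take $X=\Spec k[x,y]$ with $\GG_m$ acting by weights $(1,-1)$. Then $\tilX=\Spec k[xy]$ and $X_+=D(y)$, while $(k[x,y,y^{-1}])_0=k[xy]$, so $\widetilde{X_+}=\tilX$ and $X\times_{\tilX}\widetilde{X_+}=X\neq X_+$. In fact $X_+$ is covered by opens $D(z^m)$ with $m$ of strictly negative degree, and such opens are never preimages of opens in $\tilX$. Your derivation of the quotient half of (i) collapses at the step $Y\times_X X_\pm=Y\times_{\tilX}\widetilde{X_\pm}$, and the final paragraph's appeal to ``the strong equivariance of $X_\pm\to X$'' cannot rescue it.

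Second, even the first half of (i) is not justified as written. The uses of \cite[Lemma~5.3.8]{ATLuna} in the paper (in the proofs of Proposition~\ref{gmprop} and of the present proposition) are for \emph{strongly} equivariant morphisms, and $Y=NorBl_J(X)\to X$ is not strongly equivariant in general. One can argue that $Y_\pm=Y\times_X X_\pm$ holds because $Y\to X$ is proper (so limits lift by the valuative criterion), but that is a different argument from the one you give, and it does not help with the quotient statement.

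The paper's proof avoids these traps by a different route: it localizes to a strictly local action, enlarges the toroidal divisor via Proposition~\ref{pretoroidalprop} so that the action becomes toroidal, and then observes that $J$ and hence all the derived ideals are toroidal. At that point (ii) and (iii) follow from (i) because toroidal blowups and saturations are compatible with strict base change. For (i) itself, one picks a strongly equivariant chart $X\to\bfA_M$, uses \cite[Lemma~5.3.8]{ATLuna} (now legitimately) to reduce to the toric model $X=\bfA_M$, and verifies both halves of (i) by an explicit monoid computation with the charts $\bfA_{M[t_1-t_i,\ldots,t_l-t_i]^\sat}$, exploiting that the generators $t_i$ lie in $M_0$ because $S$ is balanced. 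The balancedness is what makes the quotient statement tractable; your argument never uses it.
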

\begin{proof}
Let us work with $X_+$ for concreteness. As in the proof of Proposition \ref{gmprop}, it suffices to consider the case when the action is local. Moreover, we can assume that the action is strictly local, since otherwise $X_+=X$ and the assertions are vacuous. In particular, we can assume that $X=\Spec A$ for $A=\oplus_{n\in\ZZ}A_n$ and $S$ is a balanced multiset in $\ZZ$. Moreover, since all claims are independent of the toroidal structure of $X$, we can enlarge it by Proposition~\ref{pretoroidalprop} making the action toroidal. Once the action on $X$ is toroidal, the actions on $X_\pm$, $Y$ and $Y_\pm$ are toroidal too, and the quotients are toroidal schemes by Theorem~\ref{toroidalth}(i). Furthermore, the ideal $J$ is toroidal by Lemma~\ref{torificblowlem}(i), hence all other ideals are toroidal and all normalizations are, in fact, saturations. In particular, the normalizations of ideals and the (normalized) blowings up from (ii) and (iii) are compatible with base changes, and hence (ii) and (iii) follow from (i).

To prove (i) we use Corollary~\ref{toroidalprop} to pick up a strongly equivariant chart $X\to Z=\bfA_M$ with a $\ZZ$-graded toric monoid $M$. Since $X_\pm=X\times_{Z}Z_\pm$ by \cite[Lemma~5.3.8]{ATLuna}, it suffices to prove (i) for $Z$ instead of $X$, so we can assume that $X=\bfA_M$. By definition, $X_+$ has an open strongly equivariant covering by subschemes $X_f=\bfA_{M[-f]}$ with negatively graded $f\in M$. Since $S$ is balanced, $J$ is generated by elements $t_1\.t_l\in M_0$ and hence $Y$ is covered by the charts $Y_i=\bfA_{M[t_1-t_i\.t_l-t_i]^\sat}$ and $X_+\times_XY$ is covered by charts $Y_{f,i}=\bfA_{M[t_1-t_i\.t_l-t_i]^\sat[-f]}$. Similarly, $Bl_{J_+}(X+)$ is covered by the charts $\bfA_{M[-f][t_1-t_i\.t_l-t_i]^\sat}$. The localization of a saturated monoid is saturated, hence $$M[t_1-t_i\.t_l-t_i]^\sat[-m]=M[-m][t_1-t_i\.t_l-t_i]^\sat,$$ and we obtain that $Y_+=Y\times_XX_+$. Since $t_i\in M_0$, one checks in the same manner that $\widetilde{Y_+}=\tilY\times_\tilX\widetilde{Y_+}$ since both schemes are covered by the charts $\bfA_{M_0[-m,t_1-t_i\.t_l-t_i]^\sat}$.
\end{proof}

\bibliographystyle{amsalpha}
\bibliography{factor-qe}
\printindex
\end{document}